\tikzset{
    state/.style={
           rectangle,
           rounded corners,
           draw=black, very thick,
           minimum height=2em,
           inner sep=2pt,
           text centered,
           },
}
\newcounter{noteno}\setcounter{noteno}{0}
\newcounter{exam}\setcounter{exam}{0}
\newcounter{polin}\setcounter{polin}{0}
\newtheorem{thm}{Theorem}[chapter]
\newtheorem{cor}[thm]{Corollary}
\newtheorem{lem}[thm]{Lemma}
\newtheorem{prop}[thm]{Proposition}
\newtheorem{defin}[thm]{Definition}
\newtheorem*{thmm}{Theorem}
\newtheorem{prob}[thm]{Problem}
\newcommand{\m}{\mathbf}
\newcommand{\bl}{\bullet}
\newcommand{\p}{\partial}
\def\d{\,{\rm{d}}}
\def\v{{\varsigma}}
\def\sm{\,{\rm{sm}}}
\def\cm{\,{\rm{cm}}}
\def\sn{\,{\rm{sn}}}
\def\cn{\,{\rm{cn}}}
\def\dn{\,{\rm{dn}}}
\newenvironment{Note}%
	{\refstepcounter{noteno}%
	
	\medbreak\par\noindent{{\bf Note~\thenoteno}.}}%
	{\hfill{$\Box$}
	
	\par\medbreak}
\newenvironment{Example}%
	{\refstepcounter{exam}%
	
	\medbreak\par\noindent{{\bf Example~\theexam}.}}%
	{\hfill{$\Box$}
	
	\par\medbreak}
	\newenvironment{Poly}%
	{\refstepcounter{polin}%
	\medbreak\par\noindent{{\bf Polynomial superflows~\thepolin}.}}%
	{\hfill{$\Box$}
	\par\medbreak}
\begin{document}

\thanks{The research of the author was supported by the Research Council of Lithuania grant No. MIP-072/2015}

\date{Started: October 5, 2015. Final version before proofs: December 22, 2017}

\mathclass{Primary 14H70, 20G05, 33E05, 34A05, 39B12, 37C10, 53C12. Secondary 14H45, 37J15,  	37J35, 58E15}

\keywords{Translation equation, projective translation equation, projective flow, global flows, rational vector fields, linear PDEs, non-linear autonomous ODEs, linear groups, invariant theory, (hyper-)elliptic functions, curves of low genus, abelian functions, addition formulas, periods, closed-form formulas, solenoidal vector fields, Platonic solids, regular polytopes, hyperoctahedral group, $n$-simplex group, group representations, extremal vector fields, foliations, algebraic leaves, vector field on spheres, ABC flows, birational geometry, reduction of differential equations, Noether's theorem}

\abbrevauthors{G. Alkauskas}
\abbrevtitle{Superflows}

\title{Projective and polynomial superflows. I}

\author{Giedrius Alkauskas}
\address{Vilnius University,\\ 
Department of Mathematics and Informatics, \\
Institute of Informatics, \\
Naugarduko 24, LT-03225 Vilnius, Lithuania\\
E-mail: giedrius.alkauskas@mif.vu.lt}

\maketitledis

\tableofcontents

\begin{abstract}
Let $\m{x}\in\mathbb{R}^{n}$. For $\phi:\mathbb{R}^{n}\mapsto\mathbb{R}^{n}$ and $t\in\mathbb{R}$, we put $\phi^{t}=t^{-1}\phi(\m{x}t)$. \emph{A projective  flow} is a solution to the projective translation equation $\phi^{t+s}=\phi^{t}\circ\phi^{s}$, $t,s\in\mathbb{R}$. This is equivalent to the fact that a vector field of a flow is $2$-homogeneous.\\  
\indent Previously we have developed an arithmetic, topological and analytic theory of $2$-dimensional projective flows (over $\mathbb{R}$ and $\mathbb{C}$): rational, algebraic, unramified, abelian flows, commuting flows. The current paper is devoted to highly symmetric flows - \emph{projective superflows}. Within flows with a given symmetry, superflows are unique and  optimal, and this is in essence their definition.\\
\indent  Our first result classifies all $2$-dimensional superflows. For any $d\in\mathbb{N}$, there exists the superflow $\phi_{\mathbb{D}_{2d+1}}$ whose group of symmetries is the dihedral group $\mathbb{D}_{2d+1}$. In the current paper we explore the superflow $\phi_{\mathbb{D}_{5}}$, which leads to investigation of abelian functions over a curve of genus $6$. \\
\indent We further investigate two different $3$-dimensional superflows, both solenoidal, whose group of symmetries are, respectively, the full tetrahedral group $\widehat{\mathbb{T}}$, and the octahedral group $\mathbb{O}$. The generic orbits of the first flow are space curves of genus $1$, and the flow itself can be analytically described in terms of Jacobi elliptic functions. The generic orbits of the second flow are curves of genus $9$, and the flow itself can be described in terms of Weierstrass elliptic functions, via a triple reduction, which occurs in this particular case.\\
\indent We also introduce the notion of a \emph{polynomial superflow} as a flow with polynomial vector field whose components are homogeneous of even degree (not necessarily $2$), such that a vector field is also unique and optimal. This notion seems to be even more fundamental than a notion of a projective superflow. However, in many cases projective superflows and polynomial superflows can be easily converted into one another. However, this is not the case with hyper-octahedral superflows in odd dimension $n\geq 5$: projective superflows do not exist, while polynomial ones do. We investigate this in more detail.\\
\indent The main emphasis of our approach, thus distancing it from the main topics of interest usually dealt with in differential geometry, is an explicit integration of the corresponding differential system, minding that in a superflow setting this system appears to possess a maximal set of independent polynomial first integrals. This relates our results to Noether's theorem about how differentiable symmetries of a Lagrangian function give rise to conservation laws (first integrals).\\ 
\indent We also introduce the notion of an \emph{unramified flow}. The latter property depends on a very subtle arithmetic structure of a vector field.\\
\indent In the second part of this work we will classify all $3$-dimensional superflows (including the icosahedral superflow), in the third we investigate superflows over $\mathbb{C}$ in dimension $2$, and the fourth part is dealing with arithmetic of the orbits.
\end{abstract}

\makeabstract

%
%
%

\chapter{Projective flows}
\label{sec-pirmas}
\section{Preliminaries}
\label{prelim}
We always write $F(x,y)\bl G(x,y)$ instead of $\big{(}F(x,y),G(x,y)\big{)}$, and also $\m{x}=x\bl y$. Equally, a vector field $F\frac{\p}{\p x}+G\frac{\p}{\p y}$, when not considered as a derivation (on the space of algebra of $C^{\infty}$ germs), is also frequently denoted by $F\bl G$ for typographical convenience. The same convention applies to a $3$-dimensional case, $\m{x}=x\bl y\bl z$. We use an environment \textbf{Note} to make comments which are not crucial for the main narrative of the text, but which give a deeper insight. We use an environment \textbf{Example} to double check the validity of theoretical results, or to illustrate them in particular cases.\\

The current work is a continuation of \cite{alkauskas-t,alkauskas, alkauskas-rat2,alkauskas-un,alkauskas-ab, alkauskas-comm}, but it is completely independent from the cited papers (including all classification theorems), apart from the fact that we use explicit methods to integrate vector fields developed there. The notion of \emph{the superflow} was first formulated in \cite{alkauskas-un}. Yet, the final formulas we obtain can be double-verified, so the current work is self-contained.\\

 We start with the orthogonal groups $O(2)$ (the dimension of the corresponding Lie algebra is $1$) and $O(3)$ (the dimension is $3$). The latter is continued in \cite{alkauskas-super2}. In \cite{alkauskas-super3} the case of the unitary group $U(2)$ is investigated (the dimension is $4$). The fourth paper \cite{alkauskas-super4} is of a different nature and concerns the arithmetic of the orbits, Abel-Jacobi theorem and abelian functions over algebraic curves.\\

\emph{The translation equation}, or a flow equation, is the equation \cite{aczel, conlon, nikolaev}

\begin{eqnarray}\setlength{\shadowsize}{2pt}\shadowbox{$\displaystyle{\quad
F(F(\m{x},s),t)=F(\m{x},s+t)},\quad t,s\in\mathbb{R}\text{ or }\mathbb{C}$.\quad}
\label{tre-classical}
\end{eqnarray}
One also requires the boundary condition
\begin{eqnarray*}
F(\m{x},0)=\m{x}.
\end{eqnarray*}
When these two are satisified, we have \emph{a flow}. Note that it is of an interest to talk about special solutions to (\ref{tre-classical}), like rational or algebraic functions, which do not satisfy the boundary condition.\\

\emph{The projective translation equation}, or PrTE, is a special case of a general translation equation, and it was first introduced in \cite{alkauskas-t}. It is given by $F(\m{x},t)=t^{-1}\phi(\m{x}t)$, and therefore it is the equation of the form

\begin{eqnarray}\setlength{\shadowsize}{2pt}\shadowbox{$\displaystyle{\quad
\frac{1}{t+s}\,\phi\big{(}\m{x}(t+s)\big{)}=\frac{1}{s}\,\phi\Big{(}\phi(\m{x}t)\frac{s}{t}\Big{)}},\quad t,s\in\mathbb{R}\text{ or }\mathbb{C}$.\quad}\label{funk}
\end{eqnarray}

This should be satisfied for $s,t$ small enough (\emph{local flows}). However, we are interested in \emph{global flows}. With this in mind, in the above equation one can confine to the case $s=1-t$ without altering the set of solutions, though some complications arise concerning ramification, if we are dealing with flows over $\mathbb{C}$. 

\begin{Note} In a PrTE we have a subtle interplay between an additive structure of the reals (that is, $t+s$), and a multiplicative structure (that is, $\frac{s}{t}$). This hints towards a Jordan normal form of matrices, nilpotent elements, and the Jacobson-Morozov theorem concerning $\mathfrak{sl}_{2}$-triples in Lie algebras. This relation needs further clarification. Still, Lie algebras do appear in our investigations as finite-dimensional algebras of infinite linear symmetry groups of reducible superflows; see Proposition \ref{interim}. The tricky point is to show that these groups, if non-abelian, do not have finite non-abelian subgroups \cite{alkauskas-super3}. However, symmetry groups of irreducible superflows (the topic of our main interest, see Definitions \ref{defin-proj} and \ref{defin-poly}) are always finite groups.
\end{Note}
 In a $3$-dimensional case, $\phi(x,y,z)=u(x,y,z)\bl v(x,y,z)\bl w(x,y,z)$ is a triple of functions in three real (or complex) variables. A non-singular solution of this equation is called \emph{a projective flow}.  The \emph{non-singularity} means that a flow satisfies the boundary condition
\begin{eqnarray}
\lim\limits_{t\rightarrow 0}\frac{\phi(\m{x}t)}{t}=\m{x}.
\label{init}
\end{eqnarray}

However, as explained in Section \ref{sec6.3} (see Proposition \ref{prop-ner}), and much more exhaustively in \cite{alkauskas-rat2}, the phrase \emph{``is a special case of a general translation equation"} is misleading. For the explanation, see cited references, but we will emphasize that, as far as the space $\mathbb{R}^{n}$ is concerned, the projective translation equation describes general flows not less, and in some cases (when a vector field is $2$-homogeneous) more efficiently than the affine translation equation. Moreover, some questions, like rationality or algebraicity of a flow, are much more natural in the projective flow setting \cite{alkauskas-rat2} - see Note \ref{note-alg} for an example and an explanation. 
\begin{Poly} This paper, apart from the final Section \ref{hyper-5}, is written from the perspective of projective translation equation. The notion of polynomial superflows, which is related to translation equation but not generally to PrTE, however, seems to be even more fundamental: these exist in huge variety. Indeed, sometimes the degree of homogenuity of a vector field with a given symmetry is too large to ensure the existence of the unique invariant of the group in consideration, whose degree is less by exactly $2$ and which then could serve as a denominator for the vector field in a projective superflow case. In the other direction, numerator for the vector field of projective superflow, if it is of even degree, always produces a polynomial superflow. This has only few exceptions - the $4$-antiprismal superflow $\phi_{\mathbb{A}_{4}}$ \cite{alkauskas-super2}, and a series of reducible superflows (see \cite{alkauskas-super3}, Proposition 4), where the numerator is of odd degree. Hence, not to confuse these two notions, we single out what applies to polynomial superflows and general translation equation into separate remarks with a distinguished subject line \textbf{Polynomial superflows}. When no adjective is used with a noun ``superflow", we mean the projective one. When the denominator of the vector field of the latter is the first integral of the corresponding differential system and it is of even degree, like $\phi_{\mathbb{D}_{3}}$  in Section \ref{sec-2d}, $\phi_{S_{n+1}}$ in Section \ref{S4}, $\phi_{\widehat{\mathbb{T}}}$ in Section \ref{sub4.2}, $\phi_{\mathbb{O}}$ in Sections \ref{octahedral}, \ref{partiii} and \ref{partiv}, or $\phi_{\mathbb{I}}$ in \cite{alkauskas-super2}, these two notions essentially coincide. 
\end{Poly}
  
The main object accompanying a projective flow is its \emph{vector field} given by
\begin{eqnarray}
\varpi(x,y,z)\bl\varrho(x,y,z)\bl\sigma(x,y,z)=\frac{\d}{\d t}\frac{\phi(xt,yt,zt)}{t}\Big{|}_{t=0}.
\label{vec}
\end{eqnarray}
Vector field is necessarily a triple of $2$-homogeneous functions, and this is exactly what distinguishes projective flows from general flows. Such vector fields were investigated in the literature: for example, the paper \cite{camacho} contains geometric analysis of polynomial $2$-homogeneous vector fields in $\mathbb{R}^{3}$. A research of the corresponding differential system, closely related the topic of the current study, is carried out in \cite{hopkins1, hopkins2}.\\

Each point under a flow possesses the orbit, which is defined by
\begin{eqnarray*}
\mathscr{V}(\m{x})=\bigcup\limits_{t\in\mathbb{R}}\phi^{t}(\m{x}),\quad \phi^{t}(\m{x})=\frac{\phi(\m{x}t)}{t}.
\end{eqnarray*}
\\

Our approach to the analytic structure of vector fields differs from the one usually considered in differential geometry: while normally one considers vector fields being smooth (of class $C^{\infty}$), we limit ourself to rational vector fields, allowing discontinuities and indeterminacy at zero locus of denominators. This gives a strong algebro-geometric side.

\begin{Note}
\label{note-just}
 As explained in Section \ref{sec6.3}, we can use a stereographic projection (or any other birational map) to get a vector field in $\mathbb{R}^{2}$, not homogeneous anymore, whose group of symmetries is a finite subgroup of the group of birational transformations of $\mathbb{R}^{2}$, and thus our formulas give explicit expressions for the latter flows. Therefore, projective flows ramify much wider than can be expected from the starting point of limiting our vector fields to $2$-homogeneous rational functions. For an additional motivation, see the end of Section \ref{sub-inv}.
\end{Note}
\begin{Note} The projective flow $\phi$ with rational vector field is said to be \emph{rational} or \emph{algebraic}, if all of its coordinates are rational or algebraic functions. (Note that algebraic vector fields can also produce algebraic flows; we did and do not deal with this situation, it is less important from the point of view of ramification and algebraic geometry).\\

 For example, the vector field $2x^2+xy\bl xy+2y^2$ produces the flow $\phi(\m{x})=u(x,y)\bl u(y,x)$, where $u$ is a third degree algebraic function, given by \cite{alkauskas-comm}
\begin{eqnarray*}
u(x,y)=\frac{\Bigg{(}\sqrt[3]{x+y\sqrt{\frac{y-3x+6x^2}{x-3y+6y^2}}}+\sqrt[3]{x-y\sqrt{\frac{y-3x+6x^2}{x-3y+6y^2}}}\Bigg{)}x(x-y)}{\sqrt[3]{x-3y+6y^2}\cdot(y-3x+6x^2)}
-\frac{2x^2}{y-3x+6x^2}.
\end{eqnarray*}
The orbits of this flow are algebraic curves $x^2(x-y)^{-3}y^{2}=\mathrm{const.}$ If we consider a local flow ($t^{-1}\phi(\m{x}t)$ for $t$ small enough), this is valid for, say, $0<\frac{x}{3}<y<3x$; see \cite{alkauskas-comm} for the double verification that this is indeed the correct explicit expression. The ``time" variable is accommodated within space variables $x,y$, and thus the notion of the flow being algebraic (or rational) is unambiguous.\\
 
 However, we have a dilemma what to call \emph{an algebraic flow} in a general setting: if $F(\m{x},t)$ is such a flow, then we may require that:
\begin{itemize}
\item[i)] $F(\m{x},t_{0})$ is a vector of algebraic functions in $\m{x}$ for any particular $t_{0}$; or
\item[ii)] $F(\m{x},t)$ is an algebraic function in all variables $(\m{x},t)$.
\end{itemize}
\indent For example, consider the general $1$-dimensional translation equation, and the vector field $\varpi(x)=\frac{(x^2+1)x}{x^2-1}$. Integrating the differential equation $\frac{\d x}{\d t}=\frac{(x^2+1)x}{x^2-1}$ and finding its special solution $x(0)=x_{0}$, we find that $\varpi$ generates the flow
\begin{eqnarray*}
F(x,t)=e^t\frac{x^2+1}{2x}+\frac{x^2-1}{2x}\sqrt{1+(e^{2t}-1)\frac{(x^2+1)^2}{(x^2-1)^2}},\quad |t|<C(x).
\end{eqnarray*}
Indeed, then $F(x,0)=x$. The square root for small $t$ is assumed to be positive. This flow is algebraic according to the first definition, but not algebraic according to the second.
\label{note-alg}
\end{Note}

If a function is smooth, the functional equation (\ref{funk}) and the condition (\ref{init}) imply the PDE \cite{alkauskas}
\begin{eqnarray}
u_{x}(\varpi-x)+u_{y}(\varrho-y)+u_{z}(\sigma-z)=-u,\label{pde}
\end{eqnarray}
and the same PDE for $v$ and $w$, with the boundary conditions as given by (\ref{init}). That is,
\begin{eqnarray}
\lim\limits_{t\rightarrow 0}\frac{u(xt,yt,zt)}{t}=x,\quad
\lim\limits_{t\rightarrow 0}\frac{v(xt,yt,zt)}{t}=y,\quad
\lim\limits_{t\rightarrow 0}\frac{w(xt,yt,zt)}{t}=z.
\label{boundr}
\end{eqnarray}
These three PDEs (\ref{pde}) with the above boundary conditions are equivalent to (\ref{funk}) for $s,t$ small enough \cite{alkauskas}.\\

We can give here an alternative proof of (\ref{pde}) than the one presented in \cite{alkauskas}. Let $F(\m{x},t):\mathbb{R}^{3}\times\mathbb{R}\mapsto\mathbb{R}^{3}$ be a general flow, 
$F=u\bl v\bl w$, and $\varpi\bl\varrho\bl\sigma$ be its vector field; see (\ref{tre-classical}). Then we have the flow equation
\begin{eqnarray*}
u\Big{(}u(\m{x},s),v(\m{x},s),w(\m{x},s);t\Big{)}=u(\m{x};s+t).
\end{eqnarray*}
Differentiate now with respect to $s$, and put $s=0$ afterwards. This gives
\begin{eqnarray}
u_{x}(\m{x};t)\varpi+u_{y}(\m{x};t)\varrho+u_{z}(\m{x};t)\sigma=u_{t}(\m{x};t).
\label{PDE-gen}
\end{eqnarray}
In a projective flow case $u(\m{x};t)=t^{-1}\hat{u}(xt,yt,zt)$, hence the expression $u_{t}$, if we put $t=1$, reads as $u_{x}x+u_{y}y+u_{z}z-u$, where $u(\m{x})=u(\m{x};1)$. Whence (\ref{pde}). The same holds for $v$ and $w$.\\

The important feature of the equation (\ref{pde}) is that we can calculate the Taylor series of the solution $u(x,y,z)$ without knowing its explicit expression in closed form. Indeed, as was shown in \cite{alkauskas}, and this equality is essentially equivalent to (\ref{pde}) coupled with the boundary condition (\ref{boundr}), that
\begin{eqnarray}
u(xt,yt,zt)&=&xt+\sum\limits_{i=2}^{\infty}t^{i}\varpi^{(i)}(x,y,z),\nonumber\\
\varpi^{(i+1)}(x,y,z)&=&\frac{1}{i}[\varpi^{(i)}_{x}\varpi+\varpi^{(i)}_{y}\varrho+\varpi^{(i)}_{z}\sigma],\quad i\geq 2,\label{expl-taylor}
\end{eqnarray}
where $\varpi^{(1)}=x$. This converges for $t$ small enough. Thus, $\varpi^{(i)}$ is a homogeneous function of degree $i$. To get functions $v$ and $w$, we use analogous recurrence, only replace $\varpi^{(i)}$ with $\varrho^{(i)}$ and $\sigma^{(i)}$, respectively, and use $\varrho^{(1)}=y$, $\sigma^{(1)}=z$. For general flows, the corresponding Taylor series is given by (\ref{taylor-gen}).\\

After integrating the vector field in case orbits of the flow are algebraic curves (see Problem \ref{orbits-algebro} in the end of Section \ref{sec1.6}), we obtain the expression involving abelian functions whose Taylor series can be calculated recurrently by knowing the curve which is parametrized by this abelian function and its derivative. Plugging the latter series into the closed-form formula we get after the integration, we always verify that the obtained series is identical to the one given by (\ref{expl-taylor}), thus checking the validity of the closed-form identity.

\section{Previous results}
Section \ref{prelim} is written from the perspective of $3$-dimensional projective flows, but the same applies to $2$-dimensional  flows. The orbits of the flow with the vector field $\varpi\bl\varrho$ are given by $\mathscr{W}(x,y)=\mathrm{const}.$, where the function $\mathscr{W}$ can be found from the differential equation
\begin{eqnarray}
\mathscr{W}(x,y)\varrho(x,y)+\mathscr{W}_{x}(x,y)[y\varpi(x,y)-x\varrho(x,y)]=0.
\label{orbits}
\end{eqnarray}
$\mathscr{W}$ is uniquely (up to a scalar multiple) defined from this ODE and the condition that it is a $1$-homogeneous function.\\

Quadratic autonomous differential equations on the plane is a rich and ramified subject. It encompasses algebraic, dynamic, asymptotic and qualitative aspects. We may refer to \cite{date,gine,shlomiuk,winkel} - more than 2000 papers on quadratic vector fields on the plane alone.\\
 
The projective translation equation gives a fresh and new perspective. Many things are already known in a $2$-dimensional case, when, as always, vector field is given by a pair of $2$-homogeneous rational functions. In this Section we will confine to presenting two results related to arithmetic side of these.

\begin{thm}[\cite{alkauskas,alkauskas-rat2}]
Let $\phi(x,y)=u(x,y)\bl v(x,y)$ be a pair of rational functions in $\mathbb{R}(x,y)$ which satisfies the functional equation (\ref{funk})
and the boundary conditions (\ref{init}). Assume that $\phi(\m{x})\neq \phi_{{\rm id}}(\m{x}):=x\bl y$.
Then there exists an integer $N\geq 0$, which  is the basic invariant of the flow, called \emph{the level}. Such a flow $\phi(x,y)$ can be given by
\begin{eqnarray*}
\phi(x,y)=\ell^{-1}\circ\phi_{N}\circ\ell(x,y),
\end{eqnarray*}
where $\ell$ is a $1-$BIR ($1$-homogeneous birational plane transformation), and $\phi_{N}$ is the canonical solution of level $N$ given by
\begin{eqnarray*}
\phi_{N}(x,y)=x(y+1)^{N-1}\bl \frac{y}{y+1}.
\end{eqnarray*}
\label{mthm}
\end{thm}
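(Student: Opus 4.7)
The plan is to isolate a discrete birational invariant $N$ of the flow and then use the $1$-BIR action to transport the flow to the canonical template $\phi_{N}$.

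First, to $\phi=u\bl v$ I associate its $2$-homogeneous rational vector field $\varpi\bl\varrho$ as in (\ref{vec}). Since $\phi$ is rational, each generic orbit $t\mapsto\phi^{t}(\m{x})$ is a rational parametrization of an algebraic plane curve; therefore there exists a non-constant rational first integral $M(x,y)$, constant along orbits. As the family of orbits is preserved under the dilations $(x,y)\mapsto(\lambda x,\lambda y)$, one may take $M$ homogeneous of some integer degree $N\ge 0$, and the minimal such $N$ is a $1$-BIR invariant of the flow; this is the \emph{level}.

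Second, I use the $1$-BIR action to normalize $M$ to the monomial $xy^{N-1}$ (or $x/y$ when $N=0$). Any $1$-BIR factors as a Möbius transformation acting on the projective ratio $u=x/y$, followed by an arbitrary rational radial rescaling along the dilation direction; the Möbius factor places $M$ in a canonical position on $\mathbb{P}^{1}$, and the radial rescaling then absorbs the residual rational factor. The main obstacle of the proof lies precisely here: the radial rescaling must itself be rational, which amounts to a residue/exponent compatibility on the projective data of $M$. That this compatibility holds is \emph{not} automatic from the rationality of the orbits alone---it is forced by the rationality of the full flow $\phi$, propagated through the Taylor recursion (\ref{expl-taylor}) derived from the PrTE (\ref{funk}).

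Third, once the orbit foliation is $\{xy^{N-1}=c\}$, the PrTE restricted to each orbit reduces to a one-dimensional PrTE in the second coordinate; its unique non-trivial rational solution, up to an affine $1$-BIR in $y$, is $v=y/(y+1)$. Inserting this into the orbit equation $uv^{N-1}=xy^{N-1}$ yields $u=x(y+1)^{N-1}$, and the pair $(u,v)$ is exactly $\phi_{N}$. Composing the intermediate $1$-BIRs into a single $\ell$ produces the claimed decomposition $\phi=\ell^{-1}\circ\phi_{N}\circ\ell$, with the non-negativity of $N$ reflecting the degree of the minimal homogeneous rational first integral.
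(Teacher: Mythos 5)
First, note that the paper does not prove this theorem at all: it is quoted as a prior result from \cite{alkauskas,alkauskas-rat2}, so there is no in-text proof to compare your argument against. Judged on its own terms, your proposal identifies the right invariant (the minimal degree $N$ of a homogeneous rational first integral) and the right overall strategy (normalize the orbit foliation by a $1$-BIR, then pin down the flow along the orbits), but it leaves the two steps that actually constitute the theorem unproved.

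The first gap is the one you flag yourself. Writing a $1$-BIR as $\ell=(F,G)$ with $G=y\hat g(u)$, $u=x/y$, and the induced map on $u$ a M\"obius transformation $\mu$, the requirement $FG^{N-1}=M$ with $M=y^{N}m(u)$ forces $m(u)=\mu(u)\hat g(u)^{N}$, i.e.\ the divisor of $m$ on $\mathbb{P}^{1}$ must be, modulo $N$, the divisor of a M\"obius function. This is a genuine arithmetic constraint on the zero/pole multiplicities of the first integral, and it fails for a generic $N$-homogeneous rational function; saying it is ``forced by the rationality of the full flow, propagated through the Taylor recursion'' is a statement of what must be true, not a proof. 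Establishing exactly this multiplicity compatibility (via the local structure of the flow at the singular directions of $\varpi\bl\varrho$) is where essentially all the work in \cite{alkauskas,alkauskas-rat2} lies. The second gap is in your third step: knowing the orbits are $xy^{N-1}=c$ does not reduce the problem to a one-dimensional PrTE in $y$, because you have not shown that the second coordinate of the normalized flow depends on $y$ alone; distinct projective flows (distinct $2$-homogeneous vector fields) can share the foliation $xy^{N-1}=c$, and one must still rule out all but the vector field $(N-1)xy\bl(-y^{2})$ up to foliation-preserving $1$-BIR conjugation. Finally, your opening step (rational flow $\Rightarrow$ rational homogeneous first integral, and $1$-BIR invariance of the minimal degree $N$) is plausible but also needs an argument of Darboux--Jouanolou type rather than a bare assertion. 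As written, the proposal is a correct roadmap with the destination assumed at each of the critical junctions.
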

\begin{thm}[\cite{alkauskas-comm}]Suppose, two projective flows $\phi(\m{x})\neq x\bl y$ and $\psi(\m{x})\neq x\bl y$ with rational vector fields $\varpi\bl\varrho$ and $\alpha\bl\beta$ commute. Suppose $y\varpi-x\varrho\neq 0$. Then $\phi$ and $\psi$ are level $1$ algebraic flows. For any vector field of a level $1$ algebraic flow, the set of vector fields which commute with it, form a $2$-dimensional real vector space.
\end{thm}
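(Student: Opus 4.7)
The plan is to convert commutativity of the one-parameter groups $\{\phi^{t}\}$ and $\{\psi^{s}\}$ into a Lie-algebraic condition on their infinitesimal generators, then exploit the classification given by Theorem \ref{mthm}. First I would verify that $\phi^{t}\circ\psi^{s}=\psi^{s}\circ\phi^{t}$ for all admissible $s,t$ is equivalent to the vanishing of the Lie bracket $[X_{\phi},X_{\psi}]=0$, where $X_{\phi}=\varpi\partial_{x}+\varrho\partial_{y}$ and $X_{\psi}=\alpha\partial_{x}+\beta\partial_{y}$; differentiation in $t$ and $s$ at the origin gives one direction, and the analyticity of the flows yields the converse. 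Then, by Theorem \ref{mthm} together with its extension to rational vector fields from the earlier papers in the series, there is a $1$-BIR $\ell$ and an integer level $N$ such that $\ell$ conjugates $X_{\phi}$ to the canonical form $\widetilde{X}=(N-1)xy\,\partial_{x}-y^{2}\,\partial_{y}$; the non-degeneracy hypothesis $y\varpi-x\varrho\neq0$ becomes $Nxy^{2}\neq0$, which excludes the radial case $N=0$.

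The core of the proof is the explicit solution of $[\widetilde{X},\widetilde{X}_{\psi}]=0$ for $\widetilde{X}_{\psi}=\widetilde{\alpha}\partial_{x}+\widetilde{\beta}\partial_{y}$ within the class of $2$-homogeneous rational functions. Substituting the ansatz $\widetilde{\alpha}=x^{2}A(u)$, $\widetilde{\beta}=x^{2}B(u)$ with $u=y/x$, the $\partial_{y}$-component of the commutator collapses (after using the Euler identity) to $2xB=yB'$, forcing $B(u)=Cu^{2}$, i.e.\ $\widetilde{\beta}=Cy^{2}$ for an arbitrary constant $C$, regardless of $N$. The $\partial_{x}$-component then reduces to the first-order linear ODE $NuA'-(N-1)A=-(N-1)Cu$, whose general solution is $A(u)=-(N-1)Cu+C'u^{(N-1)/N}$. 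For $N=1$ this is $A(u)=C'$, so the commutant consists of all $\widetilde{X}_{\psi}=C'x^{2}\partial_{x}+Cy^{2}\partial_{y}$, a $2$-dimensional real vector space. For $N\neq0,1$ the exponent $(N-1)/N$ is not an integer, so $u^{(N-1)/N}$ is not a rational function; rationality of $\alpha,\beta$ forces $C'=0$, giving $\widetilde{X}_{\psi}=-C\widetilde{X}$, a scalar multiple of $\widetilde{X}$. Consequently $\psi=\phi^{-C}$ is a time-reparametrization of $\phi$, so the existence of a genuinely independent commuting flow forces $N=1$.

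The trickiest aspect is the rigorous exclusion of the fractional-power branch: one has to justify that $u^{(N-1)/N}$ cannot be repackaged into a rational function in $(x,y)$ by any choice of normalization of $\widetilde{\alpha}$, which amounts to observing that $\gcd(N-1,N)=1$ so the exponent is truly irrational in the multiplicative group of rational powers. With this in hand, pulling back the canonical $2$-dimensional commutant through $\ell^{-1}$ yields the asserted $2$-dimensional commutant for any level $1$ vector field in the original coordinates. Algebraicity is then immediate: the canonical level $1$ flow $\phi_{1}(x,y)=x\bl y/(y+1)$ is rational, and conjugation by the $1$-BIR $\ell$ preserves rationality, so every level $1$ flow with rational vector field is itself rational, a fortiori algebraic.
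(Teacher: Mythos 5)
There is a genuine gap at the reduction step, and it is the crux of the whole theorem. You invoke ``Theorem \ref{mthm} together with its extension to rational vector fields'' to conjugate $X_{\phi}$ by a $1$-BIR to the canonical field $(N-1)xy\bl(-y^{2})$. But Theorem \ref{mthm} classifies \emph{rational flows} (flows whose coordinate functions lie in $\mathbb{R}(x,y)$), not flows with rational \emph{vector fields}; the latter class is vastly larger (unramified elliptic flows, abelian flows of every level, flows with non-algebraic orbits such as those with first integral $\exp(-\tfrac{x}{y}-\tfrac{x^{2}}{2y^{2}})y$) and admits no such $1$-BIR normal form. In particular, the existence of an integer level at all --- i.e.\ of a \emph{rational} homogeneous solution $\mathscr{W}$ of (\ref{orbits}), equivalently algebraicity of the orbits --- is precisely what the theorem asserts and what commutativity must be shown to force; you assume it as an input. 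The same problem recurs in the second half: a level~$1$ flow with rational vector field is generically \emph{not} rational but only algebraic (the cubic flow of Note \ref{note-alg}, with vector field $2x^{2}+xy\bl xy+2y^{2}$ and orbits $x^{2}(x-y)^{-3}y^{2}=\mathrm{const}$, is exactly such an example), hence is not $1$-BIR conjugate to $\phi_{1}$, so your computation of the commutant in canonical coordinates does not transfer to the flows the statement is actually about. Your closing sentence ``every level $1$ flow with rational vector field is itself rational'' is false and contradicts that example --- which is why the theorem says ``algebraic'' and not ``rational.''

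What you do have is correct and worth keeping: the equivalence of commutativity with $[X_{\phi},X_{\psi}]=0$, and the explicit computation that the commutant of $(N-1)xy\bl(-y^{2})$ within $2$-homogeneous rational fields is two-dimensional for $N=1$ and reduces to scalar multiples for $N\geq 2$ (the fractional exponent $u^{(N-1)/N}$ argument is fine, since $\gcd(N-1,N)=1$). But this settles only the rational-flow case. A proof of the actual statement has to work directly from the bracket condition on a general pair of rational $2$-homogeneous fields with $y\varpi-x\varrho\neq 0$: one must first extract from $[X_{\phi},X_{\psi}]=0$ that the $1$-homogeneous orbit function $\mathscr{W}$ is rational (this is where algebraicity of the flows comes from), then pin its homogeneity degree to $1$, and only then count the commuting fields --- none of which can be routed through Theorem \ref{mthm}.
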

The commutativity of vector fields is more thoroughly treated in Section \ref{lie-brackets}. The reference \cite{alkauskas-comm} contains explicit construction of such commuting algebraic projective flows. The $2$-dimensional flow is said to be of level $N$, if its orbits are given by $\mathscr{W}=\mathrm{const.}$ for a rational $N$-homogeneous function $\mathscr{W}$; see Definition \ref{ab-3-d} for a $3$-dimensional analogue. We note that the notion of ``level" in both theorems in this section is the same, and it means the degree of homogeneity of orbits. Indeed, the orbits of the canonical flow $\phi_{N}$ are curves $xy^{N-1}=\mathrm{const}.$
\begin{Example} The cubic algebraic flow in Note \ref{note-alg} is exactly of level $1$, since in this case $\mathscr{W}=x^{2}(x-y)^{-3}y^2$. Algebraic flow which commutes with it is constructed in \cite{alkauskas-comm}. It is given by cubic functions $a\bl b$, where implicitly $a$ is given by
\begin{eqnarray*}
(9ax-8x^2-3ay)x^2y^2+(3y+6y^2-x)a(ay-3ax+3x^2)^2=0,
\end{eqnarray*}
only the branch with $\lim\limits_{z\rightarrow 0}\frac{a(xz,yz)}{z}=x$ is chosen, and the function $b$ is given by
\begin{eqnarray*}
b=\frac{a^2(y-3x)}{x^2}+3a.
\end{eqnarray*}
\end{Example}
\section{Projective flows as non-linear PDE's}In this section we will generalize the result given by (Proposition 3, \cite{alkauskas}) and show how to derive a system of non-linear second order PDEs for a projective flow. 
\begin{prop}
\label{prop-trans}
Let $n\in\mathbb{N}$, and $\varpi^{(i)}$, $i=1,2,\ldots,n$, be arbitrary smooth\\ $2$-homogeneous functions, and assume $u:\mathbb{R}^{n}\mapsto\mathbb{R}$ is a smooth function which satisfies 
\begin{eqnarray}
\sum\limits_{i=1}^{n}u_{x_{i}}(\varpi^{(i)}-x_{i})=-u.
\label{n-dim}
\end{eqnarray}
Let us define $\mathcal{U}=\sum_{i=1}^{n}x_{i}u_{x_{i}}-u$. 
Then
\begin{eqnarray}
\sum\limits_{i=1}^{n}\mathcal{U}_{x_{i}}(\varpi^{(i)}-x_{i})=-2\mathcal{U}.
\label{n-dim2}
\end{eqnarray}
\end{prop}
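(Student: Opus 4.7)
The plan is to recast the hypothesis in operator form and then exploit homogeneity via a short commutator computation, which seems to me the cleanest way to avoid a notational grind.

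First, I would introduce the two first-order operators
\[
D=\sum_{i=1}^{n}x_{i}\partial_{x_{i}},\qquad L=\sum_{i=1}^{n}(\varpi^{(i)}-x_{i})\partial_{x_{i}},
\]
so that $D$ is the Euler operator and the hypothesis (\ref{n-dim}) reads $Lu=-u$. Since $\mathcal{U}=Du-u$, the goal (\ref{n-dim2}) becomes the operator identity $L\mathcal{U}=-2\mathcal{U}$ evaluated on $u$. I would now compute the commutator $[D,L]$ acting on an arbitrary smooth function. Using (i) $[D,\partial_{x_{i}}]=-\partial_{x_{i}}$, which is immediate from the product rule, (ii) $[D,x_{i}]=x_{i}$ because $x_{i}$ is $1$-homogeneous, and (iii) $[D,\varpi^{(i)}]=2\varpi^{(i)}$ because $\varpi^{(i)}$ is $2$-homogeneous (Euler's identity), one expands
\[
[D,(\varpi^{(i)}-x_{i})\partial_{x_{i}}]=[D,\varpi^{(i)}-x_{i}]\partial_{x_{i}}+(\varpi^{(i)}-x_{i})[D,\partial_{x_{i}}],
\]
and the two terms collapse to $\varpi^{(i)}\partial_{x_{i}}$. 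Summing over $i$ gives the key identity
\[
[D,L]=\sum_{i=1}^{n}\varpi^{(i)}\partial_{x_{i}}.
\]

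Next I would apply this to $u$ and substitute the hypothesis. Evidently $[D,L]u=\sum_{i}\varpi^{(i)}u_{x_{i}}$, and rewriting the hypothesis as $\sum_{i}\varpi^{(i)}u_{x_{i}}=Du-u=\mathcal{U}$ yields $[D,L]u=\mathcal{U}$, equivalently $LDu=DLu-\mathcal{U}=-Du-\mathcal{U}$. Finally,
\[
L\mathcal{U}=L(Du-u)=LDu-Lu=(-Du-\mathcal{U})-(-u)=-(Du-u)-\mathcal{U}=-2\mathcal{U},
\]
which is exactly (\ref{n-dim2}).

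The only potentially delicate point is bookkeeping in the commutator $[D,L]$, in particular making sure that the $x_{i}\partial_{x_{i}}$ contribution (which individually is \emph{not} in $L$ because of the linear shift $-x_{i}$) and the contribution from $[D,\partial_{x_{i}}]$ cancel correctly to leave just $\varpi^{(i)}\partial_{x_{i}}$. This is really where $2$-homogeneity of the $\varpi^{(i)}$ enters and is the one place where sign errors are easy; everything else is formal manipulation. Note that smoothness of $u$ is used only to justify that $D$ and $L$ are well-defined first-order operators on $u$ and on $Du$, so no extra regularity hypothesis is needed beyond what Proposition \ref{prop-trans} already assumes.
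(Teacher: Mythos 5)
Your proof is correct: every step of the commutator computation checks out, and the final operator algebra $L\mathcal{U}=LDu-Lu=DLu-[D,L]u-Lu=-Du-\mathcal{U}+u=-2\mathcal{U}$ is exactly the claimed identity. Compared with the paper, the underlying computation is the same — the paper rewrites the hypothesis as $\sum_{i}u_{x_{i}}\varpi^{(i)}=\mathcal{U}$, differentiates in each $x_{j}$, multiplies by $x_{j}$ and sums, which is precisely applying the Euler operator $D$ to that equation and invoking Euler's identity $\sum_{j}x_{j}\varpi^{(i)}_{x_{j}}=2\varpi^{(i)}$ together with $\mathcal{U}_{x_{i}}=\sum_{j}x_{j}u_{x_{i}x_{j}}$. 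Your version packages those two facts once and for all into the commutator identity $[D,L]=\sum_{i}\varpi^{(i)}\partial_{x_{i}}$, after which no second derivatives ever appear explicitly; this is cleaner, makes the role of $2$-homogeneity completely transparent (it is exactly what makes the $x_{i}\partial_{x_{i}}$ terms cancel in $[D,L]$), and generalizes immediately: if the $\varpi^{(i)}$ were $k$-homogeneous the same computation gives $[D,L]=(k-1)\sum_{i}\varpi^{(i)}\partial_{x_{i}}+(k-2)\sum_{i}x_{i}\partial_{x_{i}}$, explaining why degree $2$ is the clean case. The paper's direct computation buys nothing you lose, except that it displays the intermediate identity $\sum_{i}\mathcal{U}_{x_{i}}\varpi^{(i)}+2\mathcal{U}=\sum_{i}x_{i}\mathcal{U}_{x_{i}}$ explicitly, which the author reuses implicitly when iterating the transform. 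Either write-up is acceptable.
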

So, the function $\mathcal{U}$, which is a transform of $u$, satisfies the same linear PDE as $u$, only with an additional factor ``2".
\begin{proof}
Indeed, let us write the equation for $u$ as 
\begin{eqnarray*}
\sum\limits_{i=1}^{n}u_{x_{i}}\varpi^{(i)}=\mathcal{U}.
\end{eqnarray*}
Now, differentiate this with respect to $x_{j}$, $j=1,\ldots,n$. We obtain
\begin{eqnarray*}
\sum\limits_{i=1}^{n}u_{x_{i}x_{j}}\varpi^{(i)}+\sum\limits_{i=1}^{n}u_{x_{i}}
\varpi^{(i)}_{x_{j}}
=\mathcal{U}_{x_{j}},\quad j=1,2,\ldots,n.
\end{eqnarray*}
Now, multiply this equality by $x_{j}$, and add over $j$. Since 
$\sum_{j=1}^{n}x_{j}\varpi^{(i)}_{x_{j}}=2\varpi^{(i)}$ because of $2-$homogeneity (Euler's identity), and also 
$\mathcal{U}_{x_{i}}=\sum_{j=1}^{n}x_{j}u_{x_{i}x_{j}}$, we obtain
\begin{eqnarray*}
\sum\limits_{i=1}^{n}\mathcal{U}_{x_{i}}\varpi^{(i)}+\sum\limits_{i=1}^{n}2u_{x_{i}}\varpi^{(i)}= \sum\limits_{i=1}^{n}x_{i}\mathcal{U}_{x_{i}}.
\end{eqnarray*}
This can be rewritten as 
\begin{eqnarray*}
\sum\limits_{i=1}^{n}\mathcal{U}_{x_{i}}(\varpi^{(i)}-x_{i})=-2\sum\limits_{i=1}^{n}u_{x_{i}}\varpi^{(i)}=-2\mathcal{U},
\end{eqnarray*}
because of (\ref{n-dim}), and this is the needed identity.
\end{proof}
\begin{Note}
\label{legendre}
 Let $n=3$, and $u=u(x,y,z)$. Consider the transformation 
\begin{eqnarray*}
\mathcal{X}=u_{x},\quad\mathcal{Y}=u_{y},\quad\mathcal{Z}=u_{z},\quad
\mathcal{U}=xu_{x}+yu_{y}+zu_{z}-u.
\end{eqnarray*}
So, $\mathcal{U}$ is as above for $n=3$. Suppose, $u$ satisfies a certain PDE. We can rewrite this PDE for a new function $\mathcal{U}$ as a function in three new variables $\mathcal{X}$, $\mathcal{Y}$ and $\mathcal{Z}$. Suppose that
\begin{eqnarray*}
\frac{D(\mathcal{X},\mathcal{Y},\mathcal{Z})}{D(x,y,z)}=\begin{vmatrix}
u_{xx} & u_{xy} & u_{xz}\\
u_{xy} & u_{yy} & u_{zy}\\
u_{xz} & u_{yz} & u_{zz} 
\end{vmatrix}\neq 0.
\end{eqnarray*} 
Then the inverse formulas, as can be checked, are exactly reciprocal; see (\cite{ficht1}, Chapter VI, Section 4.) That is,
\begin{eqnarray*}
x=\mathcal{U}_{\mathcal{X}},\quad y=\mathcal{U}_{\mathcal{Y}},\quad 
z=\mathcal{U}_{\mathcal{Z}},\quad 
u=\mathcal{X}\mathcal{U}_{\mathcal{X}}+\mathcal{Y}\mathcal{U}_{\mathcal{Y}}
+\mathcal{Z}\mathcal{U}_{\mathcal{Z}}-\mathcal{U}. 
\end{eqnarray*} 
This transformation is known as \emph{Legendre's transformation}, and is just a glimpse into a classical, wide and profound subject of \emph{change of variables} in a differential calculus (\cite{ficht1}, Chapter VI, Section 4). \\

For example, let $u$ satisfies (\ref{pde}). In terms of new variables, this rewrites as a non-linear PDE
\begin{eqnarray*}
\mathcal{X}\varpi\Big{(}\mathcal{U}_{\mathcal{X}},\mathcal{U}_{\mathcal{Y}},\mathcal{U}_{\mathcal{Z}}\Big{)}
+\mathcal{Y}\varrho\Big{(}\mathcal{U}_{\mathcal{X}},\mathcal{U}_{\mathcal{Y}},\mathcal{U}_{\mathcal{Z}}\Big{)}
+\mathcal{Z}\sigma\Big{(}\mathcal{U}_{\mathcal{X}},\mathcal{U}_{\mathcal{Y}},\mathcal{U}_{\mathcal{Z}}\Big{)}=
\mathcal{U}.
\end{eqnarray*}
In the simplest $1$-dimensional case, let $\varpi=-x^2$, $\varrho=\sigma=0$. Then $u=\frac{x}{x+1}$. Calculating directly, we get that
\begin{eqnarray*}
\mathcal{U}(\mathcal{X})=-\Big{(}1-\sqrt{\mathcal{X}}\Big{)}^{2},
\end{eqnarray*}
and this indeed satisfies $-\mathcal{X}\mathcal{U}^{2}_{\mathcal{X}}=\mathcal{U}$. 
\end{Note}
To continue from Proposition \ref{prop-trans}, Cramer's rule and formulas (\ref{n-dim}) give values for $(x_{i}-\varpi^{(i)})$, and substituting this into (\ref{n-dim2}) gives the needed system of PDEs which does not explicitly involve a vector field, as in (\ref{pde}). In a $3$-dimensional case, which is the topic of the most of the current paper, this reads as follows.
\begin{prop}
Let $\phi(\m{x})=u(x,y,z)\bl v(x,y,z)\bl w(x,y,z)$ be a $3$-dimensional smooth projective flow. Then it satisfies the boundary condition (\ref{boundr}), and also the non-linear system of second order PDEs as follows. Let 
\begin{eqnarray*}
D=\begin{vmatrix}
u_{x} & u_{y} & u_{z}\\
v_{x} & v_{y} & v_{z}\\
w_{x} & w_{y} & w_{z} 
\end{vmatrix},
\end{eqnarray*}
\begin{eqnarray*}
D^{(x)}=\begin{vmatrix}
u & u_{y} & u_{z}\\
v & v_{y} & v_{z}\\
w & w_{y} & w_{z} 
\end{vmatrix},
\quad 
D^{(y)}=\begin{vmatrix}
u_{x} & u & u_{z}\\
v_{x} & v & v_{z}\\
w_{x} & w & w_{z} 
\end{vmatrix},
\quad
D^{(z)}=\begin{vmatrix}
u_{x} & u_{y} & u\\
v_{x} & v_{y} & v\\
w_{x} & w_{y} & w 
\end{vmatrix}.
\end{eqnarray*}
Then
\begin{eqnarray*}
& &(xu_{xx}+yu_{xy}+zu_{xz})D^{(x)}+
(xu_{xy}+yu_{yy}+zu_{yz})D^{(y)}\\
&+&(xu_{xz}+yu_{yz}+zu_{zz})D^{(z)}=2(xu_{x}+yu_{y}+zu_{z}-u)D.
\end{eqnarray*}
The same holds for $v$ and $w$ instead of $u$ (the multipliers $D,D^{(x)},D^{(y)},D^{(z)}$ remain intact).
\label{prop-2}
\end{prop}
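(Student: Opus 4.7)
The plan is to eliminate the vector field $\varpi\bl\varrho\bl\sigma$ from (\ref{pde}) by viewing it, together with its analogues for $v$ and $w$, as a linear system, and then to feed the result into the transform identity of Proposition \ref{prop-trans}. Concretely, the three PDEs (\ref{pde}) for $u$, $v$, $w$ read
\begin{eqnarray*}
\begin{pmatrix} u_{x} & u_{y} & u_{z} \\ v_{x} & v_{y} & v_{z} \\ w_{x} & w_{y} & w_{z} \end{pmatrix}
\begin{pmatrix} \varpi - x \\ \varrho - y \\ \sigma - z \end{pmatrix}
= -\begin{pmatrix} u \\ v \\ w \end{pmatrix},
\end{eqnarray*}
so, assuming $D\neq 0$, Cramer's rule yields
\begin{eqnarray*}
\varpi - x = -\frac{D^{(x)}}{D},\qquad \varrho - y = -\frac{D^{(y)}}{D},\qquad \sigma - z = -\frac{D^{(z)}}{D}.
\end{eqnarray*}

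Next, I would apply Proposition \ref{prop-trans} for $n=3$ to the function $u$, obtaining
\begin{eqnarray*}
\mathcal{U}_{x}(\varpi-x)+\mathcal{U}_{y}(\varrho-y)+\mathcal{U}_{z}(\sigma-z)=-2\mathcal{U},
\end{eqnarray*}
with $\mathcal{U}=xu_{x}+yu_{y}+zu_{z}-u$. A short direct differentiation (the $u_{x}$ from the product rule cancels the $-u_{x}$ from $-u$) gives
\begin{eqnarray*}
\mathcal{U}_{x}=xu_{xx}+yu_{xy}+zu_{xz},\quad \mathcal{U}_{y}=xu_{xy}+yu_{yy}+zu_{yz},\quad \mathcal{U}_{z}=xu_{xz}+yu_{yz}+zu_{zz}.
\end{eqnarray*}

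Now I would substitute the Cramer expressions for $\varpi-x$, $\varrho-y$, $\sigma-z$ into the identity for $\mathcal{U}$ and multiply through by $-D$; this produces the claimed identity for $u$ exactly, because $2\mathcal{U}D=2(xu_{x}+yu_{y}+zu_{z}-u)D$ on the right. Since $v$ and $w$ satisfy the same linear PDE (\ref{pde}) as $u$, Proposition \ref{prop-trans} applies to them verbatim and the same substitution, with the same Jacobian matrix on the left-hand side of the linear system, produces the analogous identities with $v$ or $w$ in place of $u$ and the determinants $D, D^{(x)}, D^{(y)}, D^{(z)}$ left unchanged, as asserted.

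The only subtle point I anticipate is the non-degeneracy assumption $D\neq 0$ required to invert the Jacobian; this is not an obstacle, because $D$ tends to $1$ as $t\to 0$ along $\phi^{t}$ by the boundary condition (\ref{boundr}), so the identity holds on an open set containing the diagonal and then extends by analyticity to wherever the flow is smooth. Otherwise, the argument is a clean algebraic manipulation: the role of Proposition \ref{prop-trans} is to replace the first-order linear PDE (which knows the vector field explicitly) by a second-order relation in which the vector field occurs only through the combinations $\varpi-x,\varrho-y,\sigma-z$, which are precisely the quantities expressible via $D$ and $D^{(x)},D^{(y)},D^{(z)}$.
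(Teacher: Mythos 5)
Your proposal is correct and follows exactly the route the paper indicates: Cramer's rule applied to the three copies of (\ref{pde}) for $u,v,w$ gives $\varpi-x=-D^{(x)}/D$, $\varrho-y=-D^{(y)}/D$, $\sigma-z=-D^{(z)}/D$, and substituting these into the identity of Proposition \ref{prop-trans} for $\mathcal{U}=xu_{x}+yu_{y}+zu_{z}-u$ yields the claimed second-order relation. Your sign bookkeeping and the computation of $\mathcal{U}_{x},\mathcal{U}_{y},\mathcal{U}_{z}$ are accurate, and the remark on the non-degeneracy $D\neq 0$ near $t=0$ is a sensible addition that the paper leaves implicit.
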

This result generalizes and is compatible with (Proposition 3, \cite{alkauskas}), where it is given in a $2$-dimensional case. Note that the equation of Proposition \ref{prop-2} can be written as\scriptsize 
\begin{eqnarray*}
\begin{vmatrix}
2(xu_{x}+yu_{y}+zu_{z}-u)&xu_{xx}+yu_{xy}+zu_{xz} &xu_{xy}+yu_{yy}+zu_{yz} &xu_{xz}+yu_{yz}+zu_{zz}\\
u& u_{x} & u_{y} & u_{z}\\
v& v_{x} & v_{y} & v_{z}\\
w& w_{x} & w_{y} & w_{z} 
\end{vmatrix}=0.
\end{eqnarray*}\normalsize
\section{General setting for $3-$dimensional flows}
\label{sub-1.4}
The main weight of this series of 4 works falls on explicit integration of corresponding vector fields. In this section we will briefly outline the method to tackle the PDE (\ref{pde}) in dimension $3$. The dimension $2$ case is a bit simpler, and the method itself was developed in \cite{alkauskas-un}. Note that it is analogous to the method of integrating P\'{o}lya urns via an autonomous system of ODEs, the method which was developed in \cite{fdp}. A close relative of the PDE in question was also treated in the framework of P\'{o}lya urns \cite{flajolet-g-p}. What is to come next is but a brief sketch, and all the necessary details will become clear in special cases in Sections \ref{S4} and \ref{octahedral}.  \\
  
Let $\varpi\bl\varrho\bl\sigma$ be a vector field defined by a triple of $2$-homogeneous rational functions, giving rise to the projective flow $u\bl v\bl w$. With the PDE (\ref{pde}) and the boundary condition (\ref{init}) we associate an autonomous (where an independent variable $t$ is not present explicitly) and homogeneous system of ODEs:
\begin{eqnarray}
\left\{\begin{array}{l}
p'=-\varpi(p,q,r),\\
q'=-\varrho(p,q,r),\\
r'=-\sigma(p,q,r).
\end{array}
\right.
\label{sys-in}
\end{eqnarray}
This is an example of a \emph{quadratic differential equation}. We refer to \cite{hopkins1,hopkins2,kin,maier,mencinger} for more information on algebraic and dynamic side of this system.\\

Let $u(x,y,z)$ be the solution to the PDE (\ref{pde}) with the boundary condition as in the first equality of (\ref{boundr}). Similarly as in \cite{alkauskas-un}, we find that the function $u$ satisfies
\begin{eqnarray*}
u\Big{(}p(s)\v,q(s)\v,r(s)\v\Big{)}=p(s-\v)\v.
\end{eqnarray*}
In order this expression to yield the full closed-form formula for $u(x,y,z)$, we need to involve one more variable (the space where the function $u$ is defined is $\mathbb{R}^{3}$), since explicitly the above equation contains only $s$ and $\v$. We act in the following way. Suppose that the system (\ref{sys-in}) possesses two independent first integrals $\mathscr{W}(p,q,r)$ and $\mathscr{V}(p,q,r)$; that is, as a function in $(x,y,z)$, $\mathscr{W}$ satisfies
\begin{eqnarray}
\mathscr{W}_{x}\varpi+\mathscr{W}_{y}\varrho+\mathscr{W}_{z}\sigma=0,
\label{ffirst-int}
\end{eqnarray}
analogously for $\mathscr{V}$. 
Therefore, to fully solve the flow in explicit terms, together with the system (\ref{sys-in}), we require
\begin{eqnarray*}
\mathscr{W}(p,q,r)=1,\quad \mathscr{V}(p,q,r)=\xi,
\end{eqnarray*}
where $\xi$ is arbitrary, but fixed. Note that we can always achieve that $\mathscr{W}$ and $\mathscr{V}$ are homogeneous functions.  This gives $3$ variables which describe a $3$-variable function.
\begin{defin}
\label{ab-3-d}
Suppose that the differential system (\ref{sys-in}) possesses two first integrals $\mathscr{W}$ and $\mathscr{V}$ which are rational functions of homogeneity degrees $a$ and $b$, respectively. We say then that $\phi=u\bl v\bl w$ is \emph{an abelian flow} of level $(a,b)$. 
\end{defin}
In other words, abelian flows are projective flows whose orbits are algebraic space curves. Note, however, that differently form a $2$-dimensional case, the level is not an invariant under birational transformation. As before, we are dealing only with $1$-homogeneous birational transformations ($1-$BIR), since this is compatible with projective flows: if $\ell$ is such a transformation and $\phi(\m{x})$ is a projective flow, then so is $\ell^{-1}\circ\phi\circ\ell(\m{x})$ \cite{alkauskas}. To illustrate the non-invariance of the level, consider the following rational flow, as given in \cite{alkauskas}:
\begin{eqnarray*}
\psi_{N,M}(x,y,z)=x(z+1)^{N-1}\bl y(z+1)^{M-1}\bl\frac{z}{z+1},\quad N,M\in\mathbb{N}.
\end{eqnarray*}
The vector field of this flow is $\varpi\bl\varrho\bl\sigma=(N-1)xz\bl(M-1)yz\bl(-z^2)$, and the first integrals are $\mathscr{W}=xz^{N-1}$, $\mathscr{V}=yz^{M-1}$. So, the level of this abelian flow is $(N,M)$. However, as shown in \cite{alkauskas}, the flow $\psi_{N,M}$ is $1-$BIR equivalent to the flow $\psi_{\mathrm{gcd}(N,M),0}$. Thus, we need to know the classification of rational projective flows up to conjugation with $1-$BIR, and this is yet unsolved. Note, however, that in \cite{alkauskas-rat2} the method of constructing rational and algebraic projective flows is presented, inductively on a dimension. It is plausible that this method accounts for all rational or algebraic flows. Thus, the complete classification is much more feasible than it was highlighted in the end of \cite{alkauskas}. 

\section{Arithmetic and integrability of $3$-dimensional flows}
\label{jou}
In Figure \ref{diagram} we present an example of integral flow (meaning whose vector field is a triple of integral quadratic forms), which is non-abelian. In fact, this example belongs to Jouanolou \cite{jouanolou}; see also \cite{fdp, maciejewski, ollagnier,zolandek}. Jouanolou theorem claims that the system of ODEs
\begin{eqnarray*}
\left\{\begin{array}{l}
p'=-q^2,\\
q'=-r^2,\\
r'=-p^2,
\end{array}
\right.
\end{eqnarray*}
does not have a rational first integral. The factor $``-1"$, trivially, does not alter a conclusion; just consider $(p(-t),q(-t),r(-t))$. \emph{A posteriori}, it does not have two independent rational first integrals. So the projective flow with the vector field $y^2\bl z^2\bl x^2$ is non-abelian integral flow. Properly, Jouanolou theorem claims a bit more (see \cite{ollagnier} for details): for every polynomial $P\in\mathbb{C}[x,y,z]$, the equation
\begin{eqnarray}
F_{x}\cdot y^2+F_{y}\cdot z^2+F_{z}\cdot x^2=PF
\label{darboux}
\end{eqnarray}  
does not admit a non-trivial solution $F\in\mathbb{C}[x,y,z]$.  The ideas stem from the works of Jean-Gaston Darboux in 1878.  However, this scenario does not seem to occur in a superflow setting; see Problem \ref{prob-two} in the end of Section \ref{sec1.6} and the remark preceding it. See also a closely related paper \cite{pereira}, where the author shows how to compute inflection and higher order inflection points for holomorphic vector fields on the complex projective plane, and also  presents bounds for the number of first integrals on families of vector fields. Also, the paper \cite{ribeiro} asks for algorithms to check whether holomorphic foliations of the complex projective plane have an algebraic solution. It gives an alternative to the method of Jouanolou to construct foliations without such. The paper \cite{moulin} is closely related to our topic. It investigates polynomial homogeneous $3-$dimensional vector fields and gives necessary and sufficient conditions for the existence of $0$-homogeneous first integrals.   
\small
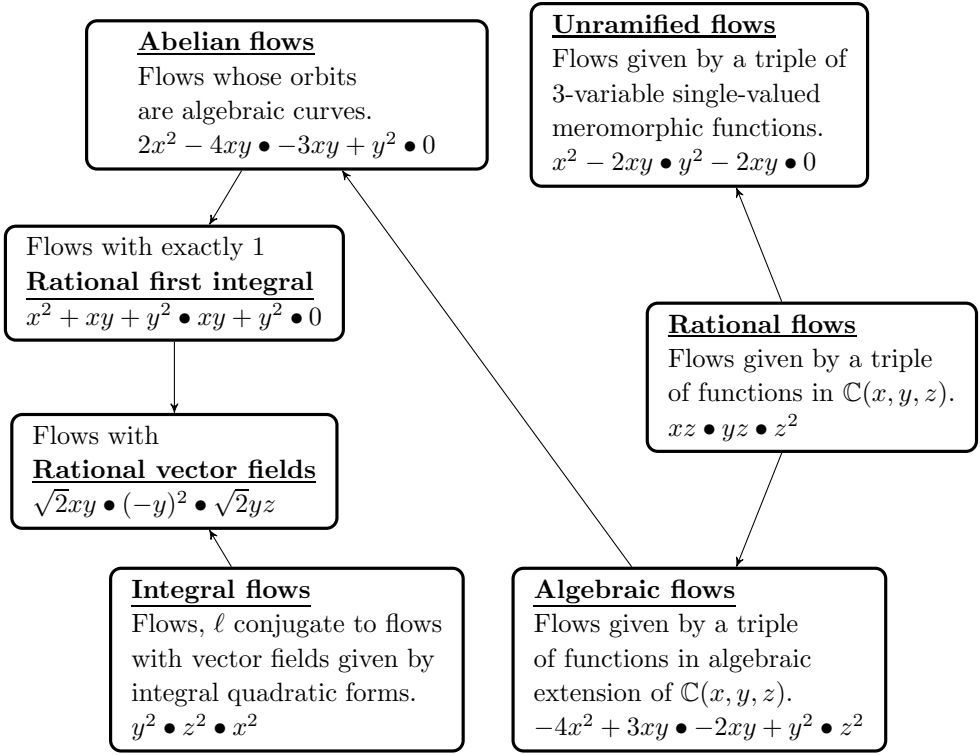
\begin{figure}
\begin{tikzpicture}[->,>=stealth']

 \node[state] (RVF) 
 {\begin{tabular}{l}
 Flows with exactly $1$\\
\underline{\textbf{Rational first integral}}\\
$x^2+xy+y^2\bl xy+y^2 \bl 0$ 
 \end{tabular}}; 
 
 \node[state,
  yshift=2.5cm,
  xshift=1.5cm,
  anchor=center,
  text width=5.2cm] (ABF) 
 {
 \begin{tabular}{l}
  \underline{\textbf{Abelian flows}}\\
  Flows whose orbits\\
  are algebraic curves.\\
  $2x^2-4xy\bl-3xy+y^2\bl 0$\\

 \end{tabular}
 };
 
 \node[state,
 yshift=-2.5cm,
 xshift=0cm,] (RVF2) 
 {\begin{tabular}{l}
 Flows with\\
\underline{\textbf{Rational vector fields}}\\ 
 $\sqrt{2}xy\bl(-y)^2\bl\sqrt{2}yz$ 
 \end{tabular}};
 
 \node[state,
  xshift=7.0cm,
  yshift=2.5cm,
  node distance=5cm,
  anchor=center] (UF) 
 {
 \begin{tabular}{l}
 \underline{\textbf{Unramified flows}}\\
 Flows given by a triple of\\
 $3$-variable single-valued\\
 meromorphic functions.\\
 $x^2-2xy\bl y^2-2xy\bl 0 $  
 \end{tabular}
 };
 
\node[state,
  yshift=-1.25cm,
  xshift=8.5cm,
  node distance=2cm,
  anchor=center] (RF) 
 {
 \begin{tabular}{l}
 \underline{\textbf{Rational flows}}\\
 Flows given by a triple\\
 of functions in $\mathbb{C}(x,y,z)$.\\
 $xz\bl yz\bl z^2$ 
 \end{tabular}
 }; 
  
 \node[state,
  xshift=7.0cm,
  yshift=-5.0cm,
  anchor=center] (ALF) 
 {
 \begin{tabular}{l}
  \underline{\textbf{Algebraic flows}}\\
 Flows given by a triple\\
 of functions in algebraic\\ extension of $\mathbb{C}(x,y,z)$.\\
 $-4x^2+3xy\bl-2xy+y^{2}\bl z^2$
 \end{tabular}
 };

\node[state,
  xshift=1.5cm,
  yshift=-5.0cm,
  anchor=center] (OK) 
 {
 \begin{tabular}{l}
  \underline{\textbf{Integral flows}}\\
 Flows, $\ell$ conjugate to flows\\
 with vector fields given by\\
integral quadratic forms.\\
$y^2\bl z^2\bl x^2$
 \end{tabular}
 };

 \path  (ABF)	edge  node[anchor=south,above]{} (RVF)
(RF)    edge                 (UF)         
(RF)    edge                 (ALF)                             
(ALF)  	edge                (ABF)
(RVF)  	edge                 (RVF2)
(OK)  	edge                 (RVF2);
\end{tikzpicture}
\caption{Diagram of arithmetic classification of projective $3-$dimensional flows (vector fields are given). The arrow stands for the proper inclusion - the facts which we do know. The intersection of unramified and algebraic flows is exactly the set of rational flows by an obvious reason. Everywhere we assume that a vector field is a triple of rational functions, necessarily $2-$homogeneous. The algebraic flow given is an extension (a direct sum with a flow $\frac{z}{1-z}$) of a flow taken from (\cite{alkauskas-ab}, Proposition 2). An abelian flow is also an extension made from (\cite{alkauskas-ab}, Proposition 4). It has two first integrals: $z$ and $x(x-y)^2y^2$. (\cite{alkauskas-ab}, Proposition 6) produces a flow with exactly $1$ rational first integral. Two first integrals of this flow are given by $\exp(-\frac{x}{y}-\frac{x^2}{2y^2})y$ and $z$, and only one of them is rational. The example of a flow with a vector field $y^2\bl z^2\bl z^2$ belongs to Jouanolou; it does not have a single rational first integral (see Section \ref{jou}). Finally, the flow with a vector field $\sqrt{2}xy\bl(-y)^2\bl\sqrt{2}yz$ can be explicitly integrated as $x(y+1)^{\sqrt{2}}\bl\frac{y}{y+1}\bl z(y+1)^{\sqrt{2}}$. It has no finite arithmetic structure and has an infinite monodromy; thus, not of interest in the framework of our approach. Note that, differently from a $2-$dimensional case, we know very little. 1) Is unramified flow necessarily an abelian flow? 2) Is a rational flow an integral flow? 3) Is unramified flow an integral flow? 
}
\label{diagram}
\end{figure}
\normalsize
\section{Superflows}
\label{sec1.6}
Now we arrive to the first main definition of this paper. The notion of projective superflows was introduced in \cite{alkauskas-un}.
\begin{defin}
\label{defin-proj}
Let $n\in\mathbb{N}$, $n\geq 2$, and $\Gamma\hookrightarrow{\rm GL}(n,\mathbb{R})$ be an exact representation of a finite group, and we identify $\Gamma$ with the image. We call the flow $\phi(\m{x})$ \emph{the projective $\Gamma$-superflow}, if 
\begin{itemize}
\item[i)]there exists a vector field $\mathbf{Q}(\m{x})=Q_{1}\bl\cdots\bl Q_{n}\neq 0\bl\cdots\bl 0$ whose components are $2$-homogeneous rational functions and which is exactly the vector field of the flow $\phi(\m{x})$, such that
\begin{eqnarray}
\gamma^{-1}\circ\mathbf{Q}\circ\gamma(\mathbf{x})=
\mathbf{Q}(\mathbf{x})\label{kappa}
\end{eqnarray}
 is satisfied for all $\gamma\in\Gamma$, and
\item[ii)] every other vector field $\mathbf{Q}'$ which satisfies (\ref{kappa}) for all $\gamma\in\Gamma$ is either a scalar multiple of $\mathbf{Q}$, or  its degree of a common denominator is higher than that of $\mathbf{Q}$. 
\end{itemize} 
The superflow is said to be \emph{reducible or irreducible}, if the representation  $\Gamma\hookrightarrow{\rm GL}(n,\mathbb{R})$ (considered as a complex representation) is reducible or, respectively, irreducible.
\end{defin} 
Thus, if $\phi$ is a superflow for a group $\Gamma$,  then it is uniquely defined up to conjugation with a linear map $\m{x}\mapsto t\m{x}$. This corresponds to multiplying all components of $\mathbf{Q}$ by $t$.
There is a slight abuse of notation in (\ref{kappa}). If $\gamma=(a_{i,j})_{i,j=1}^{n}$ and $\m{x}=x_{1}\bl x_{2}\bl\cdots\bl x_{n}$, by $\gamma(\m{x})$ or $\gamma\circ(\m{x})$ we mean $\sum_{j=1}^{n}a_{1j}x_{j}\bl\sum_{j=1}^{n}a_{2j}x_{j}\bl\cdots\bl \sum_{j=1}^{n}a_{nj}x_{j}$. In other words, if we consider a vector field $\mathbf{Q}$ as a map $\mathbb{R}^{n}\mapsto\mathbb{R}^{n}$, then $\gamma^{-1}\circ\mathbf{Q}\circ\gamma$ is just a composition of maps. Even more enlighteningly, the vector field of the superflow looks exactly the same in new coordinates corresponding to any linear change from the matrix group $\Gamma$.\\

Another strong motivation for considering $2$-homogeneous vector fields, apart from the arguments presented in (\cite{alkauskas-rat2}, Section 4), comes from the following observation. If a general rational vector field is invariant under conjugating with all matrices $\gamma$ from the group $\Gamma$, then each homogeneous component is invariant, too. However, suppose the representation is irreducible, and $q$ is a linear map. Then if $\gamma^{-1}\circ q\circ\gamma=q$ for any $\gamma$, \emph{Schur's lemma} \cite{kostrikin} tells that $\gamma$ is a multiple of the unit matrix. Thus, a vector field $x_{1}\bl x_{2}\bl\cdots\bl x_{n}$, and the flow it generates (not projective, of course), namely,
\begin{eqnarray*}
 F(\m{x};t)=e^{t}x_{1}\bl e^{t}x_{2}\bl\cdots\bl e^{t}x_{n},
\end{eqnarray*} 
  has the needed symmetry, and it is the unique, up to scaling, vector field without denominators with the needed property. Hence, the first interesting case appears only for $2$-homogeneous vector fields. 
\begin{Poly} We also introduce polynomial superflows as follows.
\begin{defin}
\label{defin-poly}
\emph{Polynomial $\Gamma$-superflows} are defined almost identically, with the difference that components of their vector fields should be homegeneous polynomials of the same even degree, and any other polynomial homogeneous (of even degree) vector field which has the given $\Gamma$-symmetry, is either a scalar multiple of the first, or its degree of homogenuity is higher.
\end{defin}
Since vector field is homogeneous, all the remarks from this and previous section, for example, about the orbits and arithmetic, apply to the polynomial superflows almost \emph{verbatim}.\\

The choice of even degree is the correct one. First, as is implied by the example referring to Shur's lemma, we cannot allow first degree vector field, since this will produce only a flow $F(\m{x},t)=\m{x}e^{t}$ whose group of symmetries is the whole $\mathrm{GL}(n,\mathbb{R})$. Suppose we restrict to odd degrees $\geq 3$. But then even in the simplest examples of groups polynomial superflows do not exist. For example, the following $1$-parameter family of vector fields
\begin{eqnarray}
x^{3}+ax(y^2+z^2)\bl y^3+ay(z^2+x^2)\bl z^{3}+az(x^2+y^2)
\label{family}
\end{eqnarray}  
has the full octahedral symmetry of order $48$, see Section \ref{octahedral}. The existence of a $1$-parameter family is not compatible with the definition of the superflow. We could additionally require solenoidality of a vector field, which is satisfied only for $a=-\frac{3}{2}$. Moreover, none of these flows has $x^2+y^2+z^2$ as its first integral. However, in even degree polynomial superflow case solenoidality seems always to be satisfied, and it is a subtle consequence of the symmetry. Hence our choice of even degree must be fundamentally more important. Also, the evenness of the vector fields shows that corresponding surfaces (See Note \ref{note-steiner} and Section \ref{sec5.2}) are one-sided. 
\end{Poly}

We can similarly define superflows over $\mathbb{C}$; this is the topic of the third part of this work \cite{alkauskas-super3}. Moreover, in real case we can talk about $\Gamma-$superflow$_{\pm}$, or $\Gamma$-superflow$_{+}$, depending whether $\det:\Gamma\mapsto\{\pm 1\}$ is surjective or not. Next, note that for the superflow $\Lambda$, defined in Section \ref{sec-2d}, the series of superflows $\phi_{S_{N+1}}$, $N\geq 2$, defined in Section \ref{symm-N} (the superflow $\phi_{\widehat{\mathbb{T}}}$ in Section \ref{S4} is an example), and the superflow $\phi_{\mathbb{O}}$, all vector fields are solenoidal: $\mathrm{div}\,\mathbf{Q}=0$. This also holds for the icosahedral, $4$-antiprismal, and $3$-prismal superflows in \cite{alkauskas-super2}. Next, in the octahedral example, the function $\mathscr{W}=x^2+y^2+z^2$ is the first integral of the vector field (so is in the icosahedral case). This quadratic form is, obviously, also an invariant of the group $O(3)$. On the other hand, the vector field for the dihedral superflow in Section \ref{dih} is solenoidal only for $d=1$. As one further remark, we  are dealing  with linear groups only, since isomorphic groups - the full tetrahedral group $\widehat{\mathbb{T}}$ in Section \ref{S4} and the octahedral group $\mathbb{O}$ in Section \ref{octahedral} - produce very different superflows. For example, the orbits of these two superflows are unbounded and bounded curves, respectively. Therefore, we refine the problem which was formulated in \cite{alkauskas-un}, as follows.
\begin{prob}
\label{prob-vienas}
Let $n\in\mathbb{N}$. Describe all groups $\Gamma$ inside ${\rm GL}(n,\mathbb{R})$ or ${\rm GL}(n,\mathbb{C})$ for which there exists the (projective or polynomial) $\Gamma$-superflow. Describe the properties of such a flow $\phi(\m{x})=\phi_{\Gamma}(\m{x})$ algebraically and analytically. If there exists the $\Gamma$-superflow with the vector field $\mathbf{Q}$,
\begin{itemize}
\item[i) ]what algebraic property of $\Gamma$ forces the vector field to be solenoidal? That is,
\begin{eqnarray*}
\sum\limits_{i=1}^{n}\frac{\partial}{\partial x_{i}}\,Q_{i}=0.
\end{eqnarray*}
\item[ii)]If $\Gamma$ is inside $U(n)$ or $O(n)$, when $\phi$ is a flow on spheres $\sum_{i=1}^{n}x_{i}^2=\xi$? That is,
\begin{eqnarray*}
\sum\limits_{i=1}^{n}x_{i}\cdot Q_{i}=0.
\end{eqnarray*}
\item[iii)]When it is both?
\end{itemize}
\end{prob}
The superflow which satisify ii) can be called \emph{spherical (polynomial or projective) superflow}. For such superflows we have the following: 
\begin{itemize}
\item[1)] The notion of polynomial and projective superflows essentially coincide;
\item[2)]For such superflows there appears to always hold the triple reduction of the corresponding differential system - see Theorems \ref{thm4}, \ref{thm-fin}, the main Theorem in \cite{alkauskas-super2}, and Problem 1 in \cite{alkauskas-super2} where we significantly strengthen part ii); therefore, the flow itself can be described, though in a very complicated way, in terms of a single abelian function;
\item[3)]For such superflows one can define three canonical real numbers (one of them is algebraic and one is from the extended ring of periods \cite{konzag}) - spherical constants for the superflow $\phi_{\Gamma}$, wired canonically into the group $\Gamma$ itself (see Section \ref{funda-period}); 
\item[4)]Such groups, as far as all examples show, are not just inside $O(n)$, but in fact inside $SO(n)$;
\item[5)]If $\Gamma$ is the orientation-preserving symmetry group of a certain polytope, then this polytope has an origin as a center of symmetry (for example, a tetrahedron does not have one);
\item[6)]The first integrals of the corresponding differential system turn out to be also invariants of the group - see Section \ref{sub-inv} and Problem \ref{prob-trys} (this is not the case for the tetrahedral superflow in Section \ref{S4}).
\end{itemize}
We will make all these statements more precise in \cite{alkauskas-super2}. A related question of investigation of homogeneous polynomial vector fields of degree $2$ on the $2$-sphere is treated in \cite{llibre}. 
 
Let $\mathbf{Q}=Q_{1}\bl\cdots\bl Q_{n}$ be a vector field corresponding to the superflow $\phi_{\Gamma}$. Taking to the common denominator, we write
\begin{eqnarray*}
\mathbf{Q}=\frac{P_{1}(\m{x})}{D(\m{x})}\bl\cdots\bl\frac{P_{n}(\m{x})}{D(\m{x})},
\end{eqnarray*}
and we say that this is in \emph{the reduced form}, if $\mathrm{g.c.d.}(P_{1},\ldots,P_{n},D)=1$. Call such $D$ \emph{the denominator of the superflow}.
\begin{Note}
\label{note-red1}
Over $\mathbb{R}$, there exist reducible superflows in dimension $3$ - the $4$-antiprismal and $3$-prismal superflows mentioned above. And over $\mathbb{C}$ they even exist in dimension $2$. The latter is the topic of Part III of this work. For example, one of the results in \cite{alkauskas-super3} reads as follows.
\begin{prop}
\label{prop-red}
Let $k\in\mathbb{N}\cup\{0\}$, and $\zeta$ be a primitive $(4k+3)$th root of unity. Let 
\begin{eqnarray}
\alpha=\begin{pmatrix}
\zeta & 0\\
0 & -\zeta^{-1}
\end{pmatrix}.\label{alfa}
\end{eqnarray}
Let us define
\begin{eqnarray*}
\phi(\m{x})=U(x,y)\bl V(x,y)=\sqrt[2k+1]{x^{2k+1}+y^{2k+2}}\bl y.
\end{eqnarray*}  
Then $\phi$ has a vector field $\frac{1}{2k+1}\frac{y^{2k+2}}{x^{2k}}\bl 0$, and is a projective superflow for the cyclic group of order $8k+6$ generated by the matrix $\alpha$. 
\end{prop}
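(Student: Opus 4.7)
The plan is to check four things in sequence: (a) the given $\phi$ is a projective flow with the claimed vector field; (b) the order of $\alpha$ equals $8k+6$; (c) the vector field $\mathbf{Q}=\frac{1}{2k+1}\frac{y^{2k+2}}{x^{2k}}\bullet 0$ is invariant under $\alpha$, hence under $\langle\alpha\rangle$; (d) among $2$-homogeneous rational vector fields with this invariance, $\mathbf{Q}$ is, up to scalar, the unique one with minimal common-denominator degree. Parts (a)--(c) are short routine calculations; the substance lies in (d).

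For (a), I would first simplify $\phi^{t}(\mathbf{x})=t^{-1}\phi(\mathbf{x}t)$ using $(2k+1)$-homogeneity inside the radical: the prefactor of $t$ cancels and $\phi^{t}(\mathbf{x})=\sqrt[2k+1]{x^{2k+1}+ty^{2k+2}}\bullet y$. The flow equation $\phi^{s+t}=\phi^{s}\circ\phi^{t}$ then collapses to the additive identity $(x^{2k+1}+ty^{2k+2})+sy^{2k+2}=x^{2k+1}+(s+t)y^{2k+2}$ inside the radical, and differentiation at $t=0$ delivers the claimed vector field. For (b), I would compute $\alpha^{n}=\mathrm{diag}(\zeta^{n},(-1)^{n}\zeta^{-n})$; the condition $\alpha^{n}=I$ is equivalent to $(4k+3)\mid n$ and $n$ even, and since $4k+3$ is odd the minimal such $n$ equals $2(4k+3)=8k+6$. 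For (c), substituting $\alpha(\mathbf{x})=(\zeta x,-\zeta^{-1}y)$ into $\mathbf{Q}$ and using $(-1)^{2k+2}=1$ together with $\zeta^{-(4k+2)}=\zeta$ gives $\mathbf{Q}(\alpha\mathbf{x})=(\zeta\mathbf{Q}_{1}(\mathbf{x}),0)$, so composing with $\alpha^{-1}$ on the outside restores $\mathbf{Q}(\mathbf{x})$.

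The main obstacle is (d). The key idea is to exploit that $\alpha$ is diagonal, so $\langle\alpha\rangle$ sits inside the algebraic torus $T=\{\mathrm{diag}(\mu,\nu)\}$. The $T$-action on $\mathbb{C}(x,y)$ decomposes canonically into weight spaces, each a $\mathbb{C}$-line spanned by a single Laurent monomial $x^{p}y^{q}$. Consequently, by isotypic decomposition under the subgroup $\langle\alpha\rangle$, every rational $\langle\alpha\rangle$-semi-invariant of a given character is a finite $\mathbb{C}$-linear combination of Laurent monomials whose $T$-weight restricts to that character. For a candidate invariant vector field $\mathbf{Q}'=R_{1}\bullet R_{2}$ the relation (\ref{kappa}) is equivalent to $R_{1}(\alpha\mathbf{x})=\zeta R_{1}(\mathbf{x})$ and $R_{2}(\alpha\mathbf{x})=-\zeta^{-1}R_{2}(\mathbf{x})$, which on a monomial $x^{p}y^{q}$ with $p+q=2$ become $(-1)^{q}\zeta^{p-q-1}=1$ and $(-1)^{q+1}\zeta^{p-q+1}=1$ respectively. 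Since $\zeta$ has odd order $4k+3$, $\langle\zeta\rangle\cap\{\pm 1\}=\{1\}$, so each condition splits into a parity constraint on $q$ and a congruence modulo $4k+3$.

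Solving these congruences gives for $R_{1}$ the solutions $q=2k+2+2m(4k+3)$, $m\in\mathbb{Z}$, with associated denominator degrees $2k,6k+4,10k+6,\ldots$, and for $R_{2}$ the solutions $q=2k+3+2m(4k+3)$, with minimum denominator degree $2k+1$. Because the LCM of monomial denominators cannot decrease under linear combination, any invariant vector field has common-denominator degree at least $2k$, and equality forces $R_{2}=0$ and $R_{1}=c\, x^{-2k}y^{2k+2}$ for some scalar $c$. This yields uniqueness up to scalar and the full superflow property.
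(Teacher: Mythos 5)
The paper does not actually prove this proposition in the present text; it is quoted from Part III of the series (\cite{alkauskas-super3}), so there is no in-paper argument to compare yours against. Your parts (a)--(c) are correct, and the congruence computations in (d) are exactly right. However, the structural lemma on which (d) rests is false as stated: $\mathbb{C}(x,y)$ does \emph{not} decompose into $T$-weight lines, and a rational $\langle\alpha\rangle$-semi-invariant need not be a finite linear combination of Laurent monomials. Concretely, take $D=x^{8k+6}+y^{8k+6}$ and $P=x^{6k+6}y^{2k+2}$; one checks $D\circ\alpha=D$ (since $\zeta^{8k+6}=1$) and $P\circ\alpha=\zeta^{4k+4}P=\zeta P$, so $R=P/D$ is a $2$-homogeneous rational function with $R\circ\alpha=\zeta R$ that is not a Laurent polynomial. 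Its denominator has degree $8k+6$, so it does not threaten the proposition, but it does refute your claimed decomposition. The point is that isotypic decomposition for the finite group $\langle\alpha\rangle$ is available on $\mathbb{C}(x,y)$, but its isotypic components are much larger than spans of monomials; only for the full torus are weight vectors forced to be monomials, and $\mathbb{C}(x,y)$ is not a sum of torus weight vectors (consider $1/(x+y)$).

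The gap is reparable and your final answer is correct. Write each component in lowest terms, $R_{i}=P_{i}/D_{i}$. Since $\alpha$ is an automorphism of $\mathbb{C}[x,y]$, uniqueness of reduced fractions converts $R_{1}\circ\alpha=\zeta R_{1}$ into $D_{1}\circ\alpha=\lambda D_{1}$ and $P_{1}\circ\alpha=\lambda\zeta P_{1}$ for some scalar $\lambda$, and similarly for $R_{2}$. Numerator and denominator are now honest homogeneous polynomials, hence finite sums of monomials, and the diagonal action has eigenvalue $(-1)^{j}\zeta^{i-j}$ on $x^{i}y^{j}$; two monomials of the same degree $d$ share an eigenvalue only if their $y$-exponents agree modulo $8k+6$, so in any degree $d<8k+6$ every eigenvector is a single monomial. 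Under the hypothesis that the common denominator has degree at most $2k$, each $D_{i}$ and each $P_{i}$ has degree below $8k+6$, so each $R_{i}$ really is a Laurent monomial, and from there your congruences $q\equiv 2k+2$ and $q\equiv 2k+3\pmod{8k+6}$ close the argument exactly as you wrote it.
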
 
The crucial remark here is that cyclic group of order $8k+6$ generated by $\alpha$ is not the full group of symmetries for this superflow - there are uncountably many symmetries. This full group $\Gamma_{4k+3}$ is abelian in case $k\geq 1$ and non-abelian in case $k=0$. However, in the latter case $\Gamma_{3}$ still does not have finite non-abelian subgroups \cite{alkauskas-super3}. Therefore, for every $k\in\mathbb{N}_{0}$, $\phi$ is still a reducible superflow. See Note \ref{note-red2} to this account.
\end{Note}
\section{Symmetries and Noether's theorem}
In light of Jouanolou theorem, we note that the orthogonal symmetry of the vector field $(y^2,z^2,x^2)$ is non-trivial, it is cyclic of order $3$. This symmetry is not enough to guarantee even a single polynomial first integral. On the other hand, it is very natural to expect that a differential system for a superflow possesses exactly $(n-1)$ independent polynomial first integrals. In other words, we pose
\begin{prob}
\label{orbits-algebro}
Is it true that orbits of the superflow are always algebraic curves? 
\label{prob-two}
\end{prob} 
We emphasize that, on a philosophical level, the last question has a lot in common with Emmy Noether's theorem concerning conservation laws for dynamical systems whose Lagrangians have differentiable symmetries.\\

For example, let us consider a system of material points and bodies with ideal holonomic relations (relations which involve only positions of points, but not velocities), with an assumption that the forces which act are potential (\cite{pet}, Chapter 4, \S 12). Then the motion of such a system can be described in terms of Lagrange equations of the second kind:
\begin{eqnarray*}
\frac{\d}{\d t}\frac{\p L}{\p\dot{q}_{s}}-\frac{\p L}{\p q_{s}}=0,\quad s=1,2,\ldots,\ell,
\end{eqnarray*}   
where $L$ is the Lagrangian of the system. Now, if the Lagrangian is explicitly independent of the time variable, then the generalized \emph{energy} does not change, and it is the integral of the system:
\begin{eqnarray*}
\sum\limits_{s=1}^{\ell}\dot{q}_{s}\frac{\p L}{\p\dot{q}_{s}}-L=\textrm{const.}
\end{eqnarray*} 
This happens if the time is homogeneous. On equal grounds, if the space is homogeneous, then this leads to another integral, the conservation of \emph{impulse}. If the space is isotropic, this leads to the conservation of \emph{kinetic moment}. All three occur in the isolated system of material points of our physical space.\\

These ideas were generalized and greatly expanded by Emmy Noether in a theorem, whose significance in particle mechanics, Hamiltonian mechanics, field theory, electrodynamics, relativity theory, Chern-Simons theory, gauge theories (Yang-Mills), quantum field theories, and many other branches of mathematics and physics, is difficult to overstate.
\begin{thm}[E. Noether, 1918 \cite{pet}]To every infinitesimal transformation which leaves the Lagrangian intact, there corresponds the integral (conserved charge) of the system.
\end{thm} 
This also extends to gauge symmetries. In Hamiltonian mechanics case, where for the Hamiltonian we have the canonical system of equations 
\begin{eqnarray*}
\frac{\d q_{s}}{\d t}=\frac{\p H}{\p p_{s}},\quad \frac{\d p_{s}}{\d t}=-\frac{\p H}{\p q_{s}},\quad s=1,2,\ldots,\ell,
\end{eqnarray*}
the inverse Noether's theorem holds (conserved charges give symmetries), and conserved charges have an addition structure - they form a Lie algebra, the pairing being the Poission bracket. For more on this subject, see a review \cite{banados}. An expository paper \cite{novikov} on symmetries and solitons is even closer in spirit to our current work. The knowledge of enough symmetries allows sometimes to integrate the dynamical system completely. The author finishes the first Section with the following sentence (the italics is by S. Novikov):
\begin{itemize}
\item[] ``How about the yet-undiscovered laws governing the transformation of elementary particles or the evolution of the cosmos in early or later stages at very large size scales? What should we expect - that they will be supersymemtric or arbitrarily chosen? Probably the former; but it would be hard to know in advance just which sort of supersymmetry it will be. \emph{The higher reason is not predictable just from our present knowledge and concepts; we can only make partial prediction. (Luckily for us!)}".
\end{itemize}

These remarks also justify our term ``superflows".\\

The situation with Problem \ref{prob-two} is, at the level of ideas, very close, but, nevertheless, with many differences:

\begin{itemize}
\item[i)]first, we are looking only for algebraic first integrals; as we have seen in Section \ref{jou} with an Jouanolou example, a symmetry of order $3$ does not guarantee a single algebraic first integral;
\item[ii)]for high symmetry, when the superflow does exists, we still need to consider optimal, minimal vector field (of smallest degree) in order the maximal amount of symmetries to exist;
\item[iii)]our symmetries are not infinitesimal, but discrete.
\end{itemize}
Hence, positive resolution of Problem \ref{prob-two} might be considered as a discrete and algebro-geometric analogue to Noether's theorem. This question with be dealt with in \cite{alkauskas-super2}.
\section{Invariants}
\label{sub-inv}
 We briefly recall the topic of invariants, which we will need further, and which is the classical XIX-th century topic in the origins of algebraic geometry \cite{benson,kostrikin,verma}. 

\begin{defin}Let $G\subset\mathrm{GL}(n,\mathbb{C})$ be a finite group. A polynomial function $P:\mathbb{C}^{n}\mapsto\mathbb{C}$ is \emph{an invariant} of the group $G$, if $P\circ\gamma(\m{x})=P(\m{x})$ for each $\m{x}\in\mathbb{C}^{n}$ and each $\gamma\in G$ treated as a linear map. The polynomial function $P$ is called \emph{a relative invariant}, if $P\circ\gamma(\m{x})=\omega_{\gamma}P(\m{x})$ for a certain root of unity $\omega_{\gamma}$. In real case, that is, $G\subset\mathrm{GL}(n,\mathbb{R})$, $\omega_{\gamma}=\pm 1$.
\end{defin}
If we want an action of $G$ on homogeneous polynomials to be associative, we must define 
\begin{eqnarray*}
\gamma(P)(\m{x})=P\circ\gamma^{-1}(\m{x}).
\end{eqnarray*} 
However, we will not use associativity.
\begin{defin}An invertible linear transformation $V:\mathbb{C}^{n}\mapsto\mathbb{C}^{n}$ is called \emph{a pseudoreflection}, if all but one of its eigenvalues are equal to $1$. If this exceptional eigenvalue is equal to $-1$, then a pseudoreflection simply becomes \emph{a reflection}. This is the case if the matrices we consider are in $\mathrm{GL}(n,\mathbb{R})$. 
\end{defin}  
\begin{thmm}Let $G\subset\mathrm{GL}(n,\mathbb{C})$ be a finite group.
\begin{itemize}
\item[1.](Classical \cite{kostrikin}). The ring of polynomial invariants of $G$ has a transcendence degree $n$, and it is generated by $n$ algebraically independent forms $f_{1},f_{2},\ldots,f_{n}$, and (possibly) one additional form $f_{n+1}$.
\item[2.](Chevalley-Shephard-Todd \cite{chevalley, shephard}). The ring of invariants is a polynomial ring (so, no $f_{n+1}$) if and only if the group is generated by pseudoreflections.
\end{itemize}
\end{thmm}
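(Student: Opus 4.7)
The plan is to prove the two parts separately, treating Part 1 with the classical Hilbert--Noether machinery and Part 2 with the Chevalley--Shephard--Todd bidirectional argument.

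For Part 1, I would first construct the Reynolds operator $\star^G:\mathbb{C}[\m{x}]\to\mathbb{C}[\m{x}]^G$ given by $f\mapsto\frac{1}{|G|}\sum_{\gamma\in G}f\circ\gamma$, which is a $\mathbb{C}[\m{x}]^G$-linear projection. Combined with the Hilbert basis theorem applied to the ideal of $\mathbb{C}[\m{x}]$ generated by positive-degree invariants, this yields finite generation of $\mathbb{C}[\m{x}]^G$ as a graded $\mathbb{C}$-algebra. For the transcendence degree assertion, the key trick is that for any $f\in\mathbb{C}[\m{x}]$ the polynomial
\begin{eqnarray*}
\prod_{\gamma\in G}\bigl(t-f\circ\gamma\bigr)\in\bigl(\mathbb{C}[\m{x}]^G\bigr)[t]
\end{eqnarray*}
has $f$ as a root, so $\mathbb{C}[\m{x}]$ is integral over $\mathbb{C}[\m{x}]^G$; since $\mathbb{C}[\m{x}]$ has transcendence degree $n$ over $\mathbb{C}$, integrality forces $\mathbb{C}[\m{x}]^G$ to have transcendence degree $n$ as well. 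A maximal algebraically independent subset $f_{1},\ldots,f_{n}$ of homogeneous generators then forms a homogeneous system of parameters, with the remaining generator $f_{n+1}$ (when present) encoding the non-freeness of $\mathbb{C}[\m{x}]^G$ over $\mathbb{C}[f_{1},\ldots,f_{n}]$.

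For Part 2, I would handle the two implications in turn. For the easy direction ($\Leftarrow$), suppose $G$ is generated by pseudoreflections. Let $I_{+}\subset R=\mathbb{C}[\m{x}]$ be the ideal generated by invariants of positive degree, and choose a minimal homogeneous set $f_{1},\ldots,f_{m}$ of invariants generating $I_{+}$. A graded Nakayama argument shows these same elements generate $R^{G}$ as a $\mathbb{C}$-algebra. The crux is algebraic independence: given any nontrivial homogeneous relation among the $f_{i}$, one differentiates and uses the defining property of pseudoreflections $\gamma$ (that $\gamma-\mathrm{id}$ has rank $1$, so $f-f\circ\gamma$ is divisible by a single linear form $L_{\gamma}$) to descend inductively on degree, showing that any syzygy $\sum h_{i}f_{i}=0$ must have all coefficients $h_{i}$ in $I_{+}$, contradicting minimality. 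By the transcendence count from Part 1, this forces $m=n$, and then $R^{G}=\mathbb{C}[f_{1},\ldots,f_{n}]$ is polynomial.

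For the harder direction ($\Rightarrow$), suppose $R^{G}=\mathbb{C}[f_{1},\ldots,f_{n}]$ with $d_{i}=\deg f_{i}$. Let $H\leq G$ be the subgroup generated by all pseudoreflections in $G$. By the previous direction $R^{H}=\mathbb{C}[g_{1},\ldots,g_{n}]$ is also polynomial, so the index $[R^{H}:R^{G}]$ of polynomial rings is determined by the products of degrees. I would invoke the Molien series identity
\begin{eqnarray*}
\frac{1}{|G|}\sum_{\gamma\in G}\frac{1}{\det(1-t\gamma)}=\prod_{i=1}^{n}\frac{1}{1-t^{d_{i}}},
\end{eqnarray*}
together with the identity $|G|=\prod_{i=1}^{n}d_{i}$ and its analogue for $H$, to compare $|G/H|$ with the actual ramification data of the covering $\mathrm{Spec}\,R\to\mathrm{Spec}\,R^{H}$. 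Since $R/R^{H}$ is unramified in codimension one (pseudoreflections already contribute all ramification hyperplanes to $H$), the purity of the branch locus forces $R^{G}=R^{H}$, hence $G=H$ since the action is faithful on invariants.

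The main obstacle, as I expect it, will be the algebraic independence step in the $(\Leftarrow)$ direction: it requires a careful inductive ``lifting'' of syzygies using the explicit form $f-f\circ\gamma=L_{\gamma}\cdot h_{\gamma}$ for pseudoreflections. Historically this is where the depth of the theorem resides, and cleaner modern treatments recast it using regular sequences and freeness of $R$ as an $R^{G}$-module (equivalently, the Cohen--Macaulay/Koszul-complex perspective), where the pseudoreflection hypothesis translates into the vanishing of a suitable Tor group.
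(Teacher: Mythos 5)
The paper does not prove this theorem at all --- it is quoted as classical background with references to Kostrikin and to Chevalley and Shephard--Todd --- so there is no internal proof to compare against; what follows is an assessment of your sketch on its own terms. For the parts that really are classical, your outline is the standard one: Reynolds operator plus the Hilbert basis theorem for finite generation, the integral dependence $\prod_{\gamma\in G}(t-f\circ\gamma)$ for the transcendence degree, graded Nakayama plus the Chevalley divisibility trick $f-f\circ\gamma=L_{\gamma}h_{\gamma}$ for the reflection-group direction, and a Molien-series/purity argument for the converse. Those steps are all workable, modulo one slip in the converse: the extension that is unramified in codimension one is $R^{H}\supseteq R^{G}$, not $R\supseteq R^{H}$; you then need $R^{G}$ regular and $\mathbb{A}^{n}$ simply connected to conclude $R^{H}=R^{G}$, and faithfulness of the Galois action of $G$ on $\mathrm{Frac}(R)$ to get $H=G$.

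The genuine gap is in Part 1. Your argument establishes finite generation and transcendence degree $n$, but the assertion that \emph{one} additional form $f_{n+1}$ always suffices is never proved --- you simply write ``the remaining generator $f_{n+1}$ (when present)'' as if at most one extra generator were automatic. It is not, and read literally as a statement about the \emph{ring} of invariants it is false: for the scalar cyclic group of order $3$ acting on $\mathbb{C}^{2}$ by $\mathrm{diag}(\zeta_{3},\zeta_{3})$ the invariant ring is the Veronese subring minimally generated by $x^{3},x^{2}y,xy^{2},y^{3}$, i.e.\ by $n+2$ forms, and higher-order scalar groups require arbitrarily many generators. What is true classically, and what your toolkit actually proves, is either (i) the \emph{field} of invariants is generated by $n+1$ elements (take a transcendence basis $f_{1},\dots,f_{n}$ and apply the primitive element theorem to the finite extension $\mathbb{C}(f_{1},\dots,f_{n})\subset\mathbb{C}(\m{x})^{G}$), or (ii) the invariant ring is a free module of finite rank over $\mathbb{C}[f_{1},\dots,f_{n}]$ for a homogeneous system of parameters, with no bound of one on the number of module generators. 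To obtain Part 1 as stated you must either prove the field version or restrict to groups whose invariant ring is a hypersurface; the examples appearing in the paper happen to fall in that class, which is presumably why the imprecision goes unnoticed there.
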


\begin{Example}
Consider the $4$th order cyclic group $\Big{\{}\begin{pmatrix} i & 0\\ 0 & -i\\ \end{pmatrix}^{s},s\in\mathbb{Z}_{4}\Big{\}}\subset SU(2)$. Its invariant ring is generated by $\{xy,x^4,y^{4}\}$. The conjugate of this group, which is inside $SO(2)$ and is generated by the matrix $\begin{pmatrix} 0 & -1\\1 & 0 \\ \end{pmatrix}$, has a ring of invariants generated by $\{x^2+y^2, x^2y^2,x^3y-xy^3\}$.
\end{Example}

The next proposition follows easily from the definition of the superflow. 
\begin{prop}The denominator of the projective superflow is a polynomial relative invariant of the linear group $\Gamma$.
\end{prop}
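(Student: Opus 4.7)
The plan is to exploit the essential uniqueness (up to an overall scalar) of the reduced-common-denominator representation of a rational vector field. Write $\m{Q}=P_{1}/D\bl\cdots\bl P_{n}/D$ with $\gcd(P_{1},\ldots,P_{n},D)=1$. For any fixed $\gamma\in\Gamma$, the symmetry condition $\gamma^{-1}\circ\m{Q}\circ\gamma=\m{Q}$ immediately produces a second rational expression for the same vector field, with common denominator $D^{\gamma}(\m{x}):=D(\gamma\m{x})$ and numerators $P_{i}^{\gamma}(\m{x}):=\sum_{j}(\gamma^{-1})_{ij}P_{j}(\gamma\m{x})$. The target is to show that this twisted tuple is again reduced, and then to deduce $D(\gamma\m{x})=c_{\gamma}D(\m{x})$ for some constant $c_{\gamma}$ that must be a root of unity.

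The first step is to check reducedness of $(P_{1}^{\gamma},\ldots,P_{n}^{\gamma},D^{\gamma})$. Suppose a non-constant polynomial $f(\m{x})$ divides every entry. Since the matrix $\gamma$ is invertible, multiplying the column vector $(P_{i}^{\gamma})_{i}$ by $\gamma$ on the left recovers $(P_{k}(\gamma\m{x}))_{k}$, so $f$ divides each $P_{k}(\gamma\m{x})$ and $D(\gamma\m{x})$. Substituting $\m{x}\mapsto\gamma^{-1}\m{x}$ then shows that $f(\gamma^{-1}\m{x})$ is a common polynomial divisor of $(P_{1},\ldots,P_{n},D)$, contradicting reducedness of the original tuple. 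Hence $\gcd(P_{1}^{\gamma},\ldots,P_{n}^{\gamma},D^{\gamma})=1$.

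Next, I would invoke the fact that in the UFD $\mathbb{R}[x_{1},\ldots,x_{n}]$ any two reduced common-denominator representations of the same tuple of rational functions differ by an overall unit. The fast argument is prime-by-prime: from $P_{i}D^{\gamma}=P_{i}^{\gamma}D$, if $p$ is a prime with $v_{p}(D^{\gamma})>v_{p}(D)$, then $v_{p}(P_{i}^{\gamma})>v_{p}(P_{i})\geq 0$ for each $i$ and also $v_{p}(D^{\gamma})>0$, so $p$ divides each $P_{i}^{\gamma}$ and $D^{\gamma}$, contradicting the reducedness just verified; the reverse inequality is ruled out symmetrically. Therefore $v_{p}(D^{\gamma})=v_{p}(D)$ for every prime $p$, i.e.\ $D(\gamma\m{x})=c_{\gamma}D(\m{x})$ for some nonzero constant $c_{\gamma}$.

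Finally, iterating the relation gives $D(\gamma^{k}\m{x})=c_{\gamma}^{k}D(\m{x})$; taking $k$ to be the order of $\gamma$ in the finite group $\Gamma$ forces $c_{\gamma}^{k}=1$, so $c_{\gamma}$ is a root of unity, which in the real case collapses to $\pm 1$. The map $\gamma\mapsto c_{\gamma}$ is visibly a character $\Gamma\to\mathbb{C}^{*}$, and this is exactly the definition of $D$ being a relative invariant of $\Gamma$. The only step that is not completely formal is the reducedness of the twisted tuple; everything else is a standard unique-factorization manipulation, so I expect no serious obstacle.
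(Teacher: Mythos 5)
Your argument is correct and is precisely the standard unique-factorization argument that the paper leaves implicit when it says the proposition ``follows easily from the definition'': the invariance condition gives a second reduced representation of $\mathbf{Q}$ with denominator $D\circ\gamma$, uniqueness of the reduced form in the UFD $\mathbb{R}[x_{1},\dots,x_{n}]$ forces $D\circ\gamma=c_{\gamma}D$, and finiteness of the order of $\gamma$ makes $c_{\gamma}$ a root of unity ($\pm1$ over $\mathbb{R}$). The one step you rightly flag as non-formal -- reducedness of the twisted tuple -- is handled correctly by applying $\gamma$ to recover $(P_{k}\circ\gamma)_{k}$ and substituting $\gamma^{-1}\m{x}$, so there is no gap.
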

Of course, since the vector field is a collection of $2$-homogeneous functions, the invariant in question is in fact a form. Note, however, that in all discovered cases the denominator is an invariant (but mind Example \ref{ex7} in Section \ref{sub5.1}).\\

In relation to P\'{o}lya urns we know that the notion of the urn to be \emph{balanced} \cite{fdp,flajolet-g-p} corresponds to the vector field being homogeneous in flow setting \cite{alkauskas-un}. Next, $1$-homogeneity of the function $F(\m{x},0)=\m{x}$ and the series (\ref{expl-taylor}) makes natural the question what happens to the flow if the vector field is $2$-homogeneous. This motivates investigation of projective flows.\\

The notion of the superflow is another huge motivation. Indeed, since a constant function is automatically an invariant of any linear group, any projective superflow with a denominator produces a family of general flows. For example, if we alter the vector field in Section \ref{sub5.1} as follows
\begin{eqnarray*}
\mathbf{C}_{a}(\m{x})=
\frac{y^3z-yz^3}{x^2+y^2+z^2+a}\bl\frac{z^3x-zx^3}{x^2+y^2+z^2+a}\bl\frac{x^3y-xy^3}{x^2+y^2+z^2+a},\quad a\in\mathbb{R,}
\end{eqnarray*}
then this vector field has the same $24$-fold symmetry, and produces a flow for every $a$. The uniqueness property, however, is not preserved. Thus, we can talk about \emph{superflows} only in a projective flow or polynomial flow (homogeneous vector field) setting. \\

We will see that sometimes the first integrals of the corresponding differential system for the superflow (see (\ref{sys-in}) in a $3$-dimenioanl case) are not the invariants of the group, like in Sections \ref{gener-2d} or \ref{dih-5}, but sometimes they are, like in Sections \ref{octahedral} and \ref{hyper-5}. This seems to be very closely related to Problem \ref{prob-vienas} and remarks just succeeding it. If the first integrals are indeed the invariants of the group, we may strengthen Problem \ref{prob-two} as follows.
\begin{prob}
\label{prob-trys}
If there are exactly $(n-1)$ first integrals of the corresponding differential system for the superflow $\phi_{\Gamma}$, and they are all the invariants of the group $\Gamma$, how to distinguish $1$ or $2$ remaining invariants which are not the first integrals?
\end{prob}
See the beginning of Section \ref{sub5.1}, where $x^2+y^2+z^2$ and $x^4+y^4+z^4$ are also the first integrals for the octahedral superflow, while the rest two are not.

\chapter{$2$-dimensional case}
\label{gener-2d}
\section{The group $S_{3}\simeq\mathbb{D}_{3}$}
\label{sec-2d}
Let $\lambda(x,y)\bl \lambda(y,x)$ be the flow generated by the vector field $x^2-2xy\bl y^2-2xy$. The formulas are valid over $\mathbb{R}$ as well as over $\mathbb{C}$. In the latter case we even get that the flow is unramified. For the last property, see Definition \ref{defin-unra} below. \\

We remind that \emph{Dixonian elliptic functions} ${\rm sm}$ and ${\rm sm}$ are defined by ${\rm sm}(0)=0$, ${\rm cm}(0)=1$, ${\rm sm}^3+{\rm cm}^3\equiv 1$, ${\rm sm}'={\rm cm}^2$, ${\rm cm}'=-{\rm sm}^2$ \cite{dixon, flajolet-c}.  
\begin{thm}[\cite{alkauskas-un}]
The function $\lambda(x,y)$ can be given the analytic expression
\begin{eqnarray*}
\lambda(x,y)&=&\frac{\v\big{(}c\v^2-sc^2y\v+s^2xy\big{)}^{2}}
{y\big{(}x-c^3y\big{)}\big{(}c^2\v^2-sx\v+s^2cxy\big{)}},\\
\lambda(y,x)&=&\frac{\v\big{(}c^2\v^2-sx\v+s^2cxy\big{)}^{2}}
{x\big{(}x-c^3y\big{)}\big{(}c\v^2-sc^2y\v+s^2xy\big{)}};
\end{eqnarray*}
here $\v=\v(x,y)=[xy(x-y)]^{1/3}$, and $s={\rm sm}(\v),c={\rm cm}(\v)$ are the Dixonian elliptic functions. The function $\lambda(x,y)$ is a single-valued $2$-variable meromorphic function. $\Lambda$ is an unramified flow, also the superflow (projective as well as polynomial), whose group of symmetries is isomorphic to $S_{3}\simeq\mathbb{D}_{3}$, a group of order $6$, generated by two matrices
\begin{eqnarray*}
\left(\begin{array}{cc}0 & 1 \\1 & 0 \\ \end{array}\right),\quad
\left(\begin{array}{cc}1 & -1 \\0 & -1 \\ \end{array}\right).
\end{eqnarray*}
\label{thm2}
\end{thm}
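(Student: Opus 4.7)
The plan is to integrate the vector field $\varpi\bl\varrho=x^2-2xy\bl y^2-2xy$ explicitly via the method of Section~\ref{sub-1.4}, adapted to $n=2$, and then to handle the symmetry and minimality questions separately. First I would identify a $1$-homogeneous first integral $\mathscr{W}$ of the associated autonomous system $p'=-\varpi(p,q)$, $q'=-\varrho(p,q)$. From (\ref{orbits}) one computes $y\varpi-x\varrho=3xy(x-y)$, and the ansatz $\mathscr{W}^{3}=xy(x-y)$ satisfies the equation, so $\v=[xy(x-y)]^{1/3}$ is the first integral; every orbit is therefore the affine cubic $pq(p-q)=\v^{3}=\mathrm{const}$, a smooth curve of genus~$1$. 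The identity $a+b+c=0\Rightarrow a^{3}+b^{3}+c^{3}=3abc$ applied to $(a,b,c)=(-p,q,p-q)$ rewrites this orbit as a Fermat-type cubic $p^{3}-q^{3}-(p-q)^{3}=3\v^{3}$. The key step is then to match it with the defining cubic $\sm^{3}+\cm^{3}=1$ of the Dixonian functions and, using $\sm'=\cm^{2}$, $\cm'=-\sm^{2}$, to produce an explicit parametrisation $p=\v\,f(s,c)$, $q=\v\,g(s,c)$ solving the reduced ODE in Dixonian time.

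Once the parametrisation is in hand, the standard flow relation $u(p(\tau)\v,q(\tau)\v)=p(\tau-\v)\v$ (the two-dimensional analogue of the device from Section~\ref{sub-1.4}) delivers the stated closed forms for $\lambda(x,y)$ and $\lambda(y,x)$. I would then double-check the answer in two independent ways: (i) a small-$t$ expansion, using $\sm(\v t)\sim\v t$ and $\cm(\v t)\sim 1$, reduces the given formula to $\lambda(xt,yt)/t=x+O(t)$, confirming the boundary condition~(\ref{init}); and (ii) the first few Taylor coefficients predicted by the formula are matched against the recurrence (\ref{expl-taylor}). Together these make the closed-form identity self-verifying, even though the Dixonian parametrisation itself is obtained by an ansatz.

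To establish the single-valued meromorphic character, I would invoke the transformation laws $\sm(\omega\v)=\omega\,\sm(\v)$ and $\cm(\omega\v)=\cm(\v)$ with $\omega=e^{2\pi i/3}$, which follow immediately from the defining ODEs and initial conditions. Substituting $\v\mapsto\omega\v$, $s\mapsto\omega s$, $c\mapsto c$ in the stated expression, a direct weight count shows that both numerator and denominator acquire the same factor $\omega^{2}$; hence $\lambda(x,y)$ depends only on $\v^{3}=xy(x-y)$ and is a well-defined function of the polynomial data $(x,y)$. Meromorphicity then follows from that of $\sm,\cm$, and this single-valuedness is precisely the unramified property.

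Finally, for the superflow statement I would verify by direct matrix computation that $\mathbf{Q}=x^{2}-2xy\bl y^{2}-2xy$ is invariant, in the sense of (\ref{kappa}), under conjugation by the two generators (the swap $(x,y)\mapsto(y,x)$ and the shear $(x,y)\mapsto(x-y,-y)$), hence under the whole order-$6$ group they generate. For minimality in the sense of Definitions~\ref{defin-proj} and~\ref{defin-poly}, the standard representation of $S_{3}$ is irreducible over $\mathbb{R}$, so the Schur-type argument recorded just after Definition~\ref{defin-proj} rules out any degree-$1$ invariant vector field; a short calculation with the ring of $\mathbb{D}_{3}$-invariants (generated by $xy(x-y)$ together with a degree-$2$ form) then shows that the space of $2$-homogeneous polynomial invariant vector fields is one-dimensional, spanned by $\mathbf{Q}$, and that no rational invariant vector field with a smaller denominator degree exists. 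Since the denominator is~$1$, this proves the projective and polynomial superflow assertions simultaneously. I expect the main technical obstacle to be the very first step: pinning down the explicit Dixonian parametrisation of the cubic that matches the reduced ODE system, so that the subsequent substitution outputs exactly the symmetric closed-form expressions of the statement; everything afterwards is routine algebra, the differential identities for $\sm,\cm$, and elementary invariant theory.
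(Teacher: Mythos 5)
Your outline follows essentially the route the paper itself prescribes: the statement is imported from \cite{alkauskas-un} and not reproved here, but the general machinery of Section \ref{sub-1.4} specialised to $n=2$ is exactly what lies behind it. Your identification of the first integral $\v^{3}=xy(x-y)$ (both as a first integral of the autonomous system and as a solution of the orbit equation), the Fermat-type rewriting $p^{3}-q^{3}-(p-q)^{3}=3pq(p-q)$, the relation $u(p(\tau)\v,q(\tau)\v)=p(\tau-\v)\v$, the rotation laws $\sm(\omega\v)=\omega\,\sm(\v)$, $\cm(\omega\v)=\cm(\v)$ used for single-valuedness (the numerator and denominator of the stated formula indeed both pick up $\omega^{2}$), the small-$t$ check of (\ref{init}), and the invariance computation for the two generators are all correct and are the paper's own devices.

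The one genuine gap is the step you yourself flag: the explicit parametrisation $p=\v f(s,c)$, $q=\v g(s,c)$ of the reduced system $p'=-p^{2}+2pq$, $q'=-q^{2}+2pq$ on the cubic $pq(p-q)=\v^{3}$ is never produced, and that is where the entire content of the closed-form expressions lies; without it you have verified rather than derived the stated formulas. That said, the fallback you propose --- matching the expansion of the closed form against the recursion (\ref{expl-taylor}) --- is precisely the paper's declared standard for validating such identities, so the scheme is acceptable. One small correction: $xy(x-y)$ is not an invariant of the order-$6$ group but only a relative invariant (it changes sign under each of the two generators, both of determinant $-1$); the invariant ring is generated by $x^{2}-xy+y^{2}$ and a different cubic. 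This does not damage the uniqueness argument, which is best run directly on the space of pairs of quadratic forms (the swap forces the second component to be $F(y,x)$, and invariance under the shear then cuts the remaining three-dimensional space down to the line spanned by $x^{2}-2xy\bl y^{2}-2xy$) rather than through the invariant ring.
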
 
Note that it is sufficient to give the formula for $\lambda(x,y)$. But swapping $x$ and $y$ changes $\v$ into $-\v$, and in \cite{alkauskas-un} we used the known formulas to rewrite ${\rm sm}(-\v)$ and ${\rm cm}(-\v)$ in terms of ${\rm sm}(\v)$ and ${\rm cm}(\v)$. \\

If we conjugate the group $S_{3}$ so it will end up inside $O(2)$, the corresponding vector field with a $6$-fold symmetry is given by
\begin{eqnarray}
\varpi\bl\varrho=2xy-x^2+y^2\bl 2xy +x^2-y^2.
\label{lambent-6}
\end{eqnarray} 
This is, up to the sign, a formula in Proposition \ref{prop5-def}, $\varpi\bl\varrho=\varpi_{3}\bl\varrho_{3}$ (see below).
\begin{Note}
\label{dihedral-beltrami}
We remark that numerators of the vector fields for the superflows give rise to \emph{lambent flows}, as explained in Notes \ref{tetra-beltrami} and \ref{octa-beltrami}, and also in \cite{alkauskas-super2} (all these relate to dimension $n=3$). In dimension $n\neq 3$ there exists no simple analogue of the curl operator (the dimension of differential $1$-forms, namely, $n$, and $2$-forms, namely, $\binom{n}{2}$, are not equal), but a slightly relaxed requirements can still be applied. In case $n=3$ these two are intricately related. Thus, the scalar multiple of the vector field (\ref{lambent-6}) generates the following vector field $\mathfrak{D}=\mathfrak{a}\bl\mathfrak{b}$ \cite{alkauskas-beltrami}:
\begin{eqnarray*}
\mathfrak{a}&=&-\cos y+\sqrt{3}\sin\Big{(}\frac{x}{2}\Big{)}\sin\Big{(}\frac{\sqrt{3}y}{2}\Big{)}+\cos\Big{(}\frac{\sqrt{3}x}{2}\Big{)}\cos\Big{(}\frac{y}{2}\Big{)},\\
\mathfrak{b}&=&-\cos x+\sqrt{3}\sin\Big{(}\frac{y}{2}\Big{)}\sin\Big{(}\frac{\sqrt{3}x}{2}\Big{)}+\cos\Big{(}\frac{\sqrt{3}y}{2}\Big{)}\cos\Big{(}\frac{x}{2}\Big{)}.
\end{eqnarray*} 
It has such properties.
\begin{itemize}
\item[i)]The vector field $\mathfrak{D}$ has a $6$-fold dihedral symmetry, given by the same matrices as in Proposition \ref{prop5-def}, $2d+1=3$;
\item[ii)]the Taylor series for $\mathfrak{D}$ contains only even compound degrees, and it starts from $\frac{3}{8}\varpi\bl\frac{3}{8}\varrho$;
\item[iii)]it satisfies the vector \emph{Helmholtz equation} $\nabla^{2}\mathfrak{D}=-\mathfrak{D}$, where $\nabla^{2}$ is a vector Laplace operator;
\item[iv)]$\mathrm{div}\,\mathfrak{D}=0$.
\end{itemize}
However, as explained after Notes \ref{tetra-beltrami} and \ref{octa-beltrami}, lambent flows do not have an algebro-geometric side, but rather a complicated dynamical one, as is clear from \cite{etnyre} in case of general Beltrami vector fields.
\end{Note}
\section{Unramified flows}In this section we will make a small step aside, since this complements perfectly results about the flow $\lambda(x,y)\bl\lambda(y,x)$ in the previous Section. \\

The notion of the flow being \emph{unramified}, or with a trivial monodromy, was discovered in \cite{alkauskas-un}. It deserves, to our opinion, a much broader attention from specialists in the fields of algebraic and differential geometry. Hence we present a precise definition. Here we formulate only the case of a general global flow in $\mathbb{R}^{n}$, the case of flows on algebraic varieties being described and investigated in \cite{alkauskas-un2}. The definition applies to all, not just projective, flows.
\begin{defin}
\label{defin-unra}
Suppose $n\in\mathbb{N}$, $\m{x}\in\mathbb{R}^{n}$, and a vector field $X=f_{1}\bl f_{2}\bl\cdots\bl f_{n}=\sum_{i=1}^{n}f_{i}\frac{\p}{\p x_{i}}$ is given by rational functions $f_{i}\in\mathbb{R}(x_{1},x_{2},\ldots,x_{n})$. Let $F(\m{x},t)=F_{1}(\m{x},t)\bl\cdots\bl F_{n}(\m{x},t)$ is the corresponding flow. We say that a flow is \emph{unramified}, if all $F_{j}(\m{x},t)$, $1\leq j\leq n$, are single-valued meromorphic functions in variables $x_{1},\ldots,x_{n},t\in\mathbb{C}$.
\end{defin} 
Let us define $f^{(0)}_{j}=x_{j}$, $f^{(1)}_{j}=f_{j}$, and
\begin{eqnarray*}
f^{(\ell)}_{j}=\sum\limits_{i=1}^{n}f_{i}\cdot\frac{\p}{\p x_{i}}f^{(\ell-1)}_{j}=X\big{(}f_{j}^{(\ell-1)}\big{)},\quad \ell\geq 2,\quad 1\leq j\leq n.
\end{eqnarray*}
Then as easily follows from (\ref{PDE-gen}) in dimension $n\in\mathbb{N}$ (exactly as in dimension $3$), one has
\begin{eqnarray}
F_{j}(\m{x},t)=\sum\limits_{\ell=0}^{\infty}\frac{1}{\ell!}f^{(\ell)}_{j}(\m{x})t^{\ell},\quad 1\leq j\leq n.
\label{taylor-gen}
\end{eqnarray}
The property of the flow being unramified is thus tantamount to a property that this series has a positive radius of convergence in $t$, and analytically extends as a single-valued meromorphic function in $(n+1)$ variables, for all $j$. In a projective case the number of variables reduces to $n$. \\

If a flow $\phi$ is ramified, there are several ways to demonstrate this. For example, we can find a suitable birational transformation $\ell$ ($1$-homogeneous in a projective flow case if we want to end up with a projective flow), such that $\ell^{-1}\circ\phi\circ\ell$, if expanded as 
\begin{eqnarray*}
F_{j}(x_{1},x_{2},\ldots,x_{n},t)=
\sum\limits_{j=0}^{\infty}x_{1}^{j}w_{j}(x_{2},\ldots,x_{n},t)
\end{eqnarray*}
has one of the functions $w_{j}$ certainly ramified. Functions $w_{j}$ are found by solving inductivelly differential equations. This was our method in \cite{alkauskas-un} to find ramification of many flows with a vector field given by a pair of quadratic forms. Some of these turned out to have ramifications of algebraic, some - of logarithmic type. However, if the flow is indeed unramified, it might take some ingenuity to demonstrate this. Of course, this duplicity in the degree of hardship in demonstrating two opposite phenomena pervades many branches of mathematics.
\begin{Example} If $f\in Cr_{n}(\mathbb{R})$ (a birational transformation of the affine space), then for a fixed $\m{a}\in\mathbb{R}^{n}$, sure,
\begin{eqnarray*}
F(\m{x},t)=f^{-1}\big{(}f(\m{x})+\m{a}t\big{)}
\end{eqnarray*} 
is an unramified flow, given by a collection of rational functions. For projective case, Theorem \ref{mthm} describes all projective rational flows.
\end{Example}
\begin{Example}
Consider the $1$-dimensional vector field $\varpi(x)=x(x+1)$. It generates the flow
\begin{eqnarray*}
F(x,t)=\frac{xe^{t}}{x+1-xe^{t}},
\end{eqnarray*}
and so it is unramified.
\end{Example}
\begin{Example} Since the Taylor series for $\sm(u)$ and $\cm(u)$ contain only powers $u^{3m+1}$ and $u^{3m}$, $m\in\mathbb{N}_{0}$, respectively, formulas in Theorem \ref{thm2} show that the vector field $x^2-2xy\bl y^2-2xy$ produces an unramified elliptic projective flow.
\end{Example}
\begin{Example}
 In \cite{alkauskas-ab}, Appendix A, the vector field $x^2-xy\bl y^2-2xy$ is investigated. The orbits are given by the plane curves $x^3y^2(3x-2y)=\mathrm{const}.$ The differential system (\ref{sys-in}), now coupled with the first integral, reads as
 \begin{eqnarray*}
\left\{\begin{array}{l@{\qquad}l}
1\equiv p^{3}q^2(3p-2q),\\
p'=-p^{2}+pq,\\
q'=-q^{2}+2pq.
\end{array}\right.
\end{eqnarray*}
This has a solution
\begin{eqnarray*}
p(u)&=&\frac{2A\sm^{3}(Bu)-A}{2\sm(Bu)\cm(Bu)},\\
q(u)&=&
\frac{4\sqrt{3}\sm^2(Bu)\cm^2(Bu)}{3A^2[1-2\sm^3(Bu)]},\end{eqnarray*} 
where $\sm$ and $\cm$ are the same Dixonian elliptic functions, $A=\frac{\sqrt[3]{4}}{\sqrt{3}}$, $B=-\frac{1}{\sqrt[3]{2}\sqrt{3}}$. Then we proceed in the usual way, write down the explicit solution using addition formulas for the elliptic functions. However, this time the demonstration of unramification involves also algebraic properties of the number field $\mathbb{Q}(\,\sqrt{3}\,)$. To say concisely, the pair $(p,q)$ solves the differential system also if we assume that in their expressions the symbol $``\sqrt{3}"$ stands for a negative value of the square root rather than positive. This observation is eesentially employed int the proof.
\end{Example}
In general, it is tempting to believe that unramified flows have algebraic curves as their orbits, hence the corresponding differential system possesses $(n-1)$ independent polynomial first integrals, and these flows are either rational flows, or arise from flows with orbits being lines, quadratics or elliptic curves, and only some very special ones of the latter flows are indeed unramified; see \cite{alkauskas-un,alkauskas-un2}.
\begin{figure}
\includegraphics[width=80mm,height=80mm,angle=-90]{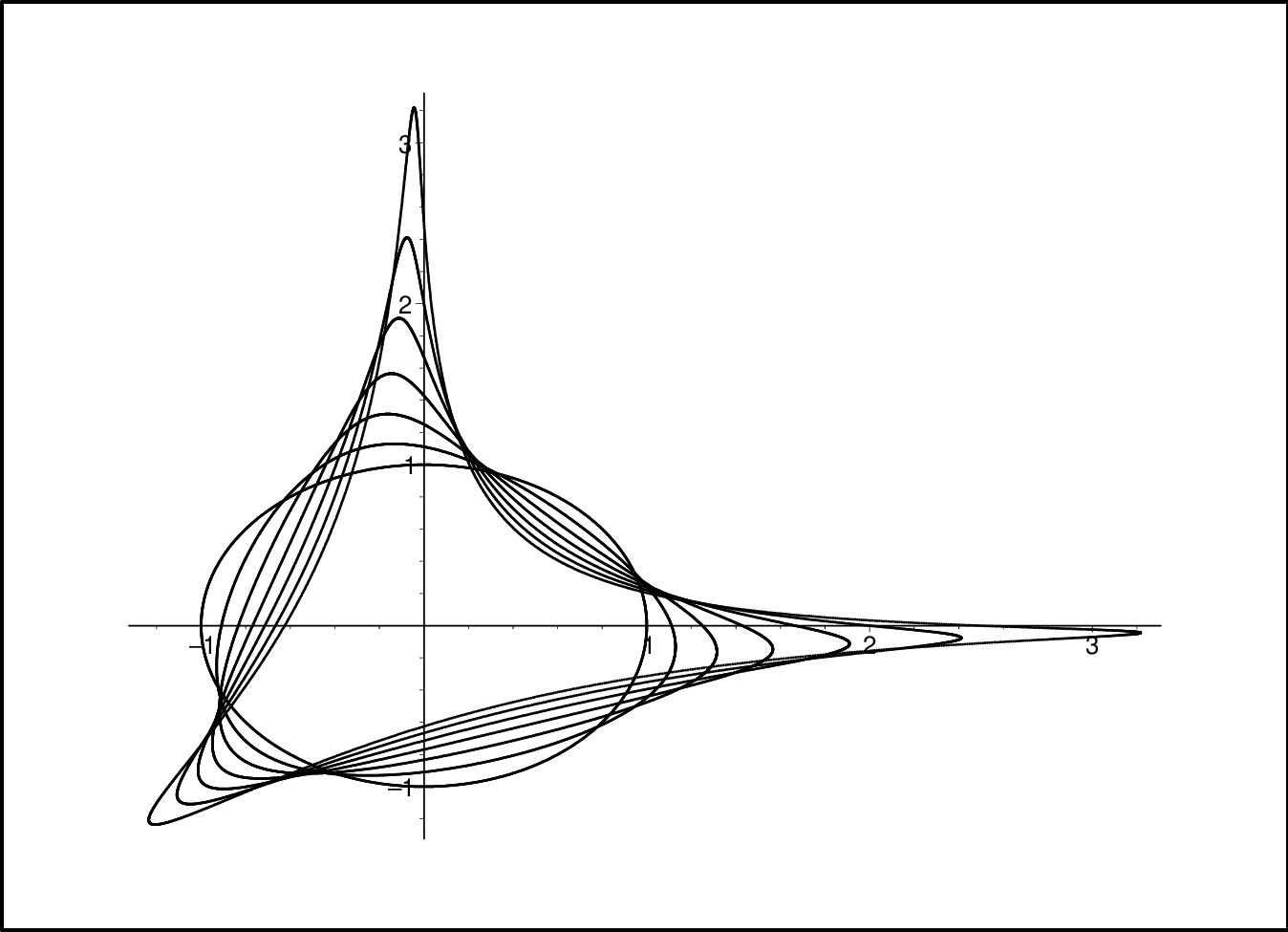}
\caption{Deformation of the unit circle $\mathbf{S}=\{x^{2}+y^{2}=1\}$ under the superflow $\Lambda$. The seven curves $\Lambda^{0.1j}(\mathbf{S})$, $j=0,\ldots 6$, are shown. The vector field is solenoidal, so the area inside each curve is constant and equal to $\pi$.}
\label{figure2}
\end{figure}

\section{Dihedral superflows}
\label{dih}
 Let $2d+1\geq 3$ be an odd integer, and let $\zeta=e^{\frac{2\pi i}{2d+1}}$ be a primitive $(2d+1)-$th root of unity. Consider the following two matrices
\begin{eqnarray*}
\alpha=\begin{pmatrix} \zeta & 0\\ 0 & \zeta^{-1}\\ \end{pmatrix},\quad
\beta=\begin{pmatrix} 0 & 1\\ 1 & 0\\ \end{pmatrix}.
\end{eqnarray*} 
Together they generate a dihedral group $\mathbb{D}_{2d+1}$ of order $4d+2$. The ring of invariants is generated by $\{xy, x^{2d+1}+y^{2d+1}\}$ and thus is a polynomial ring (dihedral group is generated by two reflections). \\

 Let us take $d=2$. The vector field, which is invariant under this group, is given by
\begin{eqnarray*}
\varpi\bl\varrho=\frac{y^{3}}{x}\bl\frac{x^{3}}{y}=\frac{y^4}{xy}\bl\frac{x^4}{xy},
\end{eqnarray*}
and it is the unique (up to conjugation with a homothety) vector field with this property whose denominator is of degree $\leq 2$. Note that the cyclic group of order $5$ which is generated by the matrix $\alpha$ ($2d+1=5$) does not give rise to the superflow (over $\mathbb{C}$), since there exists a family of vector fields which are invariant under this group, and this family is given by 
\begin{eqnarray*}
\varpi\bl\varrho_{a}=\frac{y^{3}}{x}\bl a\frac{x^{3}}{y},\text{ for any }a\in\mathbb{C}.
\end{eqnarray*}
So, this is not compatible with the definition of a superflow. We need to add the matrix $\beta$ to get one.\\

In a general case of the dihedral group $\mathbb{D}_{2d+1}$, the invariant vector field is given by
\begin{eqnarray*}
\varpi\bl\varrho=\frac{y^{d+1}}{x^{d-1}}\bl\frac{x^{d+1}}{y^{d-1}}=\frac{y^{2d}}{(xy)^{d-1}}\bl\frac{x^{2d}}{(xy)^{d-1}}.
\end{eqnarray*}A direct check shows that it is indeed the unique (up to scalar multiple, as always) vector field with this property whose denominator is of degree $\leq 2d-2$.\\

Now, we will conjugate matrices $\alpha$ and $\beta$ so that they will end up in $O(2)$, thus obtaining a superflow over $\mathbb{R}$. Indeed, let
$\gamma=\begin{pmatrix} \frac{1}{\sqrt{2}} & \frac{i}{\sqrt{2}}\\ \frac{i}{\sqrt{2}} &\frac{1}{\sqrt{2}}\\ \end{pmatrix}$. Then
\begin{eqnarray*} 
\gamma^{-1}\circ\beta\circ\gamma&=&\beta,\\
\gamma^{-1}\circ\alpha\circ\gamma&=&\begin{pmatrix}\frac{1}{2}(\zeta+\zeta^{-1}) & \frac{i}{2}(\zeta-\zeta^{-1})\\\frac{i}{2}(\zeta^{-1}-\zeta) & \frac{1}{2}(\zeta^{-1}+\zeta)\\ \end{pmatrix}=
\begin{pmatrix}\cos(\frac{2\pi}{2d+1}) & -\sin(\frac{2\pi}{2d+1})\\
\sin(\frac{2\pi}{2d+1}) & \cos(\frac{2\pi}{2d+1})\\ \end{pmatrix}:=\tilde{\alpha}.
\end{eqnarray*}
In particular, the group $\widetilde{\mathbb{D}}_{2d+1}=\langle \tilde{\alpha},\beta\rangle\subset O(2)$, and the vector field, which is invariant under this group, is given by $\gamma^{-1}\circ(\varpi\bl\varrho)\circ\gamma$. The ring of invariants is generated by $x^2+y^2$ and the form  of degree $2d+1$, given by \cite{verma}
\begin{eqnarray*}
\prod\limits_{p=0}^{2d}\Big{(}x\cos\frac{2\pi p}{2d+1}+y\sin\frac{2\pi p}{2d+1}\Big{)}.
\end{eqnarray*}
 We summarize the findings as follows.
\begin{prop}
\label{prop5-def}
Let $2d+1\geq 3$ be an odd integer, and the dihedral group $\mathbb{D}_{2d+1}\subset O(2)$ is generated by the matrices $\tilde{\alpha}$ and $\beta$. Let
\begin{eqnarray*}
P_{m}(x,y)=\Re\big{(}(x+iy)^{m}\big{)},\quad Q_{m}(x,y)=\Im\big{(}(x+iy)^{m}\big{)}
\end{eqnarray*} 
be the standard harmonic polynomials of order $m\in\mathbb{N}$. Then there exists the superflow $\phi_{\mathbb{D}_{2d+1}}$ with the vector field
\begin{eqnarray*}
\varpi_{2d+1}\bl\varrho_{2d+1}=\frac{P_{2d}(x,y)+(-1)^{d}Q_{2d}(x,y)}{(x^2+y^2)^{d-1}}\bl
\frac{(-1)^{d}P_{2d}(x,y)-Q_{2d}(x,y)}{(x^2+y^2)^{d-1}}.
\end{eqnarray*} 
The orbits of this superflow are curves of genus $d(2d-1)$ and are given by
\begin{eqnarray*}
\mathscr{W}_{2d+1}(x,y)&=&P_{2d+1}(x,y)-(-1)^{d}Q_{2d+1}(x,y)\\
&=&\frac{1+i(-1)^{d}}{2}(x+iy)^{2d+1}+\frac{1-i(-1)^{d}}{2}(x-iy)^{2d+1}=\mathrm{const.}
\end{eqnarray*}
\end{prop}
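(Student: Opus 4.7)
The plan is to first pass to the diagonalizing coordinates where $\alpha$ is diagonal, enumerate the invariant Laurent monomials, and then transport the resulting vector field and first integral back to the orthogonal coordinates via $\gamma$.

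In coordinates $(X,Y)=\gamma(x,y)$ the matrix $\alpha$ becomes $\mathrm{diag}(\zeta,\zeta^{-1})$ while $\beta$ still swaps coordinates. Writing a $2$-homogeneous rational candidate $(F,G)$ as a sum of Laurent monomials $X^{i}Y^{j}$ with $i+j=2$, the identity $\alpha^{-1}\circ(F,G)\circ\alpha=(F,G)$ forces each monomial of $F$ to satisfy $i-j\equiv 1\pmod{2d+1}$, equivalently $2i\equiv 3\pmod{2d+1}$; analogously for $G$ with residue class $-1$. The two monomials of smallest denominator degree contributing to $F$ are $Y^{d+1}/X^{d-1}$ (degree $d-1$) and $X^{d+2}/Y^{d}$ (degree $d$). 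Imposing $\beta$-invariance couples $G(X,Y)=F(Y,X)$. Among admissible combinations, the unique one (up to scalar) with smallest reduced common denominator is
\[
F\bl G=\frac{Y^{d+1}}{X^{d-1}}\bl\frac{X^{d+1}}{Y^{d-1}}=\frac{Y^{2d}}{(XY)^{d-1}}\bl\frac{X^{2d}}{(XY)^{d-1}},
\]
with denominator $(XY)^{d-1}$ of degree $2d-2$; the numerators $X^{2d},Y^{2d}$ are coprime to $(XY)^{d-1}$, so this is already reduced, and any admixture of the next allowed Laurent monomials strictly enlarges the reduced denominator.

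To transport back, use $X=(x+iy)/\sqrt{2}$, $Y=i(x-iy)/\sqrt{2}$, giving $XY=i(x^{2}+y^{2})/2$ and $X^{2d},Y^{2d}$ proportional to $(x+iy)^{2d}$ and $(x-iy)^{2d}$. The computation $\gamma^{-1}(F,G)\circ\gamma$ is most efficiently organised via the complex combination $\varpi+i\varrho$; using the identity $(-1)^{d}-i=-i(1+i(-1)^{d})$, the two components collapse to
\[
\varpi_{2d+1}+i\varrho_{2d+1}=\frac{(1+i(-1)^{d})(x-iy)^{2d}}{(x^{2}+y^{2})^{d-1}},
\]
and taking real and imaginary parts (with $(x\pm iy)^{2d}=P_{2d}\pm iQ_{2d}$) reproduces the claimed expressions for $\varpi_{2d+1}\bl\varrho_{2d+1}$.

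For the orbits and genus I again use the $(X,Y)$ coordinates: the orbit ODE $G\,dX=F\,dY$ reduces to $X^{2d}\,dX=Y^{2d}\,dY$, integrating to $X^{2d+1}-Y^{2d+1}=\mathrm{const}$. Substituting the linear expressions for $X,Y$, the constant form that emerges is, up to a nonzero scalar depending on the parity of $d$, exactly $\mathscr{W}_{2d+1}=P_{2d+1}-(-1)^{d}Q_{2d+1}$; the equivalent expression involving $(x\pm iy)^{2d+1}$ is checked by expanding $P_{2d+1}\pm iQ_{2d+1}=(x\pm iy)^{2d+1}$. The projective closure $X^{2d+1}-Y^{2d+1}=cZ^{2d+1}$ for $c\neq 0$ is manifestly smooth in $\mathbb{P}^{2}$ (the three partial derivatives cannot vanish simultaneously), hence a smooth plane curve of degree $2d+1$; the genus-degree formula gives $g=\binom{2d}{2}=d(2d-1)$, preserved under the linear change of coordinates back to $(x,y)$.

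The main obstacle I anticipate is the minimality claim for the denominator: the list of invariant Laurent monomials is transparent, but one must argue cleanly that no $\mathbb{R}$-linear combination of them, once $\beta$-symmetry is imposed on the vector $(F,G)$ jointly, yields a reduced common denominator of degree strictly less than $2d-2$. This requires tracking how contributions from the two residue classes for $F$ and $G$ can or cannot share common factors with the combined denominator, and is the only step that is not a mechanical calculation; everything else is a change-of-variable computation combined with the standard plane-curve genus formula.
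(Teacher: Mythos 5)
Your argument is correct in substance and, for the construction of the vector field, follows the same route as the paper: diagonalize $\tilde{\alpha}$ via $\gamma$, enumerate the invariant Laurent monomials (your congruence $2i\equiv 3\pmod{2d+1}$, giving $i\equiv d+2$, correctly isolates $Y^{d+1}/X^{d-1}$ as the minimal term), and conjugate back; your packaging of the return trip through $\varpi+i\varrho=(1+i(-1)^{d})(x-iy)^{2d}(x^{2}+y^{2})^{-(d-1)}$ is a clean way to recover the stated formulas. Where you genuinely diverge is the orbit statement: the paper verifies $\mathscr{W}_{x}\varpi+\mathscr{W}_{y}\varrho\equiv 0$ directly in the $(x,y)$ coordinates using the identities $\partial_{x}P_{m+1}=(m+1)P_{m}$, $\partial_{y}P_{m+1}=-(m+1)Q_{m}$, $\partial_{x}Q_{m+1}=(m+1)Q_{m}$, $\partial_{y}Q_{m+1}=(m+1)P_{m}$, whereas you integrate $X^{2d}\,\mathrm{d}X=Y^{2d}\,\mathrm{d}Y$ in the diagonal frame and transport $X^{2d+1}-Y^{2d+1}$ back, which lands on a (complex) scalar multiple of $\mathscr{W}_{2d+1}$ — both are valid, and yours has the advantage of explaining where the first integral comes from rather than just checking it. You also supply a proof of the genus claim via smoothness of the Fermat curve $X^{2d+1}-Y^{2d+1}=cZ^{2d+1}$ and the genus--degree formula, which the paper asserts without argument.

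The one soft spot is the minimality of the denominator, which you rightly flag. Your enumeration presupposes that a candidate field is a finite sum of Laurent monomials, i.e.\ that its reduced denominator is a monomial in $X,Y$; a fully general competitor has a reduced denominator that is merely a relative invariant of the cyclic group $\langle\mathrm{diag}(\zeta,\zeta^{-1})\rangle$, and relative invariants of degree $\le 2d-2$ can contain two monomials $X^{a}Y^{b}$ from the same residue class of $a-b$. To close this you should either invoke the paper's observation that the reduced denominator of a superflow is a relative invariant and dispose of the non-monomial ones, or note that for the two admissible minimal monomials the combined numerator $aY^{2d+1}+bX^{2d+1}$ shares no factor with $X^{d-1}Y^{d}$ unless $a$ or $b$ vanishes, so no cancellation can shrink the reduced denominator below degree $2d-2$ except for the claimed field. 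The paper itself dismisses this with ``a direct check,'' so you are at the same level of rigor, but since you identified it as the non-mechanical step, it is the one place your write-up should be completed.
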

\begin{proof}We are only left to prove the statement about the orbits. One needs to verify that
\begin{eqnarray*}
\Big{(}\frac{\p P_{2d+1}}{\p x}-(-1)^{d}\frac{\p Q_{2d+1}}{\p x}\Big{)}\big{(}P_{2d}+(-1)^{d}Q_{2d}\big{)}&&
\\
+\Big{(}\frac{\p P_{2d+1}}{\p y}-(-1)^{d}\frac{\p Q_{2d+1}}{\p y}\Big{)}\big{(}(-1)^{d}P_{2d}-Q_{2d}\big{)}&\equiv& 0.
\end{eqnarray*}
This is immediate if we note that $P_{m}=\frac{1}{2}((x+iy)^{m}+(x-iy)^{m})$, $Q_{m}=\frac{1}{2i}((x+iy)^{m}-(x-iy)^{m})$. So,
\begin{eqnarray*}
\frac{\p P_{m+1}}{\p x}=(m+1)P_{m},&\quad& \frac{\p P_{m+1}}{\p y}=-(m+1)Q_{m},\\
\frac{\p Q_{m+1}}{\p x}=(m+1)Q_{m},&\quad& \frac{\p Q_{m+1}}{\p y}=(m+1)P_{m},
\end{eqnarray*}
and the answer follows.
\end{proof}
Note that in case $d=1$, the superflow $\Lambda$ is linearly conjugate to $\phi_{\mathbb{D}_{3}}$. We investigated this superflow in detail in \cite{alkauskas-un}. Now, we are ready to classify all $2-$dimensional real superflows.
\begin{thm}
\label{thm-dih}
For each $d\in\mathbb{N}$, there exists the superflow whose group of symmetries is the dihedral group $\mathbb{D}_{2d+1}$. There does not exist a superflow whose group of symmetries is $\mathbb{D}_{2d}$, $d\in\mathbb{N}$, or a cyclic group $\mathbb{C}_{d}$, $d\in\mathbb{N}$.
\end{thm}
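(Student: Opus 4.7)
The existence half of the theorem for each $d\geq 1$ is already supplied by Proposition~\ref{prop5-def}. It remains to establish the two non-existence assertions. As a preliminary reduction, every finite subgroup of $\mathrm{GL}(2,\mathbb{R})$ is conjugate (via an averaged inner product) to a finite subgroup of $O(2)$, whose finite subgroups are exhausted by the cyclic groups $\mathbb{C}_{d}$ and the dihedral groups $\mathbb{D}_{d}$; thus $\mathbb{D}_{2d+1}$, $\mathbb{D}_{2d}$ and $\mathbb{C}_{d}$ exhaust all candidate symmetry groups up to conjugacy.

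The central ingredient is the following \emph{parity lemma}: if $-I\in\Gamma$, then no non-zero $2$-homogeneous rational vector field $\mathbf{Q}$ is $\Gamma$-invariant in the sense of Definition~\ref{defin-proj}. Indeed, $2$-homogeneity forces $\mathbf{Q}(-\mathbf{x})=(-1)^{2}\mathbf{Q}(\mathbf{x})=\mathbf{Q}(\mathbf{x})$, whereas $(-I)$-invariance unwinds to $\mathbf{Q}(-\mathbf{x})=-\mathbf{Q}(\mathbf{x})$, so $\mathbf{Q}\equiv 0$. Since $\mathbb{D}_{2d}$ always contains the half-turn $-I$ (its cyclic subgroup has even order $2d$), and the same holds for every $\mathbb{C}_{2m}$, $m\geq 1$, this lemma disposes in one stroke of $\mathbb{D}_{2d}$ for each $d\geq 1$ and of all cyclic groups of even order. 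The trivial case $\mathbb{C}_{1}$ is also immediate: every $2$-homogeneous vector field is vacuously invariant, and the six-dimensional space of $2$-homogeneous polynomial vector fields already furnishes infinitely many non-proportional candidates, so the uniqueness clause of Definition~\ref{defin-proj} fails.

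The substantive remaining case is $\mathbb{C}_{2d+1}$ of odd order $\geq 3$. I would pass to the complex basis $u=x+iy$, $\bar u=x-iy$, in which the generator acts diagonally by $u\mapsto\zeta u$, $\bar u\mapsto\zeta^{-1}\bar u$ with $\zeta=e^{2\pi i/(2d+1)}$. Writing the vector field as $Q_{u}\partial_{u}+Q_{\bar u}\partial_{\bar u}$, the invariance is equivalent to $Q_{u}$ having weight $+1$ and $Q_{\bar u}$ weight $-1$ under the cyclic action, while reality of the underlying vector field over $\mathbb{R}$ imposes $Q_{\bar u}(u,\bar u)=Q_{u}^{*}(\bar u,u)$, where the star denotes coefficient-wise complex conjugation. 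Enumerating the weight-$1$, $2$-homogeneous Laurent monomials $u^{m}\bar u^{n}$ (so $m+n=2$ and $m-n\equiv 1\pmod{2d+1}$) by the odd integer $k=(m-n-1)/(2d+1)$, one checks that the resulting denominator degree is at least $d-1$, with equality attained uniquely at $k=-1$, yielding
\begin{eqnarray*}
Q_{u}=c\,\frac{\bar u^{d+1}}{u^{d-1}},\qquad c\in\mathbb{C},
\end{eqnarray*}
as the unique (up to complex scalar) minimal-denominator weight-$1$ invariant; a short $\gcd$ computation then shows that any linear combination of two or more such monomials has strictly larger denominator after reduction.

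Consequently, the space of real $\mathbb{C}_{2d+1}$-invariant $2$-homogeneous vector fields of minimal denominator is parameterised by a single complex number $c$, i.e.\ it is $2$-real-dimensional, and its $\mathbb{R}$-scalar equivalence classes are the real lines through the origin in $\mathbb{C}$; so choosing for instance $c=1$ versus $c=i$ exhibits two non-proportional invariants and the superflow uniqueness clause fails. As a welcome consistency check, the extra reflection $(u,\bar u)\mapsto(\bar u,u)$ of $\mathbb{D}_{2d+1}$ imposes $\bar c=c$, collapsing this complex family to a single real line and recovering exactly the superflow of Proposition~\ref{prop5-def}. The chief obstacle in executing this plan is the denominator-minimality argument just sketched: one must check rigorously that no cancellation among weight-$1$ Laurent monomials can lower the denominator below $u^{d-1}$, which rests on the pairwise coprimality of distinct powers of $u$ and $\bar u$, with mild care needed in the boundary case $d=1$ where the minimal denominator collapses to the constant $1$.
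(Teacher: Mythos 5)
Your overall strategy coincides with the paper's: existence is Proposition \ref{prop5-def}; the groups $\mathbb{D}_{2d}$ are eliminated because they contain $-I$, while $2$-homogeneity makes the field even and $(-I)$-equivariance would make it odd; and for cyclic rotation groups one diagonalizes the generator over $\mathbb{C}$ and observes that the invariance condition decouples the two coordinates, so the minimal-denominator invariant fields form a complex line rather than a real line and clause ii) of Definition \ref{defin-proj} fails. Your computation that the minimal denominator for $\mathbb{C}_{2d+1}$ is attained uniquely at $Q_{u}=c\,\bar u^{\,d+1}/u^{\,d-1}$ is an explicit version of the paper's remark that the lowest-denominator invariant fields are of the form $a\varpi(x,y)\bl\varpi(y,x)$, and your reality bookkeeping (taking $c$ versus $ic$ to produce two non-proportional real invariant fields) is a welcome sharpening of the paper's looser ``so is $a\varpi\bl\varrho$''.

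There is, however, one genuine omission. A cyclic subgroup of $O(2)$ of order two need not consist of rotations: it can be generated by a reflection, conjugate to $\mathrm{diag}(1,-1)$, and this subgroup does not contain $-I$, nor is it $\mathrm{GL}(2,\mathbb{R})$-conjugate to $\{\pm I\}$ since $-I$ is central. Your parity lemma therefore does not dispose of ``every $\mathbb{C}_{2m}$'': it misses exactly this representation of $\mathbb{C}_{2}$, which is abstractly cyclic and so falls under the theorem's claim. The paper treats it as the case $\hat{\gamma}=\mathrm{diag}(\zeta,-\zeta^{-1})$ with $\zeta=1$ and kills it by exhibiting the one-parameter invariant family $ay^{2}\bl xy$; equivalently, invariance under $\mathrm{diag}(1,-1)$ only forces the first coordinate to be even and the second odd in $y$, which already among polynomial fields leaves a space of dimension greater than one, so uniqueness fails. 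Adding this one case closes the gap; the rest of your plan is sound, modulo the denominator-minimality verification that you yourself flag and that the paper likewise leaves as ``a direct check''.
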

\begin{Note} However, there exists two $3$-dimensional reducible superflows $\phi_{\mathbb{P}_{3}}$ and $\phi_{\mathbb{A}_{4}}$ whose groups of symmetries are isomorphic to dihedral groups $\mathbb{D}_{6}$ of order $12$, and $\mathbb{D}_{8}$ of order $16$, respectively. As representations, these groups are, correspondingly, symmetry groups of a $3$-prism and a $4$-antiprism. 
\end{Note}
\begin{Poly} Exactly the same conclusion as in Theorem \ref{thm-dih} follows for polynomial superflows. They are produced by vector fields 
\begin{eqnarray*}
P_{2d}(x,y)+(-1)^{d}Q_{2d}(x,y)\bl(-1)^{d}P_{2d}(x,y)-Q_{2d}(x,y).
\end{eqnarray*} 
These are solenoidal, whereas for $d\geq 2$ projective superflows are not. This shows that the notion of polynomial superflows is even more fundamental. See Section \ref{hyper-5} to this account.
\end{Poly}

\begin{proof}We know that the only finite subgroups of $O(2)$ are dihedral and cyclic groups. Let $\Gamma\subset O(2)$ be isomorphic to $\mathbb{D}_{2d}$, $d\in\mathbb{N}$. Then $-I\in\Gamma$. If we apply (\ref{kappa}) for $\gamma=-I$, it gives that the vector field is trivial.\\

Let $\Gamma\subset O(2)$ be isomorphic to $\mathbb{C}_{d}$, $d\in\mathbb{N}$, and $\Gamma$ is generated by $\gamma$. Conjugating with an appropriate matrix, we can achieve that inside $U(2)$, $\gamma$ is conjugate to
$\hat{\gamma}=\begin{pmatrix} \zeta & 0\\ 0 & \pm\zeta^{-1}\\ \end{pmatrix}$ for a certain root of unity $\zeta$. \\

Suppose, it is a ``+" sign. Now, if a vector field $\varpi\bl\varrho$ is invariant under conjugation with $\hat{\gamma}$, so is the vector field $a\varpi\bl\varrho$, $a\in\mathbb{C}$, and this contradicts the definition of a superflow. We only should be cautious that $\varpi\neq 0$. In our case it cannot happen, since vector fields with lowest degree denominators, invariant under conjugating  with $\hat{\gamma}$, are always of the form $a\varpi(x,y)\bl\varpi(y,x)$.\\

Suppose, we have a ``-" sign. The trace of the matrix $\hat{\gamma}$ is $2(\zeta-\zeta^{-1})$, and if it is not real, this matrix is not conjugate to any matrix in $\mathrm{GL}(2,\mathbb{R})$. The trace is real only for $\zeta=\pm 1$, and this does not produce a superflow. For example, for $\zeta=1$ a family of invariant vector fields is given by $ay^2\bl xy$. Note, however, that if $k\neq 0\text{ (mod }4)$, the cyclic group generated by $\hat{\gamma}$ produces a superflow over $\mathbb{C}$; see Note \ref{note-red1} and \cite{alkauskas-super3}.
\end{proof}
In the third part of this work \cite{alkauskas-super3} we will classify $2-$dimensional superflows over $\mathbb{C}$. For example, there exists an epimorphism $e:SU(2)\mapsto SO(3)$ with a kernel $\{\pm I\}$. So, $SU(2)$ contains the pre-images $e^{-1}(\mathbb{T})=\widetilde{\mathbb{T}}$ and $e^{-1}(\mathbb{O})=\widetilde{\mathbb{O}}$ of the tetrahedral group $\mathbb{T}$ and octahedral group $\mathbb{O}$, which are given in Sections \ref{S4} (Note \ref{note-red2}) and \ref{octahedral}. Thus, $\widetilde{\mathbb{T}}$ and $\widetilde{\mathbb{O}}$ are of order $24$ and $48$, respectively.  Further, let $\phi=\frac{1+\sqrt{5}}{2}$, and define
\begin{eqnarray}
\alpha=\begin{pmatrix}
-1 & 0 & 0\\
0 & -1 & 0\\
0 & 0 & 1 
\end{pmatrix},\quad
\beta=\begin{pmatrix}
0 & 0 & 1\\
1 & 0 &0\\
0 & 1 & 0 
\end{pmatrix},\quad
\gamma=\begin{pmatrix}
\frac{1}{2} & -\frac{\phi}{2} & \frac{1}{2\phi}\\
\frac{\phi}{2} & \frac{1}{2\phi} & -\frac{1}{2}\\
\frac{1}{2\phi} & \frac{1}{2} & \frac{\phi}{2} 
\end{pmatrix}.
\label{gen-ico}
\end{eqnarray}
These are matrices of orders $\alpha^2=\beta^3=\gamma^{5}=I$, and together they generate the icosahedral group $I\subset SO(3)$ of order $60$. Thus, $SU(2)$ contains $e^{-1}(\mathbb{I})=\widetilde{\mathbb{I}}$, and thus $\widetilde{\mathbb{I}}$ is of order $120$. The groups $\widetilde{\mathbb{T}}$, $\widetilde{\mathbb{O}}$ and $\widetilde{\mathbb{I}}$ are the so called \emph{binary tetrahedral, octahedral and icosahedral groups}. As a matter of fact, there cannot exist $2-$dimensional $\mathbb{C}$-superflows for groups $\widetilde{\mathbb{T}}$, $\widetilde{\mathbb{O}}$ or $\widetilde{\mathbb{I}}$, since $-I$ cannot be an element of the symmetry group for the superflow. Though potentially, there exist $2-$dimensional superflows over $\mathbb{C}$ with symmetries as subgroups of $\widetilde{\mathbb{T}}$, $\widetilde{\mathbb{O}}$ and $\widetilde{\mathbb{I}}$, not to mention other finite subgroups of $U(2)$ which exist in a fascinating variety \cite{coxeter,u2}.

\chapter{The dihedral superflow $\phi_{\mathbb{D}_{5}}$}
\label{dih-5}
\section{Basic properties}When $2d+1=5$, the vector field is $\varpi_{5}\bl\varrho_{5}$ is given by\small
\begin{eqnarray}
\varpi\bl\varrho=\frac{x^4+4x^3y-6x^2y^2-4xy^3+y^4}{x^2+y^2}\bl
\frac{x^4-4x^3y-6x^2y^2+4xy^3+y^4}{x^2+y^2}.\label{penki}
\end{eqnarray}\normalsize
\begin{Poly} This is the only section of this paper where results do not transfer automatically \emph{verbatim} (or with inessential alterations) to polynomial superflow case, since we would need to integrate the above vector field without a denominator $x^2+y^2$. However, as can be easily seen, analogous results hold and analysis can be carried out, leading to a different fundamental period (see Section \ref{fundamental-period}).
\end{Poly}
Vector field $\varpi\bl\varrho$ has the order $10$ group $\mathbb{D}_{5}$ as the group of its symmetries, and it gives rise to the superflow $\phi_{\mathbb{D}_{5}}$, which we will now explore in more detail. Cases $2d+1>5$ are investigated similarly.\\

Thus, let $\varpi\bl\varrho$ be given as in (\ref{penki}). The orbits of this superflow $\phi_{\mathbb{D}_{5}}=\gamma(x,y)\bl\gamma(y,x)$ are curves 
\begin{eqnarray*}
\mathscr{W}(x,y):=x^5-5x^4y-10x^3y^2+10x^2y^3+5xy^4-y^5=\mathrm{const}.
\end{eqnarray*}
For $\mathrm{const}.\neq 0$, these are plane curves of genus $6$. Figure \ref{fig3} shows the normalized vector field with one selected orbit, and Figure \ref{fig4} shows the deformation of the unit circle in 7 chosen instances of time. Let also $\mathscr{W}(x)=\mathscr{W}(x,1)$. We have
\begin{eqnarray*}
\mathscr{W}(x)=(x-1)(x^4-4x^3-14x^2-4x+1)=\frac{1+i}{2}(x+i)^{5}+\frac{1-i}{2}(x-i)^{5}.
\end{eqnarray*}
The Galois group of the $4$th degree polynomial in the above expression is a cyclic group of order $4$ ($\mathbb{F}^{*}_{5}$ is cyclic of order $4$). More importantly, the polynomial $\mathscr{W}(x)$ has a $5-$fold symmetry as follows ($\mathbb{F}^{+}_{5}$ is cyclic of order $5$). Let $\kappa_{j}=\frac{2\pi j}{5}$, $j=0,1,2,3,4$. Then
\begin{eqnarray}
\mathscr{W}\Big{(}\frac{x\cos\kappa_{j}+\sin\kappa_{j}}{-x\sin\kappa_{j}+\cos\kappa_{j}}\Big{)}(-x\sin\kappa_{j}+\cos\kappa_{j})^5=\mathscr{W}(x).
\label{cyc}
\end{eqnarray}
Indeed, let $y=\frac{x\cos\kappa_{j}+\sin\kappa_{j}}{-x\sin\kappa_{j}+\cos\kappa_{j}}$. Then
\begin{eqnarray*}
\mathscr{W}(y)=\frac{1+i}{2}(y+i)^5+\frac{1-i}{2}(y-i)^{5},\text{ while }
y\pm i=\frac{(x\pm i)e^{\mp i\kappa_{j}}}{-x\sin\kappa_{j}+\cos\kappa_{j}},
\end{eqnarray*}
and (\ref{cyc}) follows.\\

 The function $\varpi(x,y)$ vanishes when
\begin{eqnarray*}
\frac{x}{y}=-1+\sqrt{2}\pm\sqrt{4-2\sqrt{2}},\quad -1-\sqrt{2}\pm\sqrt{4+2\sqrt{2}},
\end{eqnarray*}
and $\varrho$ vanish at inverses of these algebraic numbers. The vector field itself vanishes only at $(x,y)=(0,0)$. On the unit circle, the flow in outward of inward depending on $\langle(\varpi,\varrho),(x,y)\rangle>0$ or $<0$, the critical values being $\langle(\varpi,\varrho),(x,y)\rangle=0$. Or, as calculations show, it is tantamount to $\mathscr{W}(-\frac{x}{y})=0$. Thus, the flow is tangent to the unit circle at five points
\begin{eqnarray*}
\frac{x}{y}=-1,\quad -1+\sqrt{5}\pm\sqrt{5-2\sqrt{5}},\quad -1-\sqrt{5}\pm\sqrt{5+2\sqrt{5}}.
\end{eqnarray*}
On the unit circle, these are the vertices of the regular pentagon, since these are the values $\tan(-\frac{\pi}{4}+\frac{2\pi j}{5})$, $j=0,1,2,3,4$. On the other hand, the flow is orthogonal to the unit circle at the roots of $\mathscr{W}(\frac{x}{y})$, which are 
\begin{eqnarray}
\xi_{j}=\tan\Big{(}\frac{\pi}{4}+\frac{2\pi j}{5}\Big{)},\quad j=0,1,2,3,4,
\label{xi}
\end{eqnarray}
the negatives of the numbers given just above. Let us arrange these values: 
\begin{eqnarray}
\xi_{1}&=&-1.962_{+}<\xi_{4}=-0.509_{+}\nonumber
\\&<&\xi_{2}=0.158_{+}<\xi_{0}=1.<\xi_{3}=6.313_{+}.
\label{arrange}
\end{eqnarray}

We can give the Taylor series for the function $\gamma$ directly, as before, using an analogue of (\ref{expl-taylor}) in a $2-$dimensional case. This yields
\begin{eqnarray}
\frac{\gamma(x,-x)}{x}&=&1-2x+8x^2+8x^3-16x^4-\frac{768}{5}x^5\label{gamma-first}\\
&+&\frac{2944}{5}x^6+\frac{84352}{35}x^7-\frac{357632}{35}x^8
\cdots,\nonumber\\
\frac{\gamma(x,-x)}{\gamma(-x,x)}
&=&-1+4x-8x^2-32x^3+160x^4\label{gamma-swap}\\
&+&\frac{1216}{5}x^5-\frac{13824}{5}x^6-\frac{55808}{35}x^7+\cdots.
\nonumber
\end{eqnarray}\normalsize
To integrate the vector field $\varpi\bl\varrho$, we use the method developed in \cite{alkauskas-ab}. Thus, consider the ODE
\begin{eqnarray*}
f(x)\varrho(x)+f'(x)(x\varrho(x)-\varpi(x))=1,
\end{eqnarray*}
where in our case $\varpi(x)=\varpi(x,1)$, and $\varrho(x)=\varrho(x,1)$. This ODE has the solution $Y(x)$, where
\begin{eqnarray*}
Y(x)=\frac{1}{\mathscr{W}(x)^{1/5}}\int\limits_{1}^{x}\frac{(t^2+1)\d t}{\mathscr{W}(t)^{4/5}}, \quad x\in\mathbb{R}.
\end{eqnarray*}
Let us now define the function $\mathbf{k}(t)$ implicitly from
\begin{eqnarray*}
\mathscr{W}(\mathbf{k}(t))Y^{5}(\mathbf{k}(t))=t^5.
\end{eqnarray*} 
Let $\alpha(t)$ be the inverse of $\mathbf{k}(t)$: $\mathbf{k}(\alpha(t))=t$. We get $\mathscr{W}(x)Y^5(x)=\alpha^{5}(x)$, and so
\begin{eqnarray}
\alpha(x)=\int\limits_{1}^{x}\frac{(t^2+1)\d t}{\mathscr{W}(t)^{4/5}}.
\label{abel-a}
\end{eqnarray}
So, $\alpha$ is an abelian integral, and $\mathbf{k}(x)$ is an abelian function.

\begin{figure}
\includegraphics[width=80mm,height=80mm,angle=-90]{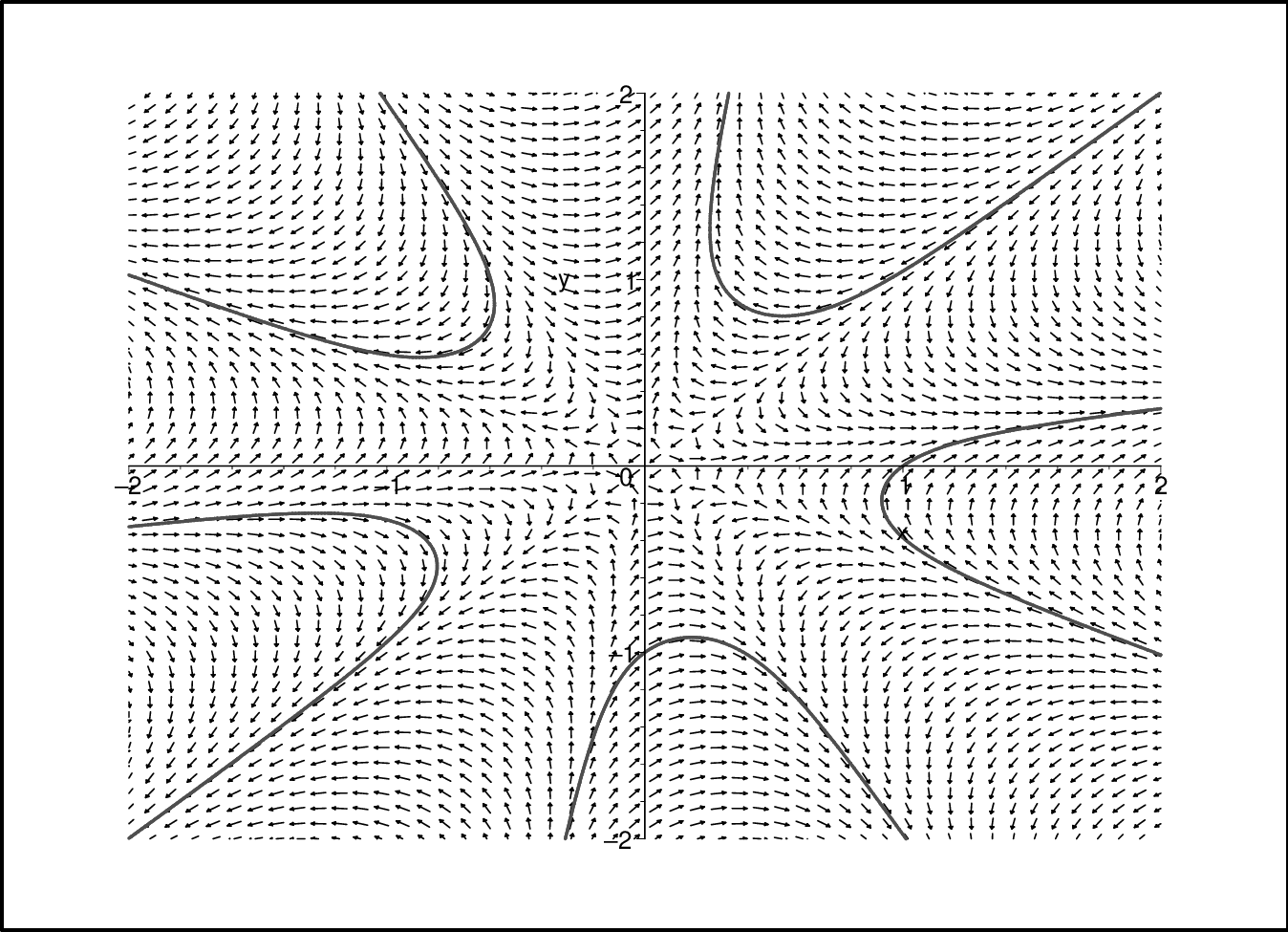}
\caption{The normalized vector field $\varpi\bl\varrho$ (all arrows are adjusted to have the same length), $-2\leq x,y\leq 2$, with a selected  orbit $\mathscr{W}(x,y)=1$.}
\label{fig3}
\end{figure}

\begin{figure}
\includegraphics[width=80mm,height=80mm,angle=-90]{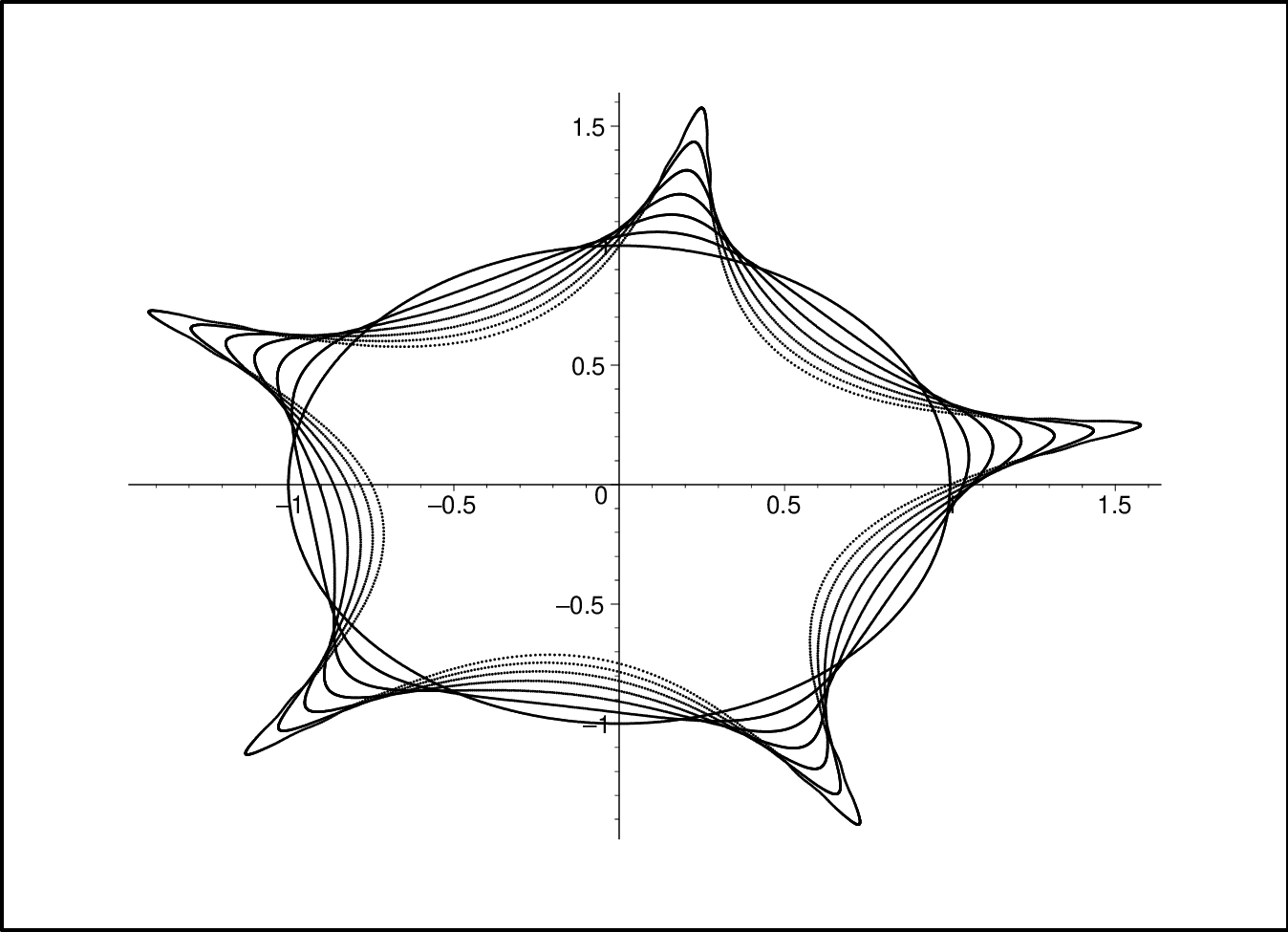}
\caption{Deformation of the unit circle $\mathbf{S}=\{x^{2}+y^{2}=1\}$ under the superflow $\phi_{\mathbb{D}_{5}}$. The seven curves $\phi_{\mathbb{D}_{5}}^{0.044j}(\mathbf{S})$, $j=0,\ldots 6$, are shown.}
\label{fig4}
\end{figure}

\section{Fundamental period and symmetries}
\label{fundamental-period}
Let us introduce the fundamental period (according to the notion introduced in \cite{konzag})
\begin{eqnarray}
\Omega=\frac{1}{5}\int\limits_{-\infty}^{\infty}\frac{(t^2+1)\d t}{\mathscr{W}(t)^{4/5}}=\frac{2}{5}\int\limits_{-1}^{1}\frac{(t^2+1)\d t}{\mathscr{W}(t)^{4/5}}=1.7162590512_{+}.
\label{omega}
\end{eqnarray}
(The equality of integrals will be demonstrated soon). \\

 We will now numerically evaluate $\Omega$. MAPLE cannot handle the integral directly because of the singularities, including at $t=1$. Let $Q(x)=-x^4+4x^3+14x^2+4x-1$. Then $\mathscr{W}(t)=Q(t)(1-t)$. Now, equalities after Proposition \ref{prop6} imply that
\begin{eqnarray*}
\int\limits_{-\xi_{4}}^{1}\frac{(t^2+1)\d t}{\mathscr{W}(t)^{4/5}}=\frac{\Omega}{2}.
\end{eqnarray*}
Assume $x\leq 1$, and let
\begin{eqnarray*}
T(x)=\int\frac{(x^{2}+1)\d x}{(1-x)^{4/5}}=-\frac{5}{11}(1-x)^{11/5}+\frac{5}{3}(1-x)^{6/5}-10(1-x)^{1/5}.
\end{eqnarray*}
Then, using integration by parts,
\begin{eqnarray*}
\int\frac{(x^2+1)\d x}{\mathscr{W}(x)^{4/5}}= T(x)Q(x)^{-4/5}+\frac{4}{5}\int T(x)\frac{Q'(x)}{Q(x)^{9/5}}\d x.
\end{eqnarray*}
The last integral has no singularities for $x\in[-\xi_{4},1]$, and we can calculate the integral to an arbitrary precision with MAPLE.\\

In (\ref{abel-a}), while integrating from the value $t=1$, we can go several times from $-\infty$ to $\infty$, ending up at $t=x$. To rephrase it differently, the function $\alpha$ is only defined modulo $5\Omega$, and so
\begin{eqnarray*}
\mathbf{k}(x+5\Omega)=\mathbf{k}(x).
\end{eqnarray*}
 The crucial property and thus the reason for one of the symmetries of the superflow $\phi_{\mathbb{D}_{5}}$ is the following involutive equality.
\begin{prop}The functions $\alpha$ and $\mathbf{k}$ satisfy
\begin{eqnarray}
 \alpha\Big{(}\frac{1}{x}\Big{)}+\alpha(x)\equiv 0\text{ }\mathrm{( mod}\text{ }5\Omega),\quad \mathbf{k}(x)\cdot\mathbf{k}(-x)=1.
\label{inver-a}
\end{eqnarray}
\label{prop5}
\end{prop}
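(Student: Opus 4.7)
The plan is to establish both identities simultaneously, since the second follows from the first via the inverse relation between $\alpha$ and $\mathbf{k}$. The key observation is that the reflection $(x,y)\mapsto(y,x)$ is the generator $\beta$ of the dihedral symmetry group of $\phi_{\mathbb{D}_{5}}$, and it induces an antisymmetry on the orbit polynomial: a direct inspection of the coefficients $1,-5,-10,10,5,-1$ of $\mathscr{W}(x,y)$ (equivalently, Proposition \ref{prop5-def} with $d=2$) yields $\mathscr{W}(y,x)=-\mathscr{W}(x,y)$. Setting $y=1$ and using $5$-homogeneity, this becomes $s^{5}\mathscr{W}(1/s)=-\mathscr{W}(s)$, and taking the $4/5$ power in the convention $(\cdot)^{4/5}=((\cdot)^{4})^{1/5}\geq 0$ gives $\mathscr{W}(1/s)^{4/5}=\mathscr{W}(s)^{4/5}/s^{4}$.

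To obtain the first identity, I would substitute $t=1/s$ in (\ref{abel-a}) applied to $\alpha(1/x)$. A short computation shows $(t^{2}+1)\,dt=-(s^{2}+1)\,ds/s^{4}$, so the factor $s^{4}$ cancels against the one produced above, and the endpoints transform as $t=1\leftrightarrow s=1$ and $t=1/x\leftrightarrow s=x$. This yields
\[
\alpha(1/x)=\int_{1}^{x}\frac{-(s^{2}+1)\,ds}{\mathscr{W}(s)^{4/5}}=-\alpha(x),
\]
valid as a strict equality whenever $x$ and $1/x$ lie in the same connected component of $\mathbb{R}^{\ast}$. When $x$ crosses $0$, the transported $s$-path loops through the line at infinity, and the difference contributes integer multiples of $5\Omega$; this is the source of the modular ambiguity in the statement.

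The second identity then follows formally. Writing $u=\alpha(x)$, so that $x=\mathbf{k}(u)$, the congruence rewrites as $\alpha(1/\mathbf{k}(u))\equiv -u\pmod{5\Omega}$. Since $\mathbf{k}$ is $5\Omega$-periodic and inverts $\alpha$ modulo $5\Omega$, applying $\mathbf{k}$ to both sides gives $1/\mathbf{k}(u)=\mathbf{k}(-u)$, or equivalently $\mathbf{k}(u)\,\mathbf{k}(-u)=1$.

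The main obstacle is the careful bookkeeping of the period shift: the substitution $t=1/s$ is a diffeomorphism on each component of $\mathbb{R}\setminus\{0\}$ but exchanges $0$ and $\infty$, so the transported path need not coincide with the direct real-line path. I would handle this by first proving the equality strictly on $(0,\infty)$, then extending across $x=\pm\infty$ and reading off any period contributions from the symmetric representation $5\Omega=\int_{-\infty}^{\infty}(t^{2}+1)\mathscr{W}(t)^{-4/5}\,dt$ in (\ref{omega}). The second equality in (\ref{omega}), namely $\int_{-\infty}^{\infty}=2\int_{-1}^{1}$, serves as a built-in consistency check: applying the same substitution to the intervals $(1,\infty)$ and $(-\infty,-1)$ bijects them onto $(0,1)$ and $(-1,0)$ respectively, with integrands that match on the nose.
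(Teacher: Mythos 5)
Your proposal is correct and follows essentially the same route as the paper: the substitution $t\mapsto 1/s$ in the abelian integral combined with the reciprocity $s^{5}\mathscr{W}(1/s)=-\mathscr{W}(s)$ (whose sign disappears under the $4/5$ power) gives $\alpha(1/x)\equiv-\alpha(x)$, and applying $\mathbf{k}$ yields $\mathbf{k}(x)\mathbf{k}(-x)=1$. Your extra bookkeeping of where the $\mathrm{mod}\ 5\Omega$ ambiguity enters is a welcome refinement of the paper's terser computation, but not a different argument.
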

\begin{proof}
Indeed,
\begin{eqnarray*}
\alpha\Big({}\frac{1}{x}\Big{)}=\int\limits_{1}^{1/x}\frac{(t^2+1)\d t}{\mathscr{W}(t)^{4/5}}=
-\int\limits_{1}^{x}\frac{(t^{-2}+1)t^{-2}\d t}{\mathscr{W}(t^{-1})^{4/5}}=
-\int\limits_{1}^{x}\frac{(t^{2}+1)\d t}{(t^5\mathscr{W}(t^{-1}))^{4/5}}
=-\alpha(x).
\end{eqnarray*}
For example, $2\alpha(-1)\equiv 0\text{ (mod }5\Omega)$, and this gives the second integral in (\ref{omega}). Now,
\begin{eqnarray*}
\mathbf{k}(\alpha(x))\cdot\mathbf{k}\Big{(}\alpha\big{(}\frac{1}{x}\big{)}\Big{)}=x\cdot\frac{1}{x}=1\Rightarrow 
\mathbf{k}(\alpha(x))\cdot\mathbf{k}(-\alpha(x))=1\Rightarrow\mathbf{k}(x)\cdot\mathbf{k}(-x)=1. 
\end{eqnarray*}
\end{proof}
Moreover, it appears that the function $\alpha$ has more symmetries. 
We will employ a deeper structure of the abelian integral. Let $\kappa_{j}=\frac{2\pi j}{5}$, where $j=0,1,2,3,4$ as before. 
\begin{prop}

The abelian integral $\alpha$ possesses the following properties:
\begin{eqnarray}
\alpha(\xi_{3})=\Omega,\quad \alpha(\xi_{2})=-\Omega,\quad \alpha(\xi_{4})=-2\Omega,\quad \alpha(\xi_{1})=-3\Omega,\nonumber\\
\alpha\Big{(}\frac{x\cos\kappa_{j}+\sin\kappa_{j}}{-x\sin\kappa_{j}+\cos\kappa_{j}}\Big{)}-\alpha(x)\equiv 2j\Omega\text{ }\mathrm{( mod}\text{ }5\Omega).\label{diff}
\end{eqnarray}
The abelian function $\mathbf{k}$ has the following properties:
\begin{eqnarray*}
\quad \mathbf{k}(x+2j\Omega)=
\frac{\mathbf{k}(x)\cos\kappa_{j}+\sin\kappa_{j}}{-\mathbf{k}(x)\sin\kappa_{j}+\cos\kappa_{j}}.
\end{eqnarray*}
\label{prop6}
\end{prop}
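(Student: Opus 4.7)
The plan is to use the $\mathbb{Z}/5$-symmetry encoded in (\ref{cyc}) to reduce the proof to a single period identification. First I would verify that the differential form $\omega = \frac{(t^2+1)\d t}{\mathscr{W}(t)^{4/5}}$ is invariant under the substitution $t = M_j(s) := \frac{s\cos\kappa_j+\sin\kappa_j}{-s\sin\kappa_j+\cos\kappa_j}$. A direct computation gives $\d M_j(s) = (-s\sin\kappa_j+\cos\kappa_j)^{-2}\d s$ and $M_j(s)^2+1 = (s^2+1)(-s\sin\kappa_j+\cos\kappa_j)^{-2}$, while (\ref{cyc}) rearranges to $\mathscr{W}(M_j(s)) = \mathscr{W}(s)\cdot(-s\sin\kappa_j+\cos\kappa_j)^{-5}$; hence $\mathscr{W}(M_j(s))^{4/5} = \mathscr{W}(s)^{4/5}(-s\sin\kappa_j+\cos\kappa_j)^{-4}$, and all these factors cancel in the ratio forming $\omega$.

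Next, substituting $t = M_j(s)$ into $\alpha(M_j(x)) = \int_1^{M_j(x)}\omega$ (the lower endpoint $t=1=\xi_0$ corresponds to $s = M_{-j}(1) = \xi_{-j}$, using the tangent addition formula $M_j(\tan\theta) = \tan(\theta+\kappa_j)$ with $\theta = \pi/4$) yields
\begin{eqnarray*}
\alpha(M_j(x)) \equiv \alpha(x)-\alpha(\xi_{-j}) \pmod{5\Omega}.
\end{eqnarray*}
Setting $x = \xi_0$ gives $\alpha(\xi_j) \equiv -\alpha(\xi_{-j})$; feeding this back and setting $x = \xi_k$ turns the above into the cocycle $\alpha(\xi_{j+k}) \equiv \alpha(\xi_j)+\alpha(\xi_k) \pmod{5\Omega}$. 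Hence $j \mapsto \alpha(\xi_j)$ is a group homomorphism $\mathbb{Z}/5\mathbb{Z} \to \mathbb{C}/5\Omega\mathbb{Z}$, so $\alpha(\xi_j) \equiv jm\Omega \pmod{5\Omega}$ for some integer $m$ determined modulo $5$.

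To pin down $m$, I would interpret $\mathscr{W}^{4/5}$ via the real fifth root (so that $\mathscr{W}^{4/5} = |\mathscr{W}|^{4/5}$ on the real axis), under which the defining integral $5\Omega = \int_{-\infty}^{\infty}\omega$ splits as the sum of five arc-integrals over the arcs of $\mathbb{RP}^1$ cut out by the $\xi_j$. Since $M_3$ has positive determinant $\cos^2\kappa_3+\sin^2\kappa_3 = 1$, it cyclically permutes these five arcs in the order they appear on $\mathbb{RP}^1$ (the ordering $\xi_1<\xi_4<\xi_2<\xi_0<\xi_3$ enters here, and $j\mapsto j+3$ is a single shift in this cyclic order); by the invariance established above, each arc-integral equals $\Omega$. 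In particular, the arc $(\xi_0,\xi_3)$ contributes $\alpha(\xi_3)=\Omega$, and comparing with $\alpha(\xi_3) \equiv 3m\Omega$ forces $3m \equiv 1 \pmod 5$, so $m \equiv 2 \equiv -3 \pmod 5$. The four stated values then follow by counting signed arc-crossings from $\xi_0=1$: moving leftward through $\xi_2,\xi_4,\xi_1$ crosses $1,2,3$ arcs in the negative direction, giving $\alpha(\xi_2) = -\Omega$, $\alpha(\xi_4) = -2\Omega$, $\alpha(\xi_1) = -3\Omega$.

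The main obstacle is justifying the equality of the five real arc-integrals under the real-root convention, because $M_j$ generically maps arcs where $\mathscr{W}>0$ onto arcs where $\mathscr{W}<0$ and vice versa, so the invariance of $\omega$ established above must be matched with a consistent choice of branch. The clean resolution is to lift $M_j$ to the automorphism $(t,w)\mapsto\bigl(M_j(t),\,w/(-t\sin\kappa_j+\cos\kappa_j)\bigr)$ of the 5-fold cover $w^5=\mathscr{W}(t)$, under which $\omega$ is literally invariant; the equality of real arc-integrals then descends from this covering symmetry once one checks that real 5th roots patch consistently across each branch point $\xi_j$ and across infinity. Once $\alpha(M_j(x)) \equiv \alpha(x)+2j\Omega \pmod{5\Omega}$ is established, the addition law for $\mathbf{k}$ is immediate: substituting $x = \mathbf{k}(u)$ so $\alpha(x) = u$ gives $\alpha(M_j(\mathbf{k}(u))) \equiv u+2j\Omega \pmod{5\Omega}$, and inverting yields $\mathbf{k}(u+2j\Omega) = M_j(\mathbf{k}(u)) = \frac{\mathbf{k}(u)\cos\kappa_j+\sin\kappa_j}{-\mathbf{k}(u)\sin\kappa_j+\cos\kappa_j}$.
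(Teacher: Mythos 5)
Your proposal is correct, and its first half --- verifying that the form $\omega=(t^2+1)\mathscr{W}(t)^{-4/5}\d t$ is invariant under $t\mapsto M_j(t)$ and changing variables in (\ref{abel-a}) to conclude that $\alpha(M_j(x))-\alpha(x)$ is constant modulo $5\Omega$, then evaluating at $x=1$ --- is exactly the paper's argument. Where you genuinely diverge is in how the constants are pinned down. The paper simply asserts that ``the ordering (\ref{arrange}) and the definition of the period $\Omega$ gives the correct values,'' concedes that the change of variables is ``not literally correct'' because M\"{o}bius maps act on the projective line rather than on intervals, and falls back on ``thorough inspection'' plus numerical evaluation. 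You instead extract two pieces of structure that make this step rigorous: the cocycle identity $\alpha(\xi_{j+k})\equiv\alpha(\xi_j)+\alpha(\xi_k)$, which forces $\alpha(\xi_j)\equiv jm\Omega$ for a single residue $m$ modulo $5$, and the observation that $M_3$ is the successor map for the cyclic order $\xi_1,\xi_4,\xi_2,\xi_0,\xi_3$ on the projective line, so the five arc-integrals of the ($M_j$-invariant, positive, integrable) density $\omega$ are all equal to $\frac{1}{5}\int_{-\infty}^{\infty}\omega=\Omega$; the arc from $\xi_0=1$ to $\xi_3$ then gives $\alpha(\xi_3)=\Omega$ and $m\equiv 2$. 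This buys a proof of the exact values in (\ref{diff}) that needs neither case-by-case inspection of how each $M_j$ scrambles intervals nor numerical confirmation, and it simultaneously disposes of the wrap-around-through-infinity issue the paper flags, since everything is phrased on the projective line from the start. Your worry about branch consistency is in fact milder than you suggest: with the real fifth root, $\mathscr{W}^{4/5}=|\mathscr{W}|^{4/5}>0$ and real fifth roots are multiplicative, so $\omega$ is literally an $M_j$-invariant positive density away from the $\xi_j$, and the lift to $w^5=\mathscr{W}(t)$, while a clean way to say this, is not strictly needed for the real computation. The deduction of the addition law for $\mathbf{k}$ by inverting $\alpha$ is the same in both treatments.
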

Note that the Proposition implies (see (\ref{xi}))
\begin{eqnarray*}
\mathbf{k}(0)&=&1,\quad\,\mathbf{k}\Big{(}\frac{\Omega}{2}\Big{)}=-\xi_{1},\\
\mathbf{k}(\Omega)&=&\xi_{3},\quad \mathbf{k}\Big{(}\frac{3\Omega}{2}\Big{)}=-\xi_{3},\\
\mathbf{k}(2\Omega)&=&\xi_{1},\quad  \mathbf{k}\Big{(}\frac{5\Omega}{2}\Big{)}=-1,\\
\mathbf{k}(3\Omega)&=&\xi_{4},\quad \mathbf{k}\Big{(}\frac{7\Omega}{2}\Big{)}=-\xi_{2},\\
\mathbf{k}(4\Omega)&=&\xi_{2},\quad  \mathbf{k}\Big{(}\frac{9\Omega}{2}\Big{)}=-\xi_{4}.
\end{eqnarray*}
\begin{proof}
Let us write $\alpha\Big{(}\frac{x\cos\kappa_{j}+\sin\kappa_{j}}{-x\sin\kappa_{j}+\cos\kappa_{j}}\Big{)}$ by the integral (\ref{abel-a}), and make the change of variables $t=\frac{u\cos\kappa_{j}+\sin\kappa_{j}}{-u\sin\kappa_{j}+\cos\kappa_{j}}$. Using (\ref{cyc}), and also the identities
\begin{eqnarray*}
\d\frac{u\cos\kappa_{j}+\sin\kappa_{j}}{-u\sin\kappa_{j}+\cos\kappa_{j}}=\frac{\d u}{(-u\sin\kappa_{j}+\cos\kappa_{j})^2},\\ (u\cos\kappa_{j}+\sin\kappa_{j})^2+(-u\sin\kappa_{j}+\cos\kappa_{j})^2=u^2+1,
\end{eqnarray*}
we transform the integral in question into the integral (\ref{abel-a}), only with a different lower bound. This shows that the left hand side of (\ref{diff}) is constant (modulo $5\Omega$), and we find it putting $x=1$. At $x=1$, we have:
\begin{eqnarray*}
\alpha\Big{(}\frac{\cos\kappa_{j}+\sin\kappa_{j}}{-\sin\kappa_{j}+\cos\kappa_{j}}\Big{)}=
\alpha\Big{(}\frac{1+\tan\kappa_{j}}{1-\tan\kappa_{j}}\Big{)}=
\alpha\Big{(}\tan\big{(}\frac{\pi}{4}+\kappa_{j}\big{)}\Big{)}=\alpha(\xi_{j}).
\end{eqnarray*}
The ordering (\ref{arrange}) and the definition of the period $\Omega$ gives the correct values. 
Now, the above proof is not literally correct, as M\"{o}bius transformations are transformations of the projective line $P\mathbb{R}^{1}$, so after making a change of variables we sometimes, instead of integrating over $(a,b)$, in fact we end up integrating over $(-\infty,a)\cup(b,\infty)$. The thorough inspection thus shows that the correct statement is the one given in the formulation of the Proposition. Numerical evaluation of integrals with computer confirms this, too. 
\end{proof}
Another period, seemingly not a rational multiple of $\Omega$, is given by 
\begin{eqnarray*}
\Xi=\int\limits_{1}^{\infty}\frac{(t^2+1)\d t}{\mathscr{W}(t)^{4/5}}=\int\limits_{0}^{1}\frac{(t^2+1)\d t}{\mathscr{W}(t)^{4/5}}=2.4439543584_{+}. 
\end{eqnarray*}
This does not play such a crucial r\^{o}le in the arithmetic of $\mathbf{k}$, but nevertheless satisfies
\begin{eqnarray*}
\mathbf{k}(\Xi_{-})=\infty,\quad \mathbf{k}(\Xi_{+})=-\infty,\quad \mathbf{k}(5\Omega-\Xi)=0.
\end{eqnarray*}
\begin{Note} To evaluate $\Xi$ numerically, we use Proposition \ref{prop6}. In particular, it states that $\alpha\big{(}\tan(\frac{2\pi}{5})\big{)}=\alpha(3.077_{+})=2\Omega-\Xi$. So,
\begin{eqnarray*}
\int\limits_{1}^{\tan(\frac{2\pi}{5})}\frac{(t^2+1)\d t}{\mathscr{W}(t)^{4/5}}=2\Omega-\Xi.
\end{eqnarray*}
If we now use the same method as in Note \ref{note1}, only replace $(1-x)$ with $(x-1)$ in the definition of $T(x)$, we get the desired numerical value.
\end{Note}

\section{Analytic formulas}Now, as in (\cite{alkauskas-ab}, Subsection 6.1), we can write the analytic formula for the superflow $\phi_{\mathbb{D}_{5}}=\gamma\bl\xi$ immediately (for a while, we denote the second coordinate by $\xi(x,y)$; after having proved the analytic formulas, we will confirm that $\xi(x,y)=\gamma(y,x)$):
\begin{eqnarray*}
\gamma\Big{(}\frac{\mathbf{k}(a)\v}{a},\frac{\v}{a}\Big{)}\bl
\xi\Big{(}\frac{\mathbf{k}(a)\v}{a},\frac{\v}{a}\Big{)}
=\frac{\mathbf{k}(a-\tilde{\v})\tilde{\v}}{a-\tilde{\v}}Y\big{(}
\mathbf{k}(a-\tilde{\v})\big{)}\bl
\frac{\tilde{\v}}{a-\tilde{\v}}Y\big{(}\mathbf{k}(a-\tilde{\v})\big{)}\\
=\frac{\mathbf{k}(a-\tilde{\v})\tilde{\v}}{\big{(}\mathscr{W}(\mathbf{k}(a-\tilde{\v})\big{)}^{1/5}}\bl
\frac{\tilde{\v}}{\big{(}\mathscr{W}(\mathbf{k}(a-\tilde{\v})\big{)}^{1/5}},
\text{ where }\tilde{\v}=\frac{\v}{Y(\mathbf{k}(a))}.
\end{eqnarray*}
Let
\begin{eqnarray*}
x=\frac{\mathbf{k}(a)\v}{a},\quad y=\frac{\v}{a}.
\end{eqnarray*}
Then, as in \cite{alkauskas-ab}, we have
\begin{eqnarray*}
\mathbf{k}(a)=\frac{x}{y},\quad a=\alpha\Big{(}\frac{x}{y}\Big{)},\quad
\v=y\alpha\Big{(}\frac{x}{y}\Big{)},\quad
\tilde{\v}=\frac{y\alpha\Big{(}\frac{x}{y}\Big{)}}{Y\Big{(}\frac{x}{y}\Big{)}}=
\mathscr{W}(x,y)^{1/5}.
\end{eqnarray*}
So, we finally get
\begin{eqnarray}
\phi_{\mathbb{D}_{5}}(\m{x})=
\frac{\mathbf{k}\big{(}\alpha(\frac{x}{y})-\v\big{)}\v}{\mathscr{W}^{1/5}\Big{(}\mathbf{k}\big{(}\alpha(\frac{x}{y})-\v\big{)}\Big{)}}\bl
\frac{\v}{\mathscr{W}^{1/5}\Big{(}\mathbf{k}\big{(}\alpha(\frac{x}{y})-\v\big{)}\Big{)}},\label{final-fo}
\end{eqnarray}
where $\v=\mathscr{W}(x,y)^{1/5}$. Now, in the second coordinate, let us swap $x$ and $y$. Then $\v$ changes its sign. Using (\ref{inver-a}), and also the reciprocity property
$\mathscr{W}(\frac{1}{t})t^{5}=-\mathscr{W}(t)$, we see that indeed the result is equal to the first coordinate. So, $\xi(x,y)=\gamma(y,x)$.
In particular,
\begin{eqnarray}
\frac{\gamma(x,y)}{\gamma(y,x)}=\mathbf{k}\Big{(}\alpha\Big{(}\frac{x}{y}\Big{)}-\v\Big{)}.
\label{fract}
\end{eqnarray}  
Moreover, using Proposition \ref{prop6} and (\ref{cyc}), we directly verify the pair of functions $\gamma(x,y)\bl\gamma(y,x)$ is invariant under conjugation with the matrix $\tilde{\alpha}$. Thus, Propositions \ref{prop5} and \ref{prop6} are the main reason for the $10-$fold symmetry of our superflow.\\

If we differentiate $\alpha(\mathbf{k}(x))=x$, we obtain $\alpha'(\mathbf{k}(x))\mathbf{k}'(x)=1$, and so the function $\mathbf{k}(x)$ satisfies (see \ref{abel-a})
\begin{eqnarray}
\mathbf{k}'(x)^{5}=\frac{(\mathscr{W}(\mathbf{k}(x))^{4}}{(\mathbf{k}(x)^2+1)^{5}},\quad 
\mathbf{k}(0)=1.
\label{diff-k} 
\end{eqnarray}
Thus, this implies the following.
\begin{thm}
\label{thm-d10}
The superflow $\phi_{\mathbb{D}_{5}}=\gamma(x,y)\bl\gamma(y,x)$ generated by the vector field $\varpi\bl\varrho$, is an abelian flow whose orbits are the curves $\mathscr{W}=\mathrm{const}.$, and the analytic expression of $\phi_{\mathbb{D}_{5}}$ is given by
\begin{eqnarray*}
\phi_{\mathbb{D}_{5}}=\frac{\mathbf{k}(\omega)\v}{\mathbf{k}'(\omega)^{1/4}(\mathbf{k}(\omega)^2+1)^{1/4}}
&\bl&\frac{\v}{\mathbf{k}'(\omega)^{1/4}(\mathbf{k}(\omega)^2+1)^{1/4}},\\
\omega=\alpha\Big{(}\frac{x}{y}\Big{)}-\v,&\quad&\v=\mathscr{W}(x,y)^{1/5}.
\end{eqnarray*}
Here the pair of functions $(\mathbf{k},\mathbf{k}')$ (the abelian function and its derivative) parametrizes 
(locally) the genus $6$ curve $\mathcal{C}$, defined by $(x^2+1)^5y^5=(x-1)^4(x^4-4x^3-14x^2-4x+1)^4$, $\mathbf{k}(0)=1$. The superflow $\phi_{\mathbb{D}_{5}}$ has a dihedral group $\mathbb{D}_{5}$ as the group of its symmetries, and the crucial role in the arithmetic of this superflow is played by the fundamental period (\ref{omega}). 
\end{thm}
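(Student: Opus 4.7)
The plan proceeds in four steps, most of the heavy lifting having already been done in the preceding sections.

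\textbf{Step 1 (Restating the closed form).} Formula (\ref{final-fo}) has already been derived by the method of \cite{alkauskas-ab}. To obtain the form claimed in the theorem, I would simply rewrite $\mathscr{W}(\mathbf{k}(\omega))^{1/5}$ using the ODE (\ref{diff-k}). Taking fourth roots of $\mathbf{k}'(x)^{5}=\mathscr{W}(\mathbf{k}(x))^{4}/(\mathbf{k}(x)^{2}+1)^{5}$ gives $\mathbf{k}'(x)^{5/4}=\mathscr{W}(\mathbf{k}(x))/(\mathbf{k}(x)^{2}+1)^{5/4}$, hence $\mathscr{W}(\mathbf{k}(x))^{1/5}=\mathbf{k}'(x)^{1/4}(\mathbf{k}(x)^{2}+1)^{1/4}$, and substitution into (\ref{final-fo}) yields the stated expression.

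\textbf{Step 2 (Parametrization and double check).} The ODE (\ref{diff-k}) raised to the fourth power on the left and arranged as $(\mathbf{k}^{2}+1)^{5}(\mathbf{k}')^{5}=\mathscr{W}(\mathbf{k})^{4}$ shows at once that the pair $(\mathbf{k},\mathbf{k}')$ satisfies $(x^{2}+1)^{5}y^{5}=\mathscr{W}(x)^{4}$; the initial condition $\mathbf{k}(0)=1$ has been arranged by the choice of base point in (\ref{abel-a}). To verify that the formula in the theorem really is $\phi_{\mathbb{D}_{5}}$ and not some spurious solution, I would check two things: (i) the boundary condition (\ref{init}) by noting $\varsigma = O(t)$ as $\mathbf{x}\mapsto \mathbf{x}t$ and $\alpha(x/y)-\varsigma \to \alpha(x/y)$, combined with $\mathbf{k}(\alpha(x/y))=x/y$; and (ii) that $\mathscr{W}$ is a first integral of (\ref{sys-in}), which is the content of the computation already carried out in the proof of Proposition \ref{prop5-def}. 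As a belt-and-braces verification one could expand the resulting formula as a series in $t$ and match against the Taylor coefficients (\ref{gamma-first})-(\ref{gamma-swap}) produced by the recurrence (\ref{expl-taylor}).

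\textbf{Step 3 (Genus via Riemann--Hurwitz).} View $\mathcal{C}$ as the cyclic degree-$5$ cover $(x,y)\mapsto x$ of $\mathbb{P}^{1}$ defined by $y^{5}=\mathscr{W}(x)^{4}(x^{2}+1)^{-5}$. The ramification of this cover occurs precisely at points where the valuation of the right-hand side is not divisible by $5$. At the poles $x=\pm i$ the valuation of $(x^{2}+1)^{-5}$ is $-5$ and $\mathscr{W}$ is nonzero, so no ramification; at $x=\infty$ the right-hand side has valuation $-(20-10)=-10$, divisible by $5$; at the five simple zeros of $\mathscr{W}$ the valuation is $4$, coprime to $5$, giving total ramification with contribution $5-1=4$ each. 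Hence the ramification divisor has degree $5\cdot 4=20$, and Riemann--Hurwitz gives $2g_{\mathcal{C}}-2=5(-2)+20=10$, so $g_{\mathcal{C}}=6$.

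\textbf{Step 4 (Symmetry group and periodicity).} Invariance of $\phi_{\mathbb{D}_{5}}$ under the reflection $\beta:(x,y)\mapsto(y,x)$ is immediate from the form $\gamma(x,y)\bl\gamma(y,x)$ combined with Proposition \ref{prop5} (which forces $\xi(x,y)=\gamma(y,x)$ in (\ref{final-fo})). For the rotational part $\tilde{\alpha}$, I would combine the M\"obius identity (\ref{cyc}), which says $\mathscr{W}$ is a relative invariant under the action induced by $\tilde{\alpha}$ on $x/y$, with the translation law of Proposition \ref{prop6}: the shift $\alpha(\tilde{\alpha}\cdot (x/y))=\alpha(x/y)+2j\Omega\pmod{5\Omega}$ is absorbed by the $2\Omega$-periodicity of $\mathbf{k}$ up to the same M\"obius action. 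The uniqueness clause in Definition \ref{defin-proj}, together with Theorem \ref{thm-dih}, then shows that $\mathbb{D}_{5}$ is in fact the full symmetry group and not merely contained in it. The period $\Omega$ is the smallest positive real such that shifting the abelian argument by $2\Omega$ realizes the cyclic generator on the orbit, which is the sense in which it plays the crucial role.

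The main obstacle is genuinely Step 3 (keeping track of the ramification at infinity for the non-monic $\mathscr{W}$) together with the bookkeeping in Step 4, where one must ensure that the M\"obius action on $\mathbf{k}$ and the homogeneity of $\varsigma$ combine coherently to realize the $\mathbb{D}_{5}$-symmetry in the ambient $\mathbb{R}^{2}$-coordinates rather than merely on the ratio $x/y$.
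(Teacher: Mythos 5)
Your proposal is correct and, for Steps 1, 2 and 4, follows essentially the route the paper takes: the theorem there is obtained by substituting the consequence $\mathscr{W}(\mathbf{k}(\omega))^{1/5}=\mathbf{k}'(\omega)^{1/4}(\mathbf{k}(\omega)^2+1)^{1/4}$ of the ODE (\ref{diff-k}) into the already-derived formula (\ref{final-fo}), and the $\mathbb{D}_{5}$-symmetry is read off from (\ref{cyc}) together with Propositions \ref{prop5} and \ref{prop6}, exactly as you describe. The one place where you genuinely diverge is the genus computation. The paper makes the birational substitution $(x,y)\mapsto\bigl(x,\tfrac{y(x^2+1)}{\mathscr{W}(x)}\bigr)$, which turns $\mathcal{C}$ into $\mathscr{W}(u,v)=1$ and then simply quotes the genus $d(2d-1)=6$ of the smooth plane quintic from Proposition \ref{prop5-def}; you instead run Riemann--Hurwitz directly on the cyclic degree-$5$ cover $y^5=\mathscr{W}(x)^4(x^2+1)^{-5}$ of $\mathbb{P}^1$, and your bookkeeping is right: no ramification over $x=\pm i$ (valuation $-5$) or $x=\infty$ (valuation $-10$), total ramification over the five simple zeros of $\mathscr{W}$ (valuation $4$, coprime to $5$), hence $2g-2=-10+20$ and $g=6$. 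Your computation is more self-contained (it also shows in passing that the cover is connected, since the local exponents at the branch points are coprime to $5$), while the paper's is shorter because the genus of the orbit curves was already established. Both are sound; I would only flag that, as everywhere in this circle of formulas, the choice of branches of the fractional powers in Step 1 and of the quasi-periodicity shifts in Step 4 deserves the explicit care you already signal at the end.
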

If we make a birational change $(x,y)\mapsto(x,\frac{y(x^2+1)}{\mathscr{W}(x)})$, we see that the genus of $\mathcal{C}$ is the same as that of $\mathscr{W}(x,y)=1$. So, it is $6$.\\

Still, this is not the final formula. Indeed, we can write $\mathbf{k}(\omega)$ using addition formulas for abelian functions on the curve $\mathcal{C}$. Similarly as with the superflow $\Lambda$, this leads to the formulas which involve algebraic expressions in $(x,y)$ and the values of abelian functions at $\v$. So, the final conjectural formulas contain only algebraic and abelian functions, not abelian integrals, like $\alpha$. For example, 
\begin{eqnarray*}
\mathbf{k}(\alpha(x))=x,\quad \mathbf{k}'(\alpha(x))=\frac{\mathscr{W}(x)^{4/5}}{x^2+1},
\end{eqnarray*} 
and similarly for other abelian functions. The refinement of Theorem \ref{thm-d10} will be given in the fourth part of this work \cite{alkauskas-super4}. Also, the latter paper will contain analogous results for the icosahedral superflow \cite{alkauskas-super2}. In the icosahedral case the generic orbits are space curves of genus $25$, and with the help of algebraic transformations we can reduce the investigation of the orbits and the superflow itself to investigation of curves of genus $3$. For example, for one specific orbit, this reduces to a pair of functions $(\Delta,\Delta')=(X,Y)$ (the function and its derivative, as in the Weierstrass elliptic function case) which parametrizes the genus $3$ curve \cite{alkauskas-super2,alkauskas-super4}
\begin{eqnarray}
4\Big{(}36Y^2+5(42 X-1)
\big{(}48X^3+12X^2+36X-1\big{)}\Big{)}^2\nonumber\\
=375\big{(}48X^3+12X^2+36 X-1\big{)}^{3}.
\label{curve-ico}
\end{eqnarray} 
For a closely related topics concerning addition formulas for Abelian functions, see \cite{eilbeck}.
\begin{Example} Now, as always, we will verify the obtain formulas with the help of MAPLE. First, we will do it for the formula (\ref{fract}). Let $(x,y)=(x,-x)$. Then $\v=-\sqrt[5]{8}x$. Further, $\mathbf{k}(\alpha(-1))=-1$ gives $\mathbf{k}'(\alpha(-1))=2\sqrt[5]{4}$. Let us define the unknown coefficients $a_{i}$ by 
\begin{eqnarray*}
\mathbf{k}\big{(}\alpha(-1)+\sqrt[5]{8}x\big{)}=-1+\sum\limits_{i=1}^{\infty}a_{i}x^{i}=-1+L.
\end{eqnarray*} 
Then
\begin{eqnarray*}
\mathbf{k}'\big{(}\alpha(-1)+\sqrt[5]{8}x\big{)}=\frac{1}{\sqrt[5]{8}}\sum\limits_{i=1}^{\infty}ia_{i}x^{i-1}.
\end{eqnarray*}
Now,
\begin{eqnarray*}
\mathscr{W}(-1+L)=8-20L+20L^3-10L^4+L^5.
\end{eqnarray*}
Plugging this into (\ref{diff-k}), we obtain
\begin{eqnarray}
\sum\limits_{i=1}^{\infty}ia_{i}x^{i-1}=4\frac{(1-\frac{5}{2}L+\frac{5}{2}L^3-\frac{5}{4}L^4+\frac{1}{8}L^5)^{4/5}}{1-L+\frac{L^2}{2}}.
\label{above}
\end{eqnarray}
On the other hand, we know the values of the coefficients $a_{i}$ in advance, and they are given by (\ref{gamma-swap}). Plugging the known values into the right hand side of (\ref{above}) and expanding as a series of $x$ to an arbitrary power, we readily obtain the left hand side. Further,
\begin{eqnarray*}
\frac{(-1+L)\v}{(\mathscr{W}(-1+L))^{1/5}}=\frac{(1-L)x}{(1-\frac{5}{2}L+\frac{5}{2}L^3-\frac{5}{4}L^4+\frac{1}{8}L^5)^{1/5}}.
\end{eqnarray*}
Plugging the known value of the series $L$ into this formula and dividing by $x$, we obtain exactly the series (\ref{gamma-first}). Thus, the formula (\ref{final-fo}) has been double-verified.
\end{Example}
\section{Higher order dihedral superflows}
Let
\begin{eqnarray*}
\mathscr{W}(x)=\mathscr{W}_{2d+1}(x,1)=P_{2d+1}(x,1)-(-1)^{d}Q_{2d+1}(x,1).
\end{eqnarray*}
 Let us define
\begin{eqnarray*}
\alpha_{d}(x)=\int\limits_{1}^{x}\frac{(t^2+1)^{d-1}\d t}{\mathscr{W}(t)^{\frac{2d}{2d+1}}}.
\end{eqnarray*}
Similarly as in the case $d=2$, this is a fascinating abelian integral having a $(4d+2)-$fold symmetry. The fundamental period is defined by
\begin{eqnarray*}
\Omega_{2d+1}=\frac{1}{2d+1}\int\limits_{-\infty}^{\infty}\frac{(t^2+1)^{d-1}\d t}{\mathscr{W}(t)^{\frac{2d}{2d+1}}}.
\end{eqnarray*} 
All the steps in deriving analytic formulas are similar as in the case $2d+1=5$.\\

In particular, for $d=3$ the vector field $\varpi_{7}\bl\varrho_{7}$ is equal to 
\begin{eqnarray*}
\frac{x^6-6x^5y-15x^4y^2+20x^3y^3+15x^2y^4-6xy^5-y^6}{(x^2+y^2)^2}\bl \\
\frac{-x^6-6x^5y+15x^4y^2+20x^3y^3-15x^2y^4-6xy^5+y^6}{(x^2+y^2)^2}.
\end{eqnarray*}

Figure \ref{figg4} shows the deformation of the unit circle under this superflow.

\begin{figure}
\includegraphics[width=80mm,height=80mm,angle=-90]{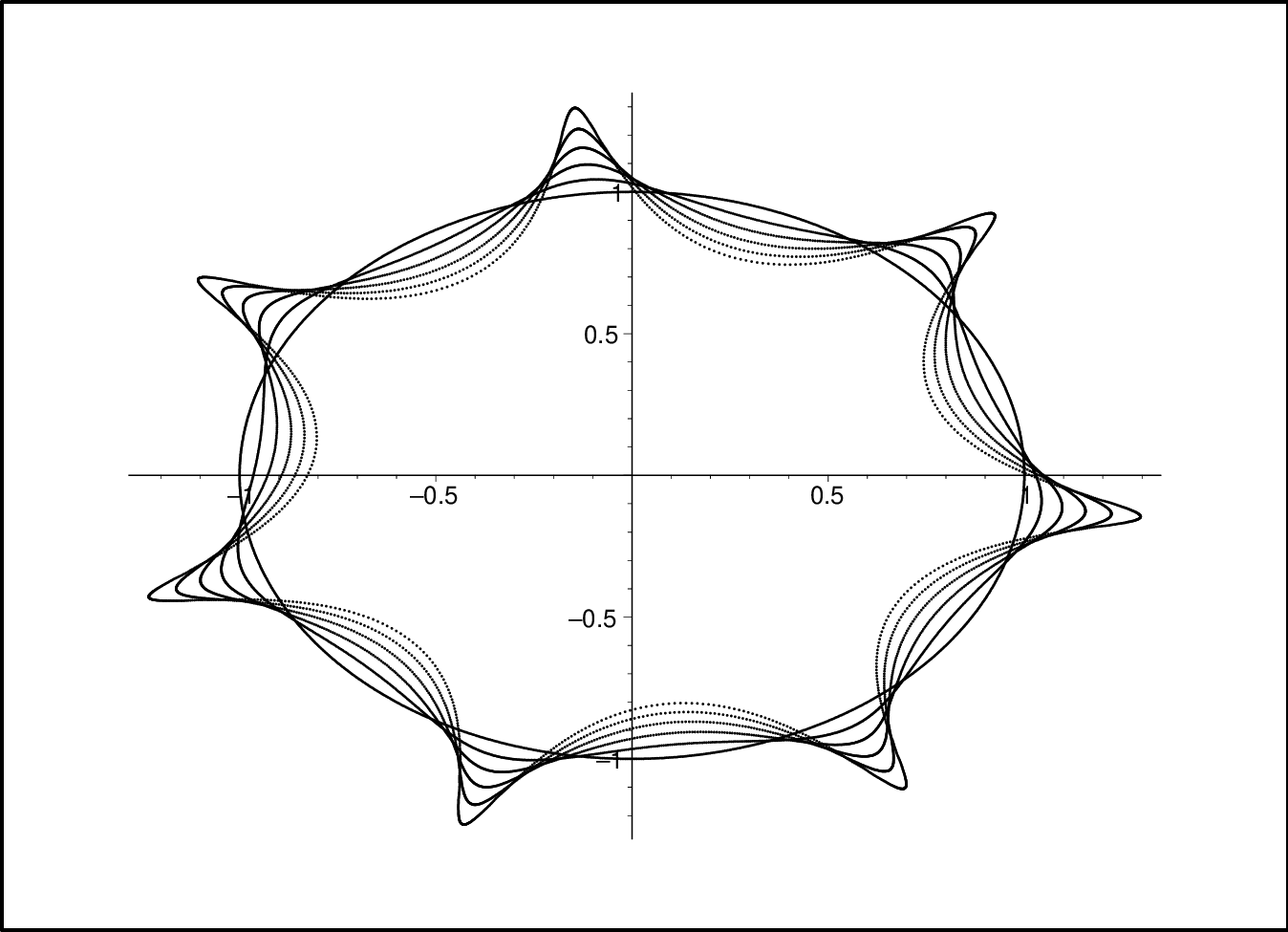}
\caption{Deformation of the unit circle $\mathbf{S}=\{x^{2}+y^{2}=1\}$ under the superflow $\phi_{\mathbb{D}_{7}}$. The six curves $\phi_{\mathbb{D}_{7}}^{0.033j}(\mathbf{S})$, $j=0,\ldots 5$, are shown.}
\label{figg4}
\end{figure}

\chapter{The full tetrahedral group $\widehat{\mathbb{T}}$ and the superflow $\phi_{\widehat{\mathbb{T}}}$}
\label{S4}
\begin{Poly} The vector fields of superflows below do not have denominators, so it applies both to projective and polynomial cases without alterations.
\end{Poly}
\section{Higher order symmetric groups $S_{n}$} 
\label{symm-N}
In the previous section it was shown that if we treat the symmetric group $S_{3}$ as a dihedral group $\mathbb{D}_{3}$, and explore the superflow $\Lambda$ given in Section \ref{sec-2d}, then (the linear conjugate of) this flow is a first example in the series of dihedral superflows $\phi_{\mathbb{D}_{2d+1}}$ of dimension $2$.\\

On the other hand, if we treat $S_{3}$ as a symmetric group, then $\Lambda$, as was shown in \cite{alkauskas-un}, is a first example (for $n=2$) in the series of superflows $\phi_{S_{n+1}}$ of dimension $n$ with symmetries given by the symmetric group $S_{n+1}$ of order $(n+1)!$. Namely, we consider the standard $(n+1)-$dimensional permutation representation of $S_{n+1}$, and then investigate its representation on an invariant $n$-dimensional subspace \cite{alkauskas-un,kostrikin}. These computations yield the following.\\

 Let $n\in\mathbb{N}$, $n\geq 2$, and consider the following vector field
\begin{eqnarray}
\mathbf{Q}(\m{x})=Q_{1}(\mathbf{x})\bl Q_{2}(\mathbf{x})\bl \ldots\bl Q_{n}(\mathbf{x}),
\label{Q-vec}
\end{eqnarray}
where
\begin{eqnarray*}
Q_{1}(\m{x})=x_{1}^{2}-\frac{2}{n-1}\cdot x_{1}
\sum\limits_{i=2}^{n}x_{i},
\end{eqnarray*}
and $Q_{i}$ is obtained from the above component by interchanging the r\^{o}les of $x_{1}$ and $x_{i}$. Now, consider the standard permutation representation of the symmetric group $S_{n}$. It is clear that the vector field $\mathbf{Q}$ is invariant under conjugation with elements of $S_{n}$. Moreover, consider the following $n\times n$ matrix of order $2$:
\begin{eqnarray}
\kappa=\begin{pmatrix}
-1 &   &   &   &  \\
-1 & 1 &   &   &  \\
-1 &   & 1 &   &  \\
\,\,\,\vdots & &   & \ddots &  \\
-1 &   &  &  & 1
\end{pmatrix}.
\label{kappa-matrix}
\end{eqnarray} 
Then, as calculations show, (\ref{kappa}) is satisfied. The group $S_{n}$ and the involution $\kappa$ generate the group $S_{n+1}$. The vector field (\ref{Q-vec}) produces a superflow, an $n$-dimensional generalization of $\Lambda$. These superflows are concisely investigated in Section \ref{desimtas}. In the next Section we pass to a special case $n=3$. 
\begin{Note} Let $n\geq 2$. In \cite{alkauskas-super2} the following is shown. Consider the group $S_{n+1}\oplus\mathbb{Z}_{2}$, $\mathbb{Z}_{2}=\{1,-1\}$, and its $(n+1)$-dimensional representation given by
\begin{eqnarray*}
\widehat{\sigma}:S_{n+1}\oplus\mathbb{Z}_{2}\mapsto\mathrm{GL}(n+1,\mathbb{R}),\quad \widehat{\sigma}\big{(}(\tau,\epsilon)\big{)}=\sigma(\tau)\oplus\tilde{\epsilon}.
\end{eqnarray*} 
Here $\tilde{\epsilon}$ is a $1\times 1$ matrix with an entry $\epsilon$, and $\sigma$ is a $n$-dimensional representation of $S_{n+1}$ just described. Then the linear group $\Gamma=\widehat{\sigma}(S_{n+1}\oplus\mathbb{Z}_{2})$ of order $2(n+1)!$ gives rise to a $(n+1)$-dimensional reducible superflow with a vector field
\begin{eqnarray*}
Q_{1}(\m{x})\bl\cdots\bl Q_{n}(\m{x})\bl 0.
\end{eqnarray*}
This is in $(n+1)$ variables $x_{1},x_{2},\ldots,x_{n+1}$, but in fact independent from the last one. In \cite{alkauskas-super2} we conjecture that it is the subgroup of $\mathrm{GL}(n+1,\mathbb{R})$ of the smallest cardinality for which there exists a (projective or polynomial) superflow. For $n=2$ this is exactly (the linear conjugate of) a $3$-prismal superflow.
\end{Note}
\section{Superflow $\phi_{\widehat{\mathbb{T}}}$}
\label{sub4.2}
Let us define 
\begin{eqnarray}
\alpha\mapsto\begin{pmatrix}
1 & 0 & 0\\
0 & -1 & 0\\
0 & 0 & -1 
\end{pmatrix},&\quad&
\beta\mapsto\begin{pmatrix}
-1 & 0 & 0\\
0 & 1 &0\\
0 & 0 & -1 
\end{pmatrix},\nonumber\\
\gamma\mapsto\begin{pmatrix}
0 & 1 & 0\\
1 & 0 & 0\\
0 & 0 & 1 
\end{pmatrix},
&\quad&
\delta\mapsto\begin{pmatrix}
0 & 1 & 0\\
0 & 0 & 1\\
1 & 0 & 0 
\end{pmatrix}.
\label{g-24}
\end{eqnarray}
These are matrices of order $\alpha^{2}=\beta^{2}=\gamma^{2}=\delta^{3}=I$, and together they generate the full tetrahedral group $\widehat{\mathbb{T}}$ of order $24$. In fact, already $\alpha,\gamma$ and $\delta$ generate the whole group, but we will need a matrix $\beta$ later; see Note \ref{note-red2}. Since $\det(\gamma)=-1$, this is  a subgroup of $O(3)$, not $SO(3)$. Calculations with SAGE (or the techniques in \cite{benson, verma}) yield that the ring of invariants is generated by $\{x^2+y^2+z^2,x^4+y^4+z^4, xyz\}$. See \cite{goller} for a rigorous proof. The vector field which is invariant under this group is given by
\begin{eqnarray}
\mathbf{S}(\m{x})=yz\bl xz\bl xy=\varpi\bl\varrho\bl\sigma.
\label{pelican}
\end{eqnarray}
and, up to scalar multiplication, this is the unique vector field without denominators. This vector field is linearly conjugate to the vector field (\ref{Q-vec}) in case $n=3$. \\

 The invariance of the flow $\phi_{\widehat{\mathbb{T}}}=U(x,y,z)\bl U(y,z,x)\bl U(z,x,y)$ under conjugation with all elements of the group $\widehat{\mathbb{T}}$ gives the following. 
\begin{eqnarray}
U(x,y,z)=-U(-x,-y,z)=-U(-x,y,-z)=U(x,z,y).
\label{u-inv}
\end{eqnarray}
(Compare this with (\ref{v-inv})).

\begin{Note}
\label{note-steiner}
Consider the unit sphere $\mathbf{S}^2$, and the map $f:\mathbf{S}^{2}\mapsto\mathbb{R}^{3}$, given by $f(x,y,z)=(yz,xz,xy)$. Since $f(-x,-y,-z)=f(x,y,z)$, this gives a self-intersecting mapping of the real projective plane into a three-dimensional space. If we remove six points, the obtained map is an immersion. The surface is called \emph{Steiner surface} (\cite{conlon}, p. 21), or alternatively \emph{Roman surface}, and it has the tetrahedral symmetry, exactly as the vector field $\mathbf{S}$; see Figure \ref{fig-steiner}. Implicitly, it is given by
\begin{eqnarray*}
x^2y^2+x^2z^2+y^2z^2-xyz=0.
\end{eqnarray*} 
See Section \ref{sec5.2} for an analogous surface in the octahedral case, and \cite{alkauskas-super2} - in the icosahedral. Implicit equation even in the octahedral case is far more complicated.
\end{Note}
\begin{figure}
\includegraphics[scale=0.65]{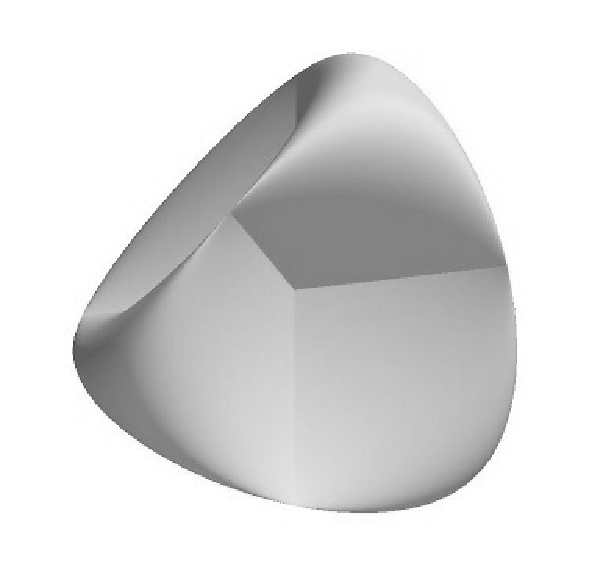}
\caption{The Steiner (Roman) surface}
\label{fig-steiner}
\end{figure}
\begin{Note}
\label{note-red2}
Suppose, a second degree polynomial vector field $A\bl B\bl C$ is invariant under conjugation with $\alpha$ and $\beta$. This gives, respectively,
\begin{eqnarray*}
A(x,-y,-z)=A(x,y,z),\quad -A(-x,y,-z)=A(x,y,z).
\end{eqnarray*}
So, $A=ayz$. Similarly, $B=bxz$, $C=cxy$, and so we have a vector field
\begin{eqnarray*}
ayz\bl bxz\bl cxy,\quad a,b,c\in\mathbb{R}.
\end{eqnarray*}
This is a general expression for the polynomial $2$-homogeneous vector field invariant under group generated by $\alpha$ and $\beta$ (Klein four-group). Note that the corresponding differential system, that is, $x'=A(x,y,z),y'=B(x,y,z),z'=(C(x,y,z)$, was considered in (\cite{hopkins2}, Example 1.3). Further, suppose this vector field is invariant under conjugation with $\delta$. This gives $a=b=c$, and thus we have a superflow.\\

However, three matrices $\alpha$, $\beta$ and $\delta$ generate the group $\mathbb{T}$ of order $12$, orientation preserving symmetries of a tetrahedron. But we know that this vector field is invariant also under conjugation with $\gamma$. Thus,
\begin{cor}
\label{cor1}
For $n=3$, there exists a finite subgroup $\Gamma$ of $\mathrm{GL}(n,\mathbb{R})$, for which there exists a superflow, whose full group of symmetries is equal to $\widetilde{\Gamma}$, a proper extension of $\Gamma$.
\end{cor}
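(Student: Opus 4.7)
The natural choice is $\Gamma=\mathbb{T}$, the rotation group of the tetrahedron of order $12$, generated by $\alpha$, $\beta$, and $\delta$ from (\ref{g-24}). The goal is to exhibit a superflow (in the sense of Definition \ref{defin-proj}) whose vector field is invariant under $\mathbb{T}$ and to verify that its actual symmetry group strictly contains $\mathbb{T}$.

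My plan is to reuse the computation already carried out in Note \ref{note-red2}. Invariance under conjugation with the Klein four-group $\langle\alpha,\beta\rangle$ forces every $2$-homogeneous polynomial vector field to have the shape $ayz\bl bxz\bl cxy$ with $a,b,c\in\mathbb{R}$, and then invariance under the $3$-cycle $\delta$ forces $a=b=c$. Hence, up to a scalar, the only polynomial $2$-homogeneous $\mathbb{T}$-invariant vector field is $\mathbf{S}(\mathbf{x})=yz\bl xz\bl xy$. To conclude that $\mathbf{S}$ is the vector field of the $\mathbb{T}$-superflow, I must also exclude rational $\mathbb{T}$-invariant vector fields whose common denominator has positive degree and whose numerator has correspondingly higher degree; this is immediate because such a vector field would give, after clearing denominators against a $\mathbb{T}$-invariant polynomial, a polynomial vector field of degree $\geq 3$, and no such vector field can have a strictly smaller common denominator than the trivial one. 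Thus condition ii) of Definition \ref{defin-proj} is satisfied, and $\mathbf{S}$ generates the projective $\mathbb{T}$-superflow.

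Next, a direct check shows $\gamma^{-1}\circ\mathbf{S}\circ\gamma=\mathbf{S}$: conjugating by the transposition $\gamma$, which exchanges $x$ and $y$, merely interchanges the first two components of $\mathbf{S}$, which swaps $yz$ and $xz$ — the same effect as the interchange forced on the output. Since $\det\gamma=-1$, we have $\gamma\notin\mathbb{T}\subset SO(3)$, so the group $\widetilde{\Gamma}:=\langle\mathbb{T},\gamma\rangle$ properly contains $\mathbb{T}$. As $\widetilde{\Gamma}$ is a finite subgroup of $O(3)$ containing all orientation preserving tetrahedral symmetries plus the orientation reversing $\gamma$, one has $\widetilde{\Gamma}=\widehat{\mathbb{T}}$, of order $24$.

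Therefore $\mathbf{S}$ is the vector field of a $\Gamma$-superflow whose actual symmetry group is the strictly larger $\widetilde{\Gamma}=\widehat{\mathbb{T}}$, establishing the corollary. The only real step that requires thought is the uniqueness argument eliminating rational vector fields with nontrivial denominators; this is the ``main obstacle'' in a purely logical sense, although it is routine once one observes that a $\mathbb{T}$-invariant denominator must come from the invariant ring generated by $\{x^{2}+y^{2}+z^{2},\,x^{4}+y^{4}+z^{4},\,xyz\}$, whose lowest nonconstant member has degree $2$, matching $\mathbf{S}$ at parity but not beating it.
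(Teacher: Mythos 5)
Your proposal is correct and follows essentially the same route as the paper: Note \ref{note-red2} derives the form $ayz\bl bxz\bl cxy$ from invariance under $\alpha,\beta$, forces $a=b=c$ via $\delta$, and then observes that the resulting field $yz\bl xz\bl xy$ is also preserved by the determinant $-1$ matrix $\gamma$, so the full symmetry group properly extends $\mathbb{T}$. Your extra remark on condition ii) of Definition \ref{defin-proj} is harmless but not really needed, since any $\mathbb{T}$-invariant field with a nonconstant denominator automatically falls into the ``higher denominator degree'' branch of that condition.
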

There are no reasons to doubt that this phenomenon of \emph{an extension of the symmetry} does occur also for $n>3$. (See Note \ref{note-pav} where this phenomenon does not occur). We will see in Note \ref{octa-dim} that, strangely enough, the reason behind the fact why this phenomenon occurs in the tetrahedral case is precisely because there are no non-zero $2$-homogeneous vector fields with an octahedral symmetry.\\

The scenario with reducible superflows (over $\mathbb{C}$) is different; see Note \ref{note-red1}. Let $k=0$ in Proposition \ref{prop-red}. The superflow $\phi_{0}(\m{x})=y^2+x\bl y$ is the superflow for the order $6$ cyclic group generated by $\alpha$, which is given by (\ref{alfa}). However, we have the following result \cite{alkauskas-super3}. 
\begin{prop}
\label{interim}
The full group of symmetries of the superflow $\phi_{0}$ is the group
\begin{eqnarray*}
\Gamma=\Bigg{\{}\gamma_{d,b}=\begin{pmatrix}
d^2 & b\\
0 & d
\end{pmatrix}:b\in\mathbb{C},d\in\mathbb{C}^{*}\Bigg{\}}.
\end{eqnarray*}
Matrix $\gamma_{d,b}$ is of finite order only if $d$ is a root of unity, $b$ is arbitrary, except the case $d=1$, $b\neq 0$, when it is of infinite order. All finite subgroups of $\Gamma$ are cyclic. So, formally, $\phi_{0}$ is still a reducible superflow. 
\end{prop}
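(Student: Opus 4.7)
The plan is to solve the symmetry equation $\gamma^{-1}\circ\mathbf{Q}\circ\gamma=\mathbf{Q}$ directly, where $\mathbf{Q}=y^{2}\bl 0$ is the vector field of $\phi_{0}$ obtained from Proposition \ref{prop-red} by specialising to $k=0$. Writing a general $\gamma=\bigl(\begin{smallmatrix}a & b\\ c & d\end{smallmatrix}\bigr)\in\mathrm{GL}(2,\mathbb{C})$, one computes $\mathbf{Q}\circ\gamma=(cx+dy)^{2}\bl 0$, and then applies $\gamma^{-1}$. Comparing the two coordinates with $y^{2}$ and $0$ as polynomials in $(x,y)$, the vanishing of the second coordinate forces $c=0$ (since $(cx+dy)^{2}$ cannot vanish identically otherwise), and matching the first coordinate then forces $a=d^{2}$. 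The converse inclusion is immediate. This identifies the full symmetry group of $\phi_{0}$ with $\Gamma$.

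For the classification of finite-order elements, I would split into two cases. If $d=1$, then $\gamma_{1,b}=\bigl(\begin{smallmatrix}1 & b\\ 0 & 1\end{smallmatrix}\bigr)$ is unipotent with $\gamma_{1,b}^{n}=\gamma_{1,nb}$, so it has finite order only when $b=0$. If $d\neq 1$, the eigenvalues $d^{2}$ and $d$ of $\gamma_{d,b}$ are distinct; hence $\gamma_{d,b}$ is diagonalisable and conjugate to $\mathrm{diag}(d^{2},d)$, whose $n$-th power is the identity precisely when $d^{n}=1$. Thus $\mathrm{ord}(\gamma_{d,b})=\mathrm{ord}(d)$, which is finite if and only if $d$ is a root of unity.

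For the cyclicity of every finite subgroup $H\leq\Gamma$, I would use the projection $\pi\colon\Gamma\to\mathbb{C}^{*}$, $\gamma_{d,b}\mapsto d$, which is a homomorphism because multiplication of upper-triangular matrices multiplies the diagonal entries. The image $\pi(H)$ is a finite subgroup of $\mathbb{C}^{*}$, hence cyclic; and the kernel $\ker\pi\cap H$ consists only of finite-order elements of the form $\gamma_{1,b}$, which by the previous paragraph forces $b=0$. Hence $\pi|_{H}$ is injective and $H\cong\pi(H)$ is cyclic. Finally, every element of $\Gamma$ is upper triangular, so the line spanned by $(1,0)^{T}$ is a common invariant subspace; the representation $\Gamma\hookrightarrow\mathrm{GL}(2,\mathbb{C})$ and each of its subrepresentations are therefore reducible, which justifies calling $\phi_{0}$ a reducible superflow in the sense of Definition \ref{defin-proj}.

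The whole argument consists of elementary computations and no step looks genuinely hard. The only place that warrants a small amount of care is the kernel analysis for $\pi|_{H}$: the full kernel $\ker\pi=\{\gamma_{1,b}:b\in\mathbb{C}\}$ is an infinite additive copy of $\mathbb{C}$, and it is exactly the presence of infinite-order unipotent elements there — together with the fact that the only unipotent of finite order is the identity — that prevents a non-cyclic finite subgroup from appearing inside $\Gamma$.
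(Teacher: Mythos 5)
Your argument is correct and complete. Note that this paper does not actually prove Proposition \ref{interim}: it is stated in Note \ref{note-red2} with a citation to the third part of the series, so there is no in-paper proof to compare against. Your direct computation is exactly the natural self-contained route: solving $\gamma^{-1}\circ\mathbf{Q}\circ\gamma=\mathbf{Q}$ for $\mathbf{Q}=y^{2}\bl 0$ correctly forces $c=0$ and $a=d^{2}$ (and the converse inclusion is immediate), the finite-order analysis via the unipotent case $d=1$ versus the diagonalisable case $d\neq 1$ is sound, and the cyclicity of finite subgroups via the homomorphism $\gamma_{d,b}\mapsto d$ (whose kernel meets any finite subgroup trivially, since the only finite-order unipotent is the identity) is the right mechanism — it is precisely the dichotomy the paper alludes to when it says irreducible superflows have finite symmetry groups while reducible ones may not. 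The only small point worth making explicit is that the group law $\gamma_{d,b}\gamma_{e,c}=\gamma_{de,\,d^{2}c+be}$ confirms both that $\Gamma$ is closed (which also follows abstractly, since it is the full solution set of the symmetry equation) and that your projection onto $d$ is indeed a homomorphism.
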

This dichotomy \emph{finite - infinite group of symmetries} corresponding to precisely \emph{irreducible - reducible superflows} is investigated in \cite{alkauskas-super3}.
\end{Note}

 Therefore, $\mathbf{S}(\m{x})$ gives rise to the $\widehat{\mathbb{T}}$-superflow$_{\pm}$ $\phi_{\widehat{\mathbb{T}}}=U(x,y,z)\bl U(y,z,x)\bl U(z,x,y)$. 
Note that
\begin{eqnarray*}
\mathrm{div}\,\mathbf{S}=0,\quad\mathrm{curl}\,\mathbf{S}=0.
\end{eqnarray*}
So, $\phi_{\widehat{\mathbb{T}}}$ is a gradient flow: $\mathbf{S}=\mathrm{grad}\,(xyz)$.
\begin{Note} 
\label{tetra-beltrami}
Let $\m{S}_{2}=\m{S}$. Since $\mathrm{div}\,\m{S}=\m{0}$, there exists a vector field $\m{R}$ such that $\mathrm{curl}\,\m{R}=\m{S}_{2}$ \cite{ficht3}; this is a special reformulation of the fact that all de Rham cohomology groups of $\mathbb{R}^{3}$ are trivial. Among all possible solutions, we require that, for $\ell=3$, the following hold:
\begin{itemize}
\item[i)]$\mathrm{div}\,\m{R}=0$;
\item[ii)]$\m{R}$ is $\ell$-homogeneous rational vector field;
\item[iii)] $\m{R}$ has at least a tetrahedral symmetry $\mathbb{T}$.
\end{itemize}
A solution exists, and one of them is given by $\m{R}=\m{S}_{3}$, where
\begin{eqnarray*}
\m{S}_{3}=
\frac{1}{4}x(z^2-y^2)\bl\frac{1}{4}y(x^2-z^2)\bl\frac{1}{4}z(y^2-x^2).
\end{eqnarray*}
The full group of symmetries of the latter is $\mathbb{T}\times\{I,-I\}$. The process can be continued for $\ell>3$. Note that, however, sometimes all three requirements are satisfied by a family of vector fields. Thus we construct a sequence $\m{S}_{\ell}$ of vector fields, $\ell\geq 2$, such that $\mathrm{curl}\,\m{S}_{\ell+1}=\m{S}_{\ell}$, and that $\m{S}_{\ell}$ has a tetrahedral symmetry. We then investigate a vector field $\sum\limits_{\ell=2}^{\infty}\m{S}_{\ell}$. For example, in our case one of the solutions is given by $\frac{1}{2}\mathfrak{T}$, where $\mathfrak{T}=(\mathfrak{a},\mathfrak{b},\mathfrak{c})$ is as follows:
\begin{eqnarray*}
\left\{\begin{array}{c@{\qquad}l}
\mathfrak{a}=z\sin y+y\sin z+x\cos y-x\cos z,\\
\mathfrak{b}=x\sin z+z\sin x+y\cos z-y\cos x,\\
\mathfrak{c}=y\sin x+x\sin y+z\cos x-z\cos y.
\end{array}\right.
\end{eqnarray*} 
This resembles to some extent, though structurally differs from, the vector field which gives the so called \emph{Arnold-Beltrami-Childress flow} \cite{fre}. The latter is given by a vector field, or, better, by an autonomous system 
\begin{eqnarray*}
\left\{\begin{array}{c@{\qquad}l}
\dot{x}=A\sin z+C\cos y,\\
\dot{y}=B\sin x+A\cos z,\\
\dot{z}=C\sin y+B\cos x,
\end{array}\right.
\end{eqnarray*}
$A,B,C\in\mathbb{R}$. \\

The Taylor series of $\mathfrak{T}$ indeed starts from $(2yz,2xz,2xy)$. More importantly,
\begin{itemize}
\item[i)]the vector field $\mathfrak{T}$ has a tetrahedral symmetry $\mathbb{T}$;
\item[ii)]and  it satisfies $\mathrm{curl}\,\mathfrak{T}=\mathfrak{T}$.
\end{itemize}
The vector field $B$ is called \emph{a Beltrami vector field}, if $B\times\mathrm{curl}\, B=0$ \cite{blair,etnyre}. So, $\mathfrak{T}$ is a Beltrami field. In physics, vector fields, that equal their own curl, are a special cases of \emph{force-free magnetic fields} \cite{chandr1,chandr2}. Beltrami fields are intricately related to \emph{contact structures}, which are defined as everywhere non-integrable planefields \cite{blair,dahl}.\\

Another solution to the above inverse problem is given by $\mathfrak{T}+a\mathfrak{O}$; see Note \ref{octa-beltrami}. This leads to a notion of \emph{lambent flows}. In dimensions $n\neq 3$ this also has a slightly relaxed analogue, as described by Note \ref{dihedral-beltrami}. The topic of lambent flows is very different from the topic of the current paper and its sequels \cite{alkauskas-super2,alkauskas-super3,alkauskas-super4}. Indeed, in the setting of the current paper, orbits are algebraic curves, and many aspects of the research belong to algebraic geometry, and even to number theory. In the lambent flows case (octahedral case is given by Note \ref{octa-beltrami}, and the icosahedral one in \cite{ico-beltrami}), however, orbits turn out to be chaotic, with the theory of dynamical systems and non-integrable systems manifesting significantly.
\end{Note}
\section{Dimensions of spaces of tetrahedral vector fields}
\label{tetra-sole}
For each even $\ell\in\mathbb{N}$, we can calculate the dimension of the linear space of vector fields of homogeneous degree $\ell$ which have a full tetrahedral symmetry $\widehat{\mathbb{T}}$. For $\ell=2$, this is, of course, equal to $1$, exactly what is needed for the superflow to exist.\\

 Let $\mathcal{P}=\mathcal{P}_{\ell}=\mathbb{R}_{\ell}[x,y,z]$ be a linear space of dimension $\binom{\ell+2}{2}$  of homogeneous polynomials in $x,y,z$. Let $\mathcal{P}_{i}$, $1\leq i\leq 3$, be three copies of $\mathcal{P}$, and $\mathcal{Y}=\mathcal{Y}_{\ell}=\mathcal{P}_{1}\oplus\mathcal{P}_{2}\oplus\mathcal{P}_{3}$. For each $X=(a^{x},a^{y},a^{z})\in\mathcal{Y}$, define $L(X)=L^{\Gamma}_{\ell}(X)=(b^{x},b^{y},b^{z})$, where 
\begin{eqnarray}
b^{x}\bl b^{y}\bl b^{z}=\frac{1}{|\Gamma|}\sum\limits_{\gamma\in\Gamma}\gamma^{-1}\circ(a^{x}\bl a^{y}\bl a^{z})\circ\gamma.
\label{ell-dim}
\end{eqnarray}
In the current case, $\Gamma=\widehat{\mathbb{T}}$, $|\Gamma|=24$.  It is obvious that a vector field $L(X)$ has a tetrahedral symmetry, and if $X$ already has this symmetry, then $L(X)=X$. Let If $\mathcal{U}=\mathcal{U}_{\ell}^{\Gamma}=\mathrm{Im}(L)$, and $\mathcal{V}=\mathcal{V}_{\ell}^{\Gamma}=\mathrm{Ker}(L)$, then we see that $\mathcal{Y}=\mathcal{U}\oplus\mathcal{V}$, and $L$ is a projection onto the first coordinate. So, we are looking for the number $\mathrm{dim}_{\mathbb{R}}\mathcal{U}$.\\

Invariance under matrices $\alpha$, $\beta$, $\gamma$, and $\delta$ (see (\ref{g-24})) of such vector field shows that its first coordinate is of even degree in $x$, of odd degree in each of $y,z$, and symmetric in $y,z$. Thus, it is given by
\begin{eqnarray*}
E_{\ell}(y,z)+x^{2}E_{\ell-2}(y,z)+\cdots+x^{\ell-2}E_{2}(y,z),
\end{eqnarray*} 
where each $E_{2j}(y,z)$ is homogeneous of degree $2j$, of odd degree in each $y,z$, and symmetric in $(y,z)$. So, $E_{2j}(y,z)=yz\widetilde{E}_{j-1}(y^2,z^2)$, where $\widetilde{E}_{j-1}$ is symmetric polynomial of degree $j-1$. The space of symmetric polynomials of degree $s$ in two variables is of dimension $\lfloor\frac{s+2}{2}\rfloor$. Thus, the number we are looking for is
\begin{eqnarray*}
\dim_{\mathbb{R}}\mathcal{U}=\sum\limits_{j=0}^{\frac{\ell-2}{2}}\Big{\lfloor}\frac{j+2}{2}\Big{\rfloor}=\Big{\lfloor}\frac{\ell+2}{4}\Big{\rfloor}\cdot\Big{\lfloor}\frac{\ell+4}{4}\Big{\rfloor}=\Big{\lfloor}\frac{(\ell+2)^2}{16}\Big{\rfloor}.
\end{eqnarray*}
 So, if $\mathcal{U}=\mathcal{U}^{\widehat{\mathbb{T}}}_{\ell}$, the sequence $\{\dim_{\mathbb{R}}\mathcal{U}^{\widehat{\mathbb{T}}}_{\ell}:\ell\in 2\mathbb{N}\}$ starts from
\begin{eqnarray*}
1,2,4,6,9,12,16,20,25,30,36,42,49,56,64,\ldots
\end{eqnarray*}
This is the sequence A002620 in OEIS \cite{oeis}, and is one of the basic sequences. See Section \ref{octa-dim} where we calculate the corresponding dimension for the octahedral group.
\section{Solenoidality}We also would like to know the dimension of the subspace of $\mathcal{U}^{\widehat{\mathbb{T}}}_{\ell}=\mathcal{U}$ of such vector fields that are solenoidal. Let $\ell\geq 4$ be an even integer. Let $X\in\mathcal{U}$. Then
$\mathrm{div} X$ is a symmetric $(\ell-1)$-homogeneous polynomial which is odd in all three variables. It is easy to see that
\begin{eqnarray}
\mathrm{div}:\mathcal{U}\mapsto xyz\cdot\mathbb{R}_{\frac{\ell-4}{2}}[x^2,y^2,z^2]^{S_{3}}
\label{div-surj}
\end{eqnarray} 
is surjective. Here we use the usual notation to denote by $\mathbb{R}_{s}[x,y,z]^{S_{3}}$ a linear space of $s$-homogeneous symmetric polynomials. Indeed, if $a,b,c\in\mathbb{N}$ are odd, then
\begin{eqnarray*}
\sum\limits x^{a}y^{b}z^{c}=\mathrm{div}\Big{(}\frac{x^{a+1}}{a+1}(y^{b}z^{c}+y^{c}z^{b})
\bl\frac{y^{a+1}}{a+1}(x^{b}z^{c}+x^{c}z^{b})\bl\frac{z^{a+1}}{a+1}(x^{b}y^{c}+x^{c}y^{b})\Big{)}.
\end{eqnarray*}
The sum on the right denotes symmetric sum with respect to all $6$ permutations of variables, and the vector field under ``$\mathrm{div}$" has the full tetrahedral symmetry. This proves surjectivity of (\ref{div-surj}).\\

So, we need to know the number of ways an integer $\frac{\ell-4}{2}$ can be partitioned into sum of $3$ non-negative integers (the order is not important), and this number is equal to \cite{andrews}
\begin{eqnarray*}
\Big{\Vert}\frac{(\ell+2)^2}{48}\Big{\Vert}.
\end{eqnarray*}
Here $\Vert\star\Vert$ is the nearest integer function (the tie on $\frac{1}{2}$ cannot occur). Thus, the number (the dimension) we are looking for is equal to
\begin{eqnarray*}
a_{\frac{\ell}{2}}=\Big{\lfloor}\frac{(\ell+2)^2}{16}\Big{\rfloor}
-\Big{\Vert}\frac{(\ell+2)^2}{48}\Big{\Vert}=
\Big{\lfloor}\frac{\ell^2+4\ell+12}{24}\Big{\rfloor},\quad \ell\in2\mathbb{N}.
\end{eqnarray*}
Thus, this sequence $\{a_{s},s\in\mathbb{N}\}$ starts from
\begin{eqnarray*}
1,1,3,4,6,8,11,13,17,20,24,28,33,37,43,\ldots
\end{eqnarray*}
Its number in \cite{oeis} is A156040. This is the dimension of $2s$-homogeneous $3$-dimensional vector fields with full tetrahedral symmetry and which are solenoidal. However, according to \cite{oeis}, this number is equal the amount of compositions (ordered partitions) of an integer $s-1$ into 3 parts (some of which may be zero), where the first summand is at least as great as each of the others.
\section{Explicit formulas}
\label{expl-t}
Next result gives explicit formulas for the tetrahedral superflow.
\begin{thm}
\label{thm-s4}
Assume that $x^2>z^2>y^2$. Then the function $U(x,y,z)$ is given by

\begin{eqnarray}
&&U(x,y,z)=\label{uu}\\
&&\frac{x(x^2-y^2)\sn'\Big{(}\sqrt{x^2-y^2};\sqrt{\frac{x^2-z^2}{x^2-y^2}}\Big{)}+yz\sqrt{x^2-y^2} \sn\Big{(}\sqrt{x^2-y^2};\sqrt{\frac{x^2-z^2}{x^2-y^2}}\Big{)}}{x^2-y^2-x^2\sn^2\Big{(}\sqrt{x^2-y^2};\sqrt{\frac{x^2-z^2}{x^2-y^2}}\Big{)}}.
\nonumber
\end{eqnarray}
\normalsize
Here $\sn$ is the Jacobi elliptic function, and $\sn'$ refers to the derivative with respect to the first variable.\\

Assume now that $x^2>y^2$, $z=x$. Then
\begin{eqnarray}
&&U(x,y,x)=\label{uu-spec}\\
&&\frac{x(x^2-y^2)\cos(\sqrt{x^2-y^2})+xy\sqrt{x^2-y^2}\sin(\sqrt{x^2-y^2})}{x^2\cos^{2}(\sqrt{x^2-y^2})-y^2}.
\nonumber
\end{eqnarray}
\end{thm}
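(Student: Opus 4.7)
The plan is to follow the general template of Section \ref{sub-1.4}, exactly in parallel with Theorem \ref{thm-d10}. The associated autonomous system for the vector field $\varpi\bl\varrho\bl\sigma = yz\bl xz\bl xy$ is
\begin{eqnarray*}
p' = -qr,\quad q' = -pr,\quad r' = -pq,
\end{eqnarray*}
and a trivial calculation $(p^2-q^2)'=2pp'-2qq'=0$ (and the analogue for $p^2-r^2$) produces two independent polynomial first integrals $\mathscr{W}(x,y,z)=x^2-y^2$ and $\mathscr{V}(x,y,z)=x^2-z^2$, both of homogeneity degree $2$. In particular $\phi_{\widehat{\mathbb{T}}}$ is an abelian flow of level $(2,2)$. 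On the orbit $p^2-q^2=1$, $p^2-r^2=\xi$ — where the hypothesis $x^2>z^2>y^2$ forces $\xi=(x^2-z^2)/(x^2-y^2)\in(0,1)$ — the relations $q^2=p^2-1$ and $r^2=p^2-\xi$ reduce the system to the single separable ODE $p'=-\sqrt{(p^2-1)(p^2-\xi)}$, and the classical substitution $p=1/\sn(s,k)$ with $k=\sqrt{\xi}$ integrates this directly to
\begin{eqnarray*}
p(s)=\frac{1}{\sn(s,k)},\quad q(s)=\frac{\cn(s,k)}{\sn(s,k)},\quad r(s)=\frac{\dn(s,k)}{\sn(s,k)},
\end{eqnarray*}
the sign choice for $q$ being pinned down by checking $q'=-pr$ directly.

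The rest mirrors Section \ref{dih-5} step by step. Writing $x=p(s_0)\varsigma$, $y=q(s_0)\varsigma$, $z=r(s_0)\varsigma$ forces $\varsigma=\sqrt{x^2-y^2}$ and the triple
\begin{eqnarray*}
\sn(s_0,k)=\frac{\sqrt{x^2-y^2}}{x},\quad \cn(s_0,k)=\frac{y}{x},\quad \dn(s_0,k)=\frac{z}{x},
\end{eqnarray*}
which is automatically consistent with $\sn^2+\cn^2=1$ and $\dn^2+k^2\sn^2=1$. The basic relation from Section \ref{sub-1.4}, $u(p(s)\varsigma,q(s)\varsigma,r(s)\varsigma)=p(s-\varsigma)\varsigma$, then gives $U(x,y,z)=\varsigma/\sn(s_0-\varsigma,k)$. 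Expanding $\sn(s_0-\varsigma,k)$ via the classical addition formula and substituting the above values of $\sn(s_0), \cn(s_0), \dn(s_0)$ produces a rational expression in $x,y,z$ and the three functions $\sn(\varsigma,k), \cn(\varsigma,k), \dn(\varsigma,k)$.

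The main obstacle is the final algebraic rearrangement into the exact compact form (\ref{uu}). The raw output of the addition formula has denominator $x\varsigma\cn(\varsigma)\dn(\varsigma)-yz\,\sn(\varsigma)$ and numerator-side factor $\varsigma\bigl(x^2-(x^2-z^2)\sn^2(\varsigma)\bigr)$, neither of which is literally what appears in (\ref{uu}). The remedy is to multiply numerator and denominator by the conjugate $x\varsigma\cn(\varsigma)\dn(\varsigma)+yz\,\sn(\varsigma)$, and then use the identities $\varsigma^2=x^2-y^2$, $k^2\varsigma^2=x^2-z^2$, $\dn^2=1-k^2\sn^2$, $\cn^2=1-\sn^2$ to verify that the quartic cross terms cancel and yield the numerator of (\ref{uu}) divided by $x^2\cn^2(\varsigma)-y^2=x^2-y^2-x^2\sn^2(\varsigma)$. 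As a final sanity check on signs and branches I would test the first few Taylor coefficients of $U$ against the recursion (\ref{expl-taylor}) and confirm the boundary condition $\lim_{t\to 0}U(xt,yt,zt)/t=x$. The special case (\ref{uu-spec}) then falls out instantly from (\ref{uu}): $z=x$ forces $k=0$, so $\sn(\cdot,0)=\sin$, $\cn(\cdot,0)=\cos$, $\dn(\cdot,0)\equiv 1$, and (\ref{uu}) collapses immediately to (\ref{uu-spec}).
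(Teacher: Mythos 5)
Your proposal is correct and follows the paper's overall template (reduce to the autonomous system $p'=-qr$, $q'=-pr$, $r'=-pq$, extract the first integrals $x^2-y^2$, $x^2-z^2$, parametrize by Jacobi functions, apply the addition theorem), but the elliptic parametrization you choose is genuinely different from the paper's, and the difference is worth noting. The paper normalizes $p^2-r^2=1$, $p^2-q^2=\xi^2$ with $\xi>1$, and takes the complex-valued solution $p=\sn(\xi t;\xi^{-1})$, $q=i\xi\dn(\xi t;\xi^{-1})$, $r=i\cn(\xi t;\xi^{-1})$; the factors of $i$ flip a sign in the addition formula so that the quotient lands directly on the form (\ref{uu}) after clearing $\xi^2\v^2$, with no further manipulation. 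You instead normalize $p^2-q^2=1$, $p^2-r^2=\xi\in(0,1)$ and use the all-real triple $p=1/\sn$, $q=\cn/\sn$, $r=\dn/\sn$ with modulus $k=\sqrt{\xi}$ directly in $(0,1)$; this avoids complex intermediaries and the reciprocal modulus, but the addition formula then produces $U=\v\bigl(x^2-(x^2-z^2)\sn^2\bigr)/\bigl(x\v\cn\dn-yz\sn\bigr)$, and you need the rationalization you describe. That step does close: multiplying by the conjugate gives denominator $x^2\v^2\cn^2\dn^2-y^2z^2\sn^2$, and using $\cn^2=1-\sn^2$, $\dn^2=1-k^2\sn^2$, $\v^2=x^2-y^2$, $k^2\v^2=x^2-z^2$ one checks the exact factorization $x^2\v^2\cn^2\dn^2-y^2z^2\sn^2=\bigl(x^2-(x^2-z^2)\sn^2\bigr)\bigl(\v^2-x^2\sn^2\bigr)$ (the cross terms cancel because $(x^2-(x^2-z^2))(x^2-(x^2-y^2))=y^2z^2$), so the unwanted factor cancels and (\ref{uu}) follows. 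Your sign verification for $q$, the Taylor-series sanity check against (\ref{expl-taylor}), and the specialization $k=0$ for (\ref{uu-spec}) all match what the paper does. In short: same method, a cleaner real parametrization, paid for by one extra algebraic identity at the end.
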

For the basics on Jacobi elliptic functions we refer to \cite{ahiezer,meyer}. 
The formulas in cases other that $x^2>z^2>y^2$ are analogous. It is important to emphasize that in Theorem \ref{thm-s4}, and in general, in all theorems describing explicit solutions of projective flows (like Theorems \ref{thm-d10}, \ref{thm-spec} and \ref{thm4}), the conditions on variables in which the formulas are valid, are always homogeneous. Therefore, these formulas determine not only $\phi(\m{x})=\phi^{1}(\m{x})$, but the whole orbit $\phi^{t}(\m{x})$, $t\in\mathbb{R}$. For example, in Theorem \ref{thm-s4} we find the first coordinate of $\phi^{t}(\m{x})$, which is
\begin{eqnarray*}
\frac{U(xt,yt,zt)}{t}.
\end{eqnarray*} 
\begin{proof}
The corresponding differential system (\ref{sys-in}) in this case is
\begin{eqnarray*}
p'=-qr,\quad q'=-pr,\quad r'=-pq.
\end{eqnarray*}
This implies
\begin{eqnarray*}
pp'=qq'=rr'=-pqr\Rightarrow (p^2)'=(q^2)'=(r^2)'.
\end{eqnarray*}
So, the orbits of the superflow $\phi_{\widehat{\mathbb{T}}}$ are given by the curves
\begin{eqnarray*}
x^2-y^2=\mathrm{const.},\quad x^2-z^2=\mathrm{const.}
\end{eqnarray*}
It is therefore the intersection of two space quadratics, and generically the intersection is an elliptic curve. This shows that the flow is abelian - its orbits are algebraic curves. According to the general setting of Section \ref{sub-1.4}, the superflow $\phi_{\widehat{\mathbb{T}}}$ can be described in terms of functions $p,q,r$ which solve the following system
\begin{eqnarray}
\left\{\begin{array}{l}
p'=-qr,\quad q'=-pr,\quad r'=-pq,\\
p^2-q^2=\xi^2,\quad p^2-r^2=1.
\end{array}
\right.
\label{syys}
\end{eqnarray}
The free variable is tacitly assumed to be $t$. So, $\phi_{\widehat{\mathbb{T}}}$ is an abelian flow  of level $(2,2)$. We now will provide the solution to this differential system explicitly. Singular cases are $\xi=\pm1$, $0$ or $\infty$ (the meaning of the latter is that we have a system $p^2-q^2=1$, $p^2-r^2=0$ instead), where the intersection is of genus $0$. We will give the closed-form formulas for the superflow $\phi_{\widehat{\mathbb{T}}}$ for $\xi^2=1,\infty$ as boundary values for this superflow in case (what we henceforth assume) $\xi^2>1$. The case $0<\xi^2<1$ is investigated completely analogously. \\

Recall that Jacobi elliptic functions $\sn,\cn,\dn$ with modulus $k$, $0<k^2<1$, satisfy \cite{ahiezer}
\begin{eqnarray*}
\sn^2(t)+\cn^2(t)=1,&\quad& k^2\sn^2(t)+\dn^2(t)=1,\\
\sn'(t)=\cn(t)\cdot\dn(t),\quad\cn'(t)&=&-\sn(t)\cdot\dn(t),\quad\dn'(t)=-k^2\sn(t)\cdot\cn(t).
\end{eqnarray*}
So, as the solution to (\ref{syys}), we can choose
\begin{eqnarray*}
p(t)=\sn(\xi t;\xi^{-1}),\quad q(t)=i\xi\dn(\xi t;\xi^{-1}),\quad r(t)=i\cn(\xi t;\xi^{-1}).
\end{eqnarray*}
(We are dealing with flows over $\mathbb{R}$, and soon it will be clear that the final formulas are indeed defined over $\mathbb{R}$). So, the first coordinate of the flow $\phi_{\widehat{\mathbb{T}}}$, that is, $U(x,y,z)$, can be given by
\begin{eqnarray}
U\Big{(}p(t)\v,q(t)\v,r(t)\v\Big{)}=p(t-\v)\v.
\label{u-p}
\end{eqnarray}
Let $x=p(t)\v$, $y=q(t)\v$, $z=r(t)\v$. First, we have
\begin{eqnarray*}
x^2-y^2=(p^2-q^2)\v^2=\xi^2\v^2,\quad x^2-z^2=(p^2-r^2)\v^2=\v^2.
\end{eqnarray*} So,
\begin{eqnarray*}
\v=\sqrt{x^2-z^2},\quad \xi=\sqrt{\frac{x^2-y^2}{x^2-z^2}}\in (1,\infty).
\end{eqnarray*}\\
We now apply the addition formulas for jacobian elliptic functions \cite{ahiezer}. Also, we use the fact that $\sn$ is an odd function in the first argument, while $\cn$ and $\dn$ are both even functions. The formula (\ref{u-p}) gives
\begin{eqnarray*}
U(x,y,z)=\sn(\xi t-\xi\v;\xi^{-1})\v
=\v\frac{\sn(\xi t)\cn(\xi\v)\dn(\xi\v)-\sn(\xi\v)\cn(\xi t)\dn(\xi t)}{1-\xi^{-2}\sn^{2}(\xi t)\sn^{2}(\xi\v)}.
\end{eqnarray*}
(Modulus of all elliptic functions involved is $\xi^{-1}$). In this formula, now let us replace
\begin{eqnarray*}
\sn(\xi t)&=&p(t)=x\v^{-1},\\ 
\cn(\xi t)&=&-ir(t)=-iz\v^{-1},\\
\dn(\xi t)&=&-i\xi^{-1}q(t)=-iy\xi^{-1}\v^{-1}.
\end{eqnarray*}
We obtain
\begin{eqnarray*}
U(x,y,z)=(\xi\v)^2\frac{x\cn(\xi\v)\dn(\xi\v)+yz(\xi\v)^{-1}\sn(\xi\v)}{(\xi\v)^2-x^2\sn^{2}(\xi\v)}.
\end{eqnarray*}
Using again the identity $\sn'=\cn\dn$, we can rewrite this as
\begin{eqnarray*}
U(x,y,z)=\frac{x(\xi\v)^2\sn'(\xi\v)+yz(\xi\v)\sn(\xi\v)}{(\xi\v)^2-x^2\sn^{2}(\xi\v)}.
\end{eqnarray*}

The function $\sn(\xi\v;\xi^{-1})$ is explicitly written as
\begin{eqnarray*}
S(x,y,z)=\sn\Big{(}\sqrt{x^2-y^2};\sqrt{\frac{x^2-z^2}{x^2-y^2}}\Big{)},\text{ where } x^2>z^2>y^2.
\end{eqnarray*}
Since $\xi\v=\sqrt{x^2-y^2}$, this gives the formula (\ref{uu}). The boundary case is obtained by specializing $\xi^{-1}=0$ and using the fact that $\sn(z,0)=\sin(z)$. Equally, if we specialize $\xi=1$, we obtain another formula for $U(x,y,y)$ from the fact $\sn(z,1)=\tanh(z)$ (see \cite{ahiezer}). 
\end{proof}
\begin{Example}
 We will now double-check that the formula (\ref{uu}) is correct. Let $x=3t$, $y=t$, $z=2t$, so that
\begin{eqnarray*}
k^2=\frac{x^2-z^2}{x^2-y^2}=\frac{5}{8},\quad \sqrt{x^2-y^2}=t\sqrt{8}.
\end{eqnarray*}
First, we have the Taylor series \cite{abramowitz,viennot} (the first reference contains coefficients up to $u^{11}$)\small
\begin{eqnarray*}
\sn(u;k)=u-\frac{u^3}{3!}(1+k^2)+\frac{u^5}{5!}
(1+14k^2+k^4)-\frac{u^{7}}{7!}(1+135k^2+135k^4+k^6)+\cdots.
\end{eqnarray*}\normalsize
In particular,
\begin{eqnarray}
\sn\Big{(}u,\sqrt{\frac{5}{8}}\Big{)}&=&u-\frac{13}{48}u^3
+\frac{649}{7680}u^{5}-\frac{70837}{2580480}u^{7}\label{sn-tarp}\\
&+&\frac{13141201}{1486356480}u^{9}-\frac{339204983}{118908518400}u^{11}+\cdots.
\nonumber
\end{eqnarray}
Our formula (\ref{uu}) claims (for the modulus $k^{2}=\frac{5}{8}$) that 
\begin{eqnarray*}
U(3t,t,2t)=\frac{24t\sn'(t\sqrt{8})+2t\sqrt{8}\sn(t\sqrt{8})}
{8-9\sn^2(t\sqrt{8})}.
\end{eqnarray*}
Plugging the series (\ref{sn-tarp}) into the above, we obtain the Taylor series for $U(3t,t,2t)$:
\begin{eqnarray*}
U(3t,t,2t)&=&3t+2t^2+\frac{15}{2}t^3+\frac{41}{3}t^4+\frac{253}{8}t^5+\frac{3349}{60}t^6\\
&+&\frac{5557}{48}t^7+\frac{555509}{2520}t^8+\frac{5934937}{13440}t^9+\cdots.
\end{eqnarray*}
On the other hand, we can obtain directly the Taylor series for $U(x,y,z)$ from (\ref{expl-taylor}), where $\varpi\bl\varrho\bl\sigma=yz\bl xz\bl xy$. This independent calculation yields the same series for $U(3t,t,2t)$ as presented above, so this double-checks the validity of the formula (\ref{uu}). Figure \ref{figure-tet} shows the setup.
\end{Example}
\begin{figure}
\includegraphics[scale=0.52]{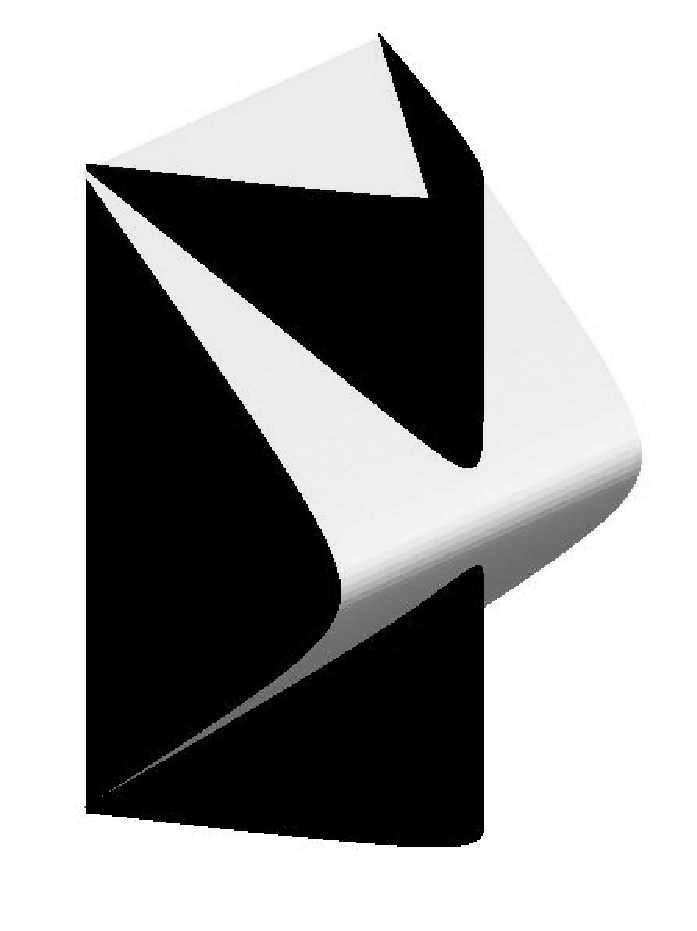}
\caption{The intersection of the surfaces $x^2-y^2=1$ (black) and $x^2-z^2=\frac{5}{8}$ (grey). This intersection consists of $4$ different orbits. Two are shown here, the other part of the picture being the mirror image.}
\label{figure-tet}
\end{figure}
\begin{Example} Let now $(x,y,z)=(5t,4t,5t)$. Then  (\ref{uu-spec}) gives
\begin{eqnarray*}
U(5t,4t,5t)=\frac{45t\cos(3t)+60t\sin(3t)}{25\cos^{2}(3t)-16}.
\end{eqnarray*}
Thus, the Taylor series of the above is 
\begin{eqnarray*}
U(5t,4t,5t)&=&5t+20t^2+\frac{205}{2}t^3+470t^4+\frac{17635}{8}t^5+\frac{20527}{2}t^6\\
&+&\frac{765869}{16}t^7+\frac{6247769}{28}t^8+\frac{932089729}{896}t^9+\cdots.
\end{eqnarray*}
This is exactly what formula (\ref{expl-taylor}) gives, as MAPLE confirms.
\end{Example}
\begin{Note}
\label{PDE-Jacobi}
The function $U(x,y,z)$, as given by (\ref{uu}), satisfies the PDE (\ref{pde}). That is,
\begin{eqnarray}
U_{x}(x-yz)+U_{y}(y-xz)+U_{z}(z-xy)=U.
\label{pde-spec}
\end{eqnarray}
Via the general framework of changing variables (see \cite{ficht3}, Section VI.4, also Note \ref{legendre}), this transforms into the non-linear PDE for the Jacobi $\sn$ function. This is a known result \cite{wolfram}, and thus our approach via the tetrahedral superflow gives a new proof, which we shortly sketch here. \\

Indeed, assume $x^2>z^2>y^2$, and let $v=v(x,y,z)=\sqrt{x^2-y^2}$, $k=k(x,y,z)=\sqrt{\frac{x^2-z^2}{x^2-y^2}}$. Let $\sn^{(a,b)}(v;k)=\frac{\p^{a+b}}{\p v^{a}\p k^{b}}\sn(v;k)$. We have $x^2-v^2=y^2$, $x^2-k^2v^2=z^2$, $yz=\sqrt{(x^2-v^2)(x^2-k^2v^2)}$, and so 
\begin{eqnarray}
U(x,y,z)&=&\frac{xv^2\sn^{(1,0)}(v;k)+yzv\sn(v;k)}{v^2-x^2\sn^{2}(v;k)}.
\label{u-part}
\end{eqnarray}  
Now, we take $v,k,x$ as functions in $x,y,z$. Then
\begin{eqnarray*}
v_{x}&=&\frac{x}{v},\quad v_{y}=-\frac{y}{v}=-\frac{\sqrt{x^2-v^2}}{v},\quad v_{z}=0;\\
k_{x}&=&\frac{x(1-k^2)}{kv^2},\quad k_{y}=\frac{yk}{v^2}=\frac{k\sqrt{x^2-v^2}}{v^2},\quad k_{z}=-\frac{z}{kv^2}=-\frac{\sqrt{x^2-k^2v^2}}{kv^2}.
\end{eqnarray*} 
Now, differentiate (\ref{u-part}) with respect to $x,y,z$, and plug the known values into (\ref{pde-spec}). We obtain the identity of the form
\begin{eqnarray*}
F\Big{(}\sn(v;k),\sn^{(1,0)}(v;k),\sn^{(2,0)}(v;k),\sn^{(0,1)}(v;k)\sn^{(1,1)}(v;k),v,k,x\Big{)}\equiv 0.
\end{eqnarray*} 
This is valid for free variables $v,k,x$. In particular, we can plug any value for $x$. To simplify calculations, we choose $x=v$. This makes $y=0$. 
\end{Note}
\section{First integrals of superflows $\phi_{S_{n+1}}$}
\label{desimtas}
In this section we will present few more properties of vector fields for the superflows $\phi_{S_{n+1}}$ given in Section \ref{symm-N}. If we conjugate a given exact representation of $S_{n+1}$ ending up in the orthogonal group, we would obtain a full symmetry group of order $(n+1)!$ of a $n$-simplex. We will reveal a huge difference with hyper-octahedral superflows described in Chapter \ref{hyper-5}. In particular, since the obtained first integrals are not symmetric with respect to all variables (exactly as in the case of tetrahedral superflow), this implies that a collection of functions describing all coordinates of the superflow, that is, which solves a $n$-dimenional analogue of (\ref{sys-in}), cannot be accommodated within a framework of a single abelian function on a certain curve, and thus the triple reduction mentioned just after Problem \ref{prob-vienas} does not manifest here. This is a wide-reaching consequence of the fact that $\sum\limits_{j=1}^{n}x_{j}^{2}$ is not the first integral.\\

 The special case $n=3$ of the setting described here is linearly conjugate to the tetrahedral superflow treated in this Chapter.  As said in \ref{symm-N}, for the sake of simplicity, we choose not an orthogonal representation, since then all matrices can be made integral, what we indeed have. \\

So, let $n\geq 2$, $n\in\mathbb{N}$, and 
\begin{eqnarray*}
\varpi_{1}(\m{x})=(n-1)x_{1}^{2}-2\cdot x_{1}
\sum\limits_{i=2}^{n}x_{i}=(n+1)x_{1}^{2}-2x_{1}S,\quad
S=\sum_{j=1}^{n}x_{j}.
\end{eqnarray*}
As before, $\varpi_{i}$ are obtained from this with $x_{i}$ instead of $x_{1}$, and
\begin{eqnarray*}
\m{Q}_{n}=\varpi_{1}\bl\varpi_{2}\bl\cdots\bl\varpi_{n}.
\end{eqnarray*}
Thus, $\varpi_{j}(\m{x})=(n-1)Q_{j}(\m{x})$ as given in Section \ref{symm-N}. Yet again we encounter the phenomenon that high symmetry implies solenoidality and also algebraicity of orbits, what we are about to show next.\\

 Indeed, the first integrals of this flow are a bit tricky to find, so we will present a final answer claiming the following.
\begin{prop}Let $n\geq 3$. The form 
\begin{eqnarray*}
\mathscr{Q}=\mathscr{Q}_{1}=x_{1}^{n-2}\prod\limits_{2\leq i<j\leq n}(x_{i}-x_{j})
\end{eqnarray*} 
is the first integral for the vector field $\m{Q}_{n}$. It is of degree $\frac{(n-2)(n+1)}{2}$.
\end{prop}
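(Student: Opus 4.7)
The plan is a direct computation verifying that $\sum_{k=1}^{n}\varpi_{k}\partial_{k}\mathscr{Q}=0$, where $\varpi_{k}=(n+1)x_{k}^{2}-2x_{k}S$ with $S=\sum_{l=1}^{n}x_{l}$. The degree statement is immediate: the factor $x_{1}^{n-2}$ contributes $n-2$, and the Vandermonde-type product $V:=\prod_{2\le i<j\le n}(x_{i}-x_{j})$ in the remaining $n-1$ variables has degree $\binom{n-1}{2}$, giving total degree $(n-2)+\frac{(n-1)(n-2)}{2}=\frac{(n-2)(n+1)}{2}$.

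First I would split the partial derivatives into two groups. For the first coordinate, $\partial_{1}\mathscr{Q}=(n-2)x_{1}^{n-3}V$. For $k\ge 2$, the logarithmic derivative of the Vandermonde gives $\partial_{k}\mathscr{Q}=x_{1}^{n-2}V\sum_{j\in\{2,\ldots,n\}\setminus\{k\}}\frac{1}{x_{k}-x_{j}}$. After dividing $\sum_{k}\varpi_{k}\partial_{k}\mathscr{Q}$ by $x_{1}^{n-3}V$, the identity to verify reduces to
\begin{eqnarray*}
(n-2)\varpi_{1}+x_{1}\sum_{\substack{2\le k,j\le n\\ k\ne j}}\frac{\varpi_{k}}{x_{k}-x_{j}}=0.
\end{eqnarray*}

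The decisive step is symmetrization of the double sum via pairing $(k,j)\leftrightarrow(j,k)$:
\begin{eqnarray*}
\sum_{\substack{2\le k,j\le n\\ k\ne j}}\frac{\varpi_{k}}{x_{k}-x_{j}}=\sum_{2\le k<j\le n}\frac{\varpi_{k}-\varpi_{j}}{x_{k}-x_{j}}.
\end{eqnarray*}
At this point the algebraic coincidence appears: because $\varpi_{k}$ is quadratic, $\varpi_{k}-\varpi_{j}=(x_{k}-x_{j})\bigl[(n+1)(x_{k}+x_{j})-2S\bigr]$, so the apparent pole at $x_{k}=x_{j}$ disappears and the inner sum collapses to $\sum_{2\le k<j\le n}\bigl[(n+1)(x_{k}+x_{j})-2S\bigr]$. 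Since each $x_{k}$ with $k\ge 2$ appears in exactly $n-2$ unordered pairs, this evaluates to $(n-2)\bigl[(n+1)(S-x_{1})-(n-1)S\bigr]=(n-2)\bigl[2S-(n+1)x_{1}\bigr]$. Multiplying by $x_{1}$ produces precisely $-(n-2)\varpi_{1}=-(n-2)x_{1}\bigl[(n+1)x_{1}-2S\bigr]$, so the two contributions to the reduced identity cancel.

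There is no substantial obstacle beyond bookkeeping; the only point requiring foresight is the symmetrization trick that converts a sum with apparent simple poles along the diagonals $x_{k}=x_{j}$ into a polynomial. The underlying reason is the divisibility $(x_{k}-x_{j})\mid(\varpi_{k}-\varpi_{j})$, forced by the quadratic degree of $\m{Q}_{n}$ together with the manifest $S_{n-1}$-symmetry of $\varpi_{2},\ldots,\varpi_{n}$ in $x_{2},\ldots,x_{n}$. This is yet another instance of the principle emphasized throughout the paper that sufficient symmetry of a homogeneous vector field forces polynomial first integrals of the associated differential system.
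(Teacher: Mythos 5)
Your proof is correct and follows essentially the same route as the paper: compute the logarithmic derivatives of $\mathscr{Q}$, pair the terms with denominators $x_{k}-x_{j}$ so that the divisibility $(x_{k}-x_{j})\mid(\varpi_{k}-\varpi_{j})$ cancels the poles, and sum the resulting linear expressions over the $\binom{n-1}{2}$ pairs to cancel the $\varpi_{1}$ contribution. The degree count also matches the paper's claim, so there is nothing to add.
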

\begin{proof}We have
\begin{eqnarray*}
\frac{1}{\mathscr{Q}}\cdot\frac{\p\mathscr{Q}}{\p x_{1}}&=&\frac{n-2}{x_{1}},\\
\frac{1}{\mathscr{Q}}\cdot\frac{\p\mathscr{Q}}{\p x_{\ell}}&=&\sum\limits_{j\neq 1,\ell}\frac{1}{x_{\ell}-x_{j}}\text{ for }2\leq \ell\leq n.
\end{eqnarray*}
Careful inspection shows that signs are the correct ones. Thus,
\begin{eqnarray*}
\sum\limits_{\ell=1}^{n}\frac{1}{\mathscr{Q}}\cdot\frac{\p \mathscr{Q}}{\p x_{\ell}}\cdot\varpi_{\ell}=
(n-2)(n+1)x_{1}-2(n-2)S+\sum\limits_{\ell=2}^{n}\sum\limits_{j\neq 1,\ell}
\frac{(n+1)x^{2}_{\ell}-2x_{\ell}S}{x_{\ell}-x_{j}}.
\end{eqnarray*}
For each pair $(i,j)$, $2\leq i<j \leq n$, the denominator $x_{i}-x_{j}$ will appear exactly twice. Considering these two terms separately, we have
\begin{eqnarray*}
\frac{(n+1)x^{2}_{i}-2x_{i}S}{x_{i}-x_{j}}-\frac{(n+1)x^{2}_{j}-2x_{j}S}{x_{i}-x_{j}}=(n+1)(x_{i}+x_{j})-2S.
\end{eqnarray*} 
Thus, the sum in question is equal to
\begin{eqnarray*}
(n-2)(n+1)x_{1}-2(n-2)S+\sum\limits_{2\leq i<j\leq n}\Big{(}(n+1)(x_{i}+x_{j})-2S\Big{)}=0.
\end{eqnarray*}
This shows that $\mathscr{Q}$ is the first integral.
\end{proof} 
Of course, the case $n=2$, described exactly by Theorem \ref{thm2}, is quite exceptional, since the expression for $\mathscr{Q}$ is then void. For $n\geq 3$ other first integrals are given by cyclic permutations. For example, up to the sign, they are
\begin{eqnarray*}
\mathscr{Q}_{\ell}=x_{\ell}^{n-2}\prod\limits_{1\leq i<j\leq n\atop i,j\neq \ell}(x_{i}-x_{j}),\quad 1\leq\ell\leq n.
\end{eqnarray*}
There seem to be $n$ first integrals, though we really have only $(n-1)$ independent, since there exists exactly one linear relation among them. Indeed, let 
\begin{eqnarray*}
P(X)=\prod\limits_{j=1}^{n}(X-x_{j}).
\end{eqnarray*}
Then the equality (\ref{n-2-eq}) coming from Lagrange interpolation (see Section \ref{sec9.2}) states that
\begin{eqnarray*}
\sum\limits_{j=1}^{n}\frac{x_{j}^{n-2}}{P'(x_{j})}=0,
\end{eqnarray*}
showing indeed the linear dependence of all $n$ first integrals. This yet again answers affirmatively to the question posed as Problem \ref{prob-two} in these cases of superflows $\phi_{S_{n+1}}$, $n\geq 2$.  The linear dependence between $n$ first integrals is exactly what happened in Section \ref{expl-t}, where any two of the expressions $x^2-y^2$, $x^2-z^2$, $x^2-z^2$ form a complete collection of independent first integrals, and all three being linearly dependent. \\

The tetrahedral superflow $\phi_{\widehat{\mathbb{T}}}$ and the octahedral superflow $\phi_{\mathbb{O}}$, as we will see in the next Chapter, have a very different arithmetic of orbits. While the first integrals for the octahedral superflow (and also hyper-octahedral ones in Chapter \ref{hyper-5}) are invariants of corresponding groups (see Proposition \ref{inv-fi}), this is not the case with superflow $\phi_{S_{n+1}}$, since $\mathscr{Q}$ is not invariant under permutation of variables (a subgroup $S_{n}<S_{n+1}$), not to mention the whole group $S_{n+1}$, obtained from $S_{n}$ by adding the matrix $\kappa$ given by (\ref{kappa-matrix}). This yet again illustrates the remarkable properties of spherical superflows, described by items 1) through 6) after posing Problem \ref{prob-vienas}.\\

Thus, we have the following differential system
\begin{eqnarray*}
\left\{\begin{array}{l}
\def\arraystretch{2.0}
p_{1}'=(n+1)p_{1}^{2}-2p_{1}\Big{(}\sum\limits_{\ell=1}^{n}p_{\ell}\Big{)},\\
\frac{p_{1}^{n-2}}{H'(p_{1})}\cdot D=\varkappa_{1},\\
\ldots,\\
\frac{p_{n}^{n-2}}{H'(p_{n})}\cdot D=\varkappa_{n}.
\end{array}
\right.
\end{eqnarray*}
Here $\varkappa_{\ell}$ are any real number subject to the condition
\begin{eqnarray*}
\sum\limits_{\ell=1}^{n}\varkappa_{\ell}=0,
\end{eqnarray*}
and
\begin{eqnarray*}
D=\prod\limits_{1\leq i<j\leq n}(p_{i}-p_{j}),\quad H(X)=\prod\limits_{\ell=1}^{n}(X-p_{\ell}).
\end{eqnarray*}
\begin{Example} In case $n=3$, $(p_{1},p_{2},p_{3})=(p,q,r)$, $(\varkappa_{1},\varkappa_{2},\varkappa_{3})=(a,b,c)$, this system looks like
\begin{eqnarray}
\left\{\begin{array}{c}
p'=4p^{2}-2p\Big{(}p+q+r\Big{)},\\
p(q-r)=a,\\
q(r-p)=b,\\
r(p-q)=c,\\
a+b+c=0.
\end{array}
\right.
\label{sys-3}
\end{eqnarray}
Following the same philosophy as in all our text, we need to show that a pair $(p,p')$ parametrizes an algebraic curve whose coefficient depend only on $a,b,c$. Square of the second equation, and difference of the third and fourth, give
\begin{eqnarray*}
\left\{\begin{array}{c}
p^2\big{(}(q+r)^2-4qr\big{)}=a^2,\\
2qr-p(q+r)=b-c.
\end{array}
\right.
\end{eqnarray*}
Expressing $qr$ from the last and plugging into the first, we obtain
\begin{eqnarray*}
p^2\Big{(}(q+r)^2-2p(q+r)+2(c-b)\Big{)}=a^2\Rightarrow (q+r-p)^2=\frac{a^2}{p^2}+2(b-c)+p^2.
\end{eqnarray*} 
Finally, the square of the first equation in (\ref{sys-3}) gives
\begin{eqnarray*}
(p')^2=4p^2(p-q-r)^2.
\end{eqnarray*} 
Thus,
\begin{eqnarray*}
(p')^2=4a^2+8(b-c)p^{2}+4p^4,
\end{eqnarray*}
and we are back to the Jacobi elliptic function setting.
\end{Example}

\chapter{The octahedral group $\mathbb{O}$ and the superflow $\phi_{\mathbb{O}}$}
\label{octahedral}
\begin{Poly} Since $x^2+y^2+z^2$ is the first integral of the corresponding system, the results of the next 3 sections can be easily transferred to the polynomial superflow case.
\end{Poly}
\section{Preliminaries}
\label{sub5.1}
Now we pass to a much more complicated superflow than $\phi_{\widehat{\mathbb{T}}}$. Let us define
\begin{eqnarray}
\alpha\mapsto\begin{pmatrix}
0 & 1 & 0\\
1 & 0 & 0\\
0 & 0 & -1 
\end{pmatrix},\quad
\beta\mapsto\begin{pmatrix}
0 & 0 & 1\\
0 & -1 &0\\
1 & 0 & 0 
\end{pmatrix},\quad
\gamma\mapsto\begin{pmatrix}
-1 & 0 & 0\\
0 & 0 & 1\\
0 & 1 & 0 
\end{pmatrix}.
\label{g-24p}
\end{eqnarray}
These are matrices of orders $\alpha^2=\beta^2=\gamma^{2}=I$, and together they generate the group $\mathbb{O}\subset SO(3)$, isomorphic to the octahedral group of order $24$. Direct calculations with SAGE, or using \emph{Molien's series} \cite{benson,smith,verma}, show that the ring of invariants is generated by
\begin{eqnarray}
x^2+y^2+z^2,\quad x^4+y^4+z^4,\quad x^6+y^6+z^6,\nonumber\\
x^5y^3z+y^5z^3x+z^5x^3y-x^5z^3y-y^5x^3z-z^5y^3x.\label{inv-specc}
\end{eqnarray}

The vector field which is invariant under the action of this group was found in \cite{alkauskas-un} and it is given by
\begin{eqnarray}
\m{C}(\m{x})=
\frac{y^3z-yz^3}{x^2+y^2+z^2}\bl\frac{z^3x-zx^3}{x^2+y^2+z^2}\bl\frac{x^3y-xy^3}{x^2+y^2+z^2}=\varpi\bl\varrho\bl\sigma.
\label{cc}
\end{eqnarray}
This vector field gives rise to the $\mathbb{O}$-superflow$_{+}$ $\phi_{\mathbb{O}}=V(x,y,z)\bl V(y,z,x)\bl V(z,x,y)$. We have also $\mathrm{div}\,\m{C}=0$, so the vector field $\m{C}$ is solenoidal.

\begin{Note}
\label{note-pav}
 As in Note \ref{note-red2}, let $\hat{\alpha}$ and $\hat{\beta}$ and $\hat{\delta}$ be now given by (\ref{g-24}) (we use a ``hat" not to confuse these with matrices given by (\ref{g-24p})). As noted, together they generate the group $\mathbb{T}$ of order $12$, which is a subgroup of $\mathbb{O}$ (and also of $\widehat{\mathbb{T}}$). We will find the vector field with a denominator $x^2+y^2+z^2$ invariant under this group. As in Note \ref{note-red2}, if such a vector field $A\bl B\bl C$ is invariant under $\hat{\alpha}$ and $\hat{\beta}$, it is given by
\begin{eqnarray*}
&&\frac{ay^3z+byz^3+cx^2yz}{x^2+y^2+z^2}\bl \frac{dz^3x+ezx^3+fxy^2z}{x^2+y^2+z^2}
\bl\frac{gx^3y+hxy^3+jxyz^2}{x^2+y^2+z^2},\\ 
&&a,b,c,d,e,f,g,h,j\in\mathbb{R}.
\end{eqnarray*} 
Now, invariance under $\hat{\delta}$ gives $a=d=g$, $b=e=h$, $c=f=j$, and so the vector field is
\small
\begin{eqnarray}
\frac{ay^3z+byz^3+cx^2yz}{x^2+y^2+z^2}\bl \frac{az^3x+bzx^3+cxy^2z}{x^2+y^2+z^2}
\bl\frac{ax^3y+bxy^3+cxyz^2}{x^2+y^2+z^2},\label{a-invv}
\end{eqnarray}\normalsize
where $a,b,c\in\mathbb{R}$. This is a general expression for the vector field with a denominator $x^2+y^2+z^2$ which has $\mathbb{T}$ as the group of its symmetries. Now, invariance under $\alpha$ (as given by (\ref{g-24p})) of (\ref{a-invv})  gives a scalar multiple of $\m{C}$, as given by (\ref{cc}), confirming once again that $\m{C}$ is a vector field of the superflow. However, this time the phenomenon $\mathbb{T}<\widehat{\mathbb{T}}$, described in Corollary \ref{cor1}, that is, a group produces a superflow, whose full group of symmetries is a proper extension of an initial one, does not occur.
\end{Note}

The invariance of the flow $\phi_{\mathbb{O}}=V(x,y,z)\bl V(y,z,x)\bl V(z,x,y)$ under conjugation with all elements of the group $\widehat{\mathbb{T}}$ give the following:
\begin{eqnarray}
V(x,y,z)=-V(-x,-y,z)=-V(-x,y,-z)=-V(-x,z,y).
\label{v-inv}
\end{eqnarray}
Note that we get a different set of identities than those in (\ref{u-inv}). Indeed, the fourth Klein group, consisting of matrices $\mathrm{diag}(-1,-1,1)$, $\mathrm{diag}(-1,1,-1)$, $\mathrm{diag}(1,-1,-1)$, $\mathrm{diag}(1,1,1)$ is a subgroup of both $\widehat{\mathbb{T}}$ and $\mathbb{O}$, and this is responsible that the first two identities in (\ref{u-inv}) and (\ref{v-inv}) coincide. However, the last equality is different.\\

In the next few chapters we will explicitly integrate the vector field $\m{C}$ and will find its arithmetic structure. \\
\begin{Example} Consider the vector field whose first coordinate is a $2-$homogeneous function
\begin{eqnarray*}
W(x,y,z)=\frac{(y^{5}z-yz^{5})+a(x^{2}y^{3}z-x^{2}yz^{3})}{(x^2+y^2+z^2)^2+b(x^2y^2+x^2z^2+y^2z^2)},\quad a,b\in\mathbb{R}.
\end{eqnarray*}
and all coordinates are $\widehat{\m{C}}(\m{x})=W(x,y,z)\bl W(y,z,x)\bl W(z,x,y)$. This vector field is also invariant under conjugation with all $\gamma\in\mathbb{O}$. But, first, its denominator has higher degree than that of $\m{C}$. Second, it is not unique but a $2-$parameter family. This example accentuates the exceptional role of the vector field $\m{C}$.
\end{Example}
\begin{Example}
\label{ex7}
 Note that the group $\mathbb{O}$ has a relative invariant of degree $3$; namely, $P(x,y,z)=xyz$. The family of vector fields with a denominator $xyz$, invariant under the group $\mathbb{O}$, is given by \small
\begin{eqnarray*}
\frac{x(z^4-y^4)+ax^3(z^2-y^2)}{xyz}\bl\frac{y(x^4-z^4)+ay^3(x^2-z^2)}{xyz}\bl\frac{z(y^4-x^4)+az^3(y^2-x^2)}{xyz},
\end{eqnarray*}\normalsize
$a\in\mathbb{R}$. This is a $1-$parameter family, and thus is not compatible with the definition of the superflow.
\end{Example}
For any point $(x,y,z)\in\mathbb{R}^{3}\setminus\{0\}$, we say that $(x,y,z)\cdot(x^{2}+y^{2}+z^{2})^{-1/2}$ is \emph{its projection} onto the unit sphere.
Note that on the unit sphere, the vector field $\m{C}(\m{x})$ vanishes at exactly $26$ points; see Section \ref{van-26} for a more transparent explanation. These correspond to $8$ projections of the points $(\pm 1,\pm 1,\pm 1)$ (the signs are independent), $12$ projections of all permutations of $(0,\pm 1,\pm 1)$ (signs are independent again), and $6$ projections of permutations of the points $(0,0,\pm 1)$. If one considers a cube inscribed into the unit sphere, so that its vertices are $(\pm \frac{1}{\sqrt{3}},\pm \frac{1}{\sqrt{3}},\pm \frac{1}{\sqrt{3}})$, the $26$ points correspond to $8$ vertices of this cube itself, $12$ projections of middle points of all edges, and $6$ projections of centres of all faces. 
\section{Orbits} 
\label{orbits-oct}
If $\mathscr{W}(x,y,z)$ is the first integral of the vector field $\varpi\bl\varrho\bl\sigma$, as given by (\ref{cc}), it satisfies (after clearing the common denominator) the identity
\begin{eqnarray*}
\mathscr{W}_{x}(y^3z-yz^3)+\mathscr{W}_{y}(z^3x-zx^3)+\mathscr{W}_{z}(x^3y-xy^3)=0.
\end{eqnarray*}
We see that there exist two independent polynomial first integrals: $\mathscr{W}=x^2+y^2+z^2$, and $\mathscr{W}=x^4+y^4+z^4$. So, the orbits of the superflow $\phi_{\mathbb{O}}$ are algebraic space curves
\begin{eqnarray*}
x^2+y^2+z^2=\mathrm{const}.,\quad x^4+y^4+z^4=\mathrm{const}.
\end{eqnarray*}
Generically, these are space curves of genus $9$ \cite{ha}; see Figure \ref{figure6} in Section \ref{partiv}. Thus, the flow is locked on any sphere with a center at the origin, and if we multiply the sphere by $\lambda$, since the vector field is $2$-homogeneous, the behaviour of the flow on the new-obtained sphere is almost identical, only $\lambda$-times faster.
Stereographic projection of the vector field $\m{C}(\m{x})$ on the unit sphere can be visualised, and is presented, together with an explanation what a stereographic projection of a vector field and a flow is, in Section \ref{sec6.3}.

\section{The surface $\mathscr{O}$}
\label{sec5.2}
Though Steiner surface (see Note \ref{note-steiner}) was only an illustration, since $x^2+y^2+z^2$ is not the first invariant of the tetrahedral superflow, the similar construction for the octahedral case is more important, since it really reveals the geometry of the vector field, so we will carry this in a little more detail. \\

Thus, let us consider the unit sphere $\m{S}^{2}$, and the map $f:\m{S}^{2}\mapsto\mathbb{R}^{3}$, given by
\begin{eqnarray*}
f(x,y,z)=(y^{3}z-yz^{3},z^{3}x-zx^{3},x^{3}y-xy^{3}).
\end{eqnarray*} This yet again gives a self-intersecting map from a real projective space into $\mathbb{R}^{3}$. Figure \ref{alka-octa-fig} shows this surface $\mathscr{O}$. From the point of view of superflows, out of all algebraic surfaces with exactly the orientation-preserving octahedral symmetry, $\mathscr{O}$ is the distinguished one. By \emph{orientation-preserving octahedral symmetry} we mean algebraic surfaces which have a symmetry $\mathbb{O}$, but not $\widehat{\mathbb{O}}$. For example, such are the surfaces\small
\begin{eqnarray*} 
c(x^{2s+4}+y^{2s+4}+z^{2s+4})+x^{2s-1}yz(y^2-z^2)+z^{2s-1}xz(z^2-x^2)+z^{2s-1}xy(x^2-y^2)=1,
\end{eqnarray*}\normalsize
$s\in\mathbb{N}$, $s\geq 3$, $c\in\mathbb{R}_{+}$. For $s=3$, $c=\frac{1}{40}$ it is shown in Figure \ref{octa-symm}. As we saw in Section \ref{orbits-oct}, the vanishing  of
\begin{eqnarray*}
x^{2s-1}yz(y^2-z^2)+z^{2s-1}xz(z^2-x^2)+z^{2s-1}xy(x^2-y^2)
\end{eqnarray*}
for $s=1,2$, gives two first integrals of the octahedral superflow, and for $s=3$ this is exactly the generating invariant (\ref{inv-specc}) of $\mathbb{O}$ which is not an invariant of $\widehat{\mathbb{O}}$.\\
\begin{figure}
\includegraphics[scale=0.62]{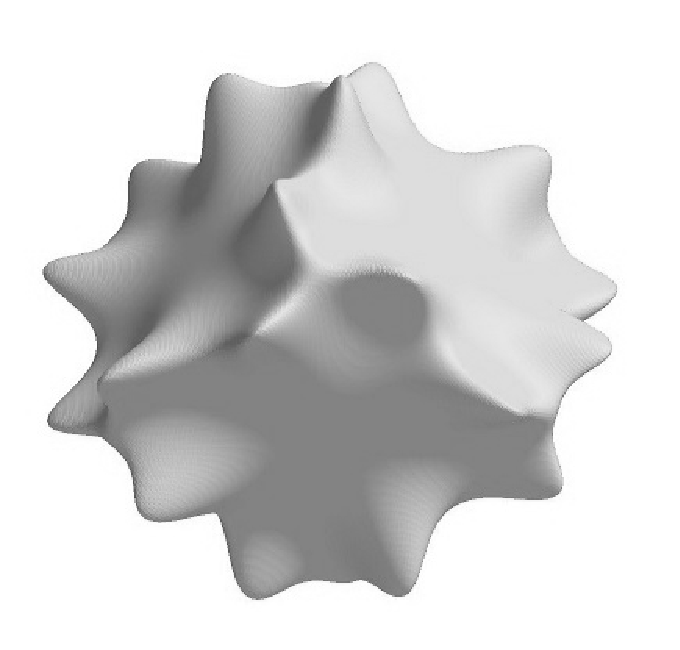}
\caption{The surface with orientation-preserving octahedral symmetry, but not the full octahedral symmetry}
\label{octa-symm}
\end{figure}
\begin{figure}
\includegraphics[scale=0.70]{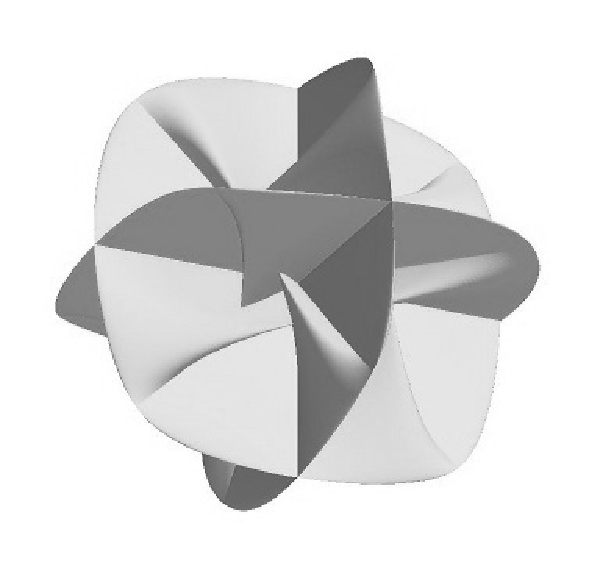}
\caption{The surface $\mathscr{O}$ associated with the octahedral superflow}
\label{alka-octa-fig}
\end{figure}

Of course, $\mathscr{O}$ is a rational surface, since $\m{S}^{2}$ is. To find an implicit algebraic equation for the surface $\mathscr{O}$, let
\begin{eqnarray*}
X=yz(y^2-z^2),\quad Y=xz(z^2-x^2),\quad Z=xy(x^2-y^2). 
\end{eqnarray*}
We want to find polynomial $\mathcal{E}$ in three variables such that
\begin{eqnarray}
\mathcal{E}(X,Y,Z)=(x^2+y^2+z^2-1)\cdot T(x,y,z).
\label{factor-e}
\end{eqnarray}
This is a complicated task, so we first set $\mathfrak{x}=X^2$, $\mathfrak{y}=Y^2$, $\mathfrak{z}=Z^2$, and $a=x^2$, $b=y^2$, $c=z^2$. Thus,
\begin{eqnarray*}
\mathfrak{x}=bc(b-c)^2,\quad\mathfrak{y}=ac(a-c)^2,\quad\mathfrak{z}=ab(a-b)^2.
\end{eqnarray*}  
We want to find a polynomial $\mathcal{F}$ such that
\begin{eqnarray*}
\mathcal{F}(\mathfrak{x},\mathfrak{y},\mathfrak{x})=(a+b+c-1)\cdot L(a,b,c).
\end{eqnarray*}
Let $c=1-a-b$. Then 
\begin{eqnarray*}
\mathfrak{x}&=&b(1-a-b)(2b-1+a)^2,\\
\mathfrak{y}&=&a(1-a-b)(2a-1+b)^2,\\
\mathfrak{z}&=&ab(a-b)^2.
\end{eqnarray*}
The polynomial in three variables which gives an algebraic dependence between three bivariate polynomials $\mathfrak{x},\mathfrak{y},\mathfrak{z}$ is the needed polynomial $\mathcal{F}$. This is a standard task of finding a \emph{relation ideal} in the ring of polynomials $\mathbb{Q}[\mathfrak{x},\mathfrak{y},\mathfrak{z}]$ \cite{cox}. We make computations with MAGMA:
\begin{verbatim}
Q := RationalField(); 
> R<a,b> := PolynomialRing(Q,2); 
> f1 := b*(1-a-b)*(2*b-1+a)^2; 
> f2 := a*(1-a-b)*(2*a-1+b)^2; 
> f3 := a*b*(a-b)^2; 
> L := [f1,f2,f3]; 
> S<x,y,z> := PolynomialRing(Q,3); 
> RelationIdeal(L,S);
\end{verbatim}

 It immediately gives a very complicated single generator $\mathcal{F}(\mathfrak{x},\mathfrak{y},\mathfrak{z})$, which is an irreducible polynomial. However, $\mathcal{F}(X^2,Y^2,Z^2)$ is reducible, splits into two factors, and one of them is exactly the polynomial we are looking for. It is given by
\begin{eqnarray*}
\mathscr{E}(X,Y,Z)&=&\sum\limits_{\textrm{cyclic}}X^{12}(Y^2+Z^2)^2\\
&+&\sum\limits_{\textrm{cyclic}}9X^{11}YZ(Y^2-Z^2)\\
&+&\sum\limits_{\textrm{cyclic}}X^{10}(44Z^2Y^4+44Z^4Y^2+12Z^2Y^2-4Y^6-4Z^6)\\
&-&\sum\limits_{\textrm{cyclic}}X^9YZ(Y^2-Z^2)(18Y^2+18Z^2+1)\\
&+&\sum\limits_{\textrm{cyclic}}6Y^{8}Z^{8}\\
&+&\sum\limits_{\textrm{cyclic}}2X^{8}Y^2Z^2(340Y^2Z^2-23Y^4-23Z^4-18Y^2-18Z^2)\\
&+&\sum\limits_{\textrm{cyclic}}3X^7YZ(Y^2-Z^2)(Y^2+Z^2-90Y^2Z^2)\\
&+&\sum\limits_{\textrm{cyclic}}48X^2Y^6Z^6+1050X^4Z^6Y^6+12X^6Y^4Z^4.
\end{eqnarray*}  
MAPLE indeed confirms (\ref{factor-e}). Thus, $\mathcal{E}(X,Y,Z)=0$ is the implicit equation for the surface given in Figure \ref{alka-octa-fig}. The smallest sphere to contain this surface is of radius (see Chapter \ref{funda-period})
\begin{eqnarray*}
\frac{\sqrt{827+73\sqrt{73}}}{96\sqrt{2}}.
\end{eqnarray*}

\begin{Note}
\label{octa-beltrami}
In the setting of Note \ref{tetra-beltrami}, consider the vector field which is the numerator of $\m{C}$ as given by (\ref{cc}). Namely, let
\begin{eqnarray}
\widetilde{\m{C}}=\m{S}_{4}=y^3z-yz^3\bl z^3x-zx^3\bl x^3y-xy^3.
\label{wide}
\end{eqnarray}
If we put $\m{S}_{3}=\mathrm{curl}\,\m{S}_{4}$, then $\mathrm{curl}\,\m{S}_{3}=\m{0}$. We may try to solve analogous inverse problem, but this time we require octahedral symmetry $\mathbb{O}$.\\

And indeed, one of the solutions is given by $6\mathfrak{O}$ , where $\mathfrak{O}=(\mathfrak{a},\mathfrak{b},\mathfrak{c})$ is given by \cite{alkauskas-beltrami}
\begin{eqnarray*}
\left\{\begin{array}{c@{\qquad}l}
\mathfrak{a}=y\sin z-z\sin y+x\cos y-2\sin x+x\cos z,\\
\mathfrak{b}=z\sin x-x\sin z+y\cos z-2\sin y+y\cos x,\\
\mathfrak{c}=x\sin y-y\sin x+z\cos x-2\sin z+z\cos y.
\end{array}\right.
\end{eqnarray*}
This produces octahedral lambent flow.
\end{Note}

\begin{figure}[htb]
\includegraphics[width=85mm,height=85mm,angle=-90]{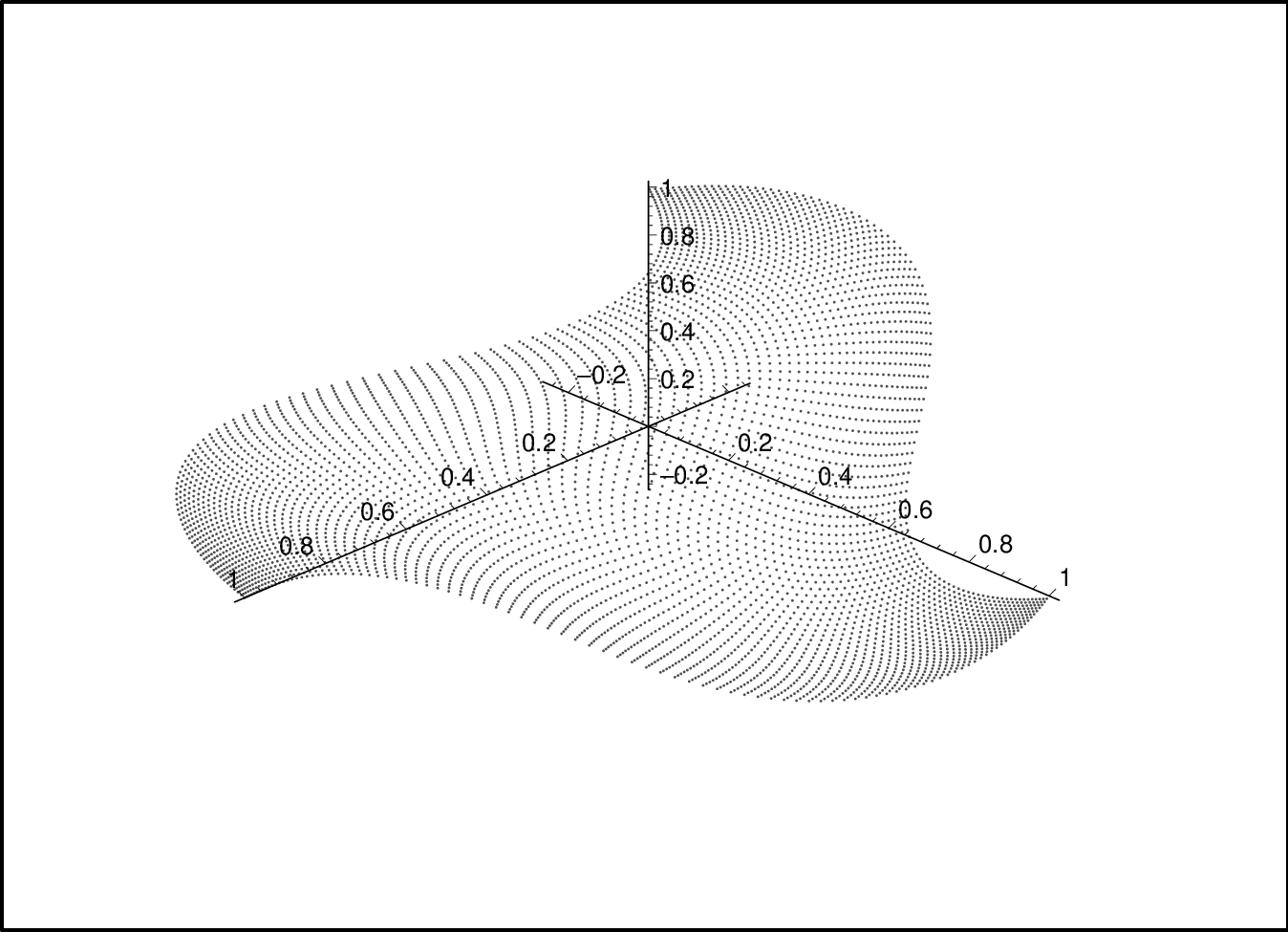}
\caption{The image of the positive octant of the unit sphere $A:=\{x^2+y^2+z^2=1:x,y,z\geq 0\}$ under the superflow $\phi_{\mathbb{O}}$ for $t=1.0$. So, this is the set $\phi_{\mathbb{O}}(A)$.}
\label{figure3}
\end{figure}

\section{The differential system}
A we saw from Section \ref{orbits-oct}, $\phi_{\mathbb{O}}$ is a flow of level $(2,4)$. The corresponding differential system therefore can be taken as
\begin{eqnarray}
\left\{\begin{array}{l}
p'=r^3q-rq^3,\quad q'=p^3r-pr^3,\quad r'=q^3p-qp^3,\\
p^2+q^2+r^2=1,\quad p^4+q^4+r^4=\xi.
\end{array}
\right.
\label{sys2}
\end{eqnarray}
Note that though the vector field $\m{C}$ has a denominator, the first line has no denominators since $p^2+q^2+r^2=1$. Also, if $p,q,r$ is a solution to (\ref{sys2}), so is, for example, $(p,-q,-r)$, in agreement with the fact that $\mathrm{diag}(1,-1,-1)$ (element of a Klein four-group mentioned just after (\ref{v-inv})) is an element of $\mathbb{O}$.\\

 We will now prove the following result.

\begin{prop}Let $P(t)=p^2(t)$, $q^2(t)$ or $r^2(t)$. The pair of functions $(P(t),P'(t))=(X,Y)$ parametrizes the following curve 
\begin{eqnarray*}
Y^2=2X(1-\xi-2X+2X^2)(2\xi-1+2X-3X^2)=f_{\xi}(X).
\end{eqnarray*}
It is of genus $0$ if $\xi=\frac{1}{2}$ or $1$ (quadratic), of genus $1$ if $\xi=\frac{1}{3}$ (elliptic), and of genus $2$ in other cases (hyper-elliptic).
\label{prop8}
\end{prop}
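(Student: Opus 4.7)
The strategy is a direct computation using only the two polynomial first integrals $p^2+q^2+r^2=1$ and $p^4+q^4+r^4=\xi$ together with the explicit form of the differential system (\ref{sys2}). Because the system is symmetric under cyclic permutation of $(p,q,r)$, it suffices to treat $P(t)=p^2(t)$.

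The first step is to differentiate and square: from $p'=rq(r^2-q^2)$ we get
\begin{eqnarray*}
(P')^2=(2pp')^2=4p^2q^2r^2(r^2-q^2)^2=4P\cdot Q\cdot R\cdot (R-Q)^2,
\end{eqnarray*}
where $Q=q^2$, $R=r^2$. Thus the right-hand side is already expressed entirely in terms of the three squares. The second step is to eliminate $Q$ and $R$ using the first integrals. Setting $Q+R=1-P$ and $Q^2+R^2=\xi-P^2$ gives
\begin{eqnarray*}
2QR=(Q+R)^2-(Q^2+R^2)=1-\xi-2P+2P^2,
\end{eqnarray*}
and
\begin{eqnarray*}
(R-Q)^2=(Q+R)^2-4QR=2\xi-1+2P-3P^2.
\end{eqnarray*}
Substituting yields exactly $(P')^2=2P(1-\xi-2P+2P^2)(2\xi-1+2P-3P^2)=f_\xi(P)$, which is the asserted curve. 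The same identity for $q^2$ and $r^2$ then follows by the cyclic symmetry of (\ref{sys2}).

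For the genus statement I would argue via the standard fact that $Y^{2}=f(X)$ with $f$ squarefree of degree $5$ defines a smooth hyperelliptic curve of genus $2$, and that repeated factors of $f$ drop the genus. I would therefore locate all values of $\xi$ for which $f_\xi$ has a repeated root by computing the discriminants of the two quadratic factors and checking when the common factor $X$ collides with them. The quadratic $1-\xi-2X+2X^{2}$ has discriminant $8\xi-4$, vanishing at $\xi=\tfrac12$; the quadratic $2\xi-1+2X-3X^{2}$ has discriminant $24\xi-8$, vanishing at $\xi=\tfrac13$; and $X=0$ becomes a double root precisely at $\xi=\tfrac12$ or $\xi=1$. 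A short check shows that at $\xi=1$ and $\xi=\tfrac12$ the polynomial $f_\xi$ becomes a square times a linear factor, producing a genus $0$ (quadratic) model, while at $\xi=\tfrac13$ one quadratic factor becomes a perfect square, leaving an irreducible cubic under the square root and hence genus $1$. In all other cases $f_\xi$ remains squarefree of degree $5$, giving genus $2$.

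The routine but slightly delicate part is verifying squarefreeness for generic $\xi$; this I would handle by checking that the resultant of $1-\xi-2X+2X^{2}$ and $2\xi-1+2X-3X^{2}$ does not vanish outside $\{\tfrac13,\tfrac12,1\}$, and that neither quadratic factor has $0$ as a root for $\xi\notin\{\tfrac12,1\}$. No deeper obstacle is expected, since the whole argument is an elimination carried out on the two first integrals.
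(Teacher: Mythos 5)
Your proposal is correct and follows essentially the same route as the paper: square the first equation of the system, multiply by $4p^2$, and eliminate $q^2r^2$ and $(q^2-r^2)^2$ via the two first integrals, which is exactly the paper's computation (the paper forms $q^2r^2=\frac{1-\xi}{2}-p^2+p^4$ first and multiplies by $4p^2$ afterwards, but this is the same elimination). For the genus, the paper computes the discriminant of the quintic directly, $8192(2\xi-1)^3(\xi-1)^6(3\xi-1)$, and then exhibits the explicit degenerate models at $\xi=\frac13,\frac12,1$; your factor-by-factor bookkeeping (discriminants of the two quadratics, their resultant, and the root at $X=0$) locates the same exceptional set and is an equivalent way of reading off that discriminant.
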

Note that $\xi=0$ is a regular point. 
\begin{proof}
From (\ref{sys2}), we have:
\begin{eqnarray*}
q^2+r^2=1-p^2,\quad q^4+r^4=\xi-p^4\Longrightarrow q^2r^2=\frac{1-\xi}{2}-p^2+p^4.
\end{eqnarray*}
This implies 
\begin{eqnarray*}
(q^2-r^2)^2=q^4+r^4-2q^2r^2=2\xi-1+2p^2-3p^4.
\end{eqnarray*} 
Now, according to the first equation of the system (\ref{sys2}),
\begin{eqnarray*}
(p')^{2}=q^2r^2(r^2-q^2)^2.
\end{eqnarray*}
Therefore,
\begin{eqnarray*}
(p')^2=\Big{(}\frac{1-\xi}{2}-p^2+p^4\Big{)}\Big{(}2\xi-1+2p^2-3p^4\Big{)}.
\end{eqnarray*}
Multiplying this by $4p^2$, we obtain
\begin{eqnarray*}
(2pp')^2=2p^2\Big{(}1-\xi-2p^2+2p^4\Big{)}\Big{(}2\xi-1+2p^2-3p^4\Big{)}.
\end{eqnarray*}
This gives the first statement of the proposition for $P=p^2(t)$. Note that nothing changes if we interchange the r\^{o}les of $p(t)$, $q(t)$ and $r(t)$, so the same conclusion follows if $P=q^{2}(t)$ or $r^{2}(t)$. The discriminant of this $5$th degree polynomial is equal to $8192(2\xi-1)^3(\xi-1)^6(3\xi-1)$. So, except for cases $\xi=\frac{1}{2}$, $\frac{1}{3}$ and $\xi=1$, all five roots of $f_{\xi}(X)$ are distinct. If $\xi=\frac{1}{3}$, the curve becomes
\begin{eqnarray*}
\Big{(}\frac{Y}{3X-1}\Big{)}^2=-\frac{4}{9}X(1-3X+3X^2),
\end{eqnarray*}
and thus it is an elliptic curve. If $\xi=\frac{1}{2}$, the curve is
\begin{eqnarray}
\Big{(}\frac{Y}{X(2X-1)}\Big{)}^2=2-3X,
\label{antroji}
\end{eqnarray}
and so it is birationally equivalent to a parabola, a quadratic of genus $0$. If $\xi=1$, the curve is 
\begin{eqnarray*}
\Big{(}\frac{Y}{X(X-1)}\Big{)}^{2}=-4-12X,
\end{eqnarray*}
and so is also birationally equivalent to a quadratic.
\end{proof}

\section{Elementary geometry of exceptional cases}
\label{elementary}
We will now see that if the superflow $\phi_{\mathbb{O}}$ and its orbits are considered over $\mathbb{R}$, there is a geometric explanation for three exceptional cases $\xi=\frac{1}{3},\frac{1}{2}$, and $1$. Indeed, consider two surfaces $\m{S}^{2}=\{x^2+y^2+z^2=1\}$ and $\m{T}_{\xi}=\{x^4+y^4+z^4=\xi\}$. Since the function $t\mapsto t^2$ is increasing for $t>0$, to understand the geometry of intersection of $\m{S}^{2}$ and $\m{T}$, we can consider the surfaces $\widehat{\m{S}}=\{|x|+|y|+|z|=1\}$ (an octahedron) and $\widehat{\m{T}}=\{x^2+y^2+z^2=\xi\}$ (a sphere). We therefore have the following seven cases:
\begin{itemize}
\item[i)]$0<\xi<\frac{1}{3}$. The surfaces $\widehat{\m{S}}$ and $\widehat{\m{T}}$ do not intersect.
\item[ii)]$\xi=\frac{1}{3}$. The surfaces touch at eight points $(x,y,z)=(\pm\frac{1}{3},\pm\frac{1}{3},\pm\frac{1}{3})$ (signs are independent).
\item[iii)]$\frac{1}{3}<\xi<\frac{1}{2}$. The surfaces intersect at $8$ disjoint curves, each homeomorphic to a circle. Each of $8$ pieces represent a different orbit (over $\mathbb{R}$) of this superflow. 
\item[iv)]$\xi=\frac{1}{2}$. The surfaces intersect at $8$ circles, which are joint in the following fashion. Given a regular octahedron $|x|+|y|+|z|=1$. In each of its $8$ faces a circle is inscribed. Each circle thus touches $3$ other circles inscribed into adjacent faces of the octahedron. There are $12$ touching points: $(\pm \frac{1}{2},\pm\frac{1}{2},0)$, where the signs are independent, and all permutations. If we temporarily return to $\m{S}^{2}$ and $\m{T}$,  these points correspond to fixed points of the superflow, where the vector field vanishes. Thus, the set $\m{S}^{2}\cap\m{T}_{\frac{1}{2}}$ splits into $12$ points (each a full orbit on its own), and $24$ disjoint orbits.
\item[v)]$\frac{1}{2}<\xi<1$. The surfaces intersect at $6$ disjoint curves, each homeomorphic to a circle, and each going round one vertex of the octahedron. Each of $6$ pieces is a separate orbit. Figure \ref{figure6} in Section \ref{partiv} illustrates two surfaces $\m{S}^{2}$ and $\m{T}_{\frac{5}{9}}$ and their intersection; this is also a very important case and will be dealt with in Chapter \ref{partiv} (\emph{a posteriori}, a Weierstrass elliptic function involved in this case has a square period lattice). 
\item[vi)] $\xi=1$. Two surfaces intersect at $6$ points $(\pm 1,0,0)$, and all cyclic permutations.
\item[vii)] $\xi>1$. The surfaces $\widehat{\m{S}}$ and $\widehat{\m{T}}$ do not intersect.
\end{itemize}
Now, note that the intersection of $\m{S}^{2}$ and $\m{T}_{\frac{1}{2}}$ is in fact a union of $4$ large circles on the unit sphere. This is the consequence of an identity
\begin{eqnarray}
&&2(x^4+y^4+z^4)-(x^2+y^2+z^2)^2\nonumber\\
&&=(x+y+z)(x+y-z)(x-y+z)(x-y-z).
\label{iden}
\end{eqnarray}
\begin{Note}
Let, as already defined in the end of Section \ref{dih}, $\phi=\frac{1+\sqrt{5}}{2}$. In \cite{alkauskas-super2} we investigate the icosahedral superflow. The orbits of the latter are singular if $(\phi^2 x^2-y^2)(\phi^2 y^2-z^2)(\phi^2 z^2-x^2)=0$. And so, completely analogously to the $\mathbb{O}-$superflow case, we have a collection of large circles on the unit sphere, $6$ in total.
\end{Note}
As is clear from this Section, the geometry of orbits on the unit sphere better corresponds not to the octahedron, but rather an \emph{cuboctahedron}, see Figure \ref{cub-oct}.

\begin{figure}
\includegraphics[scale=0.22]{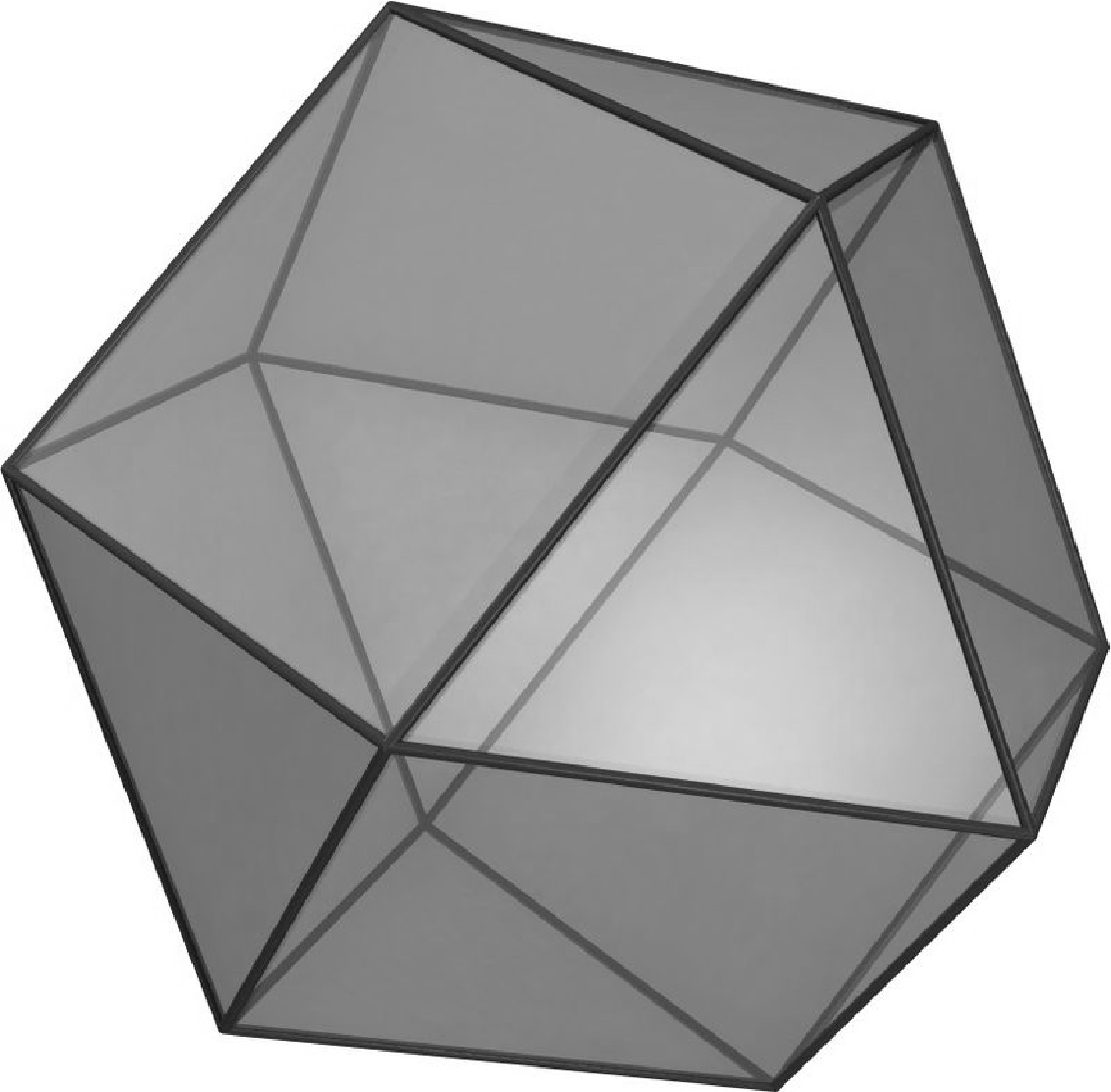}
\caption{The cuboctahedron.}
\label{cub-oct}
\end{figure}

\chapter{Octahedral vector fields}
\label{octa-vec}

\section{Dimensions of spaces of octahedral vector fields}
\label{octa-dim}
Let $\ell\in 2\mathbb{N}$. Continuing in the spirit of Section \ref{tetra-sole}, let $\mathcal{U}^{\mathbb{O}}_{\ell}$ be a linear space of $\ell$-homogeneous polynomial vector fields which have an octahedral symmetry. The first coordinate of such a vector field is equal to
\begin{eqnarray*}
E_{\ell}(y,z)+x^{2}E_{\ell-2}(y,z)+\cdots+x^{\ell-2}E_{2}(y,z),
\end{eqnarray*}
where this time each $E_{2j}$ is homogeneous of degree $2j$, of odd degree in each $y,z$ and antisymmetric in $y,z$ (and so $E_{2}=0$). Thus, $E_{2j}(y,z)=yz(y^2-z^2)\widetilde{E}_{j-2}(y^2,z^2)$, where $\widetilde{E}_{j-2}$ is symmetric of degree $j-2$. So, we obtain almost the same sequence of dimensions as in Section \ref{tetra-sole}, only a shifted one:
\begin{eqnarray*}
\dim_{\mathbb{R}}\mathcal{U}^{\mathbb{O}}_{\ell}=\Big{\lfloor}\frac{\ell^2}{16}\Big{\rfloor}.
\end{eqnarray*}  
\section{Two groups}
Next, the only place in the whole study, we deal with both groups $\widehat{\mathbb{T}}$ and $\mathbb{O}$ simultaneously. We use notations of Section \ref{tetra-sole}.\\ 

We know that $\widehat{\mathbb{T}}$ and $\mathbb{O}$ together generate $\widehat{\mathbb{O}}$, and the latter contains $-I$. Next, $\widehat{\mathbb{T}}\cap\mathbb{O}=\mathbb{T}$. The first statement gives
\begin{eqnarray*}
\mathcal{U}^{\widehat{\mathbb{T}}}_{\ell}\cap
\mathcal{U}^{\mathbb{O}}_{\ell}=\emptyset.
\end{eqnarray*}
Using both facts we get that
\begin{eqnarray}
\mathcal{Y}_{\ell}=\mathcal{U}^{\widehat{\mathbb{T}}}_{\ell}
\oplus\mathcal{U}^{\mathbb{O}}_{\ell}\oplus\mathrm{Ker}L_{\mathbb{T}}.
\label{decomp}
\end{eqnarray}
Here the last summand uses the map $L$ given by (\ref{ell-dim}) for the tetrahedral group. Indeed, all we are left to show is that
\begin{eqnarray}
\mathrm{Ker}L_{\mathbb{T}}=\mathrm{Ker}L_{\widehat{\mathbb{T}}}\cap
\mathrm{Ker}L_{\mathbb{O}}.
\label{decomp2}
\end{eqnarray}
Now, the left side is obviously a subspace of the right side. In the other direction, let $a$ be a vector field, $L_{\mathbb{O}}(a)=0$ and $L_{\widehat{\mathbb{T}}}(a)=0$, and put $L_{\mathbb{T}}(a)=c$. 
Note that $\widehat{\mathbb{T}}=\mathbb{T}\cup\gamma\mathbb{T}$, and $\mathbb{O}=\mathbb{T}\cup\alpha\mathbb{T}$, where $\gamma$ and $\alpha$  are given, respectively, by (\ref{g-24}) and (\ref{g-24p}). 
Then this implies
\begin{eqnarray*}
c+\gamma^{-1}\circ c\circ\gamma=0,\quad  c+\alpha^{-1}\circ c\circ\alpha=0.
\end{eqnarray*}
These two imply $c=(\alpha\gamma)^{-1}\circ c\circ (\alpha\gamma)$. But the matrix $\alpha\gamma=\mathrm{diag}(1,1,-1)$. Equally we arrive at the conclusion that $c$ is invariant under conjugation with matrices $\mathrm{diag}(1,-1,1)$ and $\mathrm{diag}(-1,1,1)$. So, $c$ is invariant under conjugation with $-I$, and since it is a vector field of even degree, $c=0$. This proves (\ref{decomp2}).\\

 In particular, for $\ell=2$ the second summand in (\ref{decomp}) is a $0$-space, and so 
\begin{eqnarray*}
\mathrm{Ker}L^{\mathbb{T}}_{2}=\mathrm{Ker}L^{\widehat{\mathbb{T}}}_{2}\Rightarrow
\mathcal{U}^{\mathbb{T}}_{2}=\mathcal{U}^{\widehat{\mathbb{T}}}_{2}.
\end{eqnarray*} 
This tells that if a $2$-homogeneous vector field has a tetrahedral symmetry, it has a full tetrahedral symmetry. But this is exactly the fact \emph{``extension of symmetry"} we discovered in Note \ref{note-red2} and Corollary \ref{cor1}.\\

In words, the decomposition (\ref{decomp}) can be stated as follows:
\begin{itemize}
\item[$\mathbf{\star}$]Any homogeneous vector field of even degree with a tetrahedral symmetry decomposes uniquely into the sum of a vector field with a full tetrahedral symmetry, and a vector field with an octahedral symmetry.
\end{itemize}
\indent Now, the first coordinate of the vector field with a tetrahedral symmetry is of even degree in $x$ and of odd in each $y,z$. And, as with both cases $\mathbb{O}$ and $\widehat{\mathbb{T}}$, the other two coordinates are obtained by a cyclic permutation. So, the claim ``$\star$" becomes ``any homogeneous function in $y^2,z^2$ can be uniquely written as a sum of a symmetric and an antisymmetric function", and thus becomes an obvious fact.
\section{Spheres and solenoidality}
\label{sph-sol}
As a continuation to previous two topics, and in the spirit of Section \ref{tetra-sole}, we finish this discussion with calculation of $\ell$-homogeneous polynomial vector fields with octahedral symmetry, which are both solenoidal and are flows on spheres.\\

Thus, let $\ell\in2\mathbb{N}$, $\ell\geq 4$, and $X=(a,b,c)\in\mathcal{U}^{\mathbb{O}}_{\ell}=\mathcal{U}$. Then both $\mathrm{div}X$ and $xa+yb+zc$ are anti-symmetric polynomials, odd in each $x,y,z$, of total degrees $\ell-1$ and $\ell+1$, respectively. Such polynomials are given by
\begin{eqnarray*}
W\cdot P(x^2,y^2,z^2),\quad W=xyz(x^2-y^2)(y^2-z^2)(z^2-x^2),
\end{eqnarray*}
where $P$ is symmetric polynomial that matches a degree.
Let us consider the map
\begin{eqnarray*}
T:\mathcal{U}\mapsto W\cdot\mathbb{R}_{\frac{\ell-10}{2}}[x^2,y^2,z^2]^{S_{3}}\oplus W\cdot\mathbb{R}_{\frac{\ell-8}{2}}[x^2,y^2,z^2]^{S_{3}},
\end{eqnarray*}
given by $X\mapsto \mathrm{div}X\oplus xa+yb+zc$. For negative values of index (for example, $\ell-10<0$) the space is assumed to be a zero space. The dimension of the first and the second summands on the right are equal to, respectively,
\begin{eqnarray*}
\Big{\Vert}\frac{(\ell-4)^2}{48}\Big{\Vert},\quad \Big{\Vert}\frac{(\ell-2)^2}{48}\Big{\Vert}.
\end{eqnarray*}
  
Yet again, as in Section \ref{tetra-sole}, we see that the map $T$ is surjective.\\

We will now give a proof of this fact. Consider the vector field \begin{eqnarray}
W=(a,b,c)=\varpi(x,y,z)\bl\varpi(y,z,x)\bl\varpi(z,x,y),
\label{w-coord}
 \end{eqnarray} 
where 
\begin{eqnarray}
\varpi=\Big{(}E(x^2,y^2,z^2)+x^{2}F(x^2,y^2,z^2)\Big{)}yz(y^2-z^2).
\label{geras}
\end{eqnarray}
Here $E,F$ are arbitrary symmetric homogeneous polynomials in all three variables $X=x^2$, $Y=y^2$, $Z=z^2$, of degrees $\frac{\ell-4}{2}$ and $\frac{\ell-6}{2}$, respectively. Then it is easy to see that $W$ has an octahedral symmetry ($\varpi$ is of even degree in $x$, of odd in each $y,z$, and antisymmetric in $y,z$), and $xa+yb+zc=0$. This is the same as saying that $x^2+y^2+z^2$ and $x^4+y^4+z^4$ are the first integrals for the octahedral superflow (see Section \ref{orbits-oct}). Now, if the function can be written in the form $E+x^2F$ for $E,F$ symmetric in $x^2,y^2,z^2$, then this can be done in the unique way. Indeed, assuming there is the second way $\widehat{E}+x^2\widehat{F}$, we would get
\begin{eqnarray*}
E-\widehat{E}=x^{2}(\widehat{F}-F).
\end{eqnarray*}
Now, permuting variables does not change the left side, and this gives $\widehat{F}=F$. The construction (\ref{geras}) gives all vector fields on spheres with octahedral symmetry, as can be double-verified from the identity
\begin{eqnarray*}
\Big{\lfloor}\frac{\ell^2}{16}\Big{\rfloor}-\Big{\Vert}\frac{(\ell-2)^2}{48}\Big{\Vert}
=\Big{\Vert}\frac{\ell^2}{48}\Big{\Vert}+\Big{\Vert}\frac{(\ell+2)^2}{48}\Big{\Vert},
\end{eqnarray*} 
which is satisfied for even positive integers $\ell$. Indeed, the first summand on the left is the dimension of $\mathcal{U}$, the minus summand is the dimension of $\mathbb{R}_{\frac{\ell-8}{2}}[x^2,y^2,z^2]^{S_{3}}$, and two summand on the right are the dimensions of symmetric functions in $X=x^2,Y=y^2,Z=z^2$ of degrees $\frac{\ell-4}{2}$ (all functions $E$) and $\frac{\ell-6}{2}$ (all functions $F$), respectively, in $X,Y,Z$.\\

Now, let $W$ be of the form $(a,b,c)$, where, as before, $\varpi$ is given (\ref{geras}). We will prove that $T$ is surjective on the first coordinate, if we limit ourselves to such vector fields. Even more - $T$ is surjectve on a subspace generated by such vector fields with $F\equiv 0$. Indeed, all we need is to prove the following lemma.
\begin{lem}
\label{deriv}
 Let $n\geq 3$, $n\in\mathbb{N}$, $E_{n}$ be a space of symmetric $n$-homogeneous polynomials in variables $X,Y,Z$. Let us define a linear map $\mathscr{L}:E_{n}\mapsto E_{n-3}$ by $\mathscr{L}(F)=H$, where
\begin{eqnarray*}
f(X,Y,Z)(Y-Z)+f(Y,Z,X)(Z-X)+f(Z,X,Y)(X-Y)\\
=(X-Y)(Y-Z)(Z-X)H(X,Y,Z),\quad f=\frac{\p}{\p X}F.
\end{eqnarray*} 
Then $\mathscr{L}$ is surjective.
\end{lem}
But this lemma follows from the calculation 
\begin{eqnarray}
&&\mathscr{L}\Big{(}(X+Y+Z)^p(X^2+Y^2+Z^2)^q(X^3+Y^3+Z^3)^r\Big{)}\nonumber\\
&&=-3r(X+Y+Z)^p(X^2+Y^2+Z^2)^q(X^3+Y^3+Z^3)^{r-1},
\label{calc-a}
\end{eqnarray}
for $p,q,r\in\mathbb{N}_{0}$. Indeed, $\mathcal{L}$ is a derivation:
\begin{eqnarray*}
\mathcal{L}(GH)=\mathcal{L}(G)H+G\mathcal{L}(H).
\end{eqnarray*}
Note that three versions of $\mathcal{L}$ above act on (possibly) different vector spaces. The calculation (\ref{calc-a}) now follows from
\begin{eqnarray*}
\mathcal{L}(X+Y+Z)=\mathcal{L}(X^2+Y^2+Z^2)=0,\quad\mathcal{L}(X^3+Y^3+Z^3)=-3.
\end{eqnarray*}
We are left employ the fact that every symmetric function in $X,Y,Z$ is a linear combination of terms $(X+Y+Z)^p(X^2+Y^2+Z^2)^q(X^3+Y^3+Z^3)^r$ that match a degree. This even gives an explicit kernel of $\mathscr{L}$, consisting of linear combinations of symmetric functions $(X+Y+Z)^p(X^2+Y^2+Z^2)^q$ that match a degree. We will need this lemma once again in Section \ref{lie-brackets}.\\
 
Therefore, the sequence of dimensions we are looking for is equal to
\begin{eqnarray*}
a_{\frac{\ell}{2}}=\Big{\lfloor}\frac{\ell^2}{16}\Big{\rfloor}-\Big{\Vert}\frac{(\ell-4)^2}{48}\Big{\Vert}-\Big{\Vert}\frac{(\ell-2)^2}{48}\Big{\Vert}
=\Big{\lfloor}\frac{\ell^2+12\ell+16}{48}\Big{\rfloor}.
\end{eqnarray*}
This sequence $\{a_{s}:s\in\mathbb{N}\}$, stars from ($s=1$, or $\ell=2$, is also added at the beginning)
\begin{eqnarray*}
0,1,2,3,4,6,7,9,11,13,15,18,20,23,\ldots
\end{eqnarray*}
Its number in \cite{oeis} is A253186. It is equal the amount of ways to partition $s$ into $2$ or $3$ parts, or number of connected unlabeled multigraphs with 3 vertices and $s$ edges. As we have discovered, this number is equal to the dimension of $2s$-homogeneous polynomial vector fields with an octahedral symmetry, which are solenoidal and which are vector fields on spheres with origin at the center. 

\section{Vanishing points}
\label{van-26}
Let $X$ be a homogeneous vector field with an octahedral symmetry on a sphere (not necessarily solenoidal). Let $\m{x}\in\mathbb{R}^{3}$, $\m{x}\neq\m{0}$, be such a point whose stabilizer $G_{\m{x}}<\mathbb{O}$ is non-trivial. Since $X(\m{x})$ is invariant under the action of $G_{\m{x}}$, we immediately get that $X(\m{x})$ and $\m{x}$ are parallel. However, $X(\m{x})$ is tangent to a sphere $\mathbf{S}^{2}$, and so $X(\m{x})\cdot\m{x}=0$. This gives $X(\m{x})=0$. Thus, all such vector fields vanish on $\frac{24}{2}+\frac{24}{3}+\frac{24}{4}=26$ points, mentioned in Section \ref{sub5.1}. In general, there exist a number $v(X)\in\mathbb{N}_{0}$ such that a vector field $X$ vanishes at exactly
\begin{eqnarray*}
26+24v(X)
\end{eqnarray*} 
points, or infinitely many. 
\begin{Example} Such a vector field is given by (\ref{w-coord}), where $\varpi$ is of the form (\ref{geras}). Except for these $26$ points, the vanishing of $W$ implies $E(x^2,y^2,z^2)=0$, $F(x^2,y^2,z^2)=0$. Consider now the point $\m{x}_{0}=(x,y,z)=(\frac{2}{3},\frac{2}{3},\frac{1}{3})$ on $\mathbf{S}^{2}$. For this point, 
\begin{eqnarray*}
\def\arraystretch{1.8}
\left\{\begin{array}{c@{\qquad}l}
x^2+y^2+z^2=1,\\
x^4+y^4+z^4=\frac{11}{27},\\
x^6+y^6+z^6=\frac{43}{243}.
\end{array}\right.
\end{eqnarray*} 
This system has $24$ solutions, given  by all images of $\m{x}_{0}$ under $\mathbb{O}$. Let now $\ell=10$, and let us define
\begin{eqnarray*}
F&=&27(x^4+y^4+z^4)-11(x^2+y^2+z^2)^2,\\
E&=&99(x^6+y^6+z^6)-43(x^2+y^2+z^2)(x^4+y^4+z^4).
\end{eqnarray*}
The the degree $10$ vector field (\ref{w-coord}) has an octahedral symmetry, has $x^2+y^2+z^2$ as its first integral, and exactly $50$ points on the unit sphere where it vanishes.\\

On the other hand, if 
\begin{eqnarray*}
F&=&2(x^4+y^4+z^4)-(x^2+y^2+z^2)^2,\\
E&=&2(x^4+y^4+z^4)(x^2+y^2+z^2)-(x^2+y^2+z^2)^3,
\end{eqnarray*}
then 
\begin{eqnarray*}
\varpi=(x+y+z)(x-y-z)(x-y+z)(x+y-z)(2x^2+y^2+z^2),
\end{eqnarray*}
the degree $10$ vector field vanishes on infinitely many points, on 4 large circles on the unit sphere; see (\ref{iden}) in Chapter \ref{octahedral}.
\end{Example}

\section{Commutative octahedral vector fields}
\label{lie-brackets}

If two $n$-dimensional vector fields $X$ and $Y$ in $\mathbb{R}^{n}$ (or in an open domain of $\mathbb{R}^{n}$) are given in cartesian coordinates by
\begin{eqnarray*}
X=\sum\limits_{i=1}^{n}f_{i}\frac{\p}{\p x_{i}},\quad 
Y=\sum\limits_{i=1}^{n}g_{i}\frac{\p}{\p x_{i}},
\end{eqnarray*}
then their \emph{Lie bracket} is a vector field defined  by \cite{conlon,gadea}
\begin{eqnarray*}
[X,Y]=\sum\limits_{i=1}^{n}\Big{(}X(g_{i})-Y(f_{i})\Big{)}\frac{\p}{\p x_{i}}.
\end{eqnarray*}
Vector fields \emph{commute}, if $[X,Y]=0$. For corresponding flows then one has 
\begin{eqnarray*}
F^{t}\circ G^{s}(\m{x})=G^{s}\circ F^{t}(\m{x}),\text{ for }s,t\text{ small enough}.
\end{eqnarray*}

Let us use notations of Section \ref{sph-sol}. In this section we will find when the vector field $X$, given (\ref{wide}), which we now will denote by
\begin{eqnarray*}
\widetilde{\mathbf{C}}=a\frac{\partial}{\p x}+b\frac{\partial}{\p y}+c\frac{\partial}{\p z},
\end{eqnarray*}
and $Y$, given by  (\ref{w-coord}) and (\ref{geras}),  that is,
\begin{eqnarray*}
W=\varpi(x,y,z)\frac{\partial}{\p x}+\varpi(y,z,x)\frac{\partial}{\p y}+\varpi(z,x,y)\frac{\partial}{\p z},
\end{eqnarray*}
which  is a vector field on the unit sphere with an octahedral symmetry, commute: $[\widetilde{\mathbf{C}},W]=0$.\\

We will prove the following

\begin{prop}All polynomial homogeneous vector fields of even degree, which are vector fields on spheres, have a symmetry group $\mathbb{O}$, and which commute with $\widetilde{\mathbf{C}}$, are given by
\begin{eqnarray*}
W=\varpi(x,y,z)\bl\varpi(y,z,x)\bl\varpi(z,x,y),
\end{eqnarray*} where
\begin{eqnarray*}
\varpi=yz(y^2-z^2)Y(x^2+y^2+z^2,x^4+y^4+z^4).
\end{eqnarray*}
The last factor $Y(\m{x},\m{y})$ is homogeneous in $(\m{x},\m{y})$ with weights $(1,2)$.   
\end{prop}

\begin{proof}We need to find when
\begin{eqnarray}
\widetilde{\mathbf{C}}(\varpi)-W(a)=a\cdot\varpi_{x}+
b\cdot\varpi_{y}+c\cdot\varpi_{z}
&-&\varpi(y,z,x)a_{y}-\varpi(z,x,y)a_{z}
\label{vanishes}
\end{eqnarray}
vanishes (note that $a_{x}=0$). Then the other two coordinates vanish too, since they are just cyclic permutations of the (\ref{vanishes}).\\
 
First, assume that $F\equiv 0$. So, $\varpi=e\cdot a$, $e=E(x^2,y^2,z^2)$, $E$ is symmetric. Then (\ref{vanishes}) is equal to
\begin{eqnarray*}
a(e_{x}a+e_{y}b+e_{z}c)+ea_{y}b+ea_{z}c-ea_{y}b-ea_{z}c=a(e_{x}a+e_{y}b+e_{z}c).
\end{eqnarray*}
Second, assume now that $E\equiv 0$. So, $\varpi=f\cdot x^2\cdot a$, $f=F(x^2,y^2,z^2)$, $F$ is symmetric.
Then (\ref{vanishes}) is equal to
\begin{eqnarray*}
(2xfa+f_{x}x^{2}a)a+(x^2f_{y}a+x^2fa_{y})b+(x^2f_{z}a+x^{2}fa_{z})c-
fy^{2}a_{y}b-fz^{2}a_{z}c\\
=x^2a(f_{x}a+f_{y}b+f_{z}c)+f\Big{(}2xa^{2}+(x^2-y^2)a_{y}b+(x^{2}-z^{2})a_{z}c\Big{)}.
\end{eqnarray*}
Let us denote
\begin{eqnarray*}
2xa^2+(x^2-y^2)a_{y}b+(x^{2}-z^{2})a_{z}c=t(x,y,z),
\end{eqnarray*}
which a is rather complicated polynomial of degree $9$.\\

Now, let us consider the general case, when $E$ and $F$ are arbitrary, so that (\ref{geras}) holds, and degrees do match, so that $\varpi$ is homogeneous of even degree. As we have seen, a vector field $W$ commutes with  $\widetilde{\mathbf{C}}$ if and only if  
\begin{eqnarray}
a\cdot (e_{x}a+e_{y}b+e_{z}c)+x^2a\cdot (f_{x}a+f_{y}b+f_{z}c)+t\cdot f=0.
\label{pirmm}
\end{eqnarray}
All factors $\m{e}=e_{x}a+e_{y}b+e_{z}c$, $\m{f}=f_{x}a+f_{y}b+f_{z}c$, and $f$ are cyclic polynomials in $x,y,z$: the first two are anti-symmetric, and the last one is symmetric. Let us cyclically permute variables, thus obtaining a system of three linear equations for $(\m{e},\m{f},f)$. The determinant
\begin{eqnarray*}
\begin{vmatrix}
a & x^2 a & t(x,y,z)\\
b & y^2 b & t(y,z,x)\\
c & z^2 c & t(z,x,y) 
\end{vmatrix}=0.
\end{eqnarray*} 
Indeed, 
\begin{eqnarray*}
xt(x,y,z)+yt(y,z,x)+zt(z,x,y)=0
\end{eqnarray*}
by a direct calculation, and this gives a linear dependence among the rows of the matrix. The above matrix has therefore rank two, and thus an 
equivalent system can be taken as 

\begin{eqnarray*}
\left\{\begin{array}{c@{\qquad}c}
a\m{e}+x^2 a\m{f}=-t(x,y,z)f,\\
b\m{e}+y^2 b\m{f}=-t(y,z,x)f.
\end{array}\right.
\end{eqnarray*}
This implies
\begin{eqnarray}
ab(x^2-y^2)(f_{x}a+f_{y}b+f_{z}c)=f\Big{(}at(y,z,x)-bt(x,y,z)\Big{)}=fr(x,y,z).
\label{f-r}
\end{eqnarray}
Let us now translate this identity in terms of the variables $(X,Y,Z)=(x^2,y^2,z^2)$. \\

First, since $f(x,y,z)=F(X,Y,Z)$, then $f_{x}(x,y,z)=F_{X}(X,Y,Z)\cdot 2x$, so we can rewrite the above as
 \begin{eqnarray}
2XYZ(X-Y)(Y-Z)(Z-X)\cdot\nonumber\\
\Big{(}F_{X}(Y-Z)+F_{Y}(Z-X)+F_{Z}(X-Y)\Big{)}=FR,
\label{factor}
\end{eqnarray} 
where
\begin{eqnarray*}
R=\sum X^{4}Y^{2}+8\sum X^{3}Y^{2}Z-3\sum X^{4}YZ-\sum X^3Y^3-30X^2Y^2Z^2.
\end{eqnarray*}
The sums are symmetric sums; thus, for example, the coefficient at $X^3Y^3$ is $-2$. We will show that the factorization (\ref{factor}) implies that $F\equiv 0$.  \\

Now, note that $(X-Y)$ and $X$ does not divide $R$, what is clear from calculations. Let $(X-Y)^{r}$ and $X^{s}$ be the highest powers of $X-Y$ and $X$ dividing $F$, respectively. Since $F$ is symmetric, this shows that $r=2\ell$, and
\begin{eqnarray*}
F=P^{\ell}Q^{s}H,\quad s,\ell\in\mathbb{N},\quad P=(X-Y)^2(Y-Z)^2(Z-X)^2,\quad Q=XYZ.
\end{eqnarray*}

 Let us now turn to Lemma \ref{deriv}. Since $\mathcal{L}$ is a derivation,
\begin{eqnarray*}
\mathcal{L}(P^{\ell}Q^{s}H)=\ell P^{\ell-1}Q^{s}H\mathcal{L}(P)
+sP^{\ell}Q^{s-1}H\mathcal{L}(Q)+P^{\ell}Q^{s}\mathcal{L}(H).
\end{eqnarray*}
Let us plug everything into (\ref{factor}). We obtain
\begin{eqnarray}
2\Big{(}\ell QH\mathcal{L}(P)
+sPH\mathcal{L}(Q)+PQ\mathcal{L}(H)\Big{)}=HR.
\label{factor-2}
\end{eqnarray}
By a direct calculation,
\begin{eqnarray*}
\mathcal{L}(P)=-2(Y+Z-2X)(Z+X-2Y)(X+Y-2Z),\quad \mathcal{L}(Q)=-1.
\end{eqnarray*}
The identity (\ref{factor-2}) implies that
\begin{eqnarray*}
H|PQ\mathcal{L}(H)\text{ in }\mathbb{R}[X,Y,Z].
\end{eqnarray*}
 But we know that $(H,PQ)=1$, and $\mathrm{deg}(\mathcal{L}(H))=\mathrm{deg}(H)-3$. This is a contradiction, unless $\mathcal{L}(H)=0$.\\
 
 Hence, suppose that $\mathcal{L}(H)=0$, $H\neq 0$. Then (\ref{factor-2}) gives
 \begin{eqnarray*}
2\ell Q\mathcal{L}(P)
+2sP\mathcal{L}(Q)=R.
\end{eqnarray*}
This does not have a solution in positive integers $(s,\ell)$, but rather $(s,\ell)=(-\frac{1}{2},-\frac{1}{2})$. Therefore $H=0$.\\

Returning to the initial setting, this shows that $f\equiv 0$, and this implies
\begin{eqnarray*}
e_{x}a+e_{y}b+e_{z}c\equiv 0.
\end{eqnarray*}
And so, $e$ is the first integral for the vector field $\widetilde{\mathbf{C}}$. Therefore it is a polynomial in $x^2+y^2+z^2$ and $x^4+y^4+z^4$. The Proposition is proved.
\end{proof}

When the orbit for the superflow with the vector field $\widetilde{\mathbf{C}}$ is fixed, $x^2+y^2+z^2$ and $y^4+z^4+z^4$ are fixed, too. So the vector field $W$ is proportional to $\widetilde{\mathbf{C}}$. Thus, if we are allowed to use the notion of \emph{polynomial homogeneous vector field on a unit sphere} meaning a vector field which is just a restriction on $\mathbf{S}^{2}$ of such a vector field, then 
\begin{cor}
Polynomial homogeneous vector field on a unit sphere, which commutes with $\widetilde{\mathbf{C}}$, is always collinear to the latter on any particular orbit.
\end{cor}
This shows that no non-trivial polynomial examples exist, though the scaling factor differs from orbit to orbit.\\

In Appendix we formulate one extremity problem concerning polynomial homogeneous vector fields on unit spheres $\mathbf{S}^{n-1}$. The restriction of this problem to the octahedral, or any other irreducible finite subgroup of $SO(n)$, is of big interest, too. 
\begin{Note} Let us take any solution to $\mathcal{L}(H)=0$; for example, $H=1$. If we choose $(s,\ell)=(-\frac{1}{2},-\frac{1}{2})$, in terms of $(x,y,z)$, the function $f$ is given by
\begin{eqnarray*}
f(x,y,z)=\frac{1}{xyz(x^2-y^2)(y^2-z^2)(z^2-x^2)}.
\end{eqnarray*}
We directly verify with MAPLE that (\ref{f-r}) is indeed satisfied. We are now on the right track to find not polynomial, but rather rational vector fields which commute with $\widetilde{\mathbf{C}}$. This topic will be continued in \cite{alkauskas-super2}.  
\end{Note}

\chapter{Spherical constants and projections}
\section{Three fundamental octahedral spherical constants}
\label{funda-period}
In this Section we will carry out some arithmetic calculations for the octahedral superflow related to average value of the length of the vector field; the same techniques apply generally to a wider class of spherical superflows - see Problem \ref{prob-vienas}, and also Appendix \ref{app}.\\ 

On the unit sphere, the vector field $\widetilde{\m{C}}=y^3z-yz^3\bl z^3x-zx^3\bl x^3y-xy^3$ (see (\ref{wide}), where it coincides with $\m{C}$) is the vector field for the octahedral polynomial superflow, and it is canonically normalized (modulo $\pm1$) in a sense that its coefficients are integers with the g.c.d. equal to one. We can therefore define four real numbers, different average values for the length of $\widetilde{\m{C}}$ on the unit sphere $\m{S}^{2}$. All these numbers belong (except for the first one - but see Problem \ref{prob-period}) to the extended ring of periods \cite{konzag}:
\begin{eqnarray*}
\alpha_{0}&=&\exp\Big{(}\frac{1}{4\pi}\int\limits_{\m{S}^{2}}\ln|\widetilde{\m{C}}|\d S\Big{)},\\
\alpha_{1}&=&\frac{1}{4\pi}\int\limits_{\m{S}^{2}}|\widetilde{\m{C}}|\d S,\\
\alpha_{2}&=&\frac{1}{4\pi}\int\limits_{\m{S}^{2}}|\widetilde{\m{C}}|^{2}\d S,\\
\alpha_{\infty}&=&\sup\limits_{\m{S}^{2}}|\widetilde{\m{C}}|.
\end{eqnarray*}
Since 
\begin{eqnarray*}
|\widetilde{\m{C}}|^2=x^2y^2(x^4+y^4)+x^2z^2(x^4+z^4)+y^2z^2(y^4+z^4)-
2(x^4y^4+x^4z^4+y^4z^4),
\end{eqnarray*}
then by a direct calculation, 
\begin{eqnarray*}
\alpha_{2}=\frac{4}{105},\quad \alpha_{\infty}=\frac{\sqrt{827+73\sqrt{73}}}{96\sqrt{2}}.
\end{eqnarray*}
The maximum for $\alpha_{\infty}$ is attained, for example, at
\begin{eqnarray*}
x\bl y\bl z=\sqrt{\frac{11+\sqrt{73}}{24}}\bl \sqrt{\frac{13-\sqrt{73}}{48}}\bl\sqrt{\frac{13-\sqrt{73}}{48}}.
\end{eqnarray*}
So the average (in the square norm) and maximal length of the vector on the unit sphere is equal to 
\begin{eqnarray*}
\sqrt{\alpha_{2}}=\frac{2}{\sqrt{105}}=0.1951800147_{+},\quad \alpha_{\infty}=0.2805462137_{+}.
\end{eqnarray*} 
However, as can be expected, $\alpha_{0}$ and $\alpha_{1}$ are more complicated. For example, we have
\begin{eqnarray*}
\alpha_{1}=\frac{1}{2\pi}\int\limits_{0}^{2\pi}\int\limits_{0}^{1}
\frac{r \sqrt{T(r,\phi)}}{\sqrt{1-r^2}}\d r\d\phi,
\end{eqnarray*}
where
\begin{eqnarray*}
T(r,\phi)&=&r^{8}(-4\cos^{8}\phi+8\cos^{6}\phi-12\cos^{4}\phi+8\cos^{2}\phi-4)\\
&+&r^{6}(11\cos^{4}\phi-11\cos^{2}\phi+8)+r^{4}(-4\cos^{4}\phi+4\cos^{2}\phi-5)+r^{2}.
\end{eqnarray*}
 
Now, if $\alpha_{0}(t,r)$, $\alpha_{1}(t,r)$, $\alpha_{2}(t,r)$, and $\alpha_{\infty}(t,r)$ are the corresponding averages for the vector field $t\widetilde{\m{C}}$ on the sphere $x^2+y^2+z^2=r^2$ (the constant $(4\pi)^{-1}$ is replaced by $(2\pi r^{2})^{-1}$), then
\begin{eqnarray*}
\begin{tabular}{l c}
$\alpha_{0}(t,r)=tr^{4}\alpha_{0}(1,1)$,& $\alpha_{1}(t,r)=tr^{4}\alpha_{2}(1,1),$\\
$\alpha_{2}(t,r)=t^{2}r^{8}\alpha_{2}(1,1)$,&
$\alpha_{\infty}(t,r)=tr^{4}\alpha_{\infty}(1,1)$.
\end{tabular}
\end{eqnarray*}

Therefore,
\begin{eqnarray*}
\frac{\alpha^{2}_{0}(t,r)}{\alpha_{2}(t,r)}=\frac{\alpha^{2}_{0}(1,1)}{\alpha_{2}(1,1)},\quad\frac{\alpha^{2}_{1}(t,r)}{\alpha_{2}(t,r)}=\frac{\alpha^{2}_{1}(1,1)}{\alpha_{2}(1,1)},\quad
\frac{\alpha^{2}_{\infty}(t,r)}{\alpha_{2}(t,r)}=\frac{\alpha^{2}_{\infty}(1,1)}{\alpha_{2}(1,1)}.
\end{eqnarray*}
Thus, we have discovered three fundamental ratios, the second one (possibly, and the logarithm of the first one) being from the extended ring of periods $\widehat{\mathscr{P}}$ \cite{konzag}, and the third one being an algebraic number, which are independent from the choice of the specific first integral $x^2+y^2+z^2=r^{2}$ of the octahedral superflow, and also independent from the choice of the normalizing constant for the corresponding vector field. These ratios are therefore canonically wired into the octahedral group itself, with a condition that the representation of the octahedral group is orthogonal:
\begin{eqnarray}
\Omega_{0,\mathbb{O}}=\frac{\alpha_{0}^{2}}{\alpha_{2}},\quad
\Omega_{1,\mathbb{O}}=\frac{\alpha_{1}^{2}}{\alpha_{2}},\quad
\Omega_{\infty,\mathbb{O}}=\frac{\alpha_{\infty}^{2}}{\alpha_{2}}=\frac{28945+2555\sqrt{73}}{24576}.
\label{omega-2}
\end{eqnarray}
Note that
\begin{eqnarray*}
\mathcal{N}_{\mathbb{Q}}(\Omega_{\infty,\mathbb{O}})=\frac{5^2\cdot 7^2}{2^{11}}.
\end{eqnarray*}
By the Cauchy-Schwarz inequality,
\begin{eqnarray*}
0<\Omega_{0,\mathbb{O}}<\Omega_{1,\mathbb{O}}<1<\Omega_{\infty,\mathbb{O}}=2.066037173_{+}.
\end{eqnarray*}
Equally, we can perform analogous analysis for the second first integral, the surface $x^4+y^4+z^4=1$. However, this applies only to the octahedral group in dimension $3$, and second, the choice of the first integral of degree $4$ is not canonical, since $x^4+y^4+z^4+t(x^2+y^2+z^2)^2$ is also the first integral. This justifies the following definition.
\begin{defin}Let $\m{Q}$ be the vector field in dimension $n$ for the polynomial spherical superflow (see Problem \ref{prob-vienas} and explanations just succeeding it). Then the 
\emph{spherical constants} $\Omega_{v,\Gamma}$, $v\in\{0,1,\infty\}$, for the superflow $\phi_{\Gamma}$ are defined by 
\begin{eqnarray*}
\Omega_{v,\Gamma}=\frac{\beta_{v}^{2}}{\beta_{2}},
\end{eqnarray*}
 where
\begin{eqnarray*}
\beta_{0}&=&\exp\Big{(}\frac{1}{|\m{S}^{n-1}|}\int\limits_{\m{S}^{n-1}}\ln|\m{Q}|\d S\Big{)},\\
\beta_{1}&=&\frac{1}{|\m{S}^{n-1}|}\int\limits_{\m{S}^{n-1}}|\m{Q}|\d S,\\
\beta_{2}&=&\frac{1}{|\m{S}^{n-1}|}\int\limits_{\m{S}^{n-1}}|\m{Q}|^{2}\d S,\\\quad
\beta_{\infty}&=&\sup\limits_{\m{S}^{n-1}}|\m{Q}|.
\end{eqnarray*}
\label{defin-const}
\end{defin}
Here $\m{S}^{n-1}$ is the $(n-1)$-dimensional unit sphere $\sum_{i=1}^{n}x_{i}^{2}=1$, and $|\m{S}^{n-1}|$ is its $(n-1)$-dimensional volume, given by
\begin{eqnarray*}
|\m{S}^{n-1}|=\frac{2\pi^{n/2}}{\Gamma(n/2)}.
\end{eqnarray*}
 The definition above is independent on exact scaling on the vector field we are using, and remains intact if we change the sphere to $r\m{S}^{n-1}$. So, constants $\Omega_{v,\Gamma}$ are the constants depending only on $\Gamma$, and moreover, they are the same for any other conjugate group $\gamma^{-1}\circ\Gamma\circ\gamma$, $\gamma\in O(n)$. \\
 
Note that in all examples we obtained (octahedral, hyper-octahedral and icosahedral in \cite{alkauskas-super2}), $|\m{Q}|^{2}$ is a function in $x_{i}^{2}$, $1\leq i\leq n$, and if the coefficients of $\m{Q}$ are algebraic numbers (rational numbers in the octahedral and hyper-octahedral case and belonging to $\mathbb{Q}[\,\sqrt{5}\,]$ in the icosahedral case), it is easy to see that $\beta_{2}$ is an algebraic number. In other words, we pose
\begin{prob}
\label{prob-period}
Is it true that for the spherical superflow $\phi_{\Gamma}$, the spherical constant $\Omega_{1,\Gamma}\in\widehat{\mathscr{P}}$? What about $\log\Omega_{0,\Gamma}$? 
\end{prob}
The expression for $\log\Omega_{0,\Gamma}$ is analogous to the one used in defining the (logarithmic) Mahler measure \cite{zudilin}, so it is not excluded that the answer to the second half of the above question is also affirmative.\\

See Appendix \ref{app} where we formulate a related problem of extremal vector fields on spheres.

\section{Stereographic projection} 
\label{sec6.3}
In case $x\varpi+y\varrho+z\sigma=0$, in order to to visualize the vector field $\varpi\bl\varrho\bl\sigma$ on the unit sphere, we use a stereographic projection from the point $\mathbf{p}=(0,0,1)$ (where the vector field vanishes in the octahedral case) to the plane $z=-1$. There is a canonical way to do a projection of a vector field as follows. Let $F(\m{x},t)=t^{-1}\phi(\m{x}t)$ be our flow on the unit sphere, and $\tau$ - the stereographic projection from the point $\mathbf{p}$ to the plane $z=-1$. We call a $2$-dimensional vector field $\mathcal{Y}$ in the plane $z=-1$ a \emph{stereographic projection} of the vector field $\mathcal{X}=\varpi\bl\varrho\bl\sigma$, if for a flow $G$,  associated with the vector field $\mathcal{Y}$, one has
\begin{eqnarray}
G(\tau(\m{x}),t)=\tau(F(\m{x},t))\text{ for all }\m{x}\text{ and }t\text{ small enough}.
\label{proj}
\end{eqnarray}
Let $\mathbf{a}=(x,y,z)$, $\mathbf{a}\neq\mathbf{p}$, $x^2+y^2+z^2=1$. The line $\overline{\mathbf{pa}}$ intersects the plane $z=-1$ at the point $\mathbf{b}$, which we call $\tau(\mathbf{a})$; this is, of course, a standard stereographic projection. Now, consider the line $(x+\varpi t,y+\varrho t, z+\sigma t)$, $t\in\mathbb{R}$. Via a stereographic projection (only one point of it belongs the unit sphere, but nevertheless stereographic projection works in the same way), this line is mapped into a smooth curve (in fact, a quadratic) $\mathcal{Q}(\mathbf{a})$ contained in the plane $z=-1$. Its tangent vector at the point $\mathbf{a}$ is a direction of a new vector in the plane $z=-1$.\\

The point $\mathbf{a}=(x,y,z)\in\mathbf{S}^{2}$ is mapped into a point
\begin{eqnarray}
\tau(\mathbf{a})=(\alpha,\beta)=\Big{(}\frac{2x}{1-z},\frac{2y}{1-z}\Big{)}
\label{tau-simp}
\end{eqnarray}
(where $z=-1$), and the point $(x+\varpi t,y+\varrho t, z+\sigma t)$ - into a point
\begin{eqnarray*}
\alpha(t)\bl\beta(t)=\tau(t,\mathbf{a})=\frac{2x+2t\varpi}{1-z-t\sigma}\bl\frac{2y+2t\varrho}{1-z-t\sigma}.
\end{eqnarray*}
If $\gamma=\gamma_{x,y,z}$ and $\delta=\delta_{x,y,z}$ are the corresponding M\"{o}bius transformations on the right (as a functions in $t$, where $x,y,z$ being fixed, and consequently $\varpi,\varrho,\sigma$ also), so that $\alpha(t)=\gamma(t)$, $\beta(t)=\delta(t)$, then the equation for the curve $\mathcal{Q}(\mathbf{a})$ can be written as $\gamma^{-1}(\alpha)=\delta^{-1}(\beta)$, which shows immediately that it is a quadratic. Of course, we are interested only in its germ at $\tau(\mathbf{a})$. 
The inverse of $\tau$, as given by (\ref{tau-simp}), is
\begin{eqnarray}
\tau^{-1}(\alpha,\beta)=\frac{4\alpha}{\alpha^2+\beta^2+4}
\bl\frac{4\beta}{\alpha^2+\beta^2+4}\bl\frac{\alpha^2+\beta^2-4}{\alpha^2+\beta^2+4}.\label{ab-inv}
\end{eqnarray}
Now,
\begin{eqnarray}
\frac{\d}{\d t}\tau(t,\mathbf{a})\Big{|}_{t=0}=\frac{2\varpi-2\varpi z+2x\sigma}{(1-z)^2}\bl
\frac{2\varrho-2\varrho z+2y\sigma}{(1-z)^2}.
\label{true-vec}
\end{eqnarray}
If we use now a substitution (\ref{ab-inv}), that is, plug $(x,y,z)\mapsto \tau^{-1}(\alpha,\beta)$ into the above, we obtain exactly the stereographic projection of the vector field $\varpi\bl\varrho\bl\sigma$. Indeed, $F(\m{x},t)=x+t\varpi+O(t^2)\bl y+t\varrho+O(t^2)\bl z+t\sigma+O(t^2)$. So, $\tau(F(\m{x},t))=\tau(t,\mathbf{a})+O(t^2)$, and the answer follows.\\

To get a better visualization, we multiply (\ref{true-vec}) by $1-z$; this does not change nor orbits neither directions, but changes the flow (different parametrization of the same curves). We thus arrive at the vector field
\begin{eqnarray}
2\varpi+\frac{2x}{1-z}\sigma\bl 2\varrho+\frac{2y}{1-z}\sigma. 
\label{exprr}
\end{eqnarray}
Or, minding (\ref{ab-inv}), in terms of $(\alpha,\beta)$ this reads as
\begin{eqnarray*}
&&2\varpi\Big{(}\tau^{-1}(\alpha,\beta)\Big{)}+\alpha\sigma\Big{(}\tau^{-1}(\alpha,\beta)\Big{)}
\bl 2\varrho\Big{(}\tau^{-1}(\alpha,\beta)\Big{)}+\beta\sigma\Big{(}\tau^{-1}(\alpha,\beta)\Big{)}\\
&=&\Pi(\alpha,\beta)\bl\Theta(\alpha,\beta).
\end{eqnarray*}
This is a vector field on the $(\alpha,\beta)$ plane. This is inessential for visualisation, but for purposes described in Section \ref{orth-sub}, note that the true projection of the vector field $\mathcal{X}$ is 
\begin{eqnarray}
\frac{1}{8}(\alpha^2+\beta^2+4)\Pi(\alpha,\beta)\bl \frac{1}{8}(\alpha^2+\beta^2+4)\Theta(\alpha,\beta), 
\label{ico-pro}
\end{eqnarray}
and is given by a pair of rational functions, but not $2-$ homogeneous anymore.\\

 If $\varpi\bl\varrho\bl\sigma$ vanishes on the unit sphere at a point $\mathbf{a}$, so does $\Pi\bl\Theta$ at a point $\tau(\mathbf{a})$. In the other direction - suppose $\Pi=0$, $\Theta=0$ for a certain $(\alpha,\beta)$. Then there is a point $(x,y,z)\neq (0,0,1)$ on the unit sphere such that both expressions in (\ref{exprr}) vanish. Recall that we also have $x\varpi+y\varrho+z\sigma=0$. Now, the determinant of the  matrix
\begin{eqnarray*}
\begin{pmatrix}
1-z & 0 & x\\
0 & 1-z & y\\
x & y & z 
\end{pmatrix}
\end{eqnarray*}
is equal to $-(z-1)^2\neq 0$. This implies $\varpi\bl\varrho\bl\sigma=0\bl 0\bl 0$.\\

So, in Figure \ref{figure4} we present a stereographic projection of the octahedral superflow, treated in detail in Sections \ref{octahedral}, \ref{partiii} and \ref{partiv}. This time the singular orbits are curves $\{x^2+y^2+z^2=1,x^4+y^4+z^4=\frac{1}{2}\}$.  This is a reducible case bacause of the identity (\ref{iden}).
So, these singular orbits are intersections of the unit sphere with $4$ planes. A direct calculation shows that stereographically these four circles in the $(\alpha,\beta)$ plane map into $4$ circles with radii $\sqrt{12}$, and centres $(\pm 2,\pm 2)$ (signs are independent). Figure \ref{figure4} shows the setup.

\begin{figure}
\includegraphics[width=85mm,height=85mm,angle=-90]{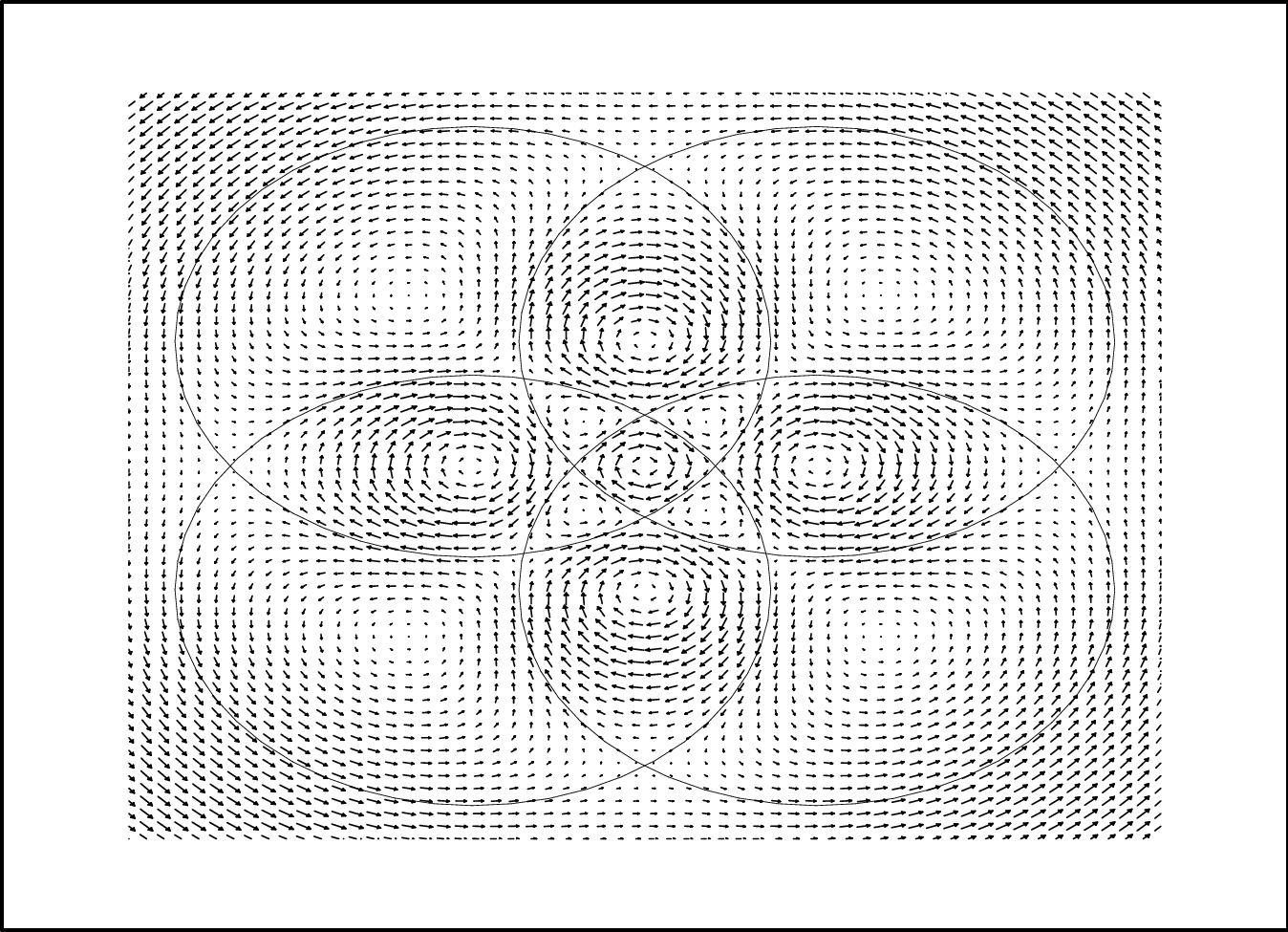}
\caption{Stereographic projection of the octahedral superflow, $-3\leq\alpha,\beta\leq 3$.}
\label{figure4}
\end{figure}

\section{Orthogonal and other birational projections}
\label{orth-sub}
In the same vein, as another, much simpler example how to visualize a $3$-dimensional vector field, we will give an orthogonal projection of the vector field
\begin{eqnarray*}
\mathbf{S}=\varpi\bl\varrho\bl\sigma=yz\bl xz\bl xy,
\end{eqnarray*}
which gives rise to the tetrahedral superflow $\phi_{\widehat{\mathbb{T}}}$, as described in Section \ref{S4}.\\

Consider the surface $\mathscr{Q}=\{(x,y,z)\in\mathbb{R}^{3}:x^2-y^2=1,x>0\}$, which is one of a two connected components of a quadratic. Since $p^2-q^2$ is the first integral of the differential system (\ref{sys-in}), the vector field $\mathbf{S}$ is tangent to $\mathscr{Q}$. We will project now orthogonally $\mathbf{S}$ to the plane $x=0$, exactly as in the previous section. Let $\tau(\m{x})=\tau(x,y,z)=(y,z)$ be this projection. Let $F(\m{x},t)=t^{-1}\phi(\m{x}t)$ be a flow on $\mathscr{Q}$. Equally, we call a $2$-dimensional vector field $\mathcal{Y}$ in the plane $x=0$ an \emph{orthogonal projection} of the vector field $\mathbf{S}$, if for a flow $G$, associated with the vector field $\mathcal{Y}$, one has (\ref{proj}).   \\

So, let $\mathbf{a}=(x,y,z)$ belongs to $\mathscr{Q}$. It is mapped to the point $\tau(\m{a})=(\alpha,\beta)=(y,z)$ (where $x=0$), and the point $(x+\varpi t,y+\varrho t,z+\sigma t)$ on the tangent line - to a point $\tau(t,\m{a})=(y+\varrho t,z+\sigma t)$. The inverse of $\tau$ is given by
\begin{eqnarray*}
\tau^{-1}(\alpha,\beta)=\sqrt{1+\alpha^2}\bl\alpha\bl\beta.
\end{eqnarray*}
Now, obviously, 
\begin{eqnarray*}
\frac{\d}{\d t}\tau(t,\m{a})\Big{|}_{t=0}=\varrho\bl\sigma.
\end{eqnarray*}   
As before, we now plug $\tau^{-1}(\alpha,\beta)\mapsto(x,y,z)$ into the above to obtain an orthogonal projection of our vector field $\mathbf{S}$. Thus, we have the vector field
\begin{eqnarray}
\Pi(\alpha,\beta)\bl\Theta(\alpha,\beta)=\beta\sqrt{1+\alpha^2}\bl \alpha\sqrt{1+\alpha^2}.
\label{pi-theta}
\end{eqnarray} 
On a surface $\mathscr{Q}$, the orbits of the flow with the vector field $\mathbf{S}$ are intersections of $\mathscr{Q}$ with $x^2-z^2=\xi$, $\xi\in\mathbb{R}$. Generically, these  are elliptic curves, except for cases $\xi=0$ and $\xi=1$. These exceptions correspond to intersection of $\mathscr{Q}$ with the planes $x=\pm z$ and $z=\pm y$.  In the $(\alpha,\beta)$ plane, this corresponds to a quadratic $\beta^2-\alpha^2=1$, and two lines $\alpha=\pm \beta$, respectively. In general, the orbits of the flow with the vector field $\Pi\bl \Theta$ are quadratics $\alpha^2-\beta^2=\phi$, $\phi\in\mathbb{R}$. Thus, we get a standard partition of the plane into hyperbolas having two fixed lines as their asymptotes. \\

Note that, differently from a previous section and the vector field (\ref{ico-pro}), the vector field $\Pi\bl\Theta$ in this Section is not rational, since the map $\tau$ is not a birational map. However, the projected flow can be described in rational terms, too, if we use a projection to the plane $x-y=0$ rather that to the plane $x=0$. This time not only $\mathcal{Q}$ is projected injectively, but the whole hyperbolic cylinder $\widehat{\mathcal{Q}}=\{(x,y,z)\in\mathbb{R}^{3}:x^2-y^2=1\}$.\\

 Indeed, suppose, a point $(\alpha,\alpha,\beta)$ in the $x-y=0$ plane  is described by global coordinates $\alpha\bl\beta$. Thus, if $(x,y,z)\in\widehat{\mathcal{Q}}$, then the orthogonal projection to the plane $x-y=0$, and its inverse, are given by
\begin{eqnarray*}
\hat{\tau}(x,y,z)=\frac{x+y}{2}\bl z,\quad
\hat{\tau}^{-1}(\alpha,\beta)=\Big{(}\alpha+\frac{1}{4\alpha}\Big{)}\bl \Big{(}\alpha-\frac{1}{4\alpha}\Big{)}\bl\beta.
\end{eqnarray*}
In the same fashion as before, in the $(\alpha, \beta)$ plane we get a vector field
\begin{eqnarray}
\hat{\Pi}(\alpha,\beta)\bl\hat{\Theta}(\alpha,\beta)=\alpha\beta\bl\alpha^{2}-\frac{1}{16\alpha^2}.
\label{al-be}
\end{eqnarray}
Thus, using the result of Theorem \ref{thm-s4}, we get
\begin{prop}
\label{jac-alt}
The $2$-dimensional vector fields $\Pi\bl\Theta$ and $\hat{\Pi}\bl\hat{\Theta}$, as given by (\ref{pi-theta}) and (\ref{al-be}), respectively, which produce flows, but not projective flows any longer, can be explicitly integrated in terms of Jacobi elliptic functions.  
\end{prop}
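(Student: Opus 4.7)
The plan is to leverage the defining property of the projected vector field, namely the identity (\ref{proj}): if $F(\m{x},t)=t^{-1}\phi_{\widehat{\mathbb{T}}}(\m{x}t)$ denotes the tetrahedral affine flow and $G$ is the flow of the projected planar vector field, then $G(\tau(\m{x}),t)=\tau(F(\m{x},t))$, so that $G=\tau\circ F\circ\tau^{-1}$. Since Theorem \ref{thm-s4} already integrates $\phi_{\widehat{\mathbb{T}}}$ explicitly via the Jacobi $\sn$ function (formula (\ref{uu})), composing with the algebraic maps $\tau^{-1}$ and $\hat{\tau}^{-1}$ displayed in Section \ref{orth-sub} immediately delivers closed-form expressions for both planar flows in terms of Jacobi elliptic functions.

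Concretely, writing $\phi_{\widehat{\mathbb{T}}}(\m{x})=U(x,y,z)\bl U(y,z,x)\bl U(z,x,y)$ and parametrizing $\mathscr{Q}$ by $\tau^{-1}(\alpha,\beta)=(\sqrt{1+\alpha^{2}},\alpha,\beta)$, the flow of $\Pi\bl\Theta$ is
\begin{equation*}
G(\alpha,\beta,t)=\frac{1}{t}U\big{(}\alpha t,\beta t,\sqrt{1+\alpha^{2}}\,t\big{)}\bl\frac{1}{t}U\big{(}\beta t,\sqrt{1+\alpha^{2}}\,t,\alpha t\big{)},
\end{equation*}
and each $U$ is substituted from (\ref{uu}). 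Replacing $\tau^{-1}$ with $\hat{\tau}^{-1}(\alpha,\beta)=(\alpha+\tfrac{1}{4\alpha},\alpha-\tfrac{1}{4\alpha},\beta)$ produces the analogous formula for the flow of $\hat{\Pi}\bl\hat{\Theta}$. As an independent sanity check, one verifies that $\alpha^{2}-\beta^{2}$ is a first integral of $\Pi\bl\Theta$ (it is the pull-back of $x^{2}-z^{2}$ from $\mathscr{Q}$), so along a level set $\alpha^{2}-\beta^{2}=c$ the system reduces to the scalar equation $\alpha'=\pm\sqrt{(\alpha^{2}-c)(1+\alpha^{2})}$, whose quadrature is the standard elliptic integral of the first kind inverted by $\sn$; a completely analogous calculation (with the first integral obtained by pulling back $x^{2}-z^{2}$ through $\hat{\tau}^{-1}$) treats $\hat{\Pi}\bl\hat{\Theta}$.

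The main obstacle will not be the existence of the formula but the bookkeeping: Theorem \ref{thm-s4} presents (\ref{uu}) under the range condition $x^{2}>z^{2}>y^{2}$, whereas on $\mathscr{Q}$ (and on $\widehat{\mathscr{Q}}$) the projected coordinates $(\alpha,\beta)$ roam through several regimes -- including the degenerate strata $\alpha^{2}=\beta^{2}$ and $\alpha=0$ -- each of which calls for the appropriate specialization of the Jacobi functions (with $\sn,\cn,\dn$ interchanged, modulus passing through $0$ or $1$, or reducing to trigonometric and hyperbolic functions as in (\ref{uu-spec})). Matching the branch of $\sqrt{1+\alpha^{2}}$ and the sign in $\alpha'=\pm\sqrt{(\alpha^{2}-c)(1+\alpha^{2})}$ with the correct orientation of the flow, and tracking the modulus as a function of $c$, is the only genuinely laborious step.
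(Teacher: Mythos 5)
Your proposal is correct and follows essentially the paper's own route: the paper obtains Proposition \ref{jac-alt} precisely by noting that the projected flows are the conjugates, via $\tau$ and $\hat{\tau}$, of the tetrahedral superflow restricted to its integral surface, and then invoking the explicit Jacobi elliptic formulas of Theorem \ref{thm-s4}. Your additional quadrature check $\alpha'=\pm\sqrt{(\alpha^{2}-c)(1+\alpha^{2})}$ on the level sets of $\alpha^{2}-\beta^{2}$, and your caveat about matching the branch conditions $x^{2}>z^{2}>y^{2}$ of (\ref{uu}), are sound refinements of the same argument rather than a different one.
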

However, differently from $\Pi\bl\Theta$, $\hat{\Pi}\bl\hat{\Theta}$ is given by rational functions. So, though the vector field $\hat{\Pi}\bl\hat{\Theta}$ is not given by a pair of $2-$homogeneous functions, it still can be described via a projective flow $\phi_{\hat{\mathbb{T}}}$ investigated in Section \ref{S4}.\\

Further, let $x^2-y^2=1$, $x^2-z^2=\xi$. Thus, we take any point on one particular orbit. Let, as before, $\alpha\bl\beta=\frac{x+y}{2}\bl z$. We know that $\alpha+\frac{1}{4\alpha}=x$. This can be checked directly, using the relation $\frac{1}{x+y}=x-y$. Thus, we obtain the equation
\begin{eqnarray*}
\mathscr{W}(\alpha,\beta)=\Big{(}\alpha+\frac{1}{4\alpha}\Big{)}^2-\beta^2=\xi.
\end{eqnarray*} 
This is the equation for the generic orbit for the flow with the vector field $\hat{\Pi}\bl\hat{\Theta}$, and for $\xi\neq 0,1$ it is an elliptic curve. We can double verify that the following holds:
\begin{eqnarray*}
\mathscr{W}_{\alpha}\hat{\Pi}+\mathscr{W}_{\beta}\hat{\Theta}\equiv 0.
\end{eqnarray*}

To this account, we make one elementary observation. Let
\begin{eqnarray*}
\sum\limits_{k=1}^{n}f_{k}\frac{\p}{\p x_{k}}
\end{eqnarray*}
be any $n$-dimensional vector field in $\mathbb{R}^{n}$. Let us define
\begin{eqnarray*}
\tilde{f}_{k}(x_{1},x_{2},\ldots,x_{n},z)=
z^{2}f_{k}\Big{(}\frac{x_{1}}{z},\frac{x_{2}}{z},\ldots,\frac{x_{n}}{z}\Big{)},\quad k=1,\ldots,n.
\end{eqnarray*}
Then these functions are $2$-homogeneous, and so the vector field
\begin{eqnarray}
\sum\limits_{k=1}^{n}\tilde{f}_{k}\frac{\p}{\p x_{k}}+0 \frac{\p}{\p z}
\label{confine}
\end{eqnarray}
gives rise to an $(n+1)$-dimensional projective flow. Note that $z=1$ is its integral surface. If we confine the projective flow (\ref{confine}) to the plane $z=1$, we recover the initial flow. Hence, we have
\begin{prop}
\label{prop-ner}
Any smooth flow in $\mathbb{R}^{n}$ is a section on a hyperplane of a $(n+1)$-dimensional projective flow. In other words, for any flow $F(\m{x},t):\mathbb{R}^{n}\times\mathbb{R}\mapsto\mathbb{R}^{n}$, there exists a projective flow $\phi:\mathbb{R}^{n+1}\mapsto\mathbb{R}^{n+1}$, such that
\begin{eqnarray*}
F(\m{x},t)=\frac{1}{t}\phi\Big{(}(\m{x},1)t\Big{)}=\frac{1}{t}\phi\big{(}\m{x}t,t\big{)}.
\end{eqnarray*}
\end{prop}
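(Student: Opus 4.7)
The construction is essentially dictated by the paragraph preceding the proposition, so the proof is a verification. Let $F(\m{x},t):\mathbb{R}^{n}\times\mathbb{R}\mapsto\mathbb{R}^{n}$ be a smooth flow with smooth vector field $f_{1}\bl\cdots\bl f_{n}$. Following the hint, define
\begin{eqnarray*}
\tilde{f}_{k}(x_{1},\ldots,x_{n},z):=z^{2}f_{k}(x_{1}/z,\ldots,x_{n}/z),\quad 1\leq k\leq n,\qquad \tilde{f}_{n+1}\equiv 0.
\end{eqnarray*}
Each $\tilde{f}_{k}$ is $2$-homogeneous (rational if $f_{k}$ is not already $2$-homogeneous, smooth in the half-space $z>0$), so it produces a $(n+1)$-dimensional projective flow $\phi:\mathbb{R}^{n+1}\mapsto\mathbb{R}^{n+1}$ via the general theory of Section \ref{prelim}.

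\textbf{Key step.} Let $\phi^{t}=t^{-1}\phi(\cdot\, t)$ be the associated time-$t$ map, i.e.\ the solution to the autonomous system $\m{p}\,'=\tilde{\m{Q}}(\m{p})$, $\m{p}(0)=\m{y}$. Because the $(n+1)$-th component of $\tilde{\m{Q}}$ is identically zero, the last coordinate of $\phi^{t}(\m{x},z)$ equals $z$ for all $t$; write $\phi^{t}(\m{x},z)=\bigl(\widehat{G}(\m{x},z,t),z\bigr)$. Then $\widehat{G}$ satisfies
\begin{eqnarray*}
\frac{\d}{\d t}\widehat{G}(\m{x},z,t)=z^{2}\bigl(f_{1},\ldots,f_{n}\bigr)\!\bigl(\widehat{G}(\m{x},z,t)/z\bigr),\qquad \widehat{G}(\m{x},z,0)=\m{x}.
\end{eqnarray*}
The substitution $H(\m{x},z,t):=\widehat{G}(\m{x},z,t)/z$ together with the rescaled time $s:=zt$ reduces this to the original system $\frac{\d}{\d s}H=(f_{1},\ldots,f_{n})(H)$, $H(s=0)=\m{x}/z$, whose unique solution is $H=F(\m{x}/z,s)$. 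Consequently
\begin{eqnarray*}
\widehat{G}(\m{x},z,t)=z\cdot F(\m{x}/z,zt),
\end{eqnarray*}
and in particular $\widehat{G}(\m{x},1,t)=F(\m{x},t)$.

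\textbf{Conclusion.} Apply the defining identity of projective flows, $\phi(\m{y}t)=t\,\phi^{t}(\m{y})$, with $\m{y}=(\m{x},1)$:
\begin{eqnarray*}
\phi(\m{x}t,t)=t\cdot\phi^{t}(\m{x},1)=t\cdot\bigl(F(\m{x},t),1\bigr).
\end{eqnarray*}
Dividing by $t$ and reading off the first $n$ components gives $F(\m{x},t)=t^{-1}\phi(\m{x}t,t)$, as claimed. The hyperplane $z=1$ is an integral surface of the projective flow (it is preserved because $\tilde{f}_{n+1}\equiv 0$), and the restriction of $\phi^{t}$ to this hyperplane is exactly the original flow $F$.

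\textbf{Main obstacle.} There is no serious obstacle — the mechanism is just the change of variables $H=\widehat{G}/z$, $s=zt$. The only caveats are domain-theoretic: $\tilde{f}_{k}$ is singular along $z=0$, so the projective flow must be interpreted locally in a neighborhood of $(\m{x},1)$, and smoothness of $F$ guarantees existence of the solution of the ODE for small $t$. No appeal to additional machinery is required.
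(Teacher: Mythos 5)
Your proposal is correct and follows exactly the construction the paper gives in the paragraph preceding the proposition: lift the vector field to the $2$-homogeneous field $\tilde{f}_{k}(x_{1},\ldots,x_{n},z)=z^{2}f_{k}(\m{x}/z)$ with vanishing last component, and restrict the resulting projective flow to the integral hyperplane $z=1$. The paper states this essentially without verification, whereas you supply the explicit check via the substitution $H=\widehat{G}/z$, $s=zt$; the argument and its caveats (singularity along $z=0$, local interpretation) are the same.
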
 
This holds even if the flow $F$ is defined for $\m{x}$ belonging to an open region of $\mathbb{R}^{n}$, and $t$ small enough (depending on $\m{x}$). On the other hand, the projective flow (\ref{confine}) is of a special kind, and hence the variety of projective flows in dimension $(n+1)$ is bigger than that of general flows in dimension $n$. So, philosophically, we may consider (\ref{funk}) for $\m{x}\in\mathbb{R}^{n}$ as a general flow equation in dimension $n-\frac{1}{2}$. \\

However, the scenario in Proposition \ref{prop-ner} is not the only way to obtain a flow as a projection of a projective flow, as we have seen.\\

Suppose $\varpi\bl\varrho\bl\sigma$ is a triple of $2$-homogeneous rational functions, and let $\mathscr{W}$ be a homogeneous function and the first integral of the flow, as in (\ref{ffirst-int}). Suppose now, an integral surface $\mathcal{S}$, given by $\mathscr{W}(x,y,z)=1$, is of genus $0$. For example, in Proposition \ref{prop-ner} this is just $z=\mathrm{const.}$ \\

 In the most general case, by a \emph{projection of a vector field} we mean the following. Consider a rational parametrization
\begin{eqnarray*} 
\pi:\mathbb{R}^{2}\mapsto\mathcal{S},\quad 
\pi:(\alpha,\beta)\mapsto\big{(}f(\alpha,\beta),g(\alpha,\beta),h(\alpha,\beta)\big{)}.
\end{eqnarray*} 
  Suppose that its inverse $\pi^{-1}:\mathcal{S}\mapsto\mathbb{R}^2$ is also rational, and is given by rational functions $\pi^{-1}:(x,y,z)\mapsto(A(x,y,z),B(x,y,z))$. Thus, the projection of our vector field is a new vector field in the $(\alpha,\beta)$ plane given by
\begin{eqnarray*}
\Big{(}A_{x}\varpi+A_{y}\varrho+A_{z}\sigma, 
B_{x}\varpi+B_{y}\varrho+B_{z}\sigma\Big{)}\Big{|}_{(x,y,z)\mapsto\big{(}f(\alpha,\beta),g(\alpha,\beta),h(\alpha,\beta)\big{)}}.
\end{eqnarray*}
In both cases dealt with in this paper and \cite{alkauskas-super2} (a sphere in icosahedral and octahedral superflow cases, and a hyperbolic cylinder in a tetrahedral superflow case, orthogonal projection to the plane $x-y=0$), surfaces are of genus $0$, and are birationally parametrized.

\chapter{The superflow $\phi_{\mathbb{O}}$. Singular orbit}
\label{partiii}
Note that, in the setting of Section \ref{elementary}, if we are dealing with flows over real numbers, the cases $\xi=\frac{1}{3}$ or $\xi=1$ need not be investigated, since, for example, the only real numbers such that
\begin{eqnarray*}
\frac{(x^2+y^2+z^2)^2}{x^4+y^4+z^4}=3
\end{eqnarray*} 
are those of the form $(x,y,z)=(\pm t,\pm t,\pm t)$ (signs are independent), and vector field for these points, as noted above, vanishes. So, $V(x,x,x)=x$, and similarly for other $7$ choices of signs. If we treat $\phi_{\mathbb{O}}$ as the superflow over $\mathbb{C}$ and $\mathbb{O}$ as a subgroup of $U(3)$ (which is not the topic of the current paper), the situation is much more interesting.
\section{The case $\xi=\frac{1}{2}$}
\label{tarpp}
Thus, before presenting explicit formulas for the superflow $\phi_{\mathbb{O}}$, we will derive formulas in the exceptional case $\xi=\frac{1}{2}$, where the situation is boundary and non-trivial. 
\begin{thm}
\label{thm-spec}
Let $x,y,z\in\mathbb{R}$ satisfy the condition
\begin{eqnarray*}
\frac{(x^2+y^2+z^2)^2}{x^4+y^4+z^4}=2.
\end{eqnarray*}
Assume $x,y,z\geq 0$, $x=y+z$; see (\ref{iden}). So, out of $24$ possible orbits, we choose one of them. Let us define the function
\begin{eqnarray*}
J(x,y,z)=\frac{\big{(}\sqrt{3}x+iy-iz\big{)}^3-2\sqrt{2}i(x^2+y^2+z^2)^{3/2}
\tanh\Big{(}\frac{\sqrt{x^2+y^2+z^2}}{2\sqrt{2}}\Big{)}}{2\sqrt{2}(x^2+y^2+z^2)^{3/2}+i\big{(}\sqrt{3}x+iy-iz\big{)}^3\tanh\Big{(}\frac{\sqrt{x^2+y^2+z^2}}{2\sqrt{2}}\Big{)}},
\end{eqnarray*}
where the square root is assumed to be non-negative. Then $|J(x,y,z)|=1$, and
\begin{eqnarray}
V(x,y,z)=\Big{(}J^{1/3}+J^{-1/3}\Big{)}\frac{\sqrt{x^2+y^2+z^2}}{\sqrt{6}},\label{v-spec}
\end{eqnarray}
where we choose $|\arg(J^{1/3})|\leq \frac{\pi}{3}$. Moreover, the function $J(x,y,z)$ satisfies the following symmetry properties:
\begin{eqnarray*}
J(-x,y,z)&=&-\frac{1}{J(x,y,z)},\\ 
J(x,-y,z)=J(x,y,z),\quad J(x,y,-z)&=&J(x,y,z), \quad J(x,y,z)=J(x,z,y).
\end{eqnarray*}
\end{thm}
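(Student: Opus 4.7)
The plan is to exploit that the chosen singular orbit has genus $0$, so the differential system (\ref{sys2}) admits an elementary parametrization, and that on this orbit the flow reduces to a one-parameter Möbius automorphism of the unit circle.  First I will restrict to the positive octant arc $x=y+z$, $x^2+y^2+z^2=1$ cut by identity (\ref{iden}).  Setting $q=(p+v)/2$, $r=(p-v)/2$ turns the first integral $q^2+r^2+qr=1/2$ into $v^2=2-3p^2$, so I introduce the real parameter $\phi$ by
\begin{eqnarray*}
p=\sqrt{\tfrac{2}{3}}\cos\phi,\quad v=\sqrt{2}\sin\phi,\qquad q=\sqrt{\tfrac{1}{6}}\cos\phi+\sqrt{\tfrac{1}{2}}\sin\phi,\quad r=\sqrt{\tfrac{1}{6}}\cos\phi-\sqrt{\tfrac{1}{2}}\sin\phi.
\end{eqnarray*}
Substituting into the system $p'=qr(r^2-q^2)$ arising from (\ref{sys2}), the identity $\cos\phi(4\cos^2\phi-3)=\cos3\phi$ yields the clean autonomous ODE
\begin{eqnarray*}
\phi'(s)=\frac{\cos3\phi(s)}{3\sqrt{2}}.
\end{eqnarray*}

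Separation of variables integrates to $\tfrac13\ln|\sec3\phi+\tan3\phi|=s/(3\sqrt{2})+C$, and the half-angle identity $\sec3\phi+\tan3\phi=\tan(\pi/4+3\phi/2)$ gives the succinct flow law
\begin{eqnarray*}
\tan\!\bigl(\tfrac{\pi}{4}+\tfrac{3\phi(s)}{2}\bigr)=\tan\!\bigl(\tfrac{\pi}{4}+\tfrac{3\phi(0)}{2}\bigr)\cdot e^{s/\sqrt{2}}.
\end{eqnarray*}
Next I apply the general recipe from Section \ref{sub-1.4}, $u(p(s)\v,q(s)\v,r(s)\v)=p(s-\v)\v$; since $\mathscr{W}_1=p^2+q^2+r^2=1$ is a first integral, the scaling constant is $\v=\sqrt{x^2+y^2+z^2}$, and the increment $s\mapsto s-\v$ multiplies the tangent above by $e^{-\v/\sqrt{2}}$.

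To convert this to closed algebraic form, pass to $J(s)=e^{3i\phi(s)}$; the standard Cayley correspondence $\tan(\pi/4+3\phi/2)=(1-iJ)/(J-i)$ turns the multiplicative action into the Möbius transformation $J\mapsto(J-iT)/(1+iTJ)$ with $T=\tanh(\v/(2\sqrt{2}))$ (the sign switch $e^{-\v/\sqrt 2}\to-T$ converts to $+T$ after simplifying the Cayley fraction).  The key identification is the initial value $v_{0}=e^{3i\phi_{0}}$ in terms of $(x,y,z)$: on the orbit one checks directly that
\begin{eqnarray*}
y e^{i\pi/6}+z e^{-i\pi/6}=\tfrac{\sqrt{3}}{2}(y+z)+\tfrac{i}{2}(y-z)=\tfrac{\v}{\sqrt{2}}e^{i\phi_{0}},
\end{eqnarray*}
so after cubing and multiplying by $8$, $(\sqrt{3}x+iy-iz)^{3}=2\sqrt{2}\,\v^{3}\,v_{0}$, giving $v_{0}=A/B$ with $A,B$ exactly the numerator and the second term of the denominator in the definition of $J(x,y,z)$.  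Applying the Möbius map yields the stated formula for $J$, and $V=p\,\v=\sqrt{2/3}\cos\phi_{\text{new}}\cdot\v=(J^{1/3}+J^{-1/3})\sqrt{x^2+y^2+z^2}/\sqrt{6}$ follows with the branch prescription $|\arg J^{1/3}|\leq\pi/3$ forcing the correct root.

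Finally, $|J|=1$ on the orbit is equivalent to $|A|^{2}=B^{2}$, and a direct calculation gives $|A|^{2}=(3x^{2}+(y-z)^{2})^{3}=(4(y^{2}+yz+z^{2}))^{3}=8(x^{2}+y^{2}+z^{2})^{3}=B^{2}$ using $x=y+z$; this same relation $A\bar A=B^{2}$ drives the symmetry $J(-x,y,z)=-1/J(x,y,z)$ via $A(-x,y,z)=-\bar A$ and elementary Möbius manipulation, and the remaining sign symmetries of $J$ are similar algebraic verifications on the orbit.  The subtlest step in the whole argument is the conversion from the real ODE to the Möbius transformation and the matching of signs (projective-flow time versus system time $s$, and $E=e^{-\v/\sqrt{2}}\to T=+\tanh$): every intermediate sign must be tracked carefully so that the composite $(1-iJ)/(J-i)\leftrightarrow J$ correspondence produces precisely the fraction written in the theorem rather than its conjugate or inverse, and this is where I expect a routine but delicate calculation to be the main obstacle.
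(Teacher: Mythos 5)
Your proposal is correct and arrives at the theorem's formulas, but by a genuinely different route from the paper's. The paper separates variables in $(P')^2=P^2(2P-1)^2(2-3P)$ (coming from (\ref{antroji})), squares and factors to reach the cubic $P(2P-1)^2=\frac{8W}{27(W+1)^2}$ whose three roots are $p^2,q^2,r^2$, solves it by Cardano's formulas (whence $A=\tan\big(\frac{t\sqrt2}{4}i+\frac{\pi}{4}\big)$), and finishes with the tangent addition theorem. You instead exploit from the outset the linear relation $p=q+r$ cutting out this singular component: together with $p^2+q^2+r^2=1$ it parametrizes the genus-$0$ orbit trigonometrically, the system collapses to the single real ODE $\phi'=\cos(3\phi)/(3\sqrt2)$, and the Cayley transform converts its elementary solution into the M\"obius action $J\mapsto(J-iT)/(1+iTJ)$, $T=\tanh(\v/(2\sqrt2))$, on $J=e^{3i\phi}$; the two derivations are reconciled by the identity $\tan(\pi/4+3\phi/2)=(1-iJ)/(J-i)$, your $J$ being exactly the paper's $A$, and I have checked that your M\"obius map, applied to $v_0=(\sqrt3x+iy-iz)^3/(2\sqrt2\v^3)$, reproduces the stated fraction. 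Your route avoids Cardano and the squaring step (hence no spurious-root check), and makes the elementary character of this orbit transparent (it is a great circle); the paper's route keeps $p,q,r$ symmetric as roots of one cubic resolvent, which is precisely the mechanism that survives in the generic case of Section \ref{form-generic} and in Theorem \ref{thm-fin}, whereas your parametrization is special to this orbit. Two points to state explicitly: you verify only the first equation of (\ref{sys2}), so you should add that $q=(p+v)/2$, $r=(p-v)/2$ satisfy the remaining equations (a short computation gives $q'+r'=p'$ and $q'-r'=v'=\tfrac13\cos\phi\cos3\phi$, so the great circle is indeed invariant and the branch choice is consistent); and the sign symmetries $J(x,-y,z)=J(x,y,z)$, etc., make literal sense only for the version of the formula written with $\sqrt{2y^2+2z^2-x^2}$ in place of $y-z$ (cf.\ Note \ref{note1}), which is how your ``similar algebraic verifications'' should be read.
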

Note that (this applies to all our results), and this was mentioned after Theorem \ref{thm-s4}, though we give only the formula for $\phi^{t}(\m{x})$ for one specific value $t=1$, but since the conditions on $x,y,z$ are always homogeneous, the obtained formulas are also valid for a point $(xt,yt,zt)$, so this describes the full orbit $\phi^{t}(x,y,z)$, $t\in\mathbb{R}$.  
\begin{Note}
\label{note1}
The presented formulas are valid for other orbits as well, only one should be clear which values for the radicals are being used. For example, in the proof we use the identity $q=\sqrt{2y^2+2z^2-x^2}=y-z$, since this, not $z-y$, is the correct choice, as calculations show. The point which lies in the middle of our particular orbit is $(x,y,z)=\Big{(}t\sqrt{\frac{2}{3}},t\sqrt{\frac{1}{6}}, t\sqrt{\frac{1}{6}}\Big{)}$, and is exactly the point where the expression $q$ vanishes and ramifies. Note also that the symmetry properties for $J(x,y,z)$ as presented above do not pass automatically to $V(x,y,z)$ exactly due to this question of ramification. Correct symmetries, valid in all $\mathbb{R}^{3}$, are given by (\ref{v-inv}).
\end{Note}
\begin{proof}
Let $x,y,z\in\mathbb{R}$ be such that
\begin{eqnarray*}
x^2+y^2+z^2=\v^2,\quad x^4+y^4+z^4=\frac{1}{2}\v^4,\quad \v\neq 0.
\end{eqnarray*}
Now, (\ref{antroji}) implies  (recall that $(X,Y)=(P,P')$)
\begin{eqnarray}
\Big{(}\frac{\d P}{\d t}\Big{)}^2=P^2(2P-1)^2(2-3P).
\label{oho}
\end{eqnarray}
This can be rewritten as 
\begin{eqnarray*}
\frac{\d P}{P(2P-1)\sqrt{2-3P}}=\d t.
\end{eqnarray*}
After integration, we obtain
\begin{eqnarray*}
\frac{2}{27}\frac{2+\sqrt{4-6P}-6P\sqrt{4-6P}}{P(2P-1)^2}-1=e^{t\sqrt{2}}.
\end{eqnarray*}
This can be rewritten as
\begin{eqnarray}
27(e^{t\sqrt{2}}+1)P(2P-1)^2-4=2\sqrt{4-6P}(1-6P).
\label{pqr}
\end{eqnarray}
Now, put $e^{t\sqrt{2}}=W(t)=W$. Squaring the equation (\ref{pqr}), and afterwards factoring, we obtain:
\begin{eqnarray*}
27P(2P-1)^2\Big{(}27(W+1)^2P(2P-1)^2-8W\Big{)}=0.
\end{eqnarray*}
Since we want $P=p^2$ to be non-constant function, we should have
\begin{eqnarray}
P(2P-1)^2=\frac{8W}{27(W+1)^2}.
\label{lygtis}
\end{eqnarray} 
Now, let $P,Q,R$ be the three roots of the above, as functions in $W$. Vieta's formulas imply $P+Q+R=1$, and $P^2+Q^2+R^2=(P+Q+R)^2-2(PQ+PR+QR)=\frac{1}{2}$. Since, as before, from symmetry considerations we get that the functions $q^2$ and $r^2$ must satisfy the same equation (\ref{lygtis}), we can take $P=p^2$, $Q=q^2$, $R=r^2$. This shows that the second line of the system (\ref{sys2}) does hold, and we will double-check that the first identity of (\ref{sys2}) holds, too. In terms of $P,Q,R$, this first identity of (\ref{sys2}) reads as 
\begin{eqnarray*}
(2p'p)^2=4(pqr)^2(r^2-q^2)^2\Rightarrow (P')^2=4PQR(Q-R)^2.
\end{eqnarray*}
First, $Q+R=1-P$, $\frac{1}{4}=PQ+PR+QR=P(Q+R)+QR=P(1-P)+QR$. So, 
\begin{eqnarray*}
(Q-R)^2=(Q+R)^2-4QR=(1-P)^2-(4P^2-4P+1)=2P-3P^2.
\end{eqnarray*}
Further, from Vieta's formulas,
\begin{eqnarray*}
4PQR=\frac{8W}{27(W+1)^2}.
\end{eqnarray*}
Thus, provided (\ref{lygtis}) holds, we therefore need to prove that
\begin{eqnarray*}
(P')^2=\frac{8W(2P-3P^2)}{27(W+1)^2},
\end{eqnarray*}
which, minding (\ref{lygtis}), is just a restatement of (\ref{oho}). \\

Let for simplicity $W=w^2$. So, $w=e^{\frac{t\sqrt{2}}{2}}$. Then the discriminant of the cubic polynomial (\ref{lygtis}), as a polynomial in $P$, is equal to 
\begin{eqnarray*}
\frac{256}{27}\frac{w^2(w^2-1)^2}{(w^2+1)^4}.
\end{eqnarray*}
Thus, solving (\ref{lygtis}) using Cardano's formulas, we obtain
\begin{eqnarray*}
P=\frac{1}{6}\Delta^{1/3}+\frac{1}{6}\Delta^{-1/3}+\frac{1}{3},\quad
\Delta=-\frac{(w-i)^2}{(w+i)^2}.
\end{eqnarray*}
($R$ and $Q$ are obtained using two other values for the cubic root, we will soon specify precisely). Now, 
\begin{eqnarray*}
i\frac{w-i}{w+i}&=&i\frac{e^{\frac{t\sqrt{2}}{2}}-i}{e^{\frac{t\sqrt{2}}{2}}+i}=i\frac{e^{\frac{t\sqrt{2}}{2}-\frac{\pi i}{2}}-1}{e^{\frac{t\sqrt{2}}{2}-\frac{\pi i}{2}}+1}\\
&=&i\frac{e^{\frac{t\sqrt{2}}{4}-\frac{\pi i}{4}}-e^{-\frac{t\sqrt{2}}{4}+\frac{\pi i}{4}}}{e^{\frac{t\sqrt{2}}{4}-\frac{\pi i}{4}}+e^{-\frac{t\sqrt{2}}{4}+\frac{\pi i}{4}}}=
\tan\Big{(}\frac{t\sqrt{2}}{4}i+\frac{\pi}{4}\Big{)}\\
&:=&A.
\end{eqnarray*}
Note that $A^2=\Delta$. Since $w\in\mathbb{R}$, this gives 
$|A|=1$. So, we obtain 
\begin{eqnarray*}
p^2=P=\frac{1}{6}A^{2/3}+\frac{1}{6}A^{-2/3}+\frac{1}{3}=\frac{1}{6}(A^{1/3}+A^{-1/3})^2.
\end{eqnarray*}
This finally gives
\begin{eqnarray}
p=\frac{1}{\sqrt{6}}\tan^{1/3}\Big{(}\frac{t\sqrt{2}}{4}i+\frac{\pi}{4}\Big{)}&+&
\frac{1}{\sqrt{6}}\tan^{-1/3}\Big{(}\frac{t\sqrt{2}}{4}i+\frac{\pi}{4}\Big{)},\label{func-p}\\ 
p(t)\in\mathbb{R},&& |p(t)|\leq \frac{\sqrt{2}}{\sqrt{3}}.\nonumber
\end{eqnarray}
The other two functions $q$ and $r$ are obtained using the same formula, only using two other values for the cubic root. To satisfy the first three identities of the system (\ref{sys2}), we should state explicitly which branches of the cubic root are being used. Let us define the function $f(t)$ in the in neighbourhood of $t=0$ by  
\begin{eqnarray*}
f(t)=\tan^{1/3}\Big{(}\frac{t\sqrt{2}}{4}i+\frac{\pi}{4}\Big{)},\quad |f(t)|=f(0)=1.
\end{eqnarray*}
The Taylor series for the function $f(t)$ at $t=0$ then is given by 
\begin{eqnarray*}
f(t)&=&1+\frac{i\sqrt{2}}{6}t-\frac{1}{36}t^2-\frac{5i\sqrt{2}}{324}t^3+
\frac{37}{7776}t^4+\frac{31 i\sqrt{2}}{14580}t^5\\
&-&\frac{3421}{4199040}t^6-\frac{737 i\sqrt{2}}{2204496}t^7+\frac{605737}{4232632320}t^8+\cdots.
\end{eqnarray*}
Computations show that the correct choice of cubic roots to satisfy the first line of (\ref{sys2}) is the following (recall that $p=q+r$):
\begin{eqnarray*}
p=\frac{f+f^{-1}}{\sqrt{6}},\quad q=-\frac{\omega f+\omega^{2}f^{-1}}{\sqrt{6}},\quad r=-\frac{\omega^{2}f+\omega f^{-1}}{\sqrt{6}},\quad \omega=-\frac{1}{2}+\frac{\sqrt{3}}{2}i.
\end{eqnarray*}
Note also that $f+f^{-1}$ is an even function with rational Taylor coefficients:
\begin{eqnarray*}
f+f^{-1}&=&2-\frac{1}{18}t^2+\frac{37}{3888}t^4-\frac{3421}{2099520}t^6\\
&+&\frac{605737}{2116316160}t^8-\frac{176399641}{3428432179200}t^{10}+\cdots.
\end{eqnarray*}
Indeed, the evenness of $f+f^{-1}$ is clear from the fact that $\tan(-x+\frac{\pi}{4})=\tan^{-1}(x+\frac{\pi}{4})$. Thus $f(-t)=f^{-1}(t)$, and this gives 
\begin{eqnarray*}
p(-t)=p(t),\quad q(-t)=r(t),\quad r(-t)=q(t).
\end{eqnarray*}
The inverse of (\ref{func-p}) is given by
\begin{eqnarray}
\tan\Big{(}\frac{t\sqrt{2}}{4}i+\frac{\pi}{4}\Big{)}=
\frac{\Big{(}\sqrt{3}p(t)+i\sqrt{2-3p^2(t)}\Big{)}^3}{2\sqrt{2}}.
\label{inv-tan}
\end{eqnarray} 
\section{Explicit formulas} We are now ready to integrate the vector field $\m{C}(\m{x})$ explicitly in the special case described by the condition in Theorem \ref{thm-spec}. As we know in general, the solution to the PDE (\ref{pde}) with the first boundary condition (\ref{boundr}) is given by
\begin{eqnarray*}
V\big{(}p(u)\v,q(u)\v,r(u)\v\big{)}=p(u-\v)\v,\quad x=p(u)\v,\quad y=q(u)\v,\quad z=r(u)\v. 
\end{eqnarray*}
Note that $x^2+y^2+z^2=p^2(u)\v^2+q^2(u)\v^2+r^2(u)\v^2=\v^2$. 
Now, using addition formula for the tangent function, we get
\begin{eqnarray*}
\tan\Big{(}\frac{(u-\v)\sqrt{2}}{4}i+\frac{\pi}{4}\Big{)}&=&\tan\Big{(}\frac{u\sqrt{2}}{4}i+\frac{\pi}{4}-\frac{\v\sqrt{2}}{4}i\Big{)}\\
&=&
\frac{\tan\Big{(}\frac{u\sqrt{2}}{4}i+\frac{\pi}{4}\Big{)}-\tan\Big{(}\frac{\v\sqrt{2}}{4}i\Big{)}}{1+\tan\Big{(}\frac{u\sqrt{2}}{4}i+\frac{\pi}{4}\Big{)}\tan\Big{(}\frac{\v\sqrt{2}}{4}i\Big{)}}.
\end{eqnarray*}
This, using (\ref{inv-tan}) and the identity $p(u)=x\v^{-1}$, can be continued (the argument of the function $p$ is assumed to be $u$):
\begin{eqnarray*}
\ldots&=&\frac{\big{(}\sqrt{3}p+i\sqrt{2-3p^2}\big{)}^3-2\sqrt{2}i\tanh\Big{(}\frac{\v}{2\sqrt{2}}\Big{)}}{2\sqrt{2}+i\big{(}\sqrt{3}p+i\sqrt{2-3p^2}\big{)}^3\tanh\Big{(}\frac{\v}{2\sqrt{2}}\Big{)}}\\
&=&\frac{\big{(}\sqrt{3}x\v^{-1}+i\sqrt{2-3x^2\v^{-2}}\big{)}^3-2\sqrt{2}i\tanh\Big{(}\frac{\v}{2\sqrt{2}}\Big{)}}{2\sqrt{2}+i\big{(}\sqrt{3}x\v^{-1}+i\sqrt{2-3x^2\v^{-2}}\big{)}^3\tanh\Big{(}\frac{\v}{2\sqrt{2}}\Big{)}}\cdot\frac{\v^3}{\v^{3}}\\
&=&\frac{\big{(}\sqrt{3}x+i\sqrt{2y^2+2z^2-x^2}\big{)}^3-2\sqrt{2}i(x^2+y^2+z^2)^{3/2}
\tanh\Big{(}\frac{\sqrt{x^2+y^2+z^2}}{2\sqrt{2}}\Big{)}}{2\sqrt{2}(x^2+y^2+z^2)^{3/2}+i\big{(}\sqrt{3}x+i\sqrt{2y^2+2z^2-x^2}\big{)}^3\tanh\Big{(}\frac{\sqrt{x^2+y^2+z^2}}{2\sqrt{2}}\Big{)}}\\
&=&\frac{\big{(}\sqrt{3}x+iy-iz\big{)}^3-2\sqrt{2}i(x^2+y^2+z^2)^{3/2}
\tanh\Big{(}\frac{\sqrt{x^2+y^2+z^2}}{2\sqrt{2}}\Big{)}}{2\sqrt{2}(x^2+y^2+z^2)^{3/2}+i\big{(}\sqrt{3}x+iy-iz\big{)}^3\tanh\Big{(}\frac{\sqrt{x^2+y^2+z^2}}{2\sqrt{2}}\Big{)}}\\
&:=&J(x,y,z).
\end{eqnarray*}
Here we used the identity $2\v^2-3x^2=2y^2+2z^2-x^2=2y^2+2z^2-(y+z)^2=(y-z)^2$ (recall that $x=y+z$). As calculations show, $\sqrt{2y^2+2z^2-x^2}=y-z$ is the correct choice of the square root for our particular orbit. Since $V(x,y,z)=p(u-\v)\v=p(u-\v)\sqrt{x^2+y^2+z^2}$, this gives the main formula of Theorem \ref{thm-spec}. The symmetry we are particularly interested in arises from the fact that
\begin{eqnarray*}
\big{(}\sqrt{3}x+iy-iz\big{)}\cdot
\big{(}\sqrt{3}x-iy+iz\big{)}=2(x^2+y^2+z^2).
\end{eqnarray*}
Now, put $x\mapsto(-x)$ in the above formula for $J(x,y,z)$, and then multiply the numerator and the denominator by $(\sqrt{3}x+iy-iz)^3$. After some computation, we get the first symmetry property of the proposition, and the other three are obvious.   
\end{proof}
In particular, for $(x,y,z)=(xt,yt, zt)$, we get
\begin{eqnarray*}
J(xt,yt,zt)=\frac{\big{(}\sqrt{3}x+iy-iz\big{)}^3-2\sqrt{2}i(x^2+y^2+z^2)^{3/2}
\tanh\Big{(}t\frac{\sqrt{x^2+y^2+z^2}}{2\sqrt{2}}\Big{)}}{2\sqrt{2}(x^2+y^2+z^2)^{3/2}+i\big{(}3\sqrt{3}x+iy-iz\big{)}^3\tanh\Big{(}t\frac{\sqrt{x^2+y^2+z^2}}{2\sqrt{2}}\Big{)}}.
\end{eqnarray*}
At $t=0$, this gives the value
\begin{eqnarray*}
\lim\limits_{t\rightarrow 0}J(xt,yt,zt)&:=&\m{v}=\frac{\big{(}\sqrt{3}x+iy-iz\big{)}^3}{2\sqrt{2}(x^2+y^2+z^2)^{3/2}}\Longrightarrow\\
\m{v}^{1/3}&=&\frac{\sqrt{3}x+iy-iz}{\sqrt{2}\sqrt{x^2+y^2+z^2}},\quad
\m{v}^{-1/3}=\frac{\sqrt{3}x-iy+iz}{\sqrt{2}\sqrt{x^2+y^2+z^2}}.
\end{eqnarray*}
So, 
\begin{eqnarray*}
\lim\limits_{t\rightarrow 0}\frac{V(xt,yt,zt)}{t}=\frac{(\m{v}^{1/3}+\m{v}^{-1/3})\sqrt{x^2+y^2+z^2}}{\sqrt{6}}=x.
\end{eqnarray*}
Thus, the first boundary condition (\ref{boundr}) is indeed satisfied!
\begin{Example}
\label{ex8}
 We know that to derive the Taylor series for the flow we need no explicit formulas, to know the vector field is enough; we should use the recurrence and the series (\ref{expl-taylor}). In particular, we get: 
\begin{eqnarray}
V(3t,2t,t)&=&3t+\frac{3}{7}t^2-\frac{51}{98}t^3-\frac{10}{7}t^4-\frac{671}{2744}t^5\nonumber\\
&+&\frac{2669}{980}t^6+\frac{12969121}{4033680}t^7-\frac{49074611}{19765032}t^8+
\cdots.
\label{v-specc}
\end{eqnarray}
This is exactly the special case described by Theorem \ref{thm-spec}, since $3t=2t+t$.\\

On the other hand, using the explicit formula, we get
\begin{eqnarray*}
J(3t,2t,t)&=&\frac{(3\sqrt{3}t+it)^3-2\sqrt{2}i(14)^{3/2}t^3
\tanh\Big{(}\frac{t\sqrt{7}}{2}\Big{)}}{2\sqrt{2}(14)^{3/2}t^3+i(3\sqrt{3}t+it)^3\tanh\Big{(}\frac{t\sqrt{7}}{2}\Big{)}},\\
&=&\frac{(3\sqrt{3}+i)^3}{8\cdot 7^{3/2}}\cdot
\frac{1-\frac{10+9\sqrt{3}i}{7\sqrt{7}}
\tanh\Big{(}\frac{t\sqrt{7}}{2}\Big{)}}{1-\frac{10-9\sqrt{3}i}{7\sqrt{7}}\tanh\Big{(}\frac{t\sqrt{7}}{2}\Big{)}}\\
&:=&\frac{(3\sqrt{3}+i)^3}{8\cdot 7^{3/2}}\cdot A(t).
\end{eqnarray*}
We used the identity $(9\sqrt{3}+10i)(9\sqrt{3}-10i)=7^3$. Note that $A(0)=1$, so MAPLE can easily handle Taylor series of $A^{\pm 1/3}$. Now, plug this into the formula of Theorem \ref{thm-spec}. That is, 
\begin{eqnarray*}
V(3t,2t,t)&=&\Big{(}J^{1/3}(3t,2t,t)+J^{-1/3}(3t,2t,t)\Big{)}\cdot\frac{\sqrt{7}}{\sqrt{3}}\cdot t\\
&=&A^{1/3}(3t,2t,t)\cdot\Big{(}\frac{3}{2}+\frac{i\sqrt{3}}{6}\Big{)}+A^{-1/3}(3t,2t,t)\cdot\Big{(}\frac{3}{2}-\frac{i\sqrt{3}}{6}\Big{)}.
\end{eqnarray*}
Expand this in Taylor series with the help of MAPLE, and this yields exactly the series (\ref{v-specc}). So we can be double-sure that formula in Theorem \ref{thm-spec} is correct.\\

On the other hand, if we use (\ref{expl-taylor}) for the point $(x,y,z)=(3t,t,2t)$, we get
\begin{eqnarray*}
V(3t,t,2t)&=&3t-\frac{3}{7}t^2-\frac{51}{98}t^3+\frac{10}{7}t^4-\frac{671}{2744}t^5\\
&-&\frac{2669}{980}t^6+\frac{12969121}{4033680}t^7+\frac{49074611}{19765032}t^8+\cdots.
\end{eqnarray*}
This is exactly what the series (\ref{v-specc}) and the last identity in (\ref{v-inv}) would give. 
\end{Example}

\chapter{The superflow $\phi_{\mathbb{O}}$. Non-singular orbit}
\label{partiv}

\begin{figure}
\includegraphics[scale=0.5]{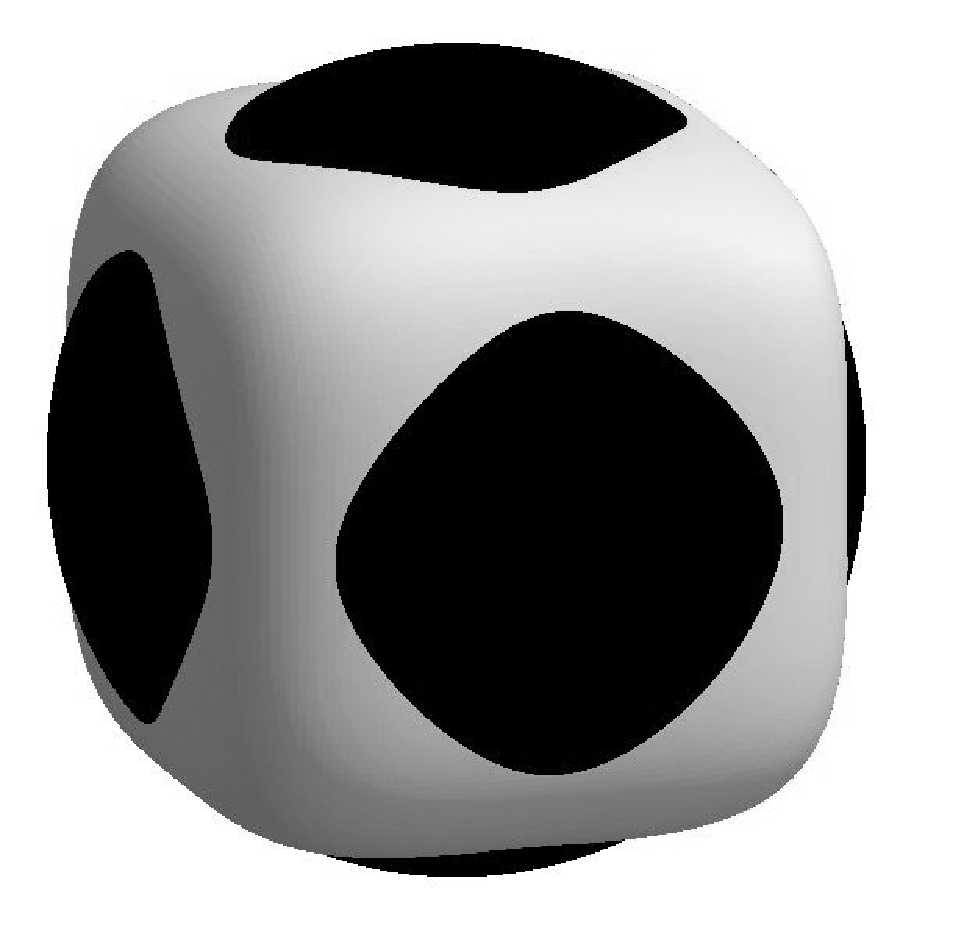}
\caption{The intersection of the surfaces $x^2+y^2+z^2=1$ (black) and $x^4+y^4+z^4=\frac{5}{9}$ (grey). This intersection consists of $6$ different orbits, and the flow $\phi_{\mathbb{O}}$ on these orbits is described in terms of Weierstrass elliptic function with square period lattice.}
\label{figure6}
\end{figure}

\begin{figure}
\includegraphics[scale=0.5]{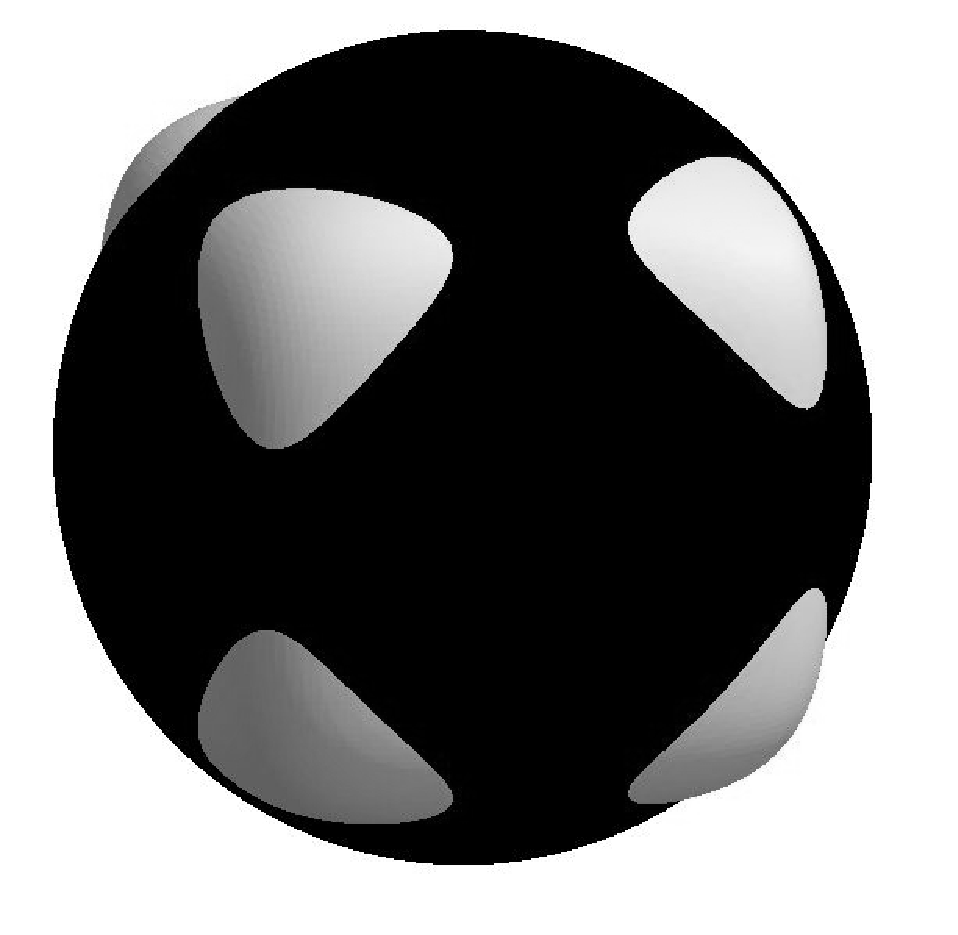}
\caption{The intersection of the surfaces $x^2+y^2+z^2=1$ (black) and $x^4+y^4+z^4=0.4535_{+}$ (grey, see a remark just after Note \ref{papr}). This intersection consists of $8$ different orbits, and the flow $\phi_{\mathbb{O}}$ on these orbits is described again in terms of Weierstrass elliptic function with square period lattice.}
\label{figure7}
\end{figure}

\section{Formulas in a generic case} 
\label{form-generic}
Now, let $\frac{1}{3}<\xi<1$, $\xi\neq\frac{1}{2}$.\\

Let, again, $X$ be any of the functions $P=p^2,Q=q^2$ or $R=r^2$, and let us define
\begin{eqnarray}
-27X^3+27X^2-\frac{27(1-\xi)}{2}X=\Upsilon(t).
\label{cub-up}
\end{eqnarray}
This choice is motivated by the fact that if $\Upsilon$ is fixed and $P,Q,R$ are three solutions of this cubic equation, then $P+Q+R=1$, $P^2+Q^2+R^2=\xi$, exactly as is needed; see (\ref{sys2}). The normalising constant $-27$ is just for convenience.  We will see that $\Upsilon$ is in general an elliptic function. So, this is indeed very fascinating that the superflow $\phi_{\mathbb{O}}$ can be described in terms of Weierstrass elliptic functions, though in a rather complicated way. We have:
\begin{eqnarray*}
\Upsilon'=\Big{(}-81X^2+54X-\frac{27(1-\xi)}{2}\Big{)}X'.
\end{eqnarray*}
Thus, using Proposition \ref{prop8}, we imply that
\begin{eqnarray*}
\Upsilon'^{2}&=&\Big{(}-81X^2+54X-\frac{27(1-\xi)}{2}\Big{)}^{2}X'^2\\
&=&\Big{(}-81X^2+54X-\frac{27(1-\xi)}{2}\Big{)}^{2}\\
&\times& 2X\cdot(1-\xi-2X+2X^2)(2\xi-1+2X-3X^2).
\end{eqnarray*}
But now, using (\ref{cub-up}), we verify by hand (or MAPLE) that the last is $9$th degree polynomial in $X$ and is exactly equal to
\begin{eqnarray}
4\Upsilon^3+(20-36\xi)\Upsilon^2-27(2\xi-1)(\xi-1)^2\Upsilon.
\label{upsilon}
\end{eqnarray}
The discriminant of this cubic polynomial is equal to $23328
(2\xi-1)^2(\xi-1)^4(3\xi-1)^3$. So, $\Upsilon$ is an elliptic function. Moreover, since the discriminant is positive, all three roots are real, and thus the period lattice is rectangular, having primitive periods $2\omega_{1}\in\mathbb{R}$ and $2i\omega_{2}\in i\mathbb{R}$.  If we denote $\widehat{\Upsilon}(t)=\Upsilon(t)+3\xi-\frac{5}{3}$ (thus eliminating the coefficient at $\Upsilon^2$), this leads to a differential equation of exactly Weierstrass type. Since it would be a nuisance to carry on all calculations with an unspecified $\xi$, we choose a particularly elegant case $\xi=\frac{5}{9}$, where the period lattice is a square lattice. \\

\begin{Note}
\label{papr}
 If $\xi=\frac{1}{2}$, from the previous Section and (\ref{lygtis}) we know that in this case $\Upsilon=-\frac{2W}{(W+1)^2}=-(2\cosh^2(\frac{t}{\sqrt{2}}))^{-1}$ (recall different normalizing constants in (\ref{lygtis}) and (\ref{cub-up})), where $W=e^{t\sqrt{2}}$. So, the differential equation $\Upsilon'^2=4\Upsilon^3+2\Upsilon^2$ holds.
\end{Note}
 For general $\xi$, the free term of the equation for $\widehat{\Upsilon}$ vanishes 
(\emph{a posteriori}, the period lattice of the elliptic function then is a square lattice), if  
\begin{eqnarray*}
(9\xi-5)(486\xi^3-567\xi^2+252\xi-43)=0.
\end{eqnarray*}
(See (\ref{g3}) below for $a=1-\xi$, $b=-2$). So, apart from the rational solution, we have one other value
\begin{eqnarray*}
\xi=\frac{1}{18}\sqrt[3]{16\sqrt{2}+13}-\frac{1}{18}\sqrt[3]{16\sqrt{2}-13}+\frac{7}{18}=0.4535087845_{+}.
\end{eqnarray*} 
The orbits for this case are shown in Figure \ref{figure7}.
\begin{Note}
\label{note-hyper}
Consider the hyper-elliptic curve
\begin{eqnarray}
Y^2=2X(a+bX+2X^2)(c+dX-3X^2).
\label{y-eq}
\end{eqnarray}
Here the constants $a,b,c,d\in\mathbb{R}$ are such that the discriminant of the $5$th degree polynomial does not vanish. The constants $2,-3$ at $X^2$ are just for convenience to keep track of our main example $a=1-\xi$, $b=-2$, $c=2\xi-1$, $d=2$ (see Proposition \ref{prop8}).\\

Suppose, the pair $(J(t),J'(t))$ parametrizes the curve (\ref{y-eq}) locally at $t=0$. We want to find a condition on $(a,b,c,d)$ which leads to a similar reduction to elliptic function as in our case of $P,Q,R$ and $\Upsilon$ in the beginning of this section. Let therefore
\begin{eqnarray*}
-27X^3+uX^2+wX=\psi.
\end{eqnarray*}
Again, the normalizing constant $``-27"$ is just for convenience in order to get a differential equation of exactly Weierstrass type, and $u,w$ are to be determined. We have:
\begin{eqnarray*}
\psi'=(-81X^2+2uX+w)X'.
\end{eqnarray*}
And so, for $X=J$,
\begin{eqnarray*}
(\psi')^{2}&=&(-81X^2+2uX+w)^2X'^2\\
&=&2(-81X^2+2uX+w)^2X(a+bX+2X^2)(c+dX-3X^2).
\end{eqnarray*}
We want the last expression to be equal to $4\psi^{3}+p\psi^{2}+q\psi$ for certain $p,q\in\mathbb{R}$ (note that both expressions have no term $X^{0}$). Indeed, both of them are $9$th degree polynomials in $X$, and this puts many restrictions on $a,b,c,d,u,w$. Calculations with MAPLE lead to two cases expressible in rational terms. Note, however, that the involution $(a,b,c,d)\mapsto(-\frac{2c}{3},-\frac{2d}{3},-\frac{3a}{2},-\frac{3b}{2})$ leaves the equation (\ref{y-eq}) intact, so in fact only one of these solutions matters. We formulate the findings as follows.
\begin{prop}Let $a,b\in\mathbb{R}$. Consider the polynomial
\begin{eqnarray*}
2X(a+bX+2X^2)\Big{(}\frac{b^2}{4}-2a-bX-3X^2\Big{)}=f_{a,b}(X).
\end{eqnarray*}
Suppose that $\mathrm{discrim}(f_{a,b}(X))=\frac{1}{4}a^6(b^2-8a)^3(b^2-6a)\neq 0$. Suppose, a pair of functions $(J(t),J'(t))=(X,Y)$ parametrizes the genus $2$ curve $Y^2=f_{a,b}(X)$ locally at $t=0$, $J(0)\neq 0$. Then, if we define
\begin{eqnarray*}
\psi(t)=-27J^3-\frac{27b}{2}J^2-\frac{27a}{2}J,
\end{eqnarray*}
then $\psi(t)=\wp(t+\alpha)-\frac{b^3}{6}+\frac{3ab}{2}$, for a certain $\alpha\in\mathbb{C}$, where $\wp$ is the Weierstarss elliptic function satisfying
\begin{eqnarray*}
\wp'^2=4\wp^3-g_{2}\wp-g_{3}.
\end{eqnarray*} 
Here $g_{2}$ and $g_{3}$ are explicit polynomials in $a,b$:
\begin{eqnarray}
g_{2}&=&\frac{1}{3}b^6-6ab^4+\frac{135}{4}a^2b^2-54a^3,\nonumber\\
g_{3}&=&-\frac{1}{27}b^9+ab^7-\frac{81}{8}a^2b^5+\frac{369}{8}a^3b^3-81a^4b,\label{g3}\\
g_{2}^{3}-27g_{3}^{2}&=&\frac{729}{64}a^4(b^2-8a)^2(b^2-6a)^3.\nonumber
\end{eqnarray}

\end{prop}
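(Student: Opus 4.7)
The plan is to verify directly that the substitution $\psi(t) = -27J^3 - \tfrac{27b}{2}J^2 - \tfrac{27a}{2}J$ turns the hyperelliptic differential equation $(J')^2 = f_{a,b}(J)$ into a cubic polynomial relation for $\psi'$ in terms of $\psi$, and then to shift $\psi$ to eliminate the quadratic term and read off $g_2, g_3$.

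First I would differentiate: $\psi' = \bigl(-81J^2 - 27bJ - \tfrac{27a}{2}\bigr)J'$, so
\begin{eqnarray*}
(\psi')^2 = \bigl(-81J^2 - 27bJ - \tfrac{27a}{2}\bigr)^2 \cdot f_{a,b}(J),
\end{eqnarray*}
which is an explicit polynomial $\Phi(J)$ of degree $9$ in $J$. Meanwhile $\psi^3, \psi^2, \psi$ are themselves explicit polynomials in $J$ of degrees $9, 6, 3$ with no constant term. The leading term of $\Phi(J)$ is $(-81)^2 \cdot (-12) J^9 = -78732 \, J^9$, which matches precisely $4\cdot(-27)^3 J^9$, the leading term of $4\psi^3$. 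This forced coincidence is the structural reason for the choice of the normalising constant $-27$, and it means $\Phi(J) - 4\psi(J)^3$ already has degree at most $8$ in $J$.

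Next I would seek constants $p, q, r \in \mathbb{R}[a,b]$ such that $\Phi(J) = 4\psi^3 + p\psi^2 + q\psi + r$ identically in $J$. The map $J \mapsto \psi$ is a threefold cover, and its three sheets over a generic $\psi$ are the roots of $J^3 + \tfrac{b}{2}J^2 + \tfrac{a}{2}J + \tfrac{\psi}{27} = 0$; by Vieta, the elementary symmetric functions of these roots are constants (in $\psi$) plus a term linear in $\psi$, so the ring extension $\mathbb{R}[\psi] \hookrightarrow \mathbb{R}[J]$ admits a clean trace. Concretely, I would compute $\Phi(J) \bmod \bigl(J^3 + \tfrac{b}{2}J^2 + \tfrac{a}{2}J + \tfrac{\psi}{27}\bigr)$ in $\mathbb{R}[a,b,\psi][J]$, and the vanishing of the remainder (a polynomial in $J$ of degree $\le 2$ with coefficients in $\mathbb{R}[a,b,\psi]$) will be the verification. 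This is a finite algebraic check giving the explicit values of $p, q, r$; in particular $r=0$ is expected because $\Phi(0)=0$ and $\psi(0)=0$.

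Finally, with $(\psi')^2 = 4\psi^3 + p\psi^2 + q\psi$ in hand, I would complete the cube: setting $\widetilde{\psi} = \psi + \tfrac{p}{12}$ eliminates the quadratic term and yields $(\widetilde{\psi}')^2 = 4\widetilde{\psi}^3 - g_2 \widetilde{\psi} - g_3$, which is the Weierstrass equation. Matching constants gives $\tfrac{p}{12} = -\tfrac{b^3}{6} + \tfrac{3ab}{2}$ (so $p = -2b^3 + 18ab$), and then
\begin{eqnarray*}
g_2 = \tfrac{p^2}{12} - q, \qquad g_3 = -\tfrac{p^3}{216} + \tfrac{pq}{12},
\end{eqnarray*}
and the stated formulas (\ref{g3}) follow from back-substitution. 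The existence of $\alpha \in \mathbb{C}$ with $\widetilde{\psi}(t) = \wp(t+\alpha)$ is then the standard uniqueness statement for solutions of the Weierstrass ODE with generic initial condition $J(0) \neq 0$ (which keeps $\widetilde{\psi}$ non-constant), provided the discriminant $g_2^3 - 27g_3^2$ is nonzero — this is the point where the condition $\mathrm{discrim}(f_{a,b}) \neq 0$ is used, and the factorisation of $g_2^3 - 27g_3^2$ in (\ref{g3}) can be verified as a check.

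The main obstacle is the middle step: showing that the degree-$9$ polynomial $\Phi(J) - 4\psi^3$ really lies in the subring $\mathbb{R}[a,b]\cdot \psi \oplus \mathbb{R}[a,b]\cdot \psi^2$ rather than only reducing modulo the covering relation. This gives more equations than unknowns $p, q, r$, so most of them must hold as polynomial identities in $a,b$ — the non-obvious algebraic content of the reduction. I expect this to go through only because $f_{a,b}$ was specifically engineered (via the relations $c = \tfrac{b^2}{4} - 2a$, $d = -b$) so that the degree-$3$ quotient map $\psi$ descends to a genuine morphism of the hyperelliptic curve onto an elliptic curve, and in practice I would discharge this verification with a direct symbolic computation in MAPLE or MAGMA.
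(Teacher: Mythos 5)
Your proposal follows essentially the same route as the paper: the paper also sets $\psi=-27X^3+uX^2+wX$, squares $\psi'$, demands that the resulting degree-$9$ polynomial in $X$ equal $4\psi^3+p\psi^2+q\psi$, and discharges the overdetermined system of coefficient identities by a MAPLE computation (deriving the constraints $c=\tfrac{b^2}{4}-2a$, $d=-b$ rather than verifying them, but this is the same finite algebraic check), after which the cube is completed exactly as you describe. One small slip: since $\psi=\wp(t+\alpha)-\tfrac{p}{12}$ must match $\wp(t+\alpha)-\tfrac{b^3}{6}+\tfrac{3ab}{2}$, the correct value is $p=2b^3-18ab$, not $p=-2b^3+18ab$; this does not affect your formulas $g_2=\tfrac{p^2}{12}-q$ and $g_3=-\tfrac{p^3}{216}+\tfrac{pq}{12}$, which are right.
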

Thus, we have discovered an explicit example of hyper-elliptic functions (or integrals) that can be transformed into elliptic functions (or integrals). This topic has a history and stems from the works of A.-M. Legendre in 1825 \cite{legendre,scarpello}.
\end{Note}
\begin{Note} We can explain the situation in the previous Note by a much simpler example. Let $J=\sqrt{\sin x}$. Then $J'=\frac{\cos x}{2\sqrt{\sin x}}$, and thus
\begin{eqnarray*}
(2JJ')^2+J^4=1.
\end{eqnarray*}
So, the pair $(J,J')=(X,Y)$ parametrizes the genus $1$ singular (as a projective curve) quartic
\begin{eqnarray*}
X^4+4X^2Y^2=1.
\end{eqnarray*}
But if we put $\psi=J^2$, then the pair $(\psi,\psi')$ parametrized the genus $0$ quadratic $X^2+Y^2=1$. 
\end{Note}
 We re-iterate that all three functions $P,Q,R$ generically are described in terms of a certain curve of genus $2$, but all three can be accommodated under a uniform curve with a reduced genus; hence, much simpler. This works a bit like a miracle - in Note \ref{note-hyper} there are $6$ degrees of freedom to equate $9$th degree polynomials, but it works! In Section \ref{hyper-5} we will see the reasons behind this phenomenon.\\
 
Exactly the same occurs for the icosahedral superflow \cite{alkauskas-super2}: three analogous functions $P,Q,R$, each with its derivative, parametrizes a genus $7$ curve, but via a third degree polynomial transformation all three can be accommodated under a uniform curve of genus $3$; for example, the one given by (\ref{curve-ico}) for one distinguished orbit. However, this phenomenon did not happen for the tetrahedral superflow in Section \ref{S4}. The reason for this is that $x^2+y^2+z^2$ is not the first integral for the system (\ref{syys}). In relation to this, a strengthened version of  Problem \ref{prob-vienas} is presented in \cite{alkauskas-super2}. 
\section{The case $\xi=\frac{5}{9}$}In this case $3\xi-\frac{5}{3}=0$, and the equation (\ref{upsilon}) reads as
\begin{eqnarray}
\Upsilon'^2=4\Upsilon^3-\frac{16}{27}\Upsilon.
\label{diff-ups}
\end{eqnarray}
Let $\wp$ be the Weierstrass elliptic function satisfying the equation
\begin{eqnarray}
\wp'^2=4\wp^3-\frac{16}{27}\wp.
\label{weier}
\end{eqnarray}
So, the period lattice of this elliptic function is a square lattice, and $\wp(it)=-\wp(t)$. Let $2\omega,2\omega i$ be its primitive periods. We have
\begin{eqnarray*}
\omega=\int\limits_{\frac{2}{3\sqrt{3}}}^{\infty}\frac{\d x}{\sqrt{4x^3-\frac{16}{27}x}}
=\frac{3^{3/4}}{2^{3/2}}\int\limits_{1}^{\infty}\frac{\d x}{\sqrt{x^3-x}}&=&
\frac{3^{3/4}\Gamma(\frac{1}{4})^2}{8 \sqrt{\pi}}=2.1131881555_{+},\\
\wp(\omega)=\frac{2}{3\sqrt{3}},\quad \wp(\omega+i\omega)&=&0,\quad \wp(i\omega)=-\frac{2}{3\sqrt{3}}.
\end{eqnarray*}
The function $\wp(t)$ has a double pole at $t=0$. The orbits of our superflow are bounded curves, thus we want $\Upsilon$ to be bounded and real. Hence, we put 
\begin{eqnarray}
\Upsilon(t)=\wp(t+i\omega),\quad t\in\mathbb{R}.
\label{ups}
\end{eqnarray}
Note that $\wp'(\omega)=\wp'(i\omega)=\wp'(\omega+i\omega)=0$; so $\omega+i\omega$ is a double zero of the order $2$ elliptic function $\wp(t)$, which therefore does not vanish for $t\neq \omega+i\omega$, modulo periods. In particular, since $\Upsilon(0)<0$, this gives
\begin{eqnarray}
\Upsilon(t)\leq 0 \text{ for }t\in\mathbb{R},\quad 4\Upsilon^{2}-\frac{16}{27}\leq 0.
\label{ineq}
\end{eqnarray}
The second inequality follows from the differential equation and the first inequality, since $\Upsilon'^{2}\geq 0$. The differential equation (\ref{weier}) allows us to derive Taylor series for $\wp$, which is a classical computation:
\begin{eqnarray}
\wp(t)&=&\frac{1}{t^2}+\frac{4}{135}t^2+\frac{16}{54675}t^6+\frac{128}{95954625}t^{10}\nonumber\\
&+&\frac{256}{44043172875}t^{14}+\frac{2048}{89187425071875}t^{18}
\cdots.
\label{wp-taylor}
\end{eqnarray}
Now we will derive the addition formula for $\Upsilon$. The classical addition formula for $\wp$ claims that \cite{ahiezer}
\begin{eqnarray*}
\wp(u+v)=\frac{1}{4}\Bigg{[}\frac{\wp'(u)-\wp'(v)}{\wp(u)-\wp(v)}\Bigg{]}^2-\wp(u)-\wp(v).
\end{eqnarray*}
In particular, plugging $u\mapsto u+i\omega$, $v\mapsto -v$, and using the fact that $\wp$ is even, $\wp'$ is odd, we have 
\begin{eqnarray}
\Upsilon(u-v)=\frac{1}{4}\Bigg{[}\frac{\Upsilon'(u)+\wp'(v)}{\Upsilon(u)-\wp(v)}\Bigg{]}^2-\Upsilon(u)-\wp(v).
\label{addition}
\end{eqnarray}
Now, the pair of equations $x^2+y^2+z^2=1$ and $x^4+y^4+z^4=\frac{5}{9}$ define six connected components, as described in Section \ref{elementary}, case v); see Figure \ref{figure6}. The next lemma will come out as the side result of our proof, but we write it explicitly to avoid questions of choosing branches of radicals. This fact is easily verified directly by solving a quadratic equation.
\begin{lem}Let $x^2+y^2+z^2=1$ and $x^4+y^4+z^4=\frac{5}{9}$. From six connected components of this set, choose two which round the points either $(1,0,0)$ or $(-1,0,0)$, as described in Section \ref{elementary}, case v).  Then $\frac{2}{3}\leq x^2\leq\frac{3+2\sqrt{3}}{9}$, and the bounds are exact. For other $4$ components, $x^2\leq\frac{1}{3}$, and the bound is exact.
\label{lemma1}
\end{lem}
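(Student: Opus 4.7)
The proof plan is to express the constraints defining the orbits as polynomial (in)equalities in $x^2$ alone, by parametrizing $(y,z)$ via their elementary symmetric functions. Set
\begin{eqnarray*}
s=y^2+z^2,\qquad p=y^2z^2.
\end{eqnarray*}
From the two equations $x^2+y^2+z^2=1$ and $x^4+y^4+z^4=\tfrac{5}{9}$ we obtain $s=1-x^2$ and $y^4+z^4=s^2-2p=\tfrac{5}{9}-x^4$, hence a direct calculation gives
\begin{eqnarray*}
p=x^4-x^2+\tfrac{2}{9}.
\end{eqnarray*}
First I would observe that for $(y,z)\in\mathbb{R}^2$ to exist, we need $y^2,z^2$ to be non-negative real roots of $T^2-sT+p=0$; since $s=1-x^2\ge 0$ automatically for $x^2\le 1$, the remaining necessary and sufficient conditions are $p\ge 0$ and $s^2-4p\ge 0$.

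Next I would solve these two quadratic (in $u=x^2$) inequalities. The condition $p\ge 0$ is $9u^2-9u+2\ge 0$, i.e.\ $(3u-1)(3u-2)\ge 0$, which yields $u\le\tfrac{1}{3}$ or $u\ge\tfrac{2}{3}$. The condition $s^2-4p\ge 0$ simplifies to $27u^2-18u-1\le 0$, with roots $u=\tfrac{3\pm 2\sqrt{3}}{9}$; as the smaller root is negative, this gives $u\le\tfrac{3+2\sqrt{3}}{9}$. Intersecting with $0\le u\le 1$ produces
\begin{eqnarray*}
x^2\in\Big{[}0,\tfrac{1}{3}\Big{]}\cup\Big{[}\tfrac{2}{3},\tfrac{3+2\sqrt{3}}{9}\Big{]}.
\end{eqnarray*}
Since $\tfrac{3+2\sqrt{3}}{9}<1$, no real point of the intersection has $x^2$ in $\big{(}\tfrac{3+2\sqrt{3}}{9},1\big{]}$; in particular the six vertices $(\pm 1,0,0),(0,\pm 1,0),(0,0,\pm 1)$ of the octahedron (case v of Section \ref{elementary}) are not on the curve, consistent with the fact that each orbit encircles such a vertex.

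Finally I would identify which of the six components lies in which interval by a simple continuity/connectedness argument. Each of the two intervals $I_1=[0,\tfrac{1}{3}]$ and $I_2=[\tfrac{2}{3},\tfrac{3+2\sqrt{3}}{9}]$ is the projection of the locus onto the $x^2$-axis, and the preimage of $I_j$ is a disjoint union of components (separated by the gap $(\tfrac{1}{3},\tfrac{2}{3})$ in $x^2$). A component encircling $(\pm 1,0,0)$ is characterized by $|x|$ not passing through $0$, forcing $x^2\ge\tfrac{2}{3}$; this picks up exactly two of the six components. The remaining four components, each encircling one of $(0,\pm 1,0)$ or $(0,0,\pm 1)$, must be traversed by a path on which $x$ changes sign, hence lie in $x^2\le\tfrac{1}{3}$. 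Exactness of the bounds follows because the discriminant conditions $p=0$ and $s^2-4p=0$ are achieved at the endpoints: at $x^2=\tfrac{2}{3}$ one gets $y^2z^2=0$ (so $y=0$ or $z=0$) and at $x^2=\tfrac{3+2\sqrt{3}}{9}$ one has $y^2=z^2$, and similarly $x^2=\tfrac{1}{3}$ is attained with $y^2z^2=0$. The only subtle point is the component-counting step; everything else is a routine discriminant computation.
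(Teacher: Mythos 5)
Your proof is correct, and it takes a genuinely different (more elementary) route than the paper's. The paper obtains the lemma as a by-product of its elliptic-function analysis: $P=x^2$ is exhibited as a root of the cubic $3X(3X-1)(3X-2)=-\Upsilon$ with $-\frac{2}{3\sqrt3}\le\Upsilon\le 0$, and the bounds $\tfrac{2}{3}=P(\omega)\le P\le P(0)=\tfrac{3+2\sqrt3}{9}$ fall out of the Cardano formula (\ref{P-ups}) together with the inequalities (\ref{ineq}); the paper only remarks that the lemma ``is easily verified directly by solving a quadratic equation,'' which is precisely what you do. Your computation is right: $s=1-u$, $p=u^2-u+\tfrac29$ with $u=x^2$, and the conditions $p\ge0$, $s^2-4p\ge0$ give $(3u-1)(3u-2)\ge0$ and $27u^2-18u-1\le0$, hence $u\in[0,\tfrac13]\cup[\tfrac23,\tfrac{3+2\sqrt3}{9}]$, with the endpoints realized at $yz=0$ and $y^2=z^2$ respectively. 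What your approach buys is independence from the choice of branches in the cubic and from the sign analysis of $\Upsilon$; what the paper's buys is that the lemma comes for free once the machinery for Theorem \ref{thm4} is set up.

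The one step you should tighten is the component identification: the assertion that a loop encircling $(\pm1,0,0)$ is ``characterized by $|x|$ not passing through $0$'' is not self-evident. A clean fix uses the symmetry $(x,y,z)\mapsto(-x,y,z)$, which preserves the locus and fixes the vertices $(0,\pm1,0)$, $(0,0,\pm1)$, hence maps each of the four equatorial components to itself; since none of them is contained in $\{x=0\}$, each contains points with both signs of $x$, hence a point with $x=0$, so its connected $x^2$-image lies in $[0,\tfrac13]$. The eight points such as $(\sqrt{2/3},\sqrt{1/3},0)$ have $x^2=\tfrac23$ and therefore lie on the remaining two components, which consequently lie entirely in $[\tfrac23,\tfrac{3+2\sqrt3}{9}]$; the same points witness exactness of both lower bounds.
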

\begin{thm}
\label{thm4}
Let $x,y,z\in\mathbb{R}_{+}$, $y\geq z$, satisfy the condition
\begin{eqnarray}
\frac{(x^2+y^2+z^2)^2}{x^4+y^4+z^4}=\frac{9}{5},\quad x^{2}\geq\frac{2}{3}(x^2+y^2+z^2).\label{condi}
\end{eqnarray}
Thus, we choose the orbit which rounds $(1,0,0)$, and $\frac{1}{8}$th of it. Let
\begin{eqnarray*}
T(x,y,z)&=&\frac{1}{4}\Bigg{[}\frac{\sqrt{L(x,y,z)}+\wp'(\sqrt{x^2+y^2+z^2})}{K(x,y,z)-\wp(\sqrt{x^2+y^2+z^2})}\Bigg{]}^2\\
&-&K(x,y,z)-\wp(\sqrt{x^2+y^2+z^2}),
\end{eqnarray*}
where $K$ and $L$ are $0-$homogeneous rational functions, given by (\ref{k-rat}) and (\ref{l-rat}), and $\wp$ is the Weierstrass elliptic function, satisfying (\ref{weier}).  The value $\sqrt{L(x,y,z)}$ is assumed to be non-negative. Let us define
\begin{eqnarray*} 
J(x,y,z)=-108T+12i\sqrt{12-81T^2}.
\end{eqnarray*} 
Then $J(x,y,z)\in\mathbb{C}$, $|J(x,y,z)|=24\sqrt{3}$, and
\begin{eqnarray}
V(x,y,z)&=&\Big{(}\frac{J(x,y,z)^{1/3}}{18}+\frac{2}{3J(x,y,z)^{1/3}}+\frac{1}{3}\Big{)}^{1/2}\nonumber\\
&\cdot&\sqrt{x^2+y^2+z^2}\in\mathbb{R}_{+},
\label{form-v}
\end{eqnarray}
where we choose $|\arg(J^{1/3})|\leq \frac{\pi}{3}$.
\end{thm}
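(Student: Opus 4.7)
\medbreak\par\noindent\textbf{Proof proposal for Theorem \ref{thm4}.}

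The plan is to follow the same template as in Theorem \ref{thm-spec}, but with the hyperelliptic/elliptic reduction of Section \ref{form-generic} taking the place of the elementary cubic that arose at $\xi=\tfrac12$. First, I would fix the specialization $\xi=\tfrac{5}{9}$, so that the equation (\ref{cub-up}) becomes $-27X^3+27X^2-6X=\Upsilon(t)$ and $\Upsilon$ satisfies the Weierstrass equation (\ref{diff-ups}). I would then take $\Upsilon(t)=\wp(t+i\omega)$ as in (\ref{ups}); this is forced by the requirement that $\Upsilon$ be real-valued and bounded on $\mathbb{R}$, and it automatically delivers the inequality (\ref{ineq}), which is what makes all subsequent square roots meaningful.

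Next I would identify the arithmetic meaning of $K(x,y,z)$ and $L(x,y,z)$. Since $P=p^2,Q=q^2,R=r^2$ are the three roots of the cubic $-27X^3+27X^2-6X=\Upsilon$, Vieta gives $PQR=-\Upsilon/27$, so along the orbit $\Upsilon(u)=-27\,PQR=-27\,x^2y^2z^2/\v^6$ with $\v^2=x^2+y^2+z^2$; this is the (symmetric, $0$-homogeneous) function $K$. For $L$ I would use the differential equation: $L=(\Upsilon'(u))^2=4\Upsilon^3-\tfrac{16}{27}\Upsilon$, which yields a rational expression in $x,y,z$. The sign of $\sqrt{L}$ is fixed by computing $\Upsilon'(u)=(-81P^2+54P-6)\cdot 2pqr(r^2-q^2)$ directly, whose sign is determined on the chosen octant by the conditions $x,y,z\geq 0$ and $y\geq z$.

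The core step is then to apply the addition formula (\ref{addition}) with $u$ the orbit-parameter and $v=\v$, yielding
\begin{eqnarray*}
\Upsilon(u-\v)=\tfrac14\left[\tfrac{\Upsilon'(u)+\wp'(\v)}{\Upsilon(u)-\wp(\v)}\right]^2-\Upsilon(u)-\wp(\v),
\end{eqnarray*}
which by construction equals the function $T(x,y,z)$ of the statement. Once $T=\Upsilon(u-\v)$ is in hand, one recovers $p(u-\v)^2$ by inverting the cubic $-27X^3+27X^2-6X=T$ via Cardano. The discriminant computation gives a complex cube root of modulus $24\sqrt 3$ (straightforward: $J\bar J=108^2T^2+144(12-81T^2)=1728$), and after algebraic simplification the root corresponding to the branch $x^2\geq\tfrac{2}{3}\v^2$ is precisely $|\arg J^{1/3}|\leq\pi/3$. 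Finally, $V(x,y,z)=p(u-\v)\v=\sqrt{p(u-\v)^2}\cdot\v$, and taking the positive square root (legitimate because $x>0$ on the chosen $1/8$-orbit) gives formula (\ref{form-v}).

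The main obstacle, as usual in this series, is not any single algebraic identity but the bookkeeping of branches: one must simultaneously choose (i) the correct translate $t\mapsto t+i\omega$ of $\wp$ so that $\Upsilon$ lands in the real bounded range dictated by Lemma \ref{lemma1}; (ii) the correct sign for $\sqrt{L}$, which depends on the ordering $y\geq z$ via $(r^2-q^2)$; (iii) the correct branch of cube root in Cardano (selecting $P$ rather than $Q$ or $R$), for which the hypothesis $x^2\geq\tfrac23\v^2$ is precisely the condition that $P$ is the largest root; and (iv) the correct square root producing $p$ from $P=p^2$, given positivity of $x$. After the closed-form formula is derived, I would double-check it by computing the Taylor expansion of $V(at,bt,ct)$ for some admissible triple $(a,b,c)$ both from (\ref{expl-taylor}) with $\varpi\bl\varrho\bl\sigma=\m{C}$ and from (\ref{form-v}) using the series (\ref{wp-taylor}) for $\wp$, as in Example \ref{ex8}.
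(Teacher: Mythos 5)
Your proposal is correct and follows essentially the same route as the paper: specialize to $\xi=\frac{5}{9}$, set $\Upsilon=\wp(t+i\omega)$, identify $K=\Upsilon(u)$ and $L=\Upsilon'(u)^2$ as rational functions of $(x,y,z)$ on the orbit, apply the Weierstrass addition formula to obtain $T=\Upsilon(u-\v)$, and invert the cubic $3X(3X-1)(3X-2)=-T$ by Cardano with the branch fixed by $x^2\geq\frac{2}{3}\v^2$ (Lemma \ref{lemma1}) and the modulus check $|J|^2=1728$. The only cosmetic difference is that you write $K=-27PQR=-27x^2y^2z^2/\v^6$ via Vieta, whereas the paper's (\ref{k-rat}) referenced in the statement is the asymmetric expression $-3P(3P-1)(3P-2)$ in terms of $x$ alone; the two agree on the orbit, so nothing in the argument is affected.
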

Compare with Theorem \ref{thm-spec}, where the presented formulas, due to ramification, are valid for $\frac{1}{48}$th of the complete intersection of two surfaces $\m{S}$ and $\m{T}_{\xi}$, as defined in Section \ref{elementary}. For other parts of the intersection, we should use other values for quadratic and cubic radicals.
\begin{proof}The equation (\ref{cub-up}) in case $\xi=\frac{5}{9}$ factors as
\begin{eqnarray}
3X(3X-1)(3X-2)=-\Upsilon.
\label{factors}
\end{eqnarray}
This solves as
\begin{eqnarray}
P=\frac{\Delta^{1/3}}{18}+\frac{2}{3\Delta^{1/3}}+\frac{1}{3},\quad \Delta=-108\Upsilon+12i\sqrt{12-81\Upsilon^2}.
\label{P-ups}
\end{eqnarray}
$Q$ and $R$ are obtained from the same equality for a different third root of unity. According to (\ref{ineq}), $12-81\Upsilon^2\geq 0$, so we have $|\Delta|=24\sqrt{3}$, $|\Delta|^{1/3}=2\sqrt{3}$. So, $\frac{\Delta^{1/3}}{18}$ and $\frac{2}{3\Delta^{1/3}}$ are conjugate complex numbers. Since $-\Upsilon\geq 0$, for $P$ we choose non-negative value of the square root and $0\leq \mathrm{Arg}(\Delta^{1/3})\leq\frac{\pi}{6}$. Thus,
\begin{eqnarray}
P\in\mathbb{R}_{+},\quad \frac{2}{3}=P(\omega)\leq P\leq P(0)=\frac{3+2\sqrt{3}}{9}.
\label{p-bound}
\end{eqnarray}
 Since $P=p^2$, the side result of our calculations, the identity (\ref{factors}) and the bound (\ref{p-bound}), is Lemma \ref{lemma1}. In the other direction, (\ref{factors}) and Lemma show again that $\Upsilon$ is always non-positive.
\section{Explicit formulas}
Again, the explicit formulas come from the equality
\begin{eqnarray*}
V\big{(}p(u)\v,q(u)\v,r(u)\v\big{)}=p(u-\v)\v,\quad x=p(u)\v,\quad y=q(u)\v,\quad z=r(u)\v. 
\end{eqnarray*}
We have: $\v^2=x^2+y^2+z^2$. The strategy is the same as for all flows: we write $p$ in terms of $\Upsilon$, and then apply the addition formula (\ref{addition}).\\

First, from (\ref{factors}) and the identity $P=p^2$ it follows that ($X=P$)
\begin{eqnarray}
\Upsilon(u)&=&-3p^2(u)(3p^2(u)-1)(3p^2(u)-2)=-3x^2(3x^2-\v^2)(3x^2-2\v^2)\v^{-6}\nonumber\\
&=&-\frac{3x^2(2x^2-y^2-z^2)(x^2-2y^2-2z^2)}{(x^2+y^2+z^2)^3}:=K(x,y,z).\label{k-rat}
\end{eqnarray}
Next, from the differential equation (\ref{diff-ups}),
\begin{eqnarray}
\Upsilon'(u)^2&=&4K^{3}(x,y,z)-\frac{16}{27}K(x,y,z)
=\frac{x^2(2x^2-y^2-z^2)(x^2-2y^2-2z^2)}{9(x^2+y^2+z^2)^9}\nonumber\\
&&\cdot\Big{(}(y^2+z^2+10x^2)^2-108x^4\Big{)}\cdot\Big{(}(2y^2+2z^2-7x^2)^2-27x^4\Big{)}^2\nonumber\\
:&=&L(x,y,z).
\label{l-rat}
\end{eqnarray}
Let us examine $L(x,y,z)$ more carefully. First, factor
\begin{eqnarray*} 
&&(y^2+z^2+10x^2)^2-108x^4\\
&=&\Big{(}
y^2+z^2+(6\sqrt{3}+10)x^2\Big{)}\cdot\Big{(}y^2+z^2-(6\sqrt{3}-10)x^2\Big{)}.
\end{eqnarray*}
Note that with the condition (\ref{condi}) satisfied,
\begin{eqnarray*}
2x^2-y^2-z^2>0,\quad (7x^2-2z^2-2y^2)^2\geq(6x^2)^2>27x^4.
\end{eqnarray*}
However (see Lemma \ref{lemma1}),
\begin{eqnarray*} x^2-2y^2-2z^2\geq0,\quad 
y^2+z^2-(6\sqrt{3}-10)x^2\geq0,
\end{eqnarray*}  
with equalities achieved at 
\begin{eqnarray*}
(x,y,z)&=&(t\sqrt{2},t,0),\text{ and}\\
(x,y,z)&=&\Big{(}\sqrt{\frac{3+2\sqrt{3}}{9}}t,\sqrt{\frac{3-\sqrt{3}}{9}}t,\sqrt{\frac{3-\sqrt{3}}{9}}t\Big{)},
\end{eqnarray*}
respectively. All these remarks should be taken into account while choosing the correct sign for $\Upsilon'(u)=\pm\sqrt{L(x,y,z)}$ (see Note \ref{note1} in Section \ref{tarpp}).\\
    
Finally, from the addition formula (\ref{addition}), we imply
\begin{eqnarray}
\Upsilon(u-\v)&=&\frac{1}{4}\Bigg{[}\frac{\sqrt{L(x,y,z)}+\wp'(\sqrt{x^2+y^2+z^2})}{K(x,y,z)-\wp(\sqrt{x^2+y^2+z^2})}\Bigg{]}^2\nonumber\\
&-&K(x,y,z)-\wp(\sqrt{x^2+y^2+z^2}).
\label{add-fin}
\end{eqnarray}
This finishes the proof of Theorem \ref{thm4}.
\end{proof}
\begin{Example} Let $(x,y,z)=(t\sqrt{2},t,0)$.
Then the conditions of Theorem \ref{thm4} are satisfied. The series (\ref{expl-taylor}) in this case gives
\begin{eqnarray}
\frac{V(t\sqrt{2},t,0)}{t\sqrt{2}}&=&1+\frac{1}{18}t^2-\frac{13}{648}t^4-\frac{53}{19440}t^6\nonumber\\
&+&\frac{7663}{4199040}t^8+\frac{76183}{377913600}t^{10}-\cdots.
\label{v-ser}
\end{eqnarray}
Further, in this case $K(t\sqrt{2},t,0)=L(t\sqrt{2},t,0)=0$. So,
\begin{eqnarray*}
T(t\sqrt{2},t,0)=\frac{1}{4}\frac{\wp'^2(t\sqrt{3})}{\wp^2(t\sqrt{3})}-\wp(t\sqrt{3})=
-\frac{4}{27\wp(t\sqrt{3})}.
\end{eqnarray*}
Further, the differential equation then gives
\begin{eqnarray*}
\sqrt{12-81T^2}=-\frac{\sqrt{3}\wp'(t\sqrt{3})}{\wp(t\sqrt{3})^{3/2}}.
\end{eqnarray*}
(We choose the ``minus" sign for the right hand side in order it to be positive for $t$ small). Thus, 
\begin{eqnarray}
J(t\sqrt{2},t,0)&=&\frac{16}{\wp(t\sqrt{3})}-i\frac{12\sqrt{3}\wp'(t\sqrt{3})}{\wp(t\sqrt{3})^{3/2}}\nonumber\\
&=&24\sqrt{3}i\Big{(}-\frac{2i}{3\sqrt{3}\wp(t\sqrt{3})}-\frac{\wp'(t\sqrt{3})}{2\wp(t\sqrt{3})^{3/2}}\Big{)}\\
&:=&24\sqrt{3}iA.
\label{ex-j}
\end{eqnarray}
Note that, according to (\ref{wp-taylor}), the Taylor series at $t=0$ of the function $A=A(t)$ stars from $1$.
The equality $|A(t)|=1$ now is clear, because of the differential equation for $\wp$ again.
So, (\ref{form-v}) gives
\begin{eqnarray}
V(t\sqrt{2},t,0)&=&\Big{(}\frac{(3+i\sqrt{3})A^{1/3}}{18}+\frac{(3-i\sqrt{3})}{18A^{1/3}}+\frac{1}{3}\Big{)}^{1/2}\cdot t\sqrt{3}\nonumber\\
&=&\Big{(}\frac{(3+i\sqrt{3})A^{1/3}}{12}+\frac{(3-i\sqrt{3})}{12A^{1/3}}+\frac{1}{2}\Big{)}^{1/2}\cdot t\sqrt{2}.
\label{ex-v}
\end{eqnarray}
Now, plug the series (\ref{wp-taylor}) into the expression for $A$ to obtain
\begin{eqnarray*}\\
A&=&1-\frac{2i\sqrt{3}}{3}t^2-\frac{2}{3}t^4+\frac{8i\sqrt{3}}{45}t^6+\frac{2}{15}t^8\\
&-&\frac{64i\sqrt{3}}{2025}t^{10}-\frac{44}{2025}t^{12}+\frac{128i\sqrt{3}}{26325}t^{14}+\cdots.
\end{eqnarray*}
Lastly, plug this series for $A$ into (\ref{ex-v}). Note that the series in the brackets of (\ref{ex-v}) starts from the term $1$, and MAPLE easily handles this using the binomial formula. We get exactly the series (\ref{v-ser}), so the formula in Theorem \ref{thm4} is verified in this special case. 
\end{Example}

\chapter{Hyper-octahedral group in odd dimension $n\geq 3$}
\label{hyper-5}
Let $n\geq 3$ be an odd integer. In this section we (in a much more concise way) investigate the superflow obtained from a symmetry group of an odd-dimensional regular polytope, namely, \emph{hypercube}, defined by
\begin{eqnarray*}
-1\leq x_{i}\leq 1,\quad 1\leq i\leq n.
\end{eqnarray*}
This will generalize all results from the last four Sections, and this will provide more evidence for the main Conjecture in \cite{alkauskas-super2}.
\section{Dimension $n=5$}
 But first, start from a $5$-dimensional cube, the $5$-\emph{cube} \cite{coxeter}. We will see here that the notions of a projective superflow and a polynomial superflow give different answers - while we obtain the polynomial superflow, the degree of its vector field is to high to guarantee the uniqueness of the denominator; hence projective superflow does not exist. \\ 

The $5$-cube has an orientation-preserving symmetry group of order $1920$. Call it $\mathbb{O}_{5}$. This is the so called \emph{orientation-preserving hyperoctahedral group in dimension $5$}. The full hyperoctahedral group $\widehat{\mathbb{O}}_{5}$ of order $3840=120\cdot 32=5!\cdot 2^{5}$ can be constructed as follows. Let  $S_{5}\subset O(5)$ be a group of order $120$, a standard permutation representation of a group $S_{5}$. For example, even permutations $\alpha=(12345)$ and $\beta=(123)$ are represented as matrices, respectively,  
\begin{eqnarray*}
\alpha\mapsto\begin{pmatrix}
0 & 1 & 0 & 0 & 0\\
0 & 0 & 1 & 0 & 0\\
0 & 0 & 0 & 1 & 0\\
0 & 0 & 0 & 0 & 1\\
1 & 0 & 0 & 0 & 0
\end{pmatrix},
\beta\mapsto\begin{pmatrix}
0 & 1 & 0 & 0 & 0\\
0 & 0 & 1 & 0 & 0\\
1 & 0 & 0 & 0 & 0\\
0 & 0 & 0 & 1 & 0\\
0 & 0 & 0 & 0 & 1
\end{pmatrix}.
\end{eqnarray*}
Let $\Delta_{5}\subset O(5)$ is a group $\mathbb{Z}_{2}^{5}$ of order $32$ given by $\mathrm{diag}(\pm 1, \pm 1, \pm 1, \pm 1, \pm 1)$ (signs are independent). Then $\widehat{\mathbb{O}}_{5}$ is the group $\Delta_{5}\rtimes S_{5}$ (semi-direct product), and $\mathbb{O}_{5}$ is a subgroup of index $2$ consisting of all matrices from the latter with determinant $+1$ \cite{coxeter, field}. In fact, adding $\gamma$, given by 
\begin{eqnarray*}
\gamma\mapsto\begin{pmatrix}
0 & 1 & 0 & 0 & 0\\
1 & 0 & 0 & 0 & 0\\
0 & 0 & -1 & 0 & 0\\
0 & 0 & 0 & 1 & 0\\
0 & 0 & 0 & 0 & 1
\end{pmatrix},
\end{eqnarray*}
to $\alpha$ and $\beta$ (we identify permutations with their matrix representations), generate the whole group $\mathbb{O}_{5}$ of order $1920$. Figure \ref{hyperc-5} show the graph of vertices and edges of the $5$-cube. 
\begin{figure}
\includegraphics[scale=0.75]{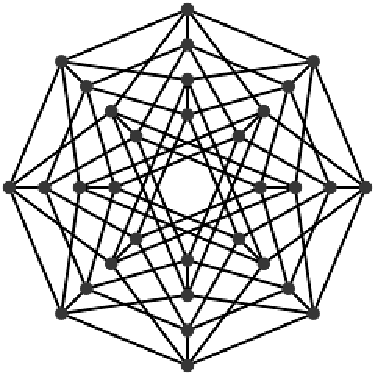}
\caption{$32$ vertices and $80$ edges of a \emph{penteract}, a $5$-cube}
\label{hyperc-5}
\end{figure}

Recall now that $yz(y^2-z^2)$, the numerator for the first coordinate of the octahedral superflow (see Section \ref{sub5.1}), possesses these properties:
\begin{itemize}
\item[1)] it is anti-symmetric in $y,z$,
\item[2)] it is of odd degree in each of $y,z$, and even in $x$ (in this case, of degree $0$).
\end{itemize}
Let $g(x_{1},\ldots,x_{5})$ be homogeneous polynomial of even degree, the first coordinate of the vector field which is invariant under $\mathbb{O}_{5}$. Now, all matrices of the type $\mathrm{diag}(\pm 1,\ldots,\pm 1)$, with even number of $``-1"$s, belong to $\mathbb{O}_{5}$. Thus, for example,
\begin{eqnarray*}
g(x_{1},-x_{2},-x_{3},x_{4},x_{5})=g(x_{1},x_{2},x_{3},x_{4},x_{5})=
-g(-x_{1},-x_{2},x_{3},x_{4},x_{5}).
\end{eqnarray*}
Since the compound degree is even, this shows that 
\begin{eqnarray}
\text{$g$ is even in $x_{1}$, and odd in each $x_{2},\ldots, x_{5}$}.
\label{property-1}
\end{eqnarray} 
Next, all matrices of the same type as $\gamma$, that is, those which have exactly two $``1"$'s and one $"-1"$ on the main diagonal, but permute two remaining variables, belong to $\mathbb{O}_{5}$. Thus, for example,
\begin{eqnarray*}
-g(-x_{1},x_{3},x_{2},x_{4},x_{5})=g(x_{1},x_{2},x_{3},x_{4},x_{5}).
\end{eqnarray*}
Let $g=h_{0}(x_{2},\ldots,x_{5})+x_{1}^{2}h_{1}(x_{2},\ldots,x_{5})+\cdots.$ Then 
\begin{eqnarray}
\text{each $h_{i}$ is antisymmetric with respect to $x_{i}$, $2\leq i\leq 5$}. 
\label{property-2}
\end{eqnarray}
The construction of antisymmetric polynomials is known \cite{kostrikin}: they are given by 
\begin{eqnarray*}
f(x_{2},\ldots,x_{5})\prod\limits_{2\leq i<j\leq 5}(x_{i}-x_{j}),
\end{eqnarray*}
where $f$ is any symmetric polynomial. Therefore, the unique, up to the scalar multiple, polynomial of the smallest degree with the two properties (\ref{property-1}) and (\ref{property-2}) we need, is given by 
\begin{eqnarray*}
h(x_{2},x_{3},x_{4},x_{5})=x_{2}x_{3}x_{4}x_{5}\prod\limits_{2\leq i<j\leq 5}(x_{i}^{2}-x_{j}^{2}).
\end{eqnarray*}
It is of degree $16$ in total and of degree $7$ in each of the variables $x_{i}$ except for $x_{1}$. Hence the solution to our problem is given by the vector field whose first coordinate is $\varpi_{1}(x_{1},x_{2},\ldots, x_{5})=h(x_{2},\ldots,x_{5})$, and other coordinates $\varpi_{j}$, $2\leq j\leq 5$, are obtained cyclically. With these clarifying remarks, we  can pass to polynomial hyperoctahedral superflows in all odd dimensions $n\geq 3$.  
\section{Hyperoctahedral polynomial superflows}
\label{sec9.2}
 Let $n\geq 3$ be an odd integer. The orientation-preserving hyperoctahedral group $\mathbb{O}_{n}\subset SO(n)$ of order $n!\cdot 2^{n-1}$ is given by all matrices $n\times n$ which have exactly one non-zero element, necessarily equal to  $1$ or $-1$, in each row or column, with determinant equal to $+1$. Without the last condition, we get a full hyperoctahedral group $\widehat{\mathbb{O}}_{n}$ of order $n!\cdot 2^{n}$. Then similar calculations as above show that the first coordinate of the unique polynomial homogeneous vector field of the smallest degree, invariant under $\mathbb{O}_{n}$, is given by
\begin{eqnarray*}
\varpi_{1}=\prod\limits_{j=2}^{n}x_{j}\prod\limits_{2\leq i<j\leq n}(x_{i}^{2}-x_{j}^{2}).
\end{eqnarray*}
It is of degree $(n-1)^2$ in total, and of degree $2n-3$ in each of variables $x_{j}$, $2\leq j\leq n$, and does not depend on $x_{1}$. This vector field gives another infinite series of polynomial superflows. The first series of superflows (both polynomial and projective, since there are no denominators) was constructed in (\cite{alkauskas-un}, Section 5.1), and briefly described in the beginning of Section \ref{symm-N}. The symmetry group of the latter is of order $(n+1)!$, as compared to $n!\cdot 2^{n-1}$ in the hyperoctahedral case.\\

 Let us define the vector field $\m{O}_{n}=\varpi_{1}\bl\cdots\bl\varpi_{n}$, whose first coordinate is $\varpi_{1}$, and other coordinates are corresponding cyclic permutations; for example, $\varpi_{2}=\varpi_{1}(x_{2},x_{3},\ldots,x_{n},x_{1})$. The group $\mathbb{O}_{n}$ (as well as $\widehat{\mathbb{O}}_{n}$) has $(n+1)$ invariants: they are given by 
\begin{eqnarray*}
\mathscr{Q}^{s}=\sum\limits_{j=1}^{n}x_{j}^{2s},\quad 1\leq s\leq n,
\end{eqnarray*} 
and one additional invariant of degree $n^2$ (not an invariant of $\widehat{\mathbb{O}}_{n}$), given by
\begin{eqnarray*}
\sum\limits_{\sigma\in S_{n}}\mathrm{sign}(\sigma)x^{1}_{\sigma(1)}\cdot x^{3}_{\sigma(2)}\cdots x^{2n-1}_{\sigma(n)}.
\end{eqnarray*}
See (\ref{inv-specc}) for example in case $n=3$. All the claims follow from writing down a Molien's series \cite{smith,verma}. 
\begin{prop}
\label{inv-fi}
The vector field $\m{O}_{n}$ is solenoidal, the corresponding differential system  has $(n-1)$ independent polynomial first integrals given by $\mathscr{Q}^{s}$, $1\leq s\leq n-1$.
\end{prop}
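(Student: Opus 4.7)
The plan is to reduce both claims to two transparent identities: one trivial and one classical.

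\textbf{Solenoidality.} The form of $\varpi_1$ shows at once that $\varpi_1$ does not depend on $x_1$. Since $\varpi_k$ is defined as the cyclic permutation $\varpi_k(\mathbf{x})=\varpi_1(x_k,x_{k+1},\ldots,x_{k-1})$, the variable $x_k$ plays the role of the absent $x_1$, so $\partial\varpi_k/\partial x_k=0$ identically. Summing yields $\mathrm{div}\,\mathbf{O}_n=0$ term by term.

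\textbf{Closed form for $\varpi_k$.} The main technical step is to prove the clean identity
\begin{eqnarray*}
\varpi_k(\mathbf{x})=\frac{E(\mathbf{x})\,V(\mathbf{x})}{x_k\,P'(x_k^2)},\qquad
E=\prod_{j=1}^{n}x_j,\quad V=\prod_{1\le i<j\le n}(x_i^2-x_j^2),\quad
P(Y)=\prod_{j=1}^{n}(Y-x_j^2).
\end{eqnarray*}
Two sign computations are needed. First, separating from $V$ the factors containing $x_k^2$ gives $\prod_{i<k}(x_i^2-x_k^2)\prod_{j>k}(x_k^2-x_j^2)=(-1)^{k-1}P'(x_k^2)$, so $V_{\hat k}:=\prod_{i,j\ne k,\,i<j}(x_i^2-x_j^2)=(-1)^{k-1}V/P'(x_k^2)$. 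Second, each unit cyclic shift substitutes $(x_2,\ldots,x_n,x_1)$ for $(y_1,\ldots,y_n)$ in $\varpi_1$; the $n-2$ factors involving the ``wrapped'' index $x_1$ appear in reversed order relative to the natural one in $V_{\hat 2}$, contributing $(-1)^{n-2}$. Iterating, $\varpi_k=(-1)^{(k-1)(n-2)}\,(E/x_k)\,V_{\hat k}$. Because $n$ is odd, $(-1)^{(k-1)(n-2)}=(-1)^{k-1}$, and the two signs cancel precisely, yielding the asserted closed form. This sign-matching is the main obstacle; it is why the proposition is restricted to odd $n$.

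\textbf{First integrals via a Lagrange identity.} Substituting the closed form,
\begin{eqnarray*}
\sum_{k=1}^{n}\frac{\partial\mathscr{Q}^s}{\partial x_k}\,\varpi_k
=2s\sum_{k=1}^{n}x_k^{2s-1}\,\varpi_k
=2s\,E\,V\sum_{k=1}^{n}\frac{(x_k^2)^{s-1}}{P'(x_k^2)}.
\end{eqnarray*}
The remaining sum is the classical Newton--Lagrange expression: setting $Y_k=x_k^2$ and expanding $1/P(T)$ in descending powers of $T$ gives $\sum_{k}Y_k^{m}/P'(Y_k)=h_{m-n+1}(Y_1,\ldots,Y_n)$, where $h_\ell$ is the complete homogeneous symmetric polynomial (and $h_\ell=0$ for $\ell<0$). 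For $1\le s\le n-1$ we have $m=s-1\le n-2$, so the sum vanishes identically, proving that each $\mathscr{Q}^s$ is a first integral. (For $s=n$ the sum equals $h_0=1$, and $\sum_k x_k^{2n-1}\varpi_k=EV\ne 0$, consistent with $\mathscr{Q}^n$ \emph{not} being a first integral.)

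\textbf{Independence.} The Jacobian matrix $\bigl(\partial\mathscr{Q}^s/\partial x_k\bigr)_{1\le s\le n-1,\;1\le k\le n}=\bigl(2s\,x_k^{2s-1}\bigr)$ contains any $(n-1)\times(n-1)$ minor of the form $\det(2s\,x_{k_j}^{2s-1})$, which up to a non-zero factor is a generalised Vandermonde in $x_{k_j}^2$ and is therefore non-zero on an open dense set. Hence $\mathscr{Q}^1,\ldots,\mathscr{Q}^{n-1}$ are functionally (and algebraically) independent, which completes the proof.
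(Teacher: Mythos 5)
Your proof is correct and follows essentially the same route as the paper: the same closed form $\varpi_k = \frac{\prod_i x_i}{x_k}\cdot\frac{D}{P'(x_k^2)}$ and the same classical vanishing identity $\sum_j y_j^s/p'(y_j)=0$ for $0\le s\le n-2$ (you derive it from the expansion of $1/P$ via complete homogeneous symmetric polynomials, the paper from the leading coefficient in Lagrange interpolation). The only differences are that you carry out explicitly the sign verification for the closed form and the functional independence of the $\mathscr{Q}^s$, both of which the paper dispatches with a remark.
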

 Thus, the exception is $\mathscr{Q}^{n}$ which is the invariant of the group but not the  first integral. This is exactly what happened to the octahedral superflow, where orbits are space curves $\{x^2+y^2+z^2=\mathrm{const}.$, $x^4+y^4+z^4=\mathrm{const}.\}$; see Section \ref{orbits-oct}. Also, the last Proposition answers affirmatively to the Problem \ref{prob-two} in this series (infinite number) of cases.  
\begin{proof}Since $\varpi_{j}$ does not depend on $x_{j}$, then $\frac{\p}{\p x_{j}}\varpi_{j}=0$, and the vector field is obviously solenoidal.\\

To prove the second part, let
\begin{eqnarray*}
P(X)=\prod\limits_{j=1}^{n}(X-x_{j}^2),\quad p(X)=\prod\limits_{j=1}^{n}(X-y_{j}). 
\end{eqnarray*}
Then $\varpi_{j}$, $1\leq j\leq n$, can be written as
\begin{eqnarray*}
\varpi_{j}=\frac{\prod\limits_{i=1}^{n}x_{i}}{x_{j}}\frac{D}{P'(x_{j}^2)},\quad D=\prod\limits_{1\leq i<j\leq n}(x_{i}^{2}-x_{j}^{2}).
\end{eqnarray*} 
We should be careful about the sign, but calculations show that this is the correct value. Thus, we need to show that for any for $1\leq s\leq n-1$, we have
\begin{eqnarray*}
\sum\limits_{j=1}^{n}\frac{x_{j}^{2s-1}}{x_{j}P'(x_{j}^{2})}=0.
\end{eqnarray*}
This is equivalent to the following. Let $y_{j}=x_{j}^{2}$. Then
we should prove that for any $0\leq s\leq n-2$, one has
\begin{eqnarray}
\sum\limits_{j=1}^{n}\frac{y_{j}^{s}}{p'(y_{j})}=0.
\label{n-2-eq}
\end{eqnarray}
But this is a standard fact from Lagrange interpolation: just consider polynomial $T_{s}(X)$ of degree at most $n-1$ such that $T(y_{j})=y_{j}^{s}$, $1\leq j\leq n$. It is unique, and is given, of course, by $T(X)=X^{s}$. Then writing the interpolating polynomial and comparing the leading coefficient at $X^{n-1}$ (which is equal to $0$) gives the desired identity. If $s=n-1$, the same calculation shows that
\begin{eqnarray*}
\sum\limits_{j=1}^{n}\frac{y_{j}^{n-1}}{p'(y_{j})}=1.
\end{eqnarray*}
\end{proof}
\section{The differential system}Thus, we need to solve the following system of differential equations, a direct generalization of (\ref{sys2}): 
\begin{eqnarray}
\left\{\begin{array}{l}
p_{1}'=\prod\limits_{j\neq 1}p_{j}\prod\limits_{2\leq i<j\leq n}(p_{i}^{2}-p_{j}^2),\\
\quad\\
p_{2}'=\prod\limits_{j\neq 2}p_{j}\prod\limits_{\text{cyclically}}(p_{i}^{2}-p_{j}^2),\\
\ldots,\\
\quad\\
\sigma_{\ell}(p_{1}^2,\ldots, p_{n}^{2})=\xi_{\ell},\quad 1\leq \ell\leq n-1.
\end{array}
\right.
\label{sys-ind}
\end{eqnarray}
(Signs ``-1", as in (\ref{sys2}), are inessential). In fact, only the first equation and $(n-1)$ first integrals are needed (since we already have found them), but we write for clarity all the equations. 
Here $\sigma_{j}$ are standard symmetric polynomials of degree $\ell$. As in case $n=3$, we can see immediately that if $\{p_{1},\ldots,p_{n}\}$ solves this system, then changing exactly two of them say, $\{p_{i},p_{j}\}$, to $\{-p_{i},-p_{j}\}$, solves the system, too.  \\

In a $3$-dimensional case, we had an inequality $\frac{1}{3}\leq\xi\leq 1$, and all the possibilities do occur. Note that in the setting of this Section, $\xi=1-2\xi_{2}$.\\

 Equally, we have many restrictions on the set $\{\xi_{\ell}:1\leq \ell\leq n-1\}$. First note that $p_{i}^{2}\geq 0$. Not all collections of $\xi$'s produce a genuine orbit. For this to be the case, many inequalities should be satisfied. Let
\begin{eqnarray*}
E_{k}=\frac{\xi_{k}}{\binom{n}{k}},\quad 1\leq k\leq n.
\end{eqnarray*}
Note that we do not have the last one, $\xi_{n}$, though it will play a crucial role further; see Theorem \ref{thm-fin}.
The first series of inequalities follows easily from the \emph{Muirhead's inequalities} \cite{hardy}:
\begin{eqnarray*}
E_{1}\geq E_{2}^{\frac{1}{2}}\geq\cdots\geq E_{n}^{\frac{1}{n}}.
\end{eqnarray*}
Another example is the \emph{Newton-Maclaurin inequalities}
\begin{eqnarray*}
E_{k}^{2}\geq E_{k-1}E_{k+1},\quad 1\leq k\leq n-1.
\end{eqnarray*}
Yet another series of inequalities was given by Roset:\small
\begin{eqnarray*}
4(E_{k+1}E_{k+3}-E_{k+2}^{2})(E_{k}E_{k+2}-E_{k+1}^{2})
\geq(E_{k+1}E_{k+2}-E_{k}E_{k+3})^2,\quad 0\leq k\leq n-3.
\end{eqnarray*}\normalsize
See \cite{hardy} for more on this topic. Thus, we must be careful in choosing a collection of $\xi$'s to make sure that it corresponds to a genuine orbit. We call such a collection of $\xi$'s \emph{admissible}. The easiest way is to start from a fixed point $\{x_{1},\ldots, x_{n}\}$ and consider symmetric expressions in its coordinates.\\ 

Now, multiply the first equation in (\ref{sys-ind}) by $p_{1}$, and raise to the power $2$. We have 
\begin{eqnarray*}
(p_{1}')^{2}p^{2}_{1}=\prod\limits_{j=1}^{n}p^{2}_{j}\prod\limits_{2\leq i<j\leq n}(p_{i}^{2}-p_{j}^2)^{2}.
\end{eqnarray*}
Let $P_{j}=p_{j}^{2}$, $1\leq j\leq n$, and  $P_{1}=P$. Since $P'=2p'_{1}p_{1}$, this gives
\begin{eqnarray}
(P')^2=4P\prod\limits_{j=2}^{n}P_{j}\prod\limits_{2\leq i<j\leq n}(P_{i}-P_{j})^{2}.
\label{P-Pder}
\end{eqnarray}
Next, we have 
\begin{eqnarray*}
\sigma_{\ell}(P,P_{2},\ldots,P_{n})=\xi_{\ell},\quad 1\leq \ell\leq n-1.
\end{eqnarray*}
Let $\omega_{\ell}$, $1\leq \ell\leq n-1$, be the standard symmetric polynomials in variables $P_{2},\ldots, P_{n}$, and put $\omega_{0}=1$. Then:
\begin{eqnarray*}
\xi_{\ell}=\sigma_{\ell}(P,P_{2},\ldots,P_{n})=\omega_{\ell}+P\omega_{\ell-1},\quad 1\leq \ell\leq n-1.
\end{eqnarray*}
This allows to express $\omega_{\ell}$ it terms of $\xi_{\ell}$ and $P$. Indeed,
\begin{eqnarray*}
\omega_{1}&=&\xi_{1}-P,\\ 
\omega_{2}&=&\xi_{2}-P\xi_{1}+P^{2},\\
\omega_{3}&=&\xi_{3}-P\xi_{2}+P^{2}\xi_{1}-P^{3},\\
&\ldots&\\
\omega_{n-1}&=&\xi_{n-1}-P\xi_{n-2}+\cdots+P^{n-1}.
\end{eqnarray*}
Let us define
\begin{eqnarray*}
H(X)=X^{n}-\xi_{1}X^{n-1}+\xi_{2}X^{n-2}+\cdots+\xi_{n-1}X.
\end{eqnarray*}
For a moment, we do not need the free term. Consider now the polynomial
\begin{eqnarray}
\mathscr{L}(Z)&=&\prod\limits_{j=2}^{n}(Z-P_{j})=
\sum\limits_{s=0}^{n-1}Z^{n-s-1}\omega_{s}(-1)^{s}=\nonumber\\
&&\sum\limits_{s=0}^{n-1}Z^{n-s-1}\Big{(}P^{s}-\xi_{1}P^{s-1}+\cdots+(-1)^{s}\xi_{s}\Big{)}\label{discrim}\\
&=&\frac{Z^{n}-P^{n}}{Z-P}-\xi_{1}\frac{Z^{n-1}-P^{n-1}}{Z-P}+\cdots
+\xi_{n-1}\frac{Z-P}{Z-P}\nonumber\\
&=&\frac{H(Z)-H(P)}{Z-P}.\nonumber
\
\end{eqnarray}
Each coefficient of the polynomial (\ref{discrim}) is in its own right a polynomial in $P$ with fixed coefficients which depend only on $\xi$'s. As is clear from (\ref{P-Pder}), we get that
\begin{eqnarray*}
(P')^2=4P\cdot\omega_{n-1}\cdot\mathrm{discrim}\,\mathscr{L}_{Z}=4H(P)\cdot\mathrm{discrim}\,\mathscr{L}_{Z}.
\end{eqnarray*}
The highest degree of $P$ in $\mathrm{discrim}_{Z}\,\mathscr{L}$ can be calculated by looking at (\ref{discrim}) and noticing that this degree coincides with the highest degree of $P$ in the discriminant of the a polynomial
\begin{eqnarray*}
\sum\limits_{s=0}^{n-1}Z^{n-s-1}P^{s}=\frac{Z^{n}-P^{n}}{Z-P}.
\end{eqnarray*}
The discriminant of the latter is $\prod\limits_{1\leq a<b\leq n-1}(P\zeta^{a}-P\zeta^{b})^{2}$, $\zeta=\exp(\frac{2\pi i }{n})$, and so is  non-zero and of degree $(n-1)(n-2)$ in $P$. 
Thus,
\begin{eqnarray*}
\mathrm{discrim}\,\mathscr{L}_{Z}=\mathscr{F}(P),
\end{eqnarray*}
where $\mathscr{F}(X)=\mathscr{F}_{\xi_{1},\ldots,\xi_{\ell-1}}(X)$ is of degree $(n-1)(n-2)$ in $X$. Finally, any of $p_{j}$ can be in place of $p_{1}$. Thus, we have proved an $n$-dimensional analogue of 
Proposition \ref{prop8}.
\begin{prop}
\label{prop-red2}
Suppose a collection of functions $\{p_{1},p_{2},\ldots, p_{n}\}$ satisfies the differential system (\ref{sys-ind}). Let $p^2_{j}(t)=P(t)$ for any choice of $j$, $1\leq j\leq n$. Then the pair of functions $(P,P')=(X,Y)$ parametrizes (generically, hyper-elliptic) curve
\begin{eqnarray*}
Y^2=4H(X)\cdot\mathscr{F}(X).
\end{eqnarray*}
The right hand side  is of degree $n^2-2n+2$ in $X$.
\end{prop}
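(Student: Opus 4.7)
My plan is to work directly from the differential system (\ref{sys-ind}) and eliminate all variables except $P = p_1^2$, using the fact that the other first integrals $\sigma_\ell(p_1^2,\ldots,p_n^2) = \xi_\ell$ completely determine the elementary symmetric functions of the remaining $P_j = p_j^2$, $j \geq 2$, in terms of $P$ alone.

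First I would multiply the first equation of (\ref{sys-ind}) by $p_1$ and square, obtaining
\begin{eqnarray*}
(P')^2 = 4P \prod_{j=2}^{n} P_j \prod_{2 \leq i < j \leq n}(P_i - P_j)^2.
\end{eqnarray*}
Next, I would let $\omega_s$ denote the elementary symmetric polynomials in $P_2,\ldots,P_n$ (with $\omega_0 = 1$). Since $\sigma_\ell(P,P_2,\ldots,P_n) = \omega_\ell + P \omega_{\ell-1} = \xi_\ell$ for $1 \leq \ell \leq n-1$, one solves recursively to get $\omega_s = \sum_{j=0}^{s}(-1)^j \xi_{s-j} P^j$ (with $\xi_0 = 1$). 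This identifies $\omega_{n-1}$ as the polynomial in $P$ whose product with $P$ equals $H(P)$, where $H$ is defined in the statement; hence $P \cdot \omega_{n-1} = H(P)$. The key algebraic observation — already essentially spelled out in the text preceding the proposition — is that the auxiliary polynomial
\begin{eqnarray*}
\mathscr{L}(Z) = \prod_{j=2}^{n}(Z - P_j) = \sum_{s=0}^{n-1}(-1)^s \omega_s Z^{n-1-s}
\end{eqnarray*}
admits the closed form $\mathscr{L}(Z) = (H(Z) - H(P))/(Z - P)$, since the telescoping sum collapses.

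The remaining factor $\prod_{2 \leq i < j \leq n}(P_i - P_j)^2$ is precisely $\mathrm{discrim}_{Z} \mathscr{L}(Z)$, which by the closed form above is a polynomial $\mathscr{F}(P)$ whose coefficients depend only on $\xi_1,\ldots,\xi_{n-1}$. Combining everything yields
\begin{eqnarray*}
(P')^2 = 4 H(P) \cdot \mathscr{F}(P),
\end{eqnarray*}
and since any of the $p_j^2$ can play the role of $p_1^2$, the pair $(P,P')$ always parametrizes the same curve.

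The only nontrivial step is controlling the degree of $\mathscr{F}$. For this I would argue that the top-degree behaviour in $P$ of $\mathrm{discrim}_Z \mathscr{L}$ is governed entirely by the ``universal'' piece $\sum_{s=0}^{n-1} Z^{n-1-s} P^s = (Z^n - P^n)/(Z-P)$, whose roots in $Z$ are $P\zeta, P\zeta^2,\ldots,P\zeta^{n-1}$ with $\zeta = e^{2\pi i/n}$. The discriminant of this polynomial factors as $\prod_{1 \leq a < b \leq n-1}(P\zeta^a - P\zeta^b)^2$, which is nonzero and of exact degree $(n-1)(n-2)$ in $P$. Lower-order corrections coming from the $\xi_j$ terms cannot raise this degree, so $\deg \mathscr{F} = (n-1)(n-2)$, and the total degree of $4 H(X) \mathscr{F}(X)$ is $n + (n-1)(n-2) = n^2 - 2n + 2$, as claimed. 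I expect the mildly delicate point to be this last degree count (making rigorous that no cancellation shifts the leading coefficient), but it follows at once from identifying the leading term of $\mathrm{discrim}_Z \mathscr{L}$ with the cyclotomic product above.
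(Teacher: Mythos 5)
Your proposal is correct and follows essentially the same route as the paper: multiply the first equation by $p_{1}$ and square, express the symmetric functions $\omega_{s}$ of $P_{2},\ldots,P_{n}$ in terms of $P$ and the $\xi_{\ell}$, identify $\prod_{j\geq 2}(Z-P_{j})$ with $\bigl(H(Z)-H(P)\bigr)/(Z-P)$ so that the squared Vandermonde factor becomes $\mathrm{discrim}_{Z}\,\mathscr{L}=\mathscr{F}(P)$, and read off the degree from the leading piece $(Z^{n}-P^{n})/(Z-P)$ with discriminant $\prod_{1\leq a<b\leq n-1}(P\zeta^{a}-P\zeta^{b})^{2}$. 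This is exactly the argument given in the text preceding the proposition, including the degree count $n+(n-1)(n-2)=n^{2}-2n+2$.
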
 
When $n=3$, this degree is $5$, and this is exactly what was obtained in Proposition \ref{prop8}. 

\section{Third reduction}Now we will show that the reduction of hyper-elliptic curve into the elliptic, the phenomenon which happened for the octahedral superflow in Section \ref{form-generic}, occurs yet again.\\

Thus, let $X$ be any of the functions $P_{j}(t)$, and consider the function $\Upsilon(t)$ defined by
\begin{eqnarray*}
H(X)=\Upsilon(t).
\end{eqnarray*} 
We will show that $(\Upsilon,\Upsilon')$ parametrizes hyper-elliptic curve of lower degree. \\

Indeed, according to Proposition \ref{prop-red2}, we have
\begin{eqnarray*}
\Upsilon'^2=H'(X)^2(X')^2=H'(X)^2\cdot 4H(X)\cdot\mathscr{F}(X).
\end{eqnarray*} 
The right hand side is of degree $n^2$ in $X$. We claim that it is a polynomial $\mathscr{D}$ of degree $n$ in $H(X)$:
\begin{eqnarray}
H'(X)^2\cdot 4H(X)\cdot\mathscr{F}(X)=\mathscr{D}\big{(}H(X)\big{)}.
\label{above-2}
\end{eqnarray}
But
\begin{eqnarray*}
\mathscr{F}(X)=\mathrm{discrim}_{Z}\,\frac{H(Z)-H(X)}{Z-X}.
\end{eqnarray*}
In reality, the fact (\ref{above-2}) does not depend on $P_{j}$, $2\leq j\leq n$ (roots of $H(Z)=0$), it is purely algebraic and is valid for any monic polynomial $H(Z)$. Indeed,
\begin{eqnarray*}
H'(X)^2\cdot\mathrm{discrim}_{Z}\,\frac{H(Z)-H(X)}{Z-X}=\mathrm{discrim}_{Z}\,\big{(}H(Z)-H(X)\big{)}.
\end{eqnarray*}
Let $H(X)=\mathfrak{x}$. The right hand side of the above is obviously a $n$th degree polynomial $\mathfrak{x}$. Thus we have proved the following result. 
\begin{thm}[Triple reduction of the differential system]
\label{thm-fin}
Consider the differential system (\ref{sys-ind}), with $\xi_{\ell}$, $1\leq \ell\leq n-1$, being fixed and corresponding to an admissible collection. Define the polynomial $H(Z)$ by
\begin{eqnarray*}
H(Z)=Z^{n}-\xi_{1}Z^{n-1}+\xi_{2}Z^{n-2}+\cdots+\xi_{n-1}Z.
\end{eqnarray*}
Define the $n$th degree polynomial $\mathscr{D}_{n}(\mathfrak{x})$ by  
\begin{eqnarray*}
\mathscr{D}_{n}(\mathfrak{x})=4\mathfrak{x}\cdot\mathrm{discrim}_{Z}\,\big{(}H(Z)-\mathfrak{x}\big{)}.
\end{eqnarray*}
Let us define the hyper-elliptic \cite{mumford} function $\Upsilon(t)$ such that
\begin{eqnarray*}
\Upsilon'^2=\mathscr{D}_{n}(\Upsilon),
\end{eqnarray*} 
which is of generic genus $\frac{n-1}{2}$. Let $P_{j}$, $1\leq j\leq n$ be $n$ solutions of the algebraic equation $H(X)=\Upsilon(t)$. Then $p_{j}$, given by $p_{j}^{2}=P_{j}$ with the correct choices of signs, solve the system (\ref{sys-ind}). 
\end{thm}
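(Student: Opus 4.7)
The plan is to reverse the reduction already developed in Proposition \ref{prop-red2} and the discriminant identity displayed just before the theorem's statement. Whereas that chain started from a solution $(p_1,\ldots,p_n)$ of (\ref{sys-ind}) and arrived at the ODE $\Upsilon'^2=\mathscr{D}_n(\Upsilon)$ for $\Upsilon := H(p_1^2)$, I now start from an arbitrary local solution $\Upsilon(t)$ of that ODE, pick $n$ real-analytic branches $P_1(t),\ldots,P_n(t)$ of the algebraic equation $H(X) = \Upsilon(t)$, set $p_j(t) = \pm\sqrt{P_j(t)}$, and verify both the $n-1$ algebraic first-integral constraints and the $n$ first-order ODEs of (\ref{sys-ind}).

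For the algebraic constraints, observe that the polynomial $H(Z) - \Upsilon(t)$, viewed as monic of degree $n$ in $Z$, has by construction the roots $P_1(t),\ldots,P_n(t)$. Vieta's formulas immediately give
\begin{eqnarray*}
\sigma_\ell\bigl(P_1(t),\ldots,P_n(t)\bigr) = \xi_\ell,\quad 1\leq \ell\leq n-1,
\end{eqnarray*}
which are exactly the required first integrals. The remaining symmetric function satisfies $\sigma_n(P_j) = (-1)^{n+1}\Upsilon = \Upsilon$ (using that $n$ is odd), so it is genuinely $t$-dependent --- in agreement with the observation after Proposition \ref{inv-fi} that $\mathscr{Q}^n$ is a group invariant but not a first integral.

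For the ODEs, differentiate $H(P_j) = \Upsilon$ implicitly to get $P_j' = \Upsilon'/H'(P_j)$, then square and substitute $\Upsilon'^2 = 4\Upsilon\cdot\mathrm{discrim}_Z(H(Z)-\Upsilon)$. Using $\mathrm{discrim}_Z(H(Z)-\Upsilon) = \prod_{i<k}(P_i-P_k)^2$ and $H'(P_j)^2 = \prod_{i\neq j}(P_j-P_i)^2$, the pairs containing $j$ cancel from the ratio, and together with $\Upsilon = \prod_i P_i$ the quotient collapses to
\begin{eqnarray*}
(P_j')^2 = 4P_j\prod_{i\neq j}P_i\!\!\prod_{\substack{i<k\\ i,k\neq j}}(P_i-P_k)^2.
\end{eqnarray*}
Substituting $P_j = p_j^2$, $P_j' = 2p_jp_j'$ and dividing by $4p_j^2$ yields $(p_j')^2$ equal to the square of the $j$-th equation of (\ref{sys-ind}); taking square roots with compatible signs completes the verification.

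The main obstacle I foresee is not the algebra above --- which is essentially forced by the discriminant identity the author has already proved --- but the sign-propagation bookkeeping. Since we only recover $(p_j')^2$, the $2^{n-1}$-fold sign indeterminacy (corresponding to the Klein subgroup $\{\mathrm{diag}(\pm 1,\ldots,\pm 1):\det=+1\}\subset\mathbb{O}_n$ noted just after (\ref{sys-ind})) must be fixed once by choosing the signs of $p_1(0),\ldots,p_n(0)$, and a standard continuity argument then propagates the choice locally in $t$, provided the initial values $P_j(0)$ are distinct and non-zero --- which is exactly admissibility of the $\xi_\ell$. The genus claim reduces to a short bookkeeping check: $\mathrm{discrim}_Z(H(Z)-\mathfrak{x})$ has degree $n-1$ in $\mathfrak{x}$ (the constant coefficient enters the discriminant of a monic degree-$n$ polynomial with multiplicity $n-1$), so $\mathscr{D}_n(\mathfrak{x})$ has odd degree $n$, and for generic $\xi_\ell$ its own discriminant is non-zero, making $Y^2 = \mathscr{D}_n(X)$ a smooth hyperelliptic model of genus $\frac{n-1}{2}$.
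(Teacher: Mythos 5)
Your argument is correct, and it rests on exactly the same two algebraic facts as the paper's derivation: the Vieta identification of the coefficients of $H(Z)-\Upsilon$ with the $\xi_\ell$, and the discriminant identity $H'(X)^2\cdot\mathrm{discrim}_{Z}\frac{H(Z)-H(X)}{Z-X}=\mathrm{discrim}_{Z}\big(H(Z)-H(X)\big)$ together with $\mathrm{discrim}=\prod_{i<k}(P_i-P_k)^2$. The one genuine difference is the direction of travel. The paper argues forward: starting from a solution $(p_1,\ldots,p_n)$ of (\ref{sys-ind}) it passes through Proposition \ref{prop-red2} to show that $\Upsilon=H(P)$ satisfies $\Upsilon'^2=\mathscr{D}_n(\Upsilon)$, and then states the theorem in the converse form, leaving the converse implicit. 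You verify the converse directly --- which is what Theorem \ref{thm-fin} literally asserts --- by differentiating $H(P_j)=\Upsilon$ implicitly and collapsing the quotient of discriminants; this is the cleaner reading of the statement as phrased, at the cost of having to confront the sign-propagation issue head-on. On that point you are no less rigorous than the paper, which also hides the $2^{n-1}$-fold ambiguity behind the phrase ``with the correct choice of sign''; your observation that the admissible sign changes form the Klein-type subgroup already noted after (\ref{sys-ind}), and that distinctness and non-vanishing of the $P_j(0)$ (admissibility) let continuity fix a branch locally, is the right way to discharge it. Your degree and genus bookkeeping for $\mathscr{D}_n$ is also correct and matches the paper's count via the resultant with $H'$.
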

We thus can clarify the penultimate paragraph of Section \ref{form-generic}. Indeed, the collection $\{p_{1},\ldots,p_{n}\}$ is replaced by the collection  $\{P_{1},\ldots,P_{n}\}$ (the first reduction); then it turns out that we can consider a pair $(P,P')$ separately (the second reduction); and finally, all such pairs can be encompassed by considering a single hyper-elliptic function $\Upsilon$ (the third reduction). In \cite{alkauskas-super2} we pose a problem asking when this happens for projective and polynomial superflows. \\

Let $n=3$, and take $\xi_{1}=1$, $\xi_{2}=\frac{1-\xi}{2}$ in Theorem \ref{thm-fin}. We obtain essentially (\ref{upsilon}), where $\Upsilon=-27\mathfrak{x}$.

\section{A singular orbit under $\phi_{\mathbb{O}_{5}}$}As the final topic of this chapter, let us return back to the case $n=5$. The polynomial $\mathscr{D}_{5}$ is a complicated polynomial in $\mathfrak{x}$ and $\xi_{j}$ :
\begin{eqnarray*}
\mathscr{D}_{5}(\mathfrak{x})&=&12500\mathfrak{x}^{5}\\
&+&\mathfrak{x}^{4}\cdot
(8000\xi_{1}^2\xi_{3}-6400\xi_{1}^3\xi_{2}+1024\xi_{1}^5
\\&&+9000\xi_{1}\xi_{2}^2-10000\xi_{1}\xi_{4}
-15000\xi_{2}\xi_{3})\\
&+&\mathfrak{x}^{3}\cdot(432\xi_{2}^5+4080\xi_{1}^2\xi_{2}^2\xi_{4}
+2240\xi_{1}^2\xi_{2}\xi_{3}^2
-8200\xi_{1}\xi_{2}\xi_{3}\xi_{4}\\
&&-3600\xi_{2}^3\xi_{4}+3300\xi_{2}^2\xi_{3}^2-512\xi_{1}^4\xi_{3}^2-768\xi_{1}^4\xi_{2}\xi_{4}
\\
&&+640\xi_{1}^3\xi_{3}\xi_{4}-200\xi_{1}^2\xi_{4}^2
+9000\xi_{3}^2\xi_{4}-3600\xi_{1}\xi_{3}^3\\
&&+8000\xi_{2}\xi_{4}^2-2520\xi_{1}\xi_{2}^3\xi_{3}+
576\xi_{1}^3\xi_{2}^2\xi_{3}-108\xi_{1}^2\xi_{2}^4
)\\
&+&\mathfrak{x}^{2}\cdot(2984\xi_{1}^2\xi_{2}\xi_{3}\xi_{4}^2
+96\xi_{1}^2\xi_{3}^3\xi_{4}
+64\xi_{2}^3\xi_{3}^3+64\xi_{1}^3\xi_{3}^4
-144\xi_{1}^3\xi_{4}^3\\
&&+96\xi_{1}\xi_{2}^3\xi_{4}^2+576\xi_{1}^4\xi_{3}\xi_{4}^2
+4080\xi_{1}\xi_{3}^2\xi_{4}^2-2520\xi_{2}\xi_{3}^3\xi_{4}
+2240\xi_{2}^2\xi_{3}\xi_{4}^2\\
&&+432\xi_{3}^5
+1424\xi_{1}\xi_{2}^2\xi_{3}^2\xi_{4}-320\xi_{1}^3\xi_{2}\xi_{3}^2\xi_{4}
-288\xi_{1}\xi_{2}\xi_{3}^4+72\xi_{1}^2\xi_{2}^3\xi_{3}\xi_{4}\\
&&-288\xi_{2}^4\xi_{3}\xi_{4}-16\xi_{1}^2\xi_{2}^2\xi_{3}^3
+640\xi_{1}\xi_{2}\xi_{4}^3-6400\xi_{3}\xi_{4}^3
-24\xi_{1}^3\xi_{2}^2\xi_{4}^2)\\
&+&\mathfrak{x}\cdot(1024\xi_{4}^5+72\xi_{1}^3\xi_{2}\xi_{3}\xi_{4}^3
-320\xi_{1}\xi_{2}^2\xi_{3}\xi_{4}^3+72\xi_{1}\xi_{2}\xi_{3}^3\xi_{4}^2\\
&&-16\xi_{1}^3\xi_{3}^3\xi_{4}^2-108\xi_{3}^4\xi_{4}^2
-108\xi_{1}^4\xi_{4}^4+576\xi_{2}\xi_{3}^2\xi_{4}^3\\
&&-512\xi_{2}^2\xi_{4}^4+64\xi_{2}^4\xi_{4}^3
+4\xi_{1}^2\xi_{2}^2\xi_{3}^2\xi_{4}^2-16\xi_{2}^3\xi_{3}^2\xi_{4}^2\\
&&-768\xi_{1}\xi_{3}\xi_{4}^4+576\xi_{1}^2\xi_{2}\xi_{4}^4
-24\xi_{1}^2\xi_{3}^2\xi_{4}^3-16\xi_{1}^2\xi_{2}^3\xi_{4}^3).
\end{eqnarray*}

Since $\mathfrak{x}$ serves as a free term in Theorem \ref{thm-fin} and hence as if plays the r\^{o}le of $\xi_{5}$ (which does not appear in our calculations), as expected, $\mathscr{D}_{5}$ is a homogeneous polynomial of degree $25$, if $(\xi_{1},\xi_{2},\xi_{3},\xi_{4},\mathfrak{x})$ are given weights, respectively, $(1,2,3,4,5)$.\\

As always, we call a particular orbit \emph{singular}, if $\mathscr{D}_{5}$ has at least a double root. If $\mathscr{D}_{5}$ has exactly one such and not a triple one, then the function $\Upsilon$ in fact can be expressed in terms of Weierstrass elliptic function. For example, let us choose a point $\m{x}_{q}=(x_{1},x_{2},x_{3},x_{3},x_{5})$ by letting
\begin{eqnarray*}
x_{1}=1,\quad x_{2}=1,\quad x_{3}=1,\quad x_{4}=2,\quad x_{5}=q. 
\end{eqnarray*} 
This gives 
\begin{eqnarray*}
\xi_{1}=7+q^2,\quad\xi_{2}=15+7q^2,\quad
\xi_{3}=13+15q^2,\quad \xi_{4}=4+13q^2.
\end{eqnarray*}

In this case $\mathscr{D}_{5}(\mathfrak{x})$ has a factor $(\mathfrak{x}-4q^2)^2$. We must make sure that the remaining third degree factor does not have a root $4q^2$.  Computer calculations with MAPLE show that this amounts to $q\neq 0$, $q^2\neq 1$, $q^2\neq 4$. We must also make sure that this factor has no repeated roots. This happens when
\begin{eqnarray}
19q^8-220q^6+9861^4-22841^2+2187\neq 0.\label{octo}
\end{eqnarray} 
This has $4$ real roots. 
\begin{cor}Suppose $q^2\neq 0,1,4$, and (\ref{octo}) is satisfied. Then the orbit of the point $\m{x}_{q}$ under the hyperoctahedral polynomial superflow $\phi_{\mathbb{O}_{5}}$ can be explicitly integrated in terms of Weierstrass elliptic functions.  
\end{cor}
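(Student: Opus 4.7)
The plan is to apply the triple reduction of Theorem \ref{thm-fin} to the specific initial point $\m{x}_q=(1,1,1,2,q)$ and to show that, under the stated hypotheses, the resulting hyper-elliptic equation for $\Upsilon(t)$ degenerates to a genuine elliptic one. The first step is to read off $(\xi_1,\xi_2,\xi_3,\xi_4)=(7+q^2,\,15+7q^2,\,13+15q^2,\,4+13q^2)$ as recorded in the text, and to identify the initial value of $\Upsilon$. Writing $H(Z)-\Upsilon(t)=\prod_{j=1}^5(Z-P_j(t))$ (a consequence of Theorem \ref{thm-fin} and the monic degree-$5$ shape of $H$) and comparing constant terms, $\Upsilon(t)=\prod_j P_j(t)$; at $t=0$ this gives $\Upsilon(0)=1\cdot 1\cdot 1\cdot 4\cdot q^2=4q^2$.

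The heart of the argument is the factorization $\mathscr{D}_5(\mathfrak{x})=(\mathfrak{x}-4q^2)^2\,C(\mathfrak{x})$ for some cubic $C$. At $\mathfrak{x}=4q^2$ the polynomial $H(Z)-4q^2=(Z-1)^3(Z-4)(Z-q^2)$ has a triple root at $Z=1$ (and, thanks to $q^2\neq 1,4$, two further simple roots), so under a perturbation $\mathfrak{x}=4q^2+\delta$ the three coalesced roots split as $1+O(\delta^{1/3})$, producing three pairwise differences of order $\delta^{1/3}$; their squares contribute a factor $\delta^2$ to $\mathrm{discrim}_Z(H(Z)-\mathfrak{x})$, while all remaining pairwise differences stay bounded away from zero. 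Since $\mathscr{D}_5=4\mathfrak{x}\cdot\mathrm{discrim}_Z(H(Z)-\mathfrak{x})$ is itself of degree $5$ in $\mathfrak{x}$, dividing out $(\mathfrak{x}-4q^2)^2$ leaves the cubic $C$. I would then verify in MAPLE (using the explicit $\mathscr{D}_5$ already displayed in the text) that $C(\mathfrak{x})$ has three distinct roots and that $C(4q^2)\neq 0$ precisely when $q^2\neq 0,1,4$ and when \eqref{octo} holds; that is, that $\mathrm{discrim}(C)$ coincides, up to elementary factors explained by the three excluded values, with $19q^8-220q^6+9861q^4-22841q^2+2187$.

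Granted this factorization, the differential equation of Theorem \ref{thm-fin} becomes
\begin{eqnarray*}
(\Upsilon')^2=(\Upsilon-4q^2)^2\,C(\Upsilon).
\end{eqnarray*}
Setting $U=\Upsilon-4q^2$ gives $(U')^2=U^2\widetilde{C}(U)$ with $\widetilde{C}(U):=C(U+4q^2)$ a cubic having three simple roots and $\widetilde{C}(0)\neq 0$. Separation of variables converts this into the elliptic integral $\int \frac{dU}{U\sqrt{\widetilde{C}(U)}}=\pm(t-t_0)$. A Möbius change of variable of the form $U=(a\wp+b)/(\wp+d)$, chosen so as to send three prescribed points (say a root of $\widetilde{C}$ together with $U=0$ and $U=\infty$) to the branch points of a Weierstrass cubic, followed by an affine shift to clear the $\wp^2$ term, brings the equation into the canonical form $(\wp')^2=4\wp^3-g_2\wp-g_3$ with $g_2,g_3$ explicit rational functions of $q^2$. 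This expresses $\Upsilon(t)$ in closed form in terms of $\wp(t+\alpha)$ for a suitable shift $\alpha$ determined by the initial condition $\Upsilon(0)=4q^2$. Finally, $P_j(t)$ are recovered as the five branches of the algebraic equation $H(X)=\Upsilon(t)$, and $p_j(t)=\pm\sqrt{P_j(t)}$ with signs fixed by continuity at $t=0$ and by the first equation of \eqref{sys-ind}.

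The hard part will be not the conceptual reduction, which is essentially read off from Theorem \ref{thm-fin}, but the bookkeeping that matches the hypothesis \eqref{octo} with the precise non-degeneracy condition $\mathrm{discrim}(C)\neq 0$, $C(4q^2)\neq 0$: one needs an explicit discriminant computation for a parametric cubic $C(\mathfrak{x})$ whose coefficients are rather unwieldy polynomials in $q^2$. Once that identification is confirmed, the remaining steps are the standard Weierstrass normalization of an elliptic integral and the routine lifting from $\Upsilon$ through $P_j$ to $p_j$, both of which parallel the octahedral analysis of Theorem \ref{thm4}.
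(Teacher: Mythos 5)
Your overall strategy coincides with the paper's: there the Corollary is a direct consequence of the computations displayed immediately before it, namely the MAPLE verification that $\mathscr{D}_5(\mathfrak{x})$ acquires the factor $(\mathfrak{x}-4q^2)^2$, that the residual cubic $C$ avoids the root $4q^2$ exactly when $q^2\neq 0,1,4$, and that $C$ is separable exactly when (\ref{octo}) holds, combined with the general remark that one double root and no triple root of $\mathscr{D}_5$ drops the genus of the curve $Y^2=\mathscr{D}_5(X)$ from Theorem \ref{thm-fin} from $2$ to $1$. Your explanation of \emph{why} the factor $(\mathfrak{x}-4q^2)^2$ appears --- $\Upsilon=\prod_j P_j$, so $\Upsilon(0)=4q^2$ and $H(Z)-4q^2=(Z-1)^3(Z-4)(Z-q^2)$ has a triple root, whence the discriminant vanishes to order exactly $2$ by the $\delta^{1/3}$ splitting of the three coalesced roots --- is correct and is a genuine improvement on the paper, which simply asserts the factorization.

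Your final normalization step, however, contains a real error. The differential $\frac{\d U}{U\sqrt{\widetilde{C}(U)}}$ is of the \emph{third kind} on the elliptic curve $y^2=\widetilde{C}(U)$: since $\widetilde{C}(0)=C(4q^2)\neq 0$ it has simple poles at the two points above $U=0$ and is regular at infinity. No M\"{o}bius change of variable can convert it into the holomorphic differential $\frac{\d v}{\sqrt{4v^3-g_2v-g_3}}$, because a birational substitution preserves the polar divisor; consequently $t$ is a third-kind abelian integral of the elliptic parameter and $\Upsilon(t)$ is \emph{not} of the form $R\big{(}\wp(t+\alpha)\big{)}$. The correct analogue of the degenerate genus-$0$ case of Theorem \ref{thm-spec} (where the reduced curve was rational and $\Upsilon$ came out through $\tanh$, i.e. the inverse of a logarithm, rather than as a rational function of $t$) is an inversion involving the Weierstrass $\sigma$ and $\zeta$ functions as well as $\wp$; this weaker reading is what the Corollary's phrase ``integrated in terms of Weierstrass elliptic functions'' must mean. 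Relatedly, the initial condition $\Upsilon(0)=4q^2$ cannot be used to fix a shift $\alpha$: since $4q^2$ is a double root of $\mathscr{D}_5$, an analytic solution of $(\Upsilon')^2=\mathscr{D}_5(\Upsilon)$ attaining that value at finite time is forced to be constant (compare leading orders of $(\Upsilon')^2$ and $(\Upsilon-4q^2)^2C(\Upsilon)$). Indeed $\m{x}_q=(1,1,1,2,q)$ is a zero of the vector field $\m{O}_5$ --- three equal coordinates annihilate every component $\varpi_j$ --- so the statement really concerns the non-stationary orbits on the level set of the first integrals through $\m{x}_q$, along which $\Upsilon$ only tends to $4q^2$ asymptotically.
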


\appendix
\chapter{Extremal homogeneous vector fields on spheres}
\label{app}

The next topic continues in the spirit of Section \ref{funda-period}, but is a little astray of our main theme of superflows, so we will be concise.\\

In the spirit of Section \ref{octa-dim}, we may pose extremity questions for homogeneous vector fields on spheres with an octahedral symmetry. However, these problems seem to be of interest for all homogeneous vector fields on spheres, with or without any orthogonal symmetry, hence we formulate problem for them first.\\

 The topic of vector fields on spheres is, of course, a deep and widely ramified subject \cite{miller}. For example, it was long been known that any $C^{\infty}$ vector field on $\m{S}^{2n}$, $n\in\mathbb{N}$, must vanish at some point.  One of central questions in this area is to calculate on $\m{S}^{n}$ the maximal possible amount of linearly independent vector fields. In particular, when this amount is equal to $n$. Or, in other words, when a vector bundle of $\m{S}^{n}$ is globally isomorphic to $\m{S}^{n}\times\mathbb{R}^{n}$. The theorem of R. Bott, J. Milnor \& M. Kervaire states that this happens if and only if $n=0,1,3$ or $7$ \cite{BM,conlon}. For instance, for $n=3$, $x^2+y^2+z^2+w^2=1$, such a maximal collection of mutually orthogonal vector fields is given by
\begin{eqnarray*}
(-y,x,w,-z),\\
(z,w,-x,-y),\\
(-w,z,-y,x).
\end{eqnarray*} 
 
Now, let $f_{j}\in\mathbb{R}_{\ell}[\m{x}]$, $\m{x}=(x_{1},\ldots, x_{n})$ be $\ell$-homogeneous polynomial functions, and let $\m{Q}=\sum\limits_{j=1}^{n}f_{j}\frac{\p}{\p x_{j}}$ be a vector field on a sphere $\mathbf{S}^{n-1}$: $\sum\limits_{j=1}^{n}x_{j}f_{j}=0$. Let $\mathcal{Y}=\mathcal{Y}(n,\ell)$ be a linear space of dimension 
\begin{eqnarray*}
n\binom{\ell+n-1}{n-1}-\binom{\ell+n}{n-1}=\ell\binom{n+\ell-1}{\ell+1}
\end{eqnarray*}
 of such vector fields. For $\m{Q}\in\mathcal{Y}$, define
$\beta_{2}(\m{Q})$ and $\beta_{\infty}(\m{Q})$ exactly as in Definition \ref{defin-const}. That is,
\begin{eqnarray*}
\beta_{2}=\frac{1}{|\m{S}^{n-1}|}\int\limits_{\m{S}^{n-1}}|\m{Q}|^{2}\d S,
\quad\beta_{\infty}=\sup\limits_{\m{S}^{n-1}}|\m{Q}|.
\end{eqnarray*} 
Now, let
\begin{eqnarray*}
\Xi(n,\ell)=\inf\limits_{\m{Q}\in\mathcal{Y}}\frac{\beta^{2}_{\infty}(\m{Q})}{\beta_{2}(\m{Q})},\quad\Upsilon(n,\ell)=\sup\limits_{\m{Q}\in\mathcal{Y}}\frac{\beta^{2}_{\infty}(\m{Q})}{\beta_{2}(\m{Q})}.
\end{eqnarray*}
We also define analogous constants $\Xi^{\mathbb{O}}(n,\ell)$ and $\Upsilon^{\mathbb{O}}(n,\ell)$ for $\m{Q}$ having an octahedral symmetry and being a vector field on a sphere.
\begin{prob}For each $(n,\ell)\in\mathbb{N}_{\geq 2}\times\mathbb{N}$, describe the interval $[\Xi(n,\ell),\Upsilon(n,\ell)]$, its arithmetic, asymptotic structure as $\ell\rightarrow\infty$. Analogous questions for $[\Xi^{\mathbb{O}}(n,\ell)$, $\Upsilon^{\mathbb{O}}(n,\ell)]$.
\end{prob}
For a different question related to extremity (or criticality) of vector fields on spheres, namely, the question of finding minimal volume in the space of tangent bundle of unit vector fields on odd-dimensional spheres, see \cite{BGM1,BGM2,medrano,Pe}.\\

We know the constants $\Xi^{\mathbb{O}}(3,\ell)$ and $\Xi^{\mathbb{O}}(3,\ell)$ do not exists for $\ell=2$ - there are no $2$-homogeneous vector fields with an octahedral symmetry (see Section \ref{octa-dim}). For $\ell=4$, these two constants coincide and are given by the value $\Omega_{\infty,\mathbb{O}}$; see (\ref{omega-2}). \\

For $(n,\ell)=(3,3)$, there exists the family of $3$-homogeneous vector fields with an octahedral symmetry, and it is given by (\ref{family}). As noted there, none of these flows is a flow on a unit sphere, hence the constants in question do not exist for $(n,\ell)=(3,3)$.
  
\begin{Example} Now we present an example how to calculate $\Xi(n,\ell)$ and $\Upsilon(n,\ell)$. For instance, consider the case $(n,\ell)=(2,4)$.\\

 We thus have a vector field $\m{Q}=\varpi\bl\varrho=yF\bl(-x)F$, where
\begin{eqnarray*}
F(x,y)=ax^3+bx^2y+cxy^2+dy^3.
\end{eqnarray*}
since $|\m{Q}|^2=F^2$, we want to find the maximum of $F$ subject to the condition $x^2+y^2=1$. Using the method of Lagrange multipliers, we need to find critical points for a function
\begin{eqnarray*}   
T(x,y,\lambda)=F(x,y)-\lambda(x^2+y^2-1).
\end{eqnarray*}
The set $\m{S}^{1}$ is compact, and so there are at least $2$ critical points. Since $F$ is odd, this implies if $(x_{0},y_{0})$ is a maximum, then $(-x_{0},-y_{0})$ is minumum, and the other way round.  But first, we can make the following simplification. Any orthogonal transformation of the unit circle (in this case, just a rotation) does not change the two constants $\beta_{2}$ and $\beta_{\infty}.$ Thus, we may suppose that one of the critical points is just $(x,y)=(1,0)$. Since
\begin{eqnarray}
\def\arraystretch{1.5}
\left\{\begin{array}{l}
T'_{x}(x,y,\lambda)=0,\\
T'_{y}(x,y,\lambda)=0,\\
x^2+y^2=1
\end{array}
\right.
\label{sys-lag}
\end{eqnarray}
 at this point, this gives $b=0$. Thus, without the loss of generality, we may suppose the latter holds. In this case
\begin{eqnarray*}
\beta_{2}=\frac{1}{16}(5a^2+c^2+5d^2+2ac).
\end{eqnarray*}
We can make the following crucial observation. Let us fix $\beta_{2}=C$. In the $(a,c,d)$ space, this defines the surface of an ellipsoid $\mathscr{E}$. For each point $\m{z}\in\mathscr{E}$, we have a function $F_{\m{z}}(x,y)$ and its maximal value $M(\m{z})$. Since $\mathscr{E}$ is compact and $M(\m{z})$ is continuous, both the infimum and the supremum are obtained at some $\m{z}$.\\

Now, suppose that $\m{z}=(a,c,d)$ is such that $M(\m{z})$ is maximal. If $F_{\m{z}}(1,0)$ is not a maximal value of this function, we can rotate once again to get this point to $(1,0)$ ($b$ stays $0$) to obtain $\bar{\m{z}}=(\bar{a},\bar{c},\bar{d})$ such that $M(\m{z})=M(\bar{\m{z}})=|\bar{a}|$. We immediately see that
\begin{eqnarray*}
\frac{\beta_{\infty}^{2}}{\beta_{2}}\leq\frac{16\bar{a}^2}{4\bar{a}^2+5\bar{d}^2+(\bar{a}+\bar{c})^2}\leq 4,
\end{eqnarray*}
and the bound can be achieved for $F=x^3-xy^2$.\\

Further, suppose that $\m{z}$ is such that $M(\m{z})$ is minimal. We may assume that $|M(\m{z})|=|a|$ (if needed, we can pass to $\bar{a}$). Now, if $a^{2}\neq 4C$ (which is true for such point), we can always move, while remaining on $\mathscr{E}$, in such a direction that $a^2$ decreases. This shows that for $F$, which solves the infimum problem, there exist another  critical point $(x_{0},y_{0})\neq (\pm 1,0)$. And second,
\begin{eqnarray}
F(x_{0},y_{0})^{2}=F(1,0)^{2}.
\label{crucial}
\end{eqnarray} 
 
 Calculations show that other extremum points exist only if $D=8c^2+9d^2-12ac>0$. We may suppose, without loss of generality, that $d=1$. Indeed, for extremal vector fields either $d\neq 0$, and we can divide by $d$ without altering the ratio $\frac{\beta_{\infty}^{2}}{\beta_{2}}$, or extremal values are achieved as $(a,c)=t(a_{0},c_{0})$ for $t\rightarrow\infty$.\\

 Then, apart from the critical points given by $(x,y,\lambda)=(\pm 1,0,\pm\frac{3}{2}a)$, there are four more points to consider. They are given by $(x_{0},y_{0},\lambda)$, where 
\begin{eqnarray*}
(x^{\pm}_{0})^{2}=\frac{3+2c^2-2ac\pm\sqrt{9+8c^2-12ac}}{6+6c^2-12ac+6a^2}.
\end{eqnarray*}
These are all $4$ roots of the polynomial
\begin{eqnarray}
Q(z)=(9+9a^2-18ac+9c^2)z^4+(6ac-9-6c^2)z^2+c^2.
\label{qq}
\end{eqnarray}
Since its leading coefficient is $9+9(a-c)^2$, it is of degree $4$. By a direct calculation,
\begin{eqnarray*}
F(x_{0},y_{0})=\frac{c+(3a-c)x_{0}^2}{3x_{0}}.
\end{eqnarray*}
Since we can always swap $(x_{0},y_{0})$ and $(-x_{0},-y_{0})$, without loss of generality, based on thee crucial remark (\ref{crucial}), we may suppose that $F(x_{0},y_{0})=a$. This is tantamount to a quadratic equation $P(x_{0})=0$, where
\begin{eqnarray}
P(z)=(3a-c)z^2-3az+c.
\label{pp}
\end{eqnarray}
This means that the two polynomials $P$ and $Q$, given by (\ref{qq}) and (\ref{pp}), have a common root, and so their resultant vanishes. Thus,
\begin{eqnarray*}
\mathrm{Res}_{z}(P,Q)=c^2(27a^3-27a-9ac^2-2c^3)(3a-2c)^3=0.
\end{eqnarray*}
Without going into technical details, we can prove that the key property to find $\Xi$ is the cubic factor $27a^3-27a-9ac^2-2c^3$. Cases $c=0$ and $c=\frac{3}{2}a$ are easily handled the same way, and these leads to minimal values $\frac{8}{5}$ and $\frac{64}{41}$, respectively.\\

We are thus lead to the following problem: to find the minimum of 
\begin{eqnarray*}
\frac{\beta_{\infty}^{2}}{\beta_{2}}=\frac{16a^2}{2ac+5a^2+c^2+5},
\end{eqnarray*}
subject to the condition $27a^3-27a-9ac^2-2c^3=0$, and $a,c\geq 0$. Via the same method of Lagrange multipliers, we obtain
\begin{eqnarray*}
a=\frac{3\sqrt{30}-2\sqrt{5}}{10},\quad c=\frac{12\sqrt{5}-3\sqrt{30}}{10}.
\end{eqnarray*}
Since we must also consider the case $(a,c)\rightarrow\infty$, this given the limit values $c=\frac{3}{2}a$, or $c=-3a$. The ratio $\frac{\beta_{\infty}^{2}}{\beta_{2}}$ is then equal to $\frac{64}{41}$ and $2$, respectively. This gives the constant $\Xi(2,4)$. Thus we have proved the following. 
\begin{prop}We have equalities
\begin{eqnarray*}
\Upsilon(2,4)=4,\quad \Xi(2,4)=2-\frac{2\sqrt{6}}{9}=1.455668946_{+}.
\end{eqnarray*}
The first and the second equalities are achieved for the vector fields $yF\bl (-xF)$ where
\begin{eqnarray*}
F=x^3-xy^2,\text{ and } F=(3\sqrt{6}-2)x^3+(12-3\sqrt{6})xy^2+2\sqrt{5}y^3,
\end{eqnarray*}respectively.  
\end{prop}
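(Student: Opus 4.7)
The plan is to reduce the search for extremal ratios $\beta_{\infty}^2/\beta_2$ over $\mathcal{Y}(2,4)$ to a finite-dimensional optimization on a single normal form. Any flow on $\m{S}^{1}$ in $\mathcal{Y}(2,4)$ has the shape $\m{Q}=(yF,-xF)$ with $F=ax^3+bx^2y+cxy^2+dy^3$, since a $4$-homogeneous vector field on $\m{S}^{1}$ must be proportional to the tangent field $(y,-x)$ with a cubic coefficient. On the unit circle $|\m{Q}|^{2}=F^{2}$, so both $\beta_2$ and $\beta_{\infty}$ depend only on $F$. The crucial first reduction is that the group $O(2)$ acts on $\mathcal{Y}(2,4)$ preserving both constants, so we can rotate the circle to bring a chosen maximum point of $|F|$ to $(1,0)$. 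The Lagrange stationarity condition $T'_x=T'_y=0$ on $x^2+y^2=1$ at $(1,0)$ forces $b=0$. After a direct integration $\beta_2=\tfrac{1}{16}(5a^{2}+c^{2}+5d^{2}+2ac)$, and $F(1,0)^{2}=a^{2}$.

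For the supremum I would simply estimate: writing $5a^{2}+c^{2}+5d^{2}+2ac=4a^{2}+(a+c)^{2}+5d^{2}$ gives
\begin{eqnarray*}
\frac{\beta_{\infty}^{2}}{\beta_{2}}\leq\frac{16a^{2}}{4a^{2}+(a+c)^{2}+5d^{2}}\leq 4,
\end{eqnarray*}
with equality when $c=-a$ and $d=0$, yielding $F=x^{3}-xy^{2}$. This gives $\Upsilon(2,4)=4$ and identifies the extremizer.

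The infimum is the main obstacle and requires a subtler argument. By compactness on the ellipsoid $\{\beta_2=C\}$ in $(a,c,d)$-space, the minimum is attained, and a first-order variational argument shows that at the minimizer the maximum value $a$ of $|F|$ on the circle cannot be isolated: otherwise one could decrease $a^{2}$ while remaining on the ellipsoid. Therefore a second critical point $(x_{0},y_{0})\neq(\pm 1,0)$ of $F$ on $\m{S}^{1}$ exists with $F(x_{0},y_{0})^{2}=F(1,0)^{2}=a^{2}$. Scaling by $d$ (or treating $d\to 0$ as a limit), we normalize $d=1$. The Lagrange system produces $x_{0}^{2}$ as a root of the quartic $Q(z)$ in (\ref{qq}), and the equality $F(x_{0},y_{0})=\pm a$ is a quadratic $P(z)=0$ as in (\ref{pp}); so $P$ and $Q$ share a root. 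Computing the resultant,
\begin{eqnarray*}
\mathrm{Res}_{z}(P,Q)=c^{2}(3a-2c)^{3}(27a^{3}-27a-9ac^{2}-2c^{3})=0.
\end{eqnarray*}
The factors $c=0$ and $c=\tfrac{3}{2}a$ give ratios $\tfrac{8}{5}$ and $\tfrac{64}{41}$, both larger than the claimed $\Xi$, so the minimum lies on the cubic locus $27a^{3}-27a-9ac^{2}-2c^{3}=0$.

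Finally, I would minimize $16a^{2}/(5a^{2}+2ac+c^{2}+5)$ constrained to this cubic (with $a,c\geq 0$) by a second application of Lagrange multipliers, together with the asymptotic cases $|(a,c)|\to\infty$, where the ratio $c/a$ must tend to $\tfrac{3}{2}$ or $-3$ (yielding $\tfrac{64}{41}$ and $2$). The stationarity system is algebraic and reduces to solving for $(a,c)$ on the cubic; the computation will produce the critical point $a=\tfrac{3\sqrt{30}-2\sqrt{5}}{10}$, $c=\tfrac{12\sqrt{5}-3\sqrt{30}}{10}$, with ratio $2-\tfrac{2\sqrt{6}}{9}$. Comparing this value to all boundary candidates $\tfrac{8}{5},\tfrac{64}{41},2$ confirms it is the global minimum, hence $\Xi(2,4)=2-\tfrac{2\sqrt{6}}{9}$ with the explicit extremal $F=(3\sqrt{6}-2)x^{3}+(12-3\sqrt{6})xy^{2}+2\sqrt{5}y^{3}$. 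The main obstacle is the verification that the interior Lagrange critical point on the cubic actually gives a strict minimum below all boundary and degenerate values; this is numerical comparison but must be done carefully to rule out that a sharper extremum is hidden in the degenerate stratum $d\to 0$.
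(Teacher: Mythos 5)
Your proposal is correct and follows essentially the same route as the paper's own argument: the reduction to $F=ax^3+bx^2y+cxy^2+dy^3$ with $b=0$ after rotation, the explicit $\beta_2=\tfrac{1}{16}(5a^2+c^2+5d^2+2ac)$, the completion-of-squares bound giving $\Upsilon(2,4)=4$ at $F=x^3-xy^2$, and for the infimum the compactness/variational argument forcing a second critical point with equal value, the normalization $d=1$, the resultant factorization $c^2(3a-2c)^3(27a^3-27a-9ac^2-2c^3)$, and the final Lagrange-multiplier computation on the cubic locus together with the boundary values $\tfrac{8}{5}$, $\tfrac{64}{41}$, $2$. No substantive differences to report.
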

Figure \ref{fig-extrema} shows uniformly scaled functions $F(\cos(\phi),\sin(\phi))$ for two extremal values of $F$.
\begin{figure}
\includegraphics[scale=0.32,angle=-90]{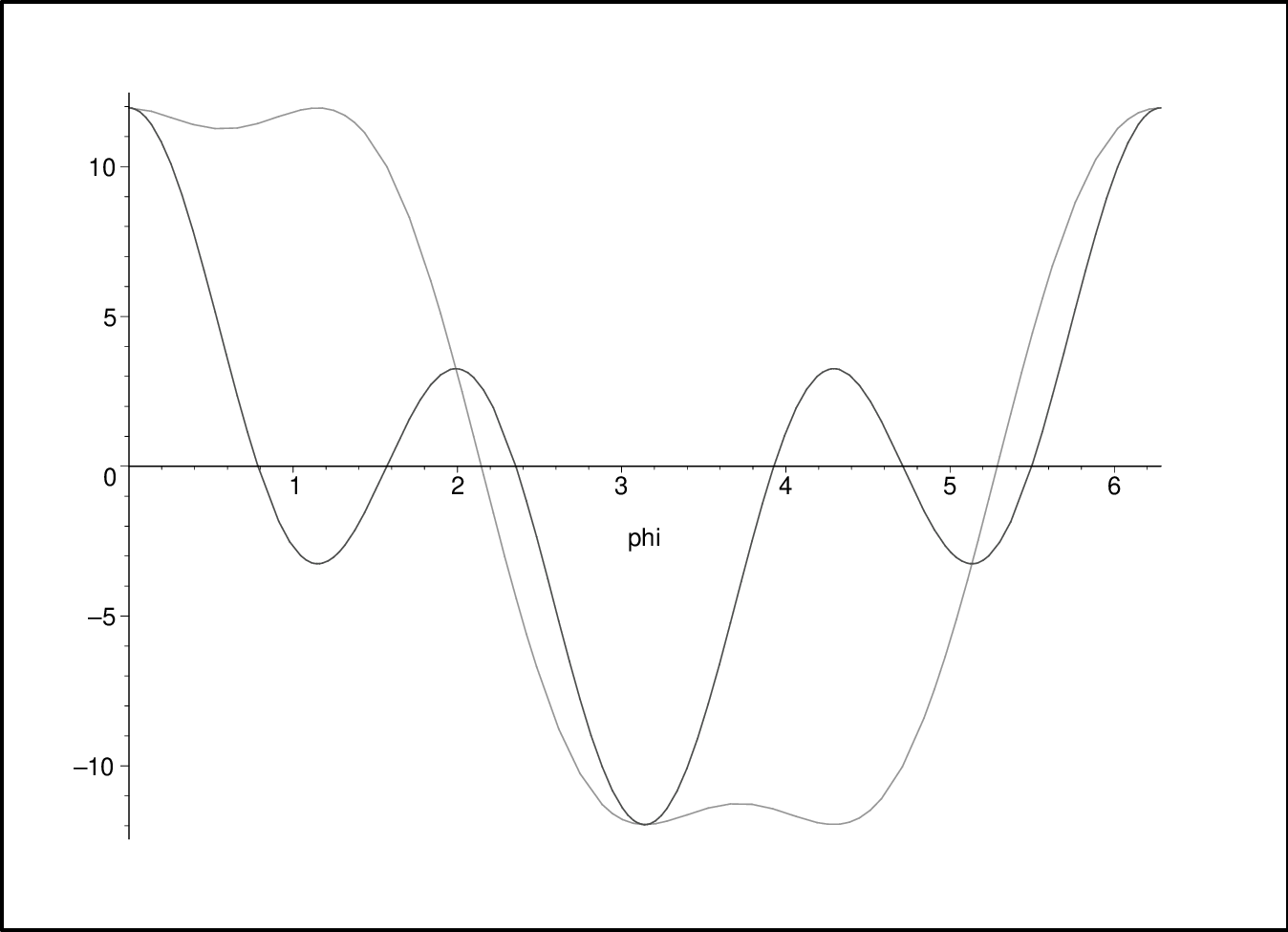}
\caption{The value of $F$, related to the vector field $yF\bl(-x)F$ on the unit sphere, paramatrized by $\pi\in\mathbb[0,2\pi]$ for two extremal homogeneous cubic polynomials}
\label{fig-extrema}
\end{figure}
Thus, the situation might be quite technical for spheres of higher order
\end{Example}


\end{document}